\numberwithin{equation}{section}
\newcommand{\be}{\begin{eqnarray}}
\newcommand{\ee}{\end{eqnarray}}
\newcommand{\ce}{\begin{eqnarray*}}
\newcommand{\de}{\end{eqnarray*}}
\newtheorem{theorem}{Theorem}[section]
\newtheorem{lemma}[theorem]{Lemma}
\newtheorem{proposition}[theorem]{Proposition}
\newtheorem{corollary}[theorem]{Corollary}
\theoremstyle{remark}
\newtheorem{example}[theorem]{Example}
\newtheorem{remark}[theorem]{Remark}
\newtheorem{definition}[theorem]{Definition}
\newenvironment{customcon}[1]
  {\innercustomthm}
  {\endinnercustomthm}
\crefname{eqn}{Equation}{Equations}
\crefname{assumption}{Assumption}{Assumptions}
\crefname{innercustomthm}{Condition}{Conditions}
\def\b{\beta}
\def\p{\partial}
\def\<{{\langle}}
\def\>{{\rangle}}
\def\({{\Big(}}
\def\){{\Big)}}
\def\bx{{\mathbf{x}}}
\def\dif{d}
\def\min{{\mathord{{\rm min}}}}
\def\={&\!\!=\!\!&}
\def\bt{\begin{theorem}}
\def\et{\end{theorem}}
\def\bl{\begin{lemma}}
\def\el{\end{lemma}}
\def\br{\begin{remark}}
\def\er{\end{remark}}
\def\bd{\begin{definition}}
\def\ed{\end{definition}}
\def\bp{\begin{proposition}}
\def\ep{\end{proposition}}
\def\bc{\begin{corollary}}
\def\ec{\end{corollary}}
\def\bx{\begin{example}}
\def\ex{\end{example}}
\def\cB{{\mathcal B}}
\def\cF{{\mathcal F}}
\def\cff{\cF}
\def\cG{{\mathcal G}}
\def\cgg{\cG}
\def\cH{{\mathcal H}}
\def\chh{\cH}
\def\cM{{\mathcal M}}
\newcommand\cmm{\cM}
\def\mE{{\mathbb E}}
\def\E{\mE}
\def\mI{{\mathbb I}}
\def\mL{{\mathbb L}}
\def\mP{{\mathbb P}}
\def\mR{{\mathbb R}}
\def\bP{{\mathbf P}}
\def\geq{\geqslant}
\def\leq{\leqslant}
\def\les{\lesssim}
\def\bP{{\mathbf P}}
\newcommand{\R}{{\mathbb R}}
\newcommand{\Rd}{{\R^d}}
\newcommand{\tand}{\quad\text{and}\quad}
\newcommand{\LL}{{\mathbb{L}}}
\newcommand{\1}{{\mathbf 1}}
\newcommand{\caa}{{\mathcal A}}
\newcommand{\wei}[1]{\langle#1\rangle}
\newcommand{\norm}[1]{{\left\vert\kern-0.25ex\left\vert\kern-0.25ex\left\vert #1 
    \right\vert\kern-0.25ex\right\vert\kern-0.25ex\right\vert}}
\DeclareMathOperator*{\esssup}{ess\,sup}
\renewcommand{\le}{\leq}
\renewcommand{\ge}{\geq}
\begin{document}
	\title[Taming singular SDEs]{Taming singular stochastic differential equations: A numerical method}
	\date{\today}
	
\author{Khoa L{\^e} 
\and Chengcheng Ling 
}
\address{
School of Mathematics, University of Leeds, Leeds, United Kingdom.}
\email{k.le@leeds.ac.uk}
 \address{Institut f\"ur Mathematik, Universit\"at Augsburg,  Augsburg, Germany}
 \email{chengcheng.ling@uni-a.de}

	\begin{abstract}
	We consider a generic and explicit  tamed Euler--Maruyama scheme for multidimensional time-inhomogeneous stochastic differential equations with multiplicative Brownian noise. The diffusive coefficient is uniformly elliptic, H\"older continuous and weakly differentiable in the spatial variables while the drift satisfies the strict Ladyzhenskaya--Prodi--Serrin condition, as considered by Krylov and R\"ockner (2005).
	In the discrete scheme, the drift is tamed by replacing it by an approximation.
	A strong rate of convergence of the scheme is provided in terms of the approximation error of the drift in a suitable and possibly very weak topology.
	A few examples of approximating drifts are discussed in detail. The parameters of the approximating drifts can vary and---under suitable conditions---
	be fine-tuned to achieve a strong convergence rate which is arbitrarily close to the benchmark $0.5$ rate.
	The result is then applied to provide numerical solutions for stochastic transport equations with singular vector fields satisfying the aforementioned condition. 
		
		\bigskip
		
		\noindent {{\sc Mathematics Subject Classification (2020):}
		Primary 60H35, 
		60H10; 
		Secondary
		60H50, 
		60L90, 
		35B65. 
		}

		\noindent{{\sc Keywords:} Singular SDEs; strong approximation; tamed Euler scheme; regularization by noise; stochastic sewing; Zvonkin's transformation; quantitative Khasminskii's lemma.}
	\end{abstract}
	
	\maketitle
\section{Introduction} 
\label{sec:introduction}
	The aim of this article is to devise a numerical scheme and  obtain its strong convergence rate for stochastic differential equations (SDEs) with integrable drift coefficients and elliptic regular diffusive coefficients.
	We consider the SDE
	\begin{align}\label{sde00}
	\dif X_t=b(t,X_t)\dif t+\sigma(t,X_t) \dif B_t,\quad X_0=x_0,\quad t\in[0,1],
	\end{align}
	where $d\geq1$, $b: [0,1]\times\mR^d\rightarrow\mR^d$ is a Borel measurable function satisfying
	\begin{align}
		\int_0^1\left[\int_\Rd|b(t,x)|^pdx\right]^{\frac qp}dt<\infty \quad\text{with}\quad q,p\in[2,\infty)\quad\text{and}\quad  \frac dp+\frac2q<1,    \label{pq}
	\end{align}
	and $\sigma:[0,1]\times\Rd\to \Rd\times\Rd$ is a bounded Borel measurable function, continuous in the spatial variables and uniformly elliptic, $(B_t)_{t\geq0}$ is a $d$-dimensional standard Brownian motion defined on some complete filtered probability space $(\Omega,\cff,(\cff_t)_{t\geq 0},\mP)$ and $x_0$ is a $\cff_0$-random variable.
	With $\le1$ in place of $<1$, \eqref{pq} is known in the fluid dynamics' literature as the Ladyzhenskaya--Prodi--Serrin  condition.

	In the seminal paper \cite{MR2117951},  Krylov and R\"ockner, building upon \cite{Z,Ve}, show that \eqref{sde00} has a unique strong solution assuming that $\sigma$ is the identity matrix and $b$ satisfies \eqref{pq}.
	This result is later extended by Zhang \cite{Zhang2011} (complemented by \cite{XXZZ})\footnote{\cite[Theorem 5.1]{Zhang2011} is non-trivial whose proof is provided in \cite{XXZZ}.} for variable diffusive coefficients which are weakly differentiable, uniformly elliptic, uniformly bounded and uniformly continuous in $x$ locally uniformly in time.

	While theoretical solutions of \eqref{sde00} are well understood since \cite{MR2117951}, numerical analysis of \eqref{sde00} under condition \eqref{pq} has been an open problem.
	At the moment of writing,  we are aware of two publications on the topic.
	Jourdain and Menozzi consider in \cite{jourdain2021convergence} the case $\sigma$ is the identity matrix and show that the marginal density of a tamed Euler--Maruyama scheme with truncated drifts converges  at the rate $\frac12-\frac d{2p}-\frac1q$. Gy\"ongy and Krylov in
	\cite{gyongy2021existence} recently show that the tamed Euler--Maruyama scheme with truncated drifts converges in probability to the exact solution, albeit without any rate. 
	 Needless to say, a strong convergence rate is desirable and is of independent interest. 
	For this purpose, we consider the discrete scheme defined by
	 \begin{equation}\label{eqn.EMscheme}
		 dX_t^n=b^n(t,X^n_{k_n(t)})dt+ \sigma(t,X^n_{k_n(t)}) dB_t,\quad X_0^n=x_0^n, \quad t\in[0,1],
	 \end{equation}
	 where $x^n_0$ is a $\cff_0$-random variable and $b^n$ is an approximation of the vector field $b$ and
	 \begin{align*}
	 	k_n(t)=\frac jn \text{ whenever } \frac jn\le t<\frac {j+1}n \text{ for some integer } j\ge0.
	 \end{align*}
	 We note that \eqref{eqn.EMscheme} with the choice $b^n=b$ is the usual Euler--Maruyama scheme, which, however, is not well-behaved for a merely integrable function $b$ even when $b$ is replaced by $b\1_{(|b|<\infty)}$. This is because  the simulation for the usual Euler--Maruyama scheme may enter a neighborhood of a singularity of $b$, making the scheme unstable and uncontrollable. We thus have to tame the vector field $b$, replacing it by a suitable approximation $b^n$. Henceforth, we call \eqref{eqn.EMscheme} a \textit{tamed Euler--Maruyama scheme}.
	 The terminology is borrowed from \cite{MR2985171}, who consider a specific case of \eqref{eqn.EMscheme} to approximate SDEs with regular but super-linear drifts. 
	 The name ``tamed Euler--Maruyama'' thus should be understood in a broad sense, and in particular, \eqref{eqn.EMscheme} also includes the ``truncated Euler--Maruyama'' scheme considered in \cite{MR3370415}.

	 Natural choices for $b^n$ are the truncated vector fields
	 \begin{gather}\label{def.truncatedb}
		  	b^n_r(x)=b_r(x)\1_{(|b_r(x)|\le Cn^{\chi}\|b_r\|_{L_p(\Rd)} )},
		  	\\b^n_r(x)=b_r(x)\1_{(|b_r(x)|\le C n^\chi)},\label{def.bn2}
		\end{gather}
	 for some constants $C,\chi>0$.
	 Another practical choice is the regularized vector field
	 \begin{gather}
		b^n_r(x)=p_{1/n^\chi}\ast b_r(x),\label{def.smoothedb}
	 \end{gather}
	  where $\chi>0$, $p_t(x)$ is the Gaussian density of variance $t$ and $*$ is the spatial convolution.
	 Alternatively, multiresolution approximations by wavelet (\cite{MR1228209}) or the truncated discrete $\varphi$-transform (\cite{MR1107300}) could be used whenever desirable.

	The main results of the article, \cref{thm.main,thm.alpha} below, assert the strong convergence of \eqref{eqn.EMscheme} to \eqref{sde00} with an explicit rate under some mild regularity conditions on $\sigma$ and on the approximating drifts $b^n$. 
	When the approximating drifts take one of the forms \eqref{def.truncatedb}-\eqref{def.smoothedb}, \cref{cor.somebn} expresses the convergence rates which are proportional to $\chi$. For each form of $b^n$, a suitable validity range of $\chi$ is identified in terms of the parameters $p,q,d$. For the approximating  drifts \eqref{def.bn2} and Lipschitz $\sigma$, the parameter $\chi$ can be tuned within the interval $(0,1 /2)$ to obtain a strong convergence rate $\chi$, which is arbitrarily close to the benchmark $0.5$ rate. For the approximating drifts \eqref{def.truncatedb} and \eqref{def.smoothedb}, such sharp rate can be achieved under some restricted conditions on $p,q$.

	We expect that \cref{thm.main,thm.alpha} are useful in algorithm designs when the vector field $b$ is not explicitly available but rather arises from another analytic system which itself needs to be numerically evaluated. Such situations appear in hydrodynamic-type equations due to their fundamental connection with singular SDEs, see for instance, \cite{MR2376844,Zhang2010,MR4207449} where the SDE \eqref{sde00} is coupled with another analytic constraint on $b$.
	In such scenarios, $b^n$ does not have an explicit form, but nevertheless, \cref{thm.main,thm.alpha} could be implemented.
	While we leave this problem for future investigations, herein we focus on a  simpler application to stochastic transport equations with vector fields satisfying \eqref{pq} (see Eq. \eqref{STE}).
	While theoretical solutions for such equations have been considered in  \cite{FGP2010,FF13,NO15,BFGM19}, singularity of the  coefficients have prevented the study of numerical solutions by standard tools (\cite{C80,P02}).
	We propose in \cref{thm.STE} an explicit numerical scheme with rate  for such equations, based upon the method of characteristics.

	\smallskip
	Literatures on convergence of Euler--Maruyama schemes for SDEs is vast and expanding, for which we provide a brief and personalized overview.
	When the coefficients are continuous, convergence rates of the Euler--Maruyama scheme are well-studied.
	For Lipschitz continuous coefficients and non-trivial diffusive coefficients, the optimal strong rate of convergence is $1/2$, as shown in \cite{MR1119837,MR1617049}.
	Results on the strong rate of convergence for H\"older / Dini continuous drifts are discussed in \cite{GR,BHY,PT} and only settled recently by  Dareiotis and Gerencs\'er in \cite{DG}, who obtain  the $L_2(\Omega)$-rate $1/2- \varepsilon$, for any $\varepsilon\in(0,1/2)$, when $b$ is Dini continuous and $\sigma$ is the identity matrix. This result is extended in \cite{butkovsky2021approximation} for the case when $b$ is H\"older continuous and $\sigma$ is uniformly elliptic and twice continuously differentiable.
	For an in-depth overview and more complete lists of other contributions, see \cite{BBT,KP,KPS,MT} and the references therein.
	Results for discontinuous drifts are more sparse but are attracting attention. The case of piecewise Lipschitz drifts are considered in \cite{LS1,LS2,MY}.
	\cite{MR4246871} considers one-dimensional SDEs with additive noise and bounded measurable drifts with a positive Sobolev--Slobodecki-regularity.
	\cite{bao2020convergence} considers bounded measurable drifts with a certain Gaussian--Besov-regularity. For merely bounded measurable drifts without any regularity, the recent article \cite{DGL2021} obtains the $L_p(\Omega)$-rate $1/2- \varepsilon$, for any $p\ge2$ and $\varepsilon\in(0,1/2)$, extending the results of \cite{DG,butkovsky2021approximation}. 
	At last, we mention the work \cite{deangelis2020numerical} who consider similar tamed Euler--Maruyama schemes for one dimensional SDEs with distributional drifts. For comparison, our approach is different, our results are in a multidimensional setting and allow completely generic approximating drifts $b^n$.
	Furthermore, we emphasize that  one dimensional SDEs are more specific,  often well-posed even for distributional drifts, and usually require tailored techniques, \cite{MR777514,MR3652414}.
	This list is surely not exhaustive.

	\smallskip
	The article is organized as follows. In \cref{sec:main_results}, we state our standing assumptions and the main results. \cref{sec:preliminaries} contains auxiliary results which are collected and adapted from previous works.
	\cref{sec.BM} is pivotal and contains a case study of moment estimation for some relevant functionals of Brownian motion. While some results in this section will not be used directly to prove the main results, the section showcases our main estimates in a simpler setting. \cref{sec:regularizing_properties_of_the_discrete_paths,sec:regularizing_properties_of_the_continuum_paths}  extend the moment estimates in \cref{sec.BM} respectively to functionals of the solutions of \eqref{eqn.EMscheme} and \eqref{sde00}. The two sections contain most of the technical estimates of the paper which build up a foundation for the proofs of the main results.
	In \cref{sec:proof}, we give the proofs of \cref{thm.main,thm.alpha}, using the moment estimates from the prior sections. The application to numerical solutions for stochastic transport equations is discussed in \cref{sec.application}.
	The appendix contains maximal regularity estimates for parabolic equations with variable coefficients and distributional forcing, which are needed but independent from the main text.

\section{Main results} 
\label{sec:main_results}

 We first fix a few notation. Let $p,q\in[1,\infty]$ be some fixed parameters. $L_p(\Rd)$ and $L_p(\Omega)$ denote the Lebesgue spaces respectively on $\Rd$ and $\Omega$.
 The expectation with respect to $\mP$ is denoted by $\E$. For each $\nu\in\R$, $L_{\nu,p}(\Rd):=({\mI}-\Delta)^{-\nu/2}\big(L^{ p}(\mR^d)\big)$ is the usual Bessel  potential space on $\Rd$ equipped with the norm
$\|f\|_{L_{\nu, p}(\mathbb{R}^d)}:=\|(\mI-\Delta)^{\nu/2}f\|_{L_{ p}(\mathbb{R}^d)}$,
where  $(\mI-\Delta)^{\nu/2}f$ is defined through Fourier's  transform. $\LL^q_{\nu,p}([0,1])$ denotes the space of measurable function $f:[0,1]\to L_{\nu,p}(\Rd)$ such that $\|f\|_{\LL^q_{\nu,p}([0,1])}$ is finite. Here, for each $s,t\in[0,1]$ satisfying $s\le t$, we denote
 \begin{align*}
  	\|f\|_{\LL^q_{\nu,p}([s,t])}:=\left(\int_s^t\|f(r,\cdot)\|_{L_{\nu,p}(\Rd)}^qdr\right)^{\frac1q}
 \end{align*}
 with obvious modification when $q=\infty$.  When $\nu=0$, we simply write $\LL^q_p([0,1])$ instead of $\LL^q_{0,p}([0,1])$.
 In particular, $\LL^q_p([0,1])$ contains Borel measurable functions $f:[0,1]\times\Rd\to\R$ such that $\int_0^1\left[\int_\Rd|f(t,x)|^pdx\right]^{q/p}dt$ is finite.
 For each $\rho\in(0,1)$, $L_\rho(\Rd)$ denotes the space of all measurable functions $f$ on $\Rd$ such that $\|f\|_{L_\rho(\Rd)}:= (\int_\Rd|f(x)|^\rho dx)^{1/\rho}$ is finite. Note that in this case, $\|\cdot\|_{L_\rho(\Rd)}$ is not a norm.

 For each $\mathcal X\in\{\LL^q_{\nu,p}([0,1]),L_p(\Rd),L_p(\Omega)\}$, an $\R^m$-valued function $f=(f^1,\ldots,f^m)$ belongs to $\mathcal X$,  if all components $f^1,\ldots,f^m$ belong to $\mathcal X$, and we put $\|f\|_{\mathcal X}=\max_{i=1,\ldots,m}\{\|f^i\|_{\mathcal X}\}$. Since we only deal with either scalars or $\Rd$-valued functions and random variables, we conventionally drop the dimension of the range in the notation of the spaces $\LL^q_{\nu,p}([0,1]),L_p(\Rd),L_p(\Omega)$.
 
  Put $D_n=\{i/n:i=0,\ldots,n\}$.
  For each $S\le T$, we put $\Delta([S,T])=\{(s,t)\in [S,T]^2: s\le t\}$ and $\Delta_2([S,T])=\{(s,u,t)\in [S,T]^3:s\le u\le t\}$. We abbreviate $\Delta=\Delta([0,1])$ and $\Delta_2=\Delta_2([0,1])$.
	We say that a function $w:\Delta([S,T])\to[0,\infty)$ is a control if $w(s,u)+w(u,t)\le w(s,t)$ for every $(s,u,t)\in \Delta_2([S,T])$.
   For a $d\times d$-matrix $P$, $P^*$ denotes its transpose and  $\|P\|$ denotes its Hilbert--Schmidt norm.
   The following conditions are enforced throughout unless noted otherwise.
  \begin{customcon}{$\mathfrak{A}$}\label{con.A}
  	The diffusion coefficient $\sigma$ is a $d\times d$-matrix-valued measurable function on $[0,1]\times\Rd$ . There exists a constant $K_1\in[1,\infty)$ such that
  	for every $s\in[0,1]$ and $x\in\Rd$
  	\begin{align}\label{ellptic-con}
  		K_1^{-1}I\le (\sigma \sigma^*)(s,x)\le K_1I.
  	\end{align}
  	Furthermore, the following conditions hold.
  	\begin{enumerate}[label=$\mathbf{\arabic{*}}$., ref=$\mathbf{\arabic*}$]
  	 	\item\label{con.Aholder} There are constants $\alpha\in(0,1]$ and $K_2\in(0,\infty)$  such that for every $s\in[0,1]$ and $x,y\in\Rd$
  	 	\begin{align*}
  	 		|(\sigma \sigma^*)(s,x)-(\sigma \sigma^*)(s,y)|\le K_2|x-y|^\alpha.
  	 	\end{align*}
  	 	
  	 	\item\label{con.ASobolev} $\sigma(s,\cdot)$ is weakly differentiable for a.e. $s\in[0,1]$ and there are constants $p_0\in[2,\infty)$, $q_0\in(2,\infty]$ and $K_3\in(0,\infty)$  such that
	  	\begin{align*}
	  		\frac d{p_0}+\frac2{q_0}<1
	  		\tand\|\nabla \sigma\|_{\LL^{q_0}_{p_0}([0,1])}\le K_3.
	  	\end{align*}
  	 \end{enumerate}
  \end{customcon}
  \begin{customcon}{$\mathfrak{B}$}\label{con.B}	
  	$x_0$ belongs to $L_p(\Omega,\cff_0)$ and
  	$b$ belongs to $\LL^q_p([0,1])$ for some $p,q\in[2,\infty)$ satisfying $\frac dp+\frac2q<1$.
	For each $n$, $x^n_0$ belongs to $L_p(\Omega,\cff_0)$ and $b^n$ belongs to $\LL^q_p([0,1])\cap \LL^q_\infty([0,1])$ with $p,q$ as above.
	Furthermore, there exist finite positive constants $K_4,\theta$ and continuous controls $\{\mu^n\}_n$ such that $\sup_{n\ge1}(\|b^n\|_{\LL^q_p([0,1])}+\mu^n(0,1))\le K_4$ and
  		\begin{align}\label{con.locGK}
  		  	(1/n)^{\frac12-\frac1q}\|b^n\|_{\LL^q_\infty([s,t])}\le \mu^n(s,t)^\theta \quad \forall\ t-s\le1/n.	
  		\end{align}
  \end{customcon}
  In the above, $I$ denotes the identity matrix.
  If one replaces H\"older continuity by uniform continuity, \crefrange{con.A}{con.B} are comparable to those from \cite{Zhang2011,XXZZ}, who show strong uniqueness for \eqref{sde00}.
  Hence, hereafter, we assume that the solution to \eqref{sde00} exists and is strongly unique.\footnote{Actually the results from \cite{Zhang2011,XXZZ} are for deterministic $x_0\in\Rd$, however, can be easily extended to our case by conditioning and utilizing Markov property of Brownian motion. See also \cref{rmk.stronguniq} below.}
  Next, we define an important quantity which controls the strong convergence rate.
  \begin{definition}\label{def.rate}
  	Let $\lambda>0$ be a fixed number which is sufficiently large.
  	Let $U=(U^1,\ldots,U^d)$ where for each $h=1,\ldots,d$, $U^h$ is the solution to the following equation
  	\begin{align}\label{eqn.uKol}
  	\partial_t U^h+\sum_{i,j=1}^d\frac{1}{2}(\sigma \sigma^*)^{ij}\partial^2_{ij} U^h+b^{n,h}\cdot\nabla U^h= \lambda U^h-b^{n,h},\quad U^h(1,\cdot)=0.
  	\end{align}
  	Let $X$ be the solution to \eqref{sde00}.
  	For each $\bar p\in[1,\infty)$, we put
  	\begin{align*}
  		&\varpi_n(\bar p)=\Big\|\sup_{t\in[0,1]}\Big|\int_0^t(1+\nabla U) [b-b^n](r,X_r)dr\Big|\Big\|_{L_{\bar p}(\Omega)}.
  	\end{align*}
  \end{definition}
  In the above and hereafter, we omit the dependence of $U$ on $n$.
  Equation \eqref{eqn.uKol} arises from a Zvonkin transformation, which we postpone to \cref{sec:proof} for the details. It is known that when $\lambda$ is sufficiently large, equation \eqref{eqn.uKol} has a unique solution, see \cref{pde} below.
\begin{theorem}\label{thm.main}
Assume that \crefrange{con.A}{con.B} hold.
Let $(X_t^n)_{t\in[0,1]}$ be the solution to \eqref{eqn.EMscheme} and $(X_t)_{t\in[0,1]}$ be the solution to  SDE \eqref{sde00}. Then for any  $\bar p\in(1,p)\cap(1,\frac 2d(p\wedge p_0))
$  and any $\gamma\in(0,1)$,  there exists a finite constant $N(K_1,K_2,K_3,K_4,\alpha,p_0,q_0,p,q,d,\bar p,\gamma)$ such that
\begin{align}\label{maine}
\|\sup_{t\in[0,1]}|X_t^n-X_t|\|_{L_{\gamma\bar p}(\Omega)} \le N \left[\|x^n_0-x_0\|_{L_{\bar p}(\Omega)}+(1/n)^{\frac \alpha2}+ (1/n)^{\frac 12}\log(n) +\varpi_n(\bar p)\right].
\end{align}
\end{theorem}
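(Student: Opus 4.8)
The plan is to pass through a Zvonkin-type transformation to remove the singular drift, reduce the error analysis to manageable quantities, and then bound the remainders by the moment estimates established in the preceding sections. First I would introduce, for the (large) parameter $\lambda$, the map $\Phi(t,x)=x+U(t,x)$ where $U$ solves \eqref{eqn.uKol}; by \cref{pde} (the PDE estimates referenced in the text), for $\lambda$ large enough $U$ is bounded with $\|\nabla U\|_\infty\le 1/2$, so $\Phi(t,\cdot)$ is a bi-Lipschitz diffeomorphism of $\R^d$ with inverse $\Psi(t,\cdot)$, and moreover $\nabla U$ has the requisite H\"older/Sobolev regularity. Applying It\^o's formula (in the Krylov sense, valid thanks to the regularity of $U$) to $\Phi(t,X_t^n)$ and to $\Phi(t,X_t)$, the drift term $b^{n}$ is annihilated for the exact equation driven by $b^n$, leaving, for the scheme $X^n$, a clean SDE for $Y_t^n:=\Phi(t,X_t^n)$ whose drift involves only the \emph{freezing error} $b^n(t,X^n_{k_n(t)})-b^n(t,X^n_t)$ together with diffusion-type terms, and similarly $Y_t:=\Phi(t,X_t)$ solves a transformed SDE whose extra drift is exactly $\int_0^t \nabla U\,[b-b^n](r,X_r)\,dr$ — this is where $\varpi_n(\bar p)$ enters.

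Next I would estimate $\|\sup_{t}|Y_t^n-Y_t|\|_{L_{\gamma\bar p}}$. Write the difference $Y_t^n-Y_t$ as the sum of: (i) the initial error $\Phi(0,x_0^n)-\Phi(0,x_0)$, controlled by $\|x_0^n-x_0\|_{L_{\bar p}}$ since $\Phi(0,\cdot)$ is Lipschitz; (ii) a term coming from $\int_0^t (1+\nabla U)[b-b^n](r,X_r)\,dr$, which is exactly $\varpi_n(\bar p)$ (up to the Lipschitz constant of $\Phi$); (iii) the diffusion-coefficient difference $\int_0^t [\tilde\sigma^n(r,X_r^n)-\tilde\sigma(r,X_r)]\,dB_r$ with $\tilde\sigma^n=(\mathrm{Id}+\nabla U)\sigma(\cdot,X^n_{k_n(\cdot)})$ evaluated along the scheme's frozen argument, etc.; and (iv) the drift remainder $\int_0^t [\text{terms with }b^n(r,X^n_{k_n(r)})-b^n(r,X^n_r)]\,dr$ and the analogous H\"older-$\sigma$ increments $\int_0^t[(\sigma\sigma^*)(r,X^n_{k_n(r)})-(\sigma\sigma^*)(r,X^n_r)]:\nabla^2 U\,dr$. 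For (iii) and for those parts of (iv) that are genuinely Lipschitz in the pair $(X^n,X)$ one uses Burkholder--Davis--Gundy plus a Gr\"onwall argument; the $|X_r^n-X^n_{k_n(r)}|^\alpha$-type terms produce, after taking moments, a contribution of order $(1/n)^{\alpha/2}$ by standard moment bounds on the scheme increments (here $\sup_n\|b^n\|_{\LL^q_\infty}$ on $1/n$-intervals combined with \eqref{con.locGK} and the $L_p(\Omega)$ bound $\sup_n\|b^n\|_{\LL^q_p}\le K_4$ keep the one-step displacement under control).

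The delicate step — and the one I expect to be the main obstacle — is the freezing error in the drift of $Y^n$: the integrand $b^n(r,X^n_{k_n(r)})-b^n(r,X^n_r)$ (and the closely related $\nabla U(r,X^n_{k_n(r)})$ versus $\nabla U(r,X^n_r)$ mismatches) cannot be handled by a naive H\"older bound in $x$, because $b^n$ is only $\LL^q_\infty$ with a norm blowing up like $n^{1/2-1/q}$ on $1/n$-intervals. Instead one must exploit the regularization-by-noise estimates of \cref{sec:regularizing_properties_of_the_discrete_paths} for functionals of the scheme: the key quantity $\int_s^t f(r,X^n_{k_n(r)})\,dr$ for $f\in\LL^q_p$ admits a stochastic-sewing bound of the form $\le N\|f\|_{\LL^q_p([s,t])}(w(s,t))^{1/2-1/q-d/(2p)}$ uniformly in $n$; applying this, together with the one-step Taylor expansion $b^n(r,X^n_{k_n(r)})-b^n(r,X^n_r)=-\nabla b^n(r,\xi)\cdot(X^n_r-X^n_{k_n(r)})$ rewritten via a second regularizing estimate (so that the unbounded $\nabla b^n$ is never used directly, but rather a sewing estimate for the composite process is invoked), produces the $(1/n)^{1/2}\log n$ bound — the logarithm arising from summing $n$ contributions each of size $\approx(1/n)^{3/2}$ with a $\log$ coming from the borderline scaling $\frac12-\frac1q-\frac d{2p}$ when it is exactly balanced, or more precisely from a dyadic summation over the relevant scales. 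Once $\|\sup_t|Y_t^n-Y_t|\|_{L_{\gamma\bar p}}$ is bounded by the right-hand side of \eqref{maine}, one transfers back: since $\Psi(t,\cdot)$ is Lipschitz uniformly in $t$, $|X_t^n-X_t|=|\Psi(t,Y_t^n)-\Psi(t,Y_t)|\le N|Y_t^n-Y_t|$, which yields the claim. The moment exponent is lowered from $\bar p$ to $\gamma\bar p$ precisely to absorb, via H\"older's inequality, the finite-moment bounds on the auxiliary functionals that appear in the Gr\"onwall step.
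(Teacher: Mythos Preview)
Your overall strategy---Zvonkin transform, decompose the difference, handle the freezing error by regularization-by-noise, close with a Gr\"onwall argument---matches the paper's. But the Gr\"onwall step as you describe it has a genuine gap.

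You write that the diffusion-difference term $\int_0^t[\tilde\sigma^n(r,X_r^n)-\tilde\sigma(r,X_r)]\,dB_r$ and the ``genuinely Lipschitz'' parts are handled by BDG plus Gr\"onwall. The problem is that \emph{nothing here is Lipschitz}. After the transform, the diffusion coefficient is $(I+\nabla U)\sigma$, and under \cref{con.A,con.B} one only has $\nabla^2 U\in\LL^q_p$ and $\nabla\sigma\in\LL^{q_0}_{p_0}$---neither $\nabla U$ nor $\sigma$ is Lipschitz in $x$. A standard Gr\"onwall loop therefore does not close. The paper's fix is to use the pointwise Hardy--Littlewood maximal inequality
\[
|\sigma(r,x)-\sigma(r,y)|\lesssim |x-y|\bigl(\cmm|\nabla\sigma|(r,x)+\cmm|\nabla\sigma|(r,y)\bigr),
\]
and similarly for $\nabla U$, which turns the quadratic variation into $\int_0^t\xi_r^{2/\bar p}\,dA_r^n$ with the \emph{random} increasing process
\[
A_t^n=t+\int_0^t\bigl[\cmm|\nabla^2 U|(s,X_s)+\cmm|\nabla^2 U|(s,X_s^n)\bigr]^2\,ds+\int_0^t\bigl[\cmm|\nabla\sigma|(s,X_s)+\cmm|\nabla\sigma|(s,X_s^n)\bigr]^2\,ds.
\]
One then needs the pathwise BDG inequality and the \emph{stochastic} Gr\"onwall lemma (\cref{lem.SGronwall}) to obtain a weighted bound $\E[e^{-c_{\bar p}|A_1^n|^{\max(\bar p/2,1)}}\sup_t|X_t-X_t^n|^{\bar p}]\lesssim(\text{RHS})$. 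The passage from $\bar p$ to $\gamma\bar p$ is precisely to remove this exponential weight by H\"older, and this is where the restriction $\bar p<\frac2d(p\wedge p_0)$ enters: one must show $\sup_n\E e^{\kappa|A_1^n|^\rho}<\infty$ for $\rho<\frac{p\wedge p_0}{d}$ via the quantitative Khasminskii lemma (\cref{lem.Khasminski}). Your account of the $\gamma\bar p$ reduction is too vague to capture this.

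Two smaller points. First, for the freezing error you propose a Taylor expansion $b^n(r,X^n_{k_n(r)})-b^n(r,X^n_r)=-\nabla b^n(r,\xi)\cdot(X^n_r-X^n_{k_n(r)})$; this is not available since $b^n$ need not be differentiable. The paper applies stochastic sewing directly to the functional $\int_0^t g(r,X^n_r)[f(r,X^n_r)-f(r,X^n_{k_n(r)})]\,dr$ (\cref{prop.gf}), with no derivative of $b^n$ ever appearing. Second, the $\log n$ does not come from ``summing $n$ contributions of size $(1/n)^{3/2}$''; it arises inside the stochastic sewing lemma at the critical exponent $\tfrac12$ (\cref{lem.Davie_iteration}), where one optimizes over the dyadic depth $h$ and picks up a factor $h\approx\log n$.
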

Actually, by adding an exponential weight, moments up to order $p$-th can be estimated, see \cref{prop.weightedmoment} below. The condition $\bar p<\frac 2d(p\wedge p_0)$ ensures finiteness of the moments of the exponential weight and therefore deduces \eqref{maine} by an application of H\"older inequality.

 Under uniform ellipticity and H\"older regularity of $\sigma$, pathwise uniqueness for \eqref{sde00}, $p>d/\alpha$, $\lim_n b^n=b$ in $\LL^q_p([0,1])$ and the following condition
 \begin{align}\label{con.GK21}
 	\sup_{n\ge1}(1/n)^{\frac12-\frac1q}\|b^n\|_{\LL^q_\infty([0,1])}<\infty,
 \end{align}
 \cite[Theorem 2.11]{gyongy2021existence} recently shows that the tamed Euler--Maruyama scheme \eqref{eqn.EMscheme} converges in probability to the solution of \eqref{sde00}.
 It is evident that \eqref{con.GK21} implies \eqref{con.locGK} (with the choice $\mu^n(s,t)^{\frac1q}=(1/n)^{\frac12-\frac1q}\|b^n\|_{\LL^q_\infty([s,t])}$).
 However, because of the interchangeability between $t-s$ and $1/n$ in \eqref{con.locGK}, truncated vector fields with higher truncation levels, which yield better convergence rates, satisfy \eqref{con.locGK} but not \eqref{con.GK21}, see \cref{cor.somebn} below.
  Under \crefrange{con.A}{con.B}, the above result provides an upper bound for the moments of $\sup_{t\in[0,1]}|X^n_t-X_t|$ which depends on $n$ and $\varpi_n$.
  When $\lim_nb^n=b$ in $\LL^q_p([0,1])$ as in the setting of \cite{gyongy2021existence}, one can show that $\lim_n \varpi_n=0$ (cf. \cref{cor.somebn}). However, the topology of $\LL^q_p$ does not provide any explicit rate. There are, of course, many other topologies for  $\lim_n b^n=b$ so that one can actually obtain an explicit rate. The choice of a suitable topology depends on the approximating vector fields $b^n$. 
  Our next main result relates $\varpi_n$ with the convergence of $b^n$ to $b$ with respective to the topologies of $\LL^{q_1}_{p_1}$ (for some $p_1,q_1\in[1,\infty]$) and $\LL^q_{-\nu,p}$ (for some $\nu\in[0,1)$).
\begin{theorem}\label{thm.alpha}
	Assume that \crefrange{con.A}{con.B} hold.

	(i) Let $p_1,q_1\in[1,\infty]$ be such that $\frac d{p_1}+\frac2{q_1}<2$. Then for every $m\ge1$, there exists a constant $N$ depending  on $K_1$, $K_2$, $K_3$, $K_4$, $\alpha$, $p_0$, $q_0$, $p_1$, $q_1$, $p$, $q$, $d$, $m$ such that
	\begin{align}
		\varpi_n(m)\le N\|b-b^n\|_{\LL^{q_1}_{p_1}([0,1])}.
		\label{est.ap}
	\end{align}

	(ii) Assuming furthermore that $q_0=\infty$ and $\frac1p+\frac1{p_0}<1$.
	Let $\nu\in[0,1)$ be such that
	\begin{align}\label{con.nu}
		\nu<\frac 32- \frac d{2p}-\frac2q.
	\end{align}
	Then for every $\bar p\in[1,p)$, there exists a constant $N$ depending  on $K_1$, $K_2$, $K_3$, $K_4$, $\alpha$, $p_0$, $p$, $q$, $d$, $\bar p$, $\nu$ such that
	\begin{gather}
		\varpi_n(\bar p)\le N\|b-b^n\|_{\LL^q_{-\nu,p}([0,1])}.
		\label{est.anu}
	\end{gather}

	(iii) Assuming furthermore that $q_0=\infty$, $\frac1p+\frac1{p_0}<1$ and
	\begin{align}\label{con.nu1}
		\frac dp+\frac4q<1. 	
	\end{align}
	Suppose that there exists a continuous control $w_0$ on $\Delta$ and a constant $\Gamma\ge0$ such that 
	\begin{align}\label{con.bbn-1}
		\|b-b^n\|_{\LL^q_{-1,p}([s,t])}\le \Gamma w_0(s,t)^{\frac1q}
		\tand\|b-b^n\|_{\LL^q_p([s,t])}\le w_0(s,t)^{\frac1q}
	\end{align}
	for every $(s,t)\in \Delta$.
	Then for every $\bar p\in[1,p)$, there exists a constant $N$ depending  on $K_1$, $K_2$, $K_3$, $K_4$, $\alpha$, $p_0$, $p$, $q$, $d$, $\bar p$ such that
	\begin{align}
		\varpi_n(\bar p)\le N \Gamma\left(1+|\log \Gamma|\right)w_0(0,1)^{\frac1q}.
		\label{est.a-1}
	\end{align}
\end{theorem}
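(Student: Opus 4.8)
The plan is to estimate $\varpi_n(\bar p)$ by decomposing the functional $\int_0^t (1+\nabla U)[b-b^n](r,X_r)\,dr$ and applying the moment estimates for functionals of the solution $X$ of \eqref{sde00} collected in Section~\ref{sec:regularizing_properties_of_the_continuum_paths}, together with the maximal regularity bounds on $U$ from Section~\ref{sec:parabolic_equations_with_variable_coefficients} (cf.\ \cref{pde}). For part (i), I would write $g = (1+\nabla U)(b-b^n)$ and observe that $\nabla U$ is bounded (in fact small, for $\lambda$ large), so $\|g\|_{\LL^{q_1}_{p_1}} \lesssim \|b-b^n\|_{\LL^{q_1}_{p_1}}$ up to a H\"older-in-space factor absorbing the change of integrability exponents; the condition $d/p_1 + 2/q_1 < 2$ is exactly the Krylov-type integrability threshold that makes $\|\sup_t|\int_0^t g(r,X_r)\,dr|\|_{L_m(\Omega)} \lesssim \|g\|_{\LL^{q_1}_{p_1}}$ via the stochastic-sewing / Khasminskii estimates already established for $X$. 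This gives \eqref{est.ap} directly.

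For part (ii), the drift difference is only measured in the negative Bessel space $\LL^q_{-\nu,p}$, so $b-b^n$ must be treated as a distribution and one cannot evaluate it pointwise along $X_r$. Here I would invoke the stochastic sewing lemma: set $A_{s,t} = \int_s^t P^{(1)}_{\cdot}(1+\nabla U)(b-b^n)(r,X_{k})\,dr$ using the heat-kernel (or the PDE) smoothing of the distributional drift, regularize $b-b^n$ at scale depending on $t-s$, and control the germ $\delta A$ using that convolution with the Gaussian gains $\nu$ derivatives at the cost of a factor $(t-s)^{-\nu/2}$, while the occupation-time bounds for $X$ supply the compensating regularity. The extra hypotheses $q_0 = \infty$ and $1/p + 1/p_0 < 1$ ensure $\nabla U$ has enough H\"older/Sobolev regularity to be paired with an object in $L_{-\nu,p}$, and the constraint \eqref{con.nu}, $\nu < 3/2 - d/(2p) - 2/q$, is precisely what keeps the resulting time exponent above $1/2$ so the sewing lemma applies and yields \eqref{est.anu}.

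For part (iii) the point is to upgrade the bound of part (ii) at the endpoint $\nu = 1$ (not allowed there) by a logarithmic interpolation between the two hypotheses in \eqref{con.bbn-1}: the $\LL^q_{-1,p}$ bound is scaled by the small parameter $\Gamma$, while the $\LL^q_p$ bound is the crude one. I would run the argument of (ii) with a scale cutoff $\epsilon$: below scale $\epsilon$ use the $L_p$ (i.e.\ $\nu = 0$) estimate, above scale $\epsilon$ use the $L_{-1,p}$ estimate with its $\Gamma$ prefactor, then optimize in $\epsilon$; the balance $\epsilon \sim \Gamma$ produces the factor $\Gamma(1 + |\log\Gamma|)$, and the control $w_0$ bookkeeps the time-additivity needed for the sewing. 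The strengthened condition \eqref{con.nu1}, $d/p + 4/q < 1$, is what gives the margin to absorb the loss of one full derivative in this endpoint case.

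The main obstacle I expect is part (ii)/(iii): pairing the genuinely distributional drift difference with the gradient $\nabla U$ — which is itself only a solution of a parabolic equation with the singular drift $b^n$ — and showing the stochastic-sewing germ bounds hold with the sharp time exponents. This requires that the occupation-measure / regularization-by-noise estimates for $X$ in Section~\ref{sec:regularizing_properties_of_the_continuum_paths} are stated with enough flexibility in the negative-regularity parameter, and careful tracking of how the exponents $d/p$, $2/q$, $\nu$ combine; the logarithmic endpoint in (iii) adds the extra subtlety of optimizing the scale split uniformly in the control $w_0$.
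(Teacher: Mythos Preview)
Your proposal is essentially correct and follows the paper's approach, with a few differences in execution worth noting.

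For part (i) you are spot on: the paper uses exactly that $\|\nabla U\|_{\LL^\infty_\infty}$ is bounded (via \cref{pde}) so that $\|(1+\nabla U)(b-b^n)\|_{\LL^{q_1}_{p_1}}\lesssim\|b-b^n\|_{\LL^{q_1}_{p_1}}$, and then applies \cref{thm.prealpha}(i).

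For part (ii), the paper is more modular than your sketch. Rather than sewing the product $(1+\nabla U)(b-b^n)$ directly, it first establishes \cref{thm.prealpha}(ii) for a generic $g\in\LL^{q_2}_{-\nu,p_2}$, and then reduces to it via the Bessel-space multiplication \cref{lem.multiplication}(ii): from \cref{pde} one gets $\nabla U\in\LL^{q_2}_{\nu,p_2}$ for a range of $(p_2,q_2)$, and the pairing $\LL^q_{-\nu,p}\times\LL^{q_2}_{\nu,p_2}\to\LL^{q_3}_{-\nu,p_3}$ then places the product in a space to which \cref{thm.prealpha}(ii) applies. The bulk of the paper's argument is the exponent-chasing verifying that such $p_2,q_2,p_3,q_3$ exist precisely under \eqref{con.nu}, and that one can take $p_3=p$ so the estimate holds for all $\bar p<p$. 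Your direct sewing would also work but you would have to replicate this pairing analysis inside the germ bounds.

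For part (iii), the logarithm in the paper does \emph{not} arise from an explicit scale-cutoff-and-optimize argument. It is built into the stochastic sewing lemma with critical exponent, \cref{lem.Davie_iteration}: the $\LL^q_p$ bound on $b-b^n$ gives $\|J_{s,t}\|\lesssim w(s,t)$ while the $\LL^q_{-1,p}$ bound gives $\|\delta J_{s,u,t}\|\lesssim\Gamma w(s,t)^{1/2}$; the lemma then outputs $\Gamma(1+|\log\Gamma|)w^{1/2}$ by choosing the dyadic depth $h\sim|\log\Gamma|$. Your heuristic (fine scales use $\nu=0$, coarse scales use $\nu=1$, balance at $\epsilon\sim\Gamma$) is the right intuition behind that choice of $h$, but the actual proof invokes the packaged sewing statement together with the same multiplication reduction as in (ii), now with $\nu=1$, $p_2=p$, $q_2=2$, $p_3=p$, $q_3=q/2$; condition \eqref{con.nu1} is exactly what makes $d/p_3+2/q_3<1$ so \cref{thm.prealpha}(iii) applies.
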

Using \cref{thm.main,thm.alpha}, we can derive explicit strong convergence rates for the scheme \eqref{eqn.EMscheme} when the approximating vector field $b^n$ take one of the forms \eqref{def.truncatedb}-\eqref{def.smoothedb}.
\begin{corollary}\label{cor.somebn}
	Assume that \cref{con.A} holds. Let $b,x_0,x_0^n$ be as in \cref{con.B}; and  $\bar p,\gamma$ be as in \cref{thm.main}.
	{}

	(a) Let $C>0$ and $\chi\in(0,1/2-1/q]$ be constants and define $b^n$ by \eqref{def.truncatedb}.
	Let $\rho\in(1,p]$ be a number such  that $\rho\frac d{p}+\frac 2q<2$.
	Then there exists a constant $N$ depending  on $K_1$, $K_2$, $K_3$, $K_4$, $\alpha$, $p_0$, $q_0$, $p$, $q$, $d$, $\bar p$, $\gamma$, $\rho$, $\chi$, $C$ such that 
	\begin{multline}\label{est.XXtruncated}
		\|\sup_{t\in[0,1]}|X^n_t-X_t|\|_{L_{\gamma\bar p}(\Omega)}
		\\ \le N\left[\|x_0-x^n_0\|_{L_{\bar p}(\Omega)}+ (1/n)^{\chi(\rho-1)}+ (1/n)^{\frac \alpha2}+ (1/n)^{\frac12}\log(n)\right].
	\end{multline}

	(b) Let $C>0$ and $\chi\in(0,1/2)$ be constants and define $b^n$ by \eqref{def.bn2}.
	Then there exists a constant $N$ depending  on $K_1$, $K_2$, $K_3$, $K_4$, $\alpha$, $p_0$, $q_0$, $p$, $q$, $d$, $\bar p$, $\gamma$, $\rho$, $\chi$, $C$ such that \eqref{est.XXtruncated} holds for any $\rho\in(1,p\wedge q]$ satisfying $\rho\left(\frac d{p}+\frac 2q\right)<2$.

	(c) Let $\chi\in\Big(0, \frac {p}d\left(1-\frac2q\right)\Big]$
	and define $b^n$ by \eqref{def.smoothedb}.
	Let $\nu\in(0,1)$ be any number satisfying \eqref{con.nu}.
	Assume furthermore that $q_0=\infty$ and $\frac1p+\frac1{p_0}<1$. Then there exists a constant $N$ depending  on $K_1$, $K_2$, $K_3$, $K_4$, $\alpha$, $p_0$,  $p$, $q$, $d$, $\bar p$, $\nu$, $\gamma$, $\chi$ such that
	\begin{align}\label{est.XXsmooth}
		\|\sup_{t\in[0,1]}|X^n_t-X_t|\|_{L_{\gamma\bar p}(\Omega)} \le N\left[\|x_0-x^n_0\|_{L_{\bar p}(\Omega)}+(1/n)^{\chi \frac \nu2} +(1/n)^{\frac \alpha2}+ (1/n)^{\frac12}\log(n)\right].
	\end{align}
\end{corollary}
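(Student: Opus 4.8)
The plan is to combine Theorem~\ref{thm.main} with the appropriate part of Theorem~\ref{thm.alpha}; the work reduces to (i) verifying that the prescribed $b^n$ satisfies Condition~\ref{con.B} in the stated range of $\chi$ and (ii) bounding $\|b-b^n\|$ in the relevant weak topology by an elementary truncation or mollification estimate.

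For (i): since $|b^n|\le|b|$ for \eqref{def.truncatedb}--\eqref{def.bn2} and $\|p_\epsilon\ast b_r\|_{L_p(\Rd)}\le\|b_r\|_{L_p(\Rd)}$, one has $b^n\in\LL^q_p([0,1])$ with $\sup_n\|b^n\|_{\LL^q_p([0,1])}\le\|b\|_{\LL^q_p([0,1])}$; moreover $\|b^n(r,\cdot)\|_{L_\infty(\Rd)}$ is bounded, by $Cn^\chi\|b(r,\cdot)\|_{L_p(\Rd)}$ for \eqref{def.truncatedb}, by $Cn^\chi$ for \eqref{def.bn2}, and (Young's inequality, with $\tfrac1p+\tfrac1{p'}=1$) by $\|p_{1/n^\chi}\|_{L_{p'}(\Rd)}\|b(r,\cdot)\|_{L_p(\Rd)}\asymp n^{\chi d/(2p)}\|b(r,\cdot)\|_{L_p(\Rd)}$ for \eqref{def.smoothedb}, so $b^n\in\LL^q_\infty([0,1])$. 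To get \eqref{con.locGK} one takes $\theta=1/q$ and the control $\mu^n(s,t)= C^q\|b\|_{\LL^q_p([s,t])}^q$ in cases (a) and (c), which is a genuine continuous control with $\mu^n(0,1)$ bounded uniformly in $n$; \eqref{con.locGK} then amounts to $n^{a}\le 1$ for all $n\ge1$, with $a=\chi-\tfrac12+\tfrac1q$ in case (a) and $a=\tfrac{\chi d}{2p}-\tfrac12+\tfrac1q$ in case (c), which are precisely the stated thresholds on $\chi$. In case (b) one instead uses a control linear in $t-s$ together with the constraint $t-s\le 1/n$.

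For (ii): in case (a) set $p_1=p/\rho$, $q_1=q$, so that $\tfrac d{p_1}+\tfrac2{q_1}<2$ by hypothesis; since $b-b^n=b\,\1_{(|b_r|>Cn^\chi\|b_r\|_{L_p})}$, the layer-cake estimate $\int_{\{|b_r|>M\}}|b_r|^{p_1}\,dx\le M^{p_1-p}\|b_r\|_{L_p(\Rd)}^{p}$ (valid as $p_1<p$) gives $\|b(r,\cdot)-b^n(r,\cdot)\|_{L_{p_1}(\Rd)}\le (Cn^\chi)^{1-\rho}\|b(r,\cdot)\|_{L_p(\Rd)}$, and integrating the $q$-th power in time yields $\|b-b^n\|_{\LL^q_{p_1}([0,1])}\lesssim(1/n)^{\chi(\rho-1)}$. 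Case (b) is the same with $p_1=p/\rho$, $q_1=q/\rho$ (the threshold no longer carrying $\|b_r\|_{L_p}$, which forces the time exponent to shrink and produces the constraints $\rho\le p\wedge q$ and $\rho(\tfrac dp+\tfrac2q)<2$), leading again to $\|b-b^n\|_{\LL^{q_1}_{p_1}([0,1])}\lesssim(1/n)^{\chi(\rho-1)}$; in both cases Theorem~\ref{thm.alpha}(i) gives $\varpi_n(\bar p)\le\varpi_n(\lceil\bar p\rceil)\lesssim(1/n)^{\chi(\rho-1)}$ and \eqref{maine} then yields \eqref{est.XXtruncated}. In case (c) we invoke Theorem~\ref{thm.alpha}(ii), whose extra hypotheses ($q_0=\infty$, $\tfrac1p+\tfrac1{p_0}<1$, \eqref{con.nu}) hold; the analytic point is the heat-semigroup bound $\|p_\epsilon\ast f-f\|_{L_{-\nu,p}(\Rd)}\lesssim\epsilon^{\nu/2}\|f\|_{L_p(\Rd)}$ for $0\le\nu<2$, obtained by factoring $(\mI-\Delta)^{-\nu/2}(\mI-e^{\epsilon\Delta/2})=\big[(\mI-\Delta)^{-\nu/2}(-\Delta)^{\nu/2}\big]\big[(-\Delta)^{-\nu/2}(\mI-e^{\epsilon\Delta/2})\big]$: the first factor is a bounded $L_p$-multiplier, and the second, after the dilation $\xi\mapsto\sqrt{\epsilon}\,\xi$, is $\epsilon^{\nu/2}$ times a fixed bounded Mikhlin multiplier. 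With $\epsilon=n^{-\chi}$ and integration in time this gives $\|b-b^n\|_{\LL^q_{-\nu,p}([0,1])}\lesssim(1/n)^{\chi\nu/2}$, hence $\varpi_n(\bar p)\lesssim(1/n)^{\chi\nu/2}$ and \eqref{maine} yields \eqref{est.XXsmooth}.

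The steps requiring care are the parameter bookkeeping --- seeing that the constraints imposed on $\rho$ (resp.\ $\nu$) coincide with the admissibility conditions of Theorem~\ref{thm.alpha}(i) (resp.\ (ii)) while the constraint on $\chi$ is exactly what Condition~\ref{con.B} demands, so that the heavier the truncation/mollification, the weaker the topology in which $b^n\to b$ with a power rate --- rather than the analysis, which uses only the layer-cake inequality and the standard $L_p$ estimate for the heat-semigroup difference.
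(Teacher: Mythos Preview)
Your proposal is correct and follows essentially the same approach as the paper: verify Condition~\ref{con.B} for each choice of $b^n$ using the same controls ($\mu^n(s,t)=\|b\|_{\LL^q_p([s,t])}^q$ in (a),(c) and $\mu^n(s,t)=t-s$ in (b)), then bound $\|b-b^n\|$ in the appropriate weak norm via the elementary truncation/mollification estimates and feed this into Theorem~\ref{thm.alpha}(i) or (ii) and then Theorem~\ref{thm.main}. The only cosmetic differences are your phrasing of the truncation bound as ``layer-cake'' and of the mollification bound via a Mikhlin-multiplier factorization, and the (harmless, unnecessary) step $\varpi_n(\bar p)\le\varpi_n(\lceil\bar p\rceil)$, since Theorem~\ref{thm.alpha}(i) already applies for all real $m\ge1$.
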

\begin{proof}
	In view of \cref{thm.main}, it suffices to estimate $\varpi_n(\bar p)$.
	(a) It is obvious that $\|b^n\|_{\LL^q_p([0,1])}\le \|b\|_{\LL^q_p([0,1])}$. From the inequality $\|b^n_r\|_{L_\infty(\Rd)}\le C n^{\chi}\|b_r\|_{L_p(\Rd)}$, we see that $\|b^n\|_{\LL^q_\infty([s,t])}\les n^\chi\|b\|_{\LL^q_p([s,t])}$.
	It follows that for every $0\le t-s\le1/n$,
	\begin{align*}
		(1/n)^{\frac12-\frac1q}\|b^n\|_{\LL^q_\infty([s,t])}\les (1/n)^{\frac12-\frac1q- \chi}\|b\|_{\LL^q_p([s,t])}
		\les\|b\|_{\LL^q_p([s,t])},
	\end{align*}
	verifying \cref{con.B} with $\mu^n(s,t)=\|b\|_{\LL^q_p([s,t])}^q$ and $\theta=1/q$. Furthermore,
	\begin{align*}
	 	|b_r(x)-b^n_r(x)|
	 	\le|b_r(x)|\1_{(|b_r(x)|> C n^\chi\|b_r\|_{L_p(\Rd)})}
	 	\le C^{1- \rho} n^{-\chi(\rho-1)} \|b_r\|_{L_p(\Rd)}^{1- \rho}|b_r(x)|^\rho.
	\end{align*}
	The function $(r,x)\mapsto \|b_r\|_{L_p(\Rd)}^{1- \rho}|b_r(x)|^\rho$ belongs to $\LL^q_{p/\rho}([0,1])$
	and hence,
	\begin{align*}
		\|b-b^n \|_{\LL^q_{p/\rho}([0,1])}\les n^{-\chi(\rho-1)}\|b\|_{\LL^q_p([0,1])}.
	\end{align*}
	It follows from \eqref{est.ap} that $\varpi_n\les(1/n)^{\chi(\rho-1)}\|b\|_{\LL^q_p([0,1])}$. The stated estimate is then a consequence of \eqref{maine}.

	(b) For the vector field $b^n$ defined by \eqref{def.bn2}, we have $\|b^n_r\|_{L_\infty(\Rd)}\les n^\chi$ so that for every $0\le t-s\le 1/n$,
	\begin{align*}
	 	(1/n)^{\frac12-\frac1q}\|b^n\|_{\LL^q_\infty([s,t])}\les (1/n)^{\frac12-\frac1q- \chi}(t-s)^{\frac1q}
	 	\les (t-s)^{\theta}
	\end{align*}
	for any $\theta>0$ such that $\theta\le \min(1/q,1/2-\chi)$. This verifies \cref{con.B} with $\mu^n(s,t)=t-s$.
	On the other hand,
	$|b-b^n|\les n^{-\chi(\rho-1)}|b|^{\rho}$ so that $\|b-b^n\|_{\LL^{q/\rho}_{p/\rho}([0,1])}\les n^{-\chi(\rho-1)}\|b\|_{\LL^q_p([0,1])}^{\rho}$.
	It follows  from \eqref{est.ap} that $\varpi_n\les n^{-\chi(\rho-1)}\|b\|_{\LL^q_p([0,1])}^{\rho}$.

	(c) We have $\|b^n_r\|_{L_{\infty}(\Rd)}\les n^{\chi\frac d{2p}}\|b_r\|_{L_p(\Rd)}$ and hence, for every $0\le t-s\le 1/n$, 
	\begin{align*}
		(1/n)^{\frac12-\frac1q}\|b^n\|_{\LL^q_\infty([s,t])} \les (1/n)^{\frac12-\frac 1q- \chi\frac d{2p}}\|b\|_{\LL^q_p([s,t])}
		\les \|b\|_{\LL^q_p([s,t])},
	\end{align*}
	verifying condition \eqref{con.locGK} with $\mu^n(s,t)=\|b\|_{\LL^q_p([s,t])}^q$ and $\theta=1/q$.
	We also have
	\begin{align*}
		\|b-b^n\|_{\LL^q_{-\nu,p}}\les (1/n)^{\chi \nu/2 }\|b\|_{\LL^q_p}.
	\end{align*}
	Applying \eqref{est.anu}, we have $\varpi_n(\bar p)\les (1/n)^{\chi\frac \nu2}$.
\end{proof}

\begin{remark}
	Similar truncated vector fields to \eqref{def.bn2} with the values $\chi=1/2$ and $\chi=d/{(2p)}+1/q$ were considered in \cite{jourdain2021convergence}, in which a weak rate of convergence of order $\frac12-\frac d{2p}-\frac1q$ was obtained.
While \cref{cor.somebn}(b) excludes the value $\chi=1/2$, by choosing $\rho=2$, it yields the strong rate
	\begin{align*}
		\|x_0-x^n_0\|_{L_{\bar p}(\Omega)}+(1/ n)^{\chi}  +(1/n)^{\frac \alpha2}+ (1/n)^{\frac12}\log(n),
	\end{align*}
	in which $\chi$ can be as close as one desires to $1/2$.

	Similar (but different) regularized vector fields to \eqref{def.smoothedb} was considered in \cite{deangelis2020numerical} in a  different setting.
\end{remark}
\begin{remark}\label{rmk.stronguniq}
	The proof of \cref{thm.main} actually works for any adapted solution to \eqref{sde00}, see \cref{rmk.weakuniq} and \cref{sec:proof}. Consequently, \cref{thm.main} yields an alternative proof (\cite{Zhang2011,XXZZ}) of pathwise uniqueness for \eqref{sde00} under \crefrange{con.A}{con.B}.
\end{remark}
The restriction on the unit time interval in \cref{thm.main,thm.alpha} is of course artificial and it is straightforward to extend the above results on arbitrary finite time intervals. In such case, the constants in our estimates also depend on the length the time interval.
The logarithmic factor in \eqref{maine} arises from the stochastic Davie--Gr\"onwall lemma with critical exponents (see \cite{fhl1} or \cref{lem.Davie_iteration} herein).
The explicit estimation for square moments from \cite{DG} suggests that the logarithmic factor in \eqref{maine} could be improved.
Because of the role of the stochastic Davie--Gr\"onwall lemma in the study of rough/stochastic ordinary/partial differential equations (\cite{fhl1,athreya2021wellposedness,MR2377011,MR2387018}), it is an important problem to identify the sharpness of the logarithmic factor. However, we do not pursue this direction herein.

\smallskip
Let us briefly explain our general method and strategy.
Starting from \eqref{sde00} and \eqref{eqn.EMscheme}, we decompose the difference $X_t-X^n_t$ into three types of differences:
\begin{itemize}
	\item differences between functionals of $b(t,X_t)$ and $b^n(t,X_t)$,
	\item differences between functionals of $X_t$ and functionals of $X^n_t$,
	\item differences between functionals of $X^n_t$ and functionals of $X^n_{k_n(t)}$.
\end{itemize}
At this stage, our strategy aligns with the classical works \cite{MR1119837,MR1617049} for SDE's with Lipschitz coefficients. However, in order to utilize the regularizing effect of the noise in compensation for the lack of regularity of the drift, our treatments for these functionals are different and follow the recent approach of \cite{DGL2021}.
The differences of the first type can be easily estimated from above by $\varpi_n$.
For the differences of the second type, we use a Zvonkin-type transformation to show that they depend on $\sup_{t\in[0,1]}|X_t-X^n_t|$ in a Lipschitz sense.
The differences of the last type contain, for instance, the functional
\[
	\sup_{t\in[0,1]}\left|\int_0^t [b^n(s,X^n_{k_n(s)})-b^n(s,X^n_s)]ds\right|.
\]
Because $b$ and $b^n$ are not continuous (uniformly in $n$), estimation for the above functional is a challenging problem and one has to utilize the regularizing effect from the noise, an important observation made by Dareiotis and Gerencs\'er in \cite{DG}.
For these differences, we use stochastic sewing techniques---originated from \cite{MR4089788} and further extended in \cite{fhl1,SSLBanach}---to estimate them by a constant multiple of $(1/n)^{\alpha/2}+ (1/n)^{1/2}\log(n)$. 
From here, we obtain an integral inequality for the moment of $\sup_{t\in[0,1]}|X_t-X^n_t|$. An application of the stochastic Gr\"onwall inequality yields the desired estimate in \cref{thm.main}.
From this analysis, one observes that the strong rate of convergence for \eqref{eqn.EMscheme} is deduced from the rates of the estimations for the differences of the first and the last types.
The estimates for $\varpi_n$ in \cref{thm.alpha} are obtained by mean of Krylov estimates, Khasminskii estimates and stochastic sewing techniques, utilizing statistical properties of the solution to \eqref{sde00}.

 We make a few observations comparing with previous works. 
 Setting technicalities aside, our proof of \cref{thm.main} follows the approach of \cite{DGL2021}; and similar to \cite{butkovsky2021approximation}, we also apply stochastic sewing techniques to obtain moment estimates for the differences of the last type.
 However, the works \cite{butkovsky2021approximation,DGL2021} crucially rely on the fact that the drifts are either continuous or bounded, which is not available under \crefrange{con.A}{con.B}.
 In particular, the stochastic lemmas in \cite{butkovsky2021approximation,DGL2021} cannot be applied under \crefrange{con.A}{con.B} because the resulting H\"older exponents are strictly below $1/2$; and even if  control functions were  employed, one would end up with a regularity exponent of exactly $1/2$ (cf. \cref{prop.B1,prop.fXX2}).\footnote{We recall that an exponent of $1/2+\varepsilon$ is required in these stochastic sewing lemmas.}
 In other words, the situations considered herein are at the border line and are critical to a certain extent.
 To successfully adapt the method above to the current setting, to overcome criticality and to remove the $\varepsilon$-loss in the obtained rate, we have benefited from the recent stochastic Davie--Gr\"onwall lemma with critical exponents from \cite{fhl1},  the analysis for singular paths from \cite{bellingeri2021singular} and novel usage of control functions inspired by Lyon's theory of rough paths \cite{MR1654527}. 
 To the authors' knowledge, these tools, which are developed within rough path theory, have not been utilized previously in stochastic numerics. 
 Lastly, in order to verify the hypotheses for stochastic sewing and of independent interests, we have obtained some new and improved analytic estimates (\cite{Kim,XXZZ,jourdain2021convergence,gyongy2021existence,MR2735377,bao2020convergence}) for the probability laws of the solutions to the discrete scheme \eqref{eqn.EMscheme} and to equation \eqref{sde00} (see \cref{sec:regularizing_properties_of_the_discrete_paths,sec:regularizing_properties_of_the_continuum_paths}). 
 
 While this article was under review, progress had been made in improving our main results. Namely, \cite{le2022quantitative} removes the  moment restrictions in \cref{thm.main,thm.alpha} and consequently in \cref{cor.somebn}. This is accomplished by taking advantage of two other recent developments (after the first appearance of the current article). One is the stability results from Galeati and the second named author in \cite{MR4546633}. The other is the John--Nirenberg inequality for stochastic processes of bounded mean oscillations as discussed in \cite{le2022quantitative}.

 \smallskip
 \noindent\textbf{Convention.} Whenever convenience, we place temporal variables into subscript right after the function, e.g. $f_t(x)=f(t,x)$. The relation $A\les B$ means that $A\le CB$ for some finite constant $C\ge0$. The implicit constants $C$ may change from one inequality to another and their values may depend on other parameters which are clear from the context. We will also make use of  Einstein's convention of summation over repeated indices. 

\section{Preliminaries} 
\label{sec:preliminaries}
	In the current section, we collect and enhance some relevant results which appear separately in previous works from various authors.
	These results form a useful toolbox which is used in later sections to prove our main results.

	For any one-parameter process $t\mapsto Y_t$ and any two-parameter process $(s,t)\mapsto A_{s,t}$, we denote $\delta Y_{s,t}=Y_t-Y_s$ and $\delta A_{s,u,t}=A_{s,t}-A_{s,u}-A_{u,t}$ for every $s\le u\le t$. We say that $Y$ (resp. $A$) is $L_m$-integrable if $\|Y_t\|_{L_m(\Omega)}$ (resp. $\|A_{s,t}\|_{L_m(\Omega)}$) is finite for each $t$ (resp. $(s,t)$); we say $A$ is adapted if $A_{s,t}$ is $\cff_t$-measurable whenever $s\le t$.
	Let $v\in[0,1]$ and let $\mP|\cff_v$ be the probability measure conditioned on $\cff_v$. We denote by $L_p(\Omega|\cff_v)$ the space of random variables $Z$ such that
	\[
		\|Z\|_{L_p(\Omega|\cff_v)}:=\esssup_{\omega} [\E(|Z|^p|\cff_v)]^{1/p}<\infty.
	\]

	The advantages of considering the conditional moment norms over the usual moment norms are summarized in the following result, which is implicit in \cite{DGL2021,fhl1}.
	\begin{lemma}\label{lem.lenglart}
		Let $\caa=(\caa_t)_{t\in[0,1]}$ be a continuous adapted stochastic process and let $p,N\in(0,\infty)$ be some fixed constants. Assume that $\caa_0=0$ and
		\begin{align*}
		 	\sup_{0\le s\le t\le1}\|\delta\caa_{s,t}\|_{L_p(\Omega|\cff_s)}\le N.
		\end{align*}
		Then the following statements hold.
		\begin{enumerate}[(i)]
			\item\label{item.Atau} There exists a constant $c(p)$ such that  $\|\caa_\tau\|_{L_p(\Omega)}\le c(p) N$ for any stopping time $\tau\le1$.
			\item\label{item.Asup} For every $\bar p\in(0,p)$, there exists a constant $c(\bar p,p)$ such that
			\begin{align*}
				\|\sup_{t\in[0,1]}|\caa_{t}|\|_{L_{\bar p}(\Omega)}\le c(\bar p,p)N.
			\end{align*}
		\end{enumerate}
	\end{lemma}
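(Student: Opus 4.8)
The statement is a conditional version of the Lenglart–Lépingle–Pratelli domination inequality, so the plan is to reduce it to that classical result via a localization argument. First I would introduce, for each $\lambda>0$, the stopping time $\tau_\lambda=\inf\{t\in[0,1]:|\caa_t|>\lambda\}\wedge 1$, which is well defined since $\caa$ is continuous and adapted. The key observation is that on $\{\sup_{t}|\caa_t|>\lambda\}$ one has $|\caa_{\tau_\lambda}|\ge\lambda$ by continuity, so $\mP(\sup_t|\caa_t|>\lambda)\le\mP(|\caa_{\tau_\lambda}|\ge\lambda)$; hence part \eqref{item.Asup} will follow from part \eqref{item.Atau} combined with the layer-cake formula $\E\sup_t|\caa_t|^{\bar p}=\bar p\int_0^\infty\lambda^{\bar p-1}\mP(\sup_t|\caa_t|>\lambda)\,d\lambda$ and a standard trick of raising $N$ to a higher power to absorb the divergence at infinity (this is where $\bar p<p$ is used). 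So the heart of the matter is part \eqref{item.Atau}.

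For \eqref{item.Atau}, fix a stopping time $\tau\le1$ and set $M=\|\caa_\tau\|_{L_p(\Omega)}$; I want $M\le c(p)N$. The natural route: apply the hypothesis with $s$ a deterministic time and use a chaining/telescoping argument, but the cleanest is to use the conditional-moment bound directly. Write $\caa_\tau=\delta\caa_{0,\tau}$ and consider the process $t\mapsto\E[\,\caa_\tau\mid\cff_t\,]$ up to time $\tau$; alternatively, and more robustly, discretize: for a partition $0=t_0<t_1<\dots<t_k=1$, write $\caa_{t_{j}\wedge\tau}-\caa_{t_{j-1}\wedge\tau}$ and control the sum. The bound $\|\delta\caa_{s,t}\|_{L_p(\Omega|\cff_s)}\le N$ gives, after conditioning, that each increment has conditional $L_p$-norm at most $N$, and since at most a bounded number of increments are ``active'' near $\tau$... this is in fact exactly the setting of Lenglart's inequality once one checks the supermartingale-type domination. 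I expect the slick argument to be: for any $c>0$,
\begin{align*}
	\mP(|\caa_\tau|>c)\le \mP\Big(\sup_{t\le\tau}\|\delta\caa_{t\wedge\tau,\tau}\|_{L_p(\Omega|\cff_{t})}\text{-controlled event}\Big),
\end{align*}
and then invoke the conditional Chebyshev inequality $\mP(|\caa_\tau|>c\mid\cff_0)\le c^{-p}N^p$ followed by taking expectations — but since $\caa_\tau$ need not be $\cff_0$-measurable one must first pass through the optional stopping structure. The honest approach is: apply the hypothesis with the \emph{random} pair $(s,t)=(0,\tau)$ is not directly allowed, so instead use that $\{\tau>u\}\in\cff_u$ and decompose $\caa_\tau=\sum_j\caa_{t_j}\1_{\{t_{j-1}<\tau\le t_j\}}+(\text{correction})$ and bound each piece using $\|\caa_{t_j}-\caa_{t_{j-1}}\|_{L_p(\Omega|\cff_{t_{j-1}})}\le N$; refining the partition and using continuity of $\caa$ kills the correction.

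The main obstacle is precisely this passage from a \emph{deterministic-indexed} conditional bound to a statement about the process stopped at a \emph{random} time: one cannot plug $\tau$ into the two-parameter hypothesis, so one needs the telescoping-over-a-partition argument together with a careful use of the tower property and the fact that $\{\tau\le t_j\}\in\cff_{t_j}$ to re-assemble the conditional norms. Once \eqref{item.Atau} is in hand, deducing \eqref{item.Asup} is routine via the stopping time $\tau_\lambda$ and the layer-cake identity as indicated above, with the exponent gap $p-\bar p$ providing the integrability at infinity after optimizing over a truncation level. I would also remark that this is essentially \cite[Lemma~2.2]{fhl1} or the argument in \cite{DGL2021}, so one could alternatively just cite it; but the self-contained proof is short enough to include.
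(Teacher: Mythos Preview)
Your approach to part \ref{item.Asup} is essentially what the paper does: once \ref{item.Atau} is in hand, the paper simply says ``Part \ref{item.Asup} is a consequence of part \ref{item.Atau} and Lenglart inequality,'' which is exactly your layer-cake argument with the hitting time $\tau_\lambda$.

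The gap is in your argument for part \ref{item.Atau}. Your telescoping idea $\caa_{t_j\wedge\tau}-\caa_{t_{j-1}\wedge\tau}$ does not work: the conditional bound gives $\|\delta\caa_{t_{j-1},t_j}\|_{L_p(\Omega|\cff_{t_{j-1}})}\le N$ for \emph{each} increment, but summing $k$ such increments in $L_p$ only yields a bound of order $kN$ (or $k^{1/2}N$ if you try a BDG-type argument, which is not obviously available here), and this blows up as the partition is refined. Your parenthetical ``at most a bounded number of increments are `active' near $\tau$'' does not save this, since every increment up to $\tau$ contributes to $\caa_\tau$, not just the last one.

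The paper's trick avoids telescoping entirely: write $\caa_\tau=\caa_1-\delta\caa_{\tau,1}$, so only a \emph{single} increment needs to be controlled. The term $\|\caa_1\|_{L_p(\Omega)}\le N$ is immediate from the hypothesis with $(s,t)=(0,1)$. For $\|\delta\caa_{\tau,1}\|_{L_p(\Omega)}$, first take $\tau$ with finitely many values $\{t_j\}$; then
\[
\E|\delta\caa_{\tau,1}|^p=\E\sum_j\1_{(\tau=t_j)}\E\big(|\delta\caa_{t_j,1}|^p\,\big|\,\cff_{t_j}\big)\le N^p,
\]
using that $\{\tau=t_j\}\in\cff_{t_j}$. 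General stopping times follow by approximation and continuity of $\caa$. The point is that going \emph{forward} from $\tau$ to the fixed terminal time $1$ gives one increment whose left endpoint is a stopping time, and that is exactly what the conditional hypothesis can handle after decomposing on the values of $\tau$.
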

	\begin{proof}
		Let $\tau$ be a stopping time taking finitely many values $\{t_j\}\subset[0,1]$. By assumption, we have
		\begin{align*}
			\E(|\delta\caa_{\tau,1}|^p)
			=\E\sum_{j}\1_{(\tau=t_j)}\E (|\delta\caa_{t_j,1}|^p|\cff_{t_j})
			\le \E\sum_{j}\1_{(\tau=t_j)}N^p
			\le N^p.
		\end{align*}
		Using the elementary inequality $(a+b)^p\les a^p+b^p$, we have
		\begin{align*}
			\|\caa_\tau\|_{L_p(\Omega)}\les\|\delta\caa_{\tau,1}\|_{L_p(\Omega)}+\|\caa_1\|_{L_p(\Omega)}\les N.
		\end{align*}
		By approximations and continuity of $\caa$, the above inequality also holds for all stopping times $\tau\le1$. This shows \ref{item.Atau}. Part \ref{item.Asup} is a consequence of part \ref{item.Atau} and Lenglart inequality. 
	\end{proof}
	The next result is a variant of the stochastic Davie--Gr\"onwall lemma from \cite{fhl1} and is closely related to the stochastic sewing lemmas from \cite{MR4089788,SSLBanach}. 
	\begin{lemma}[Stochastic sewing]\label{lem.Davie_iteration}
		Let $\varepsilon>0$; $v,S,T,C_1,C_2,C_3,\Gamma_1,\Gamma_2\ge0$ be fixed numbers such that $0\le v<S<T$. Let $w$ be a deterministic control on $\Delta([S,T])$ which is continuous. 
	    Let $J$ be a $L_m$-integrable adapted process indexed by $\Delta([S,T])$ such that
			\begin{gather}
				\|J_{s,t}\|_{L_m(\Omega|\cff_v)}\le C_2  w(s,t)^{\frac12+\varepsilon}
				\,,\quad\|\E_sJ_{s,t}\|_{L_m(\Omega|\cff_v)}\le C_1 w(s,t)^{1+\varepsilon},\label{con.sll1}
				\\\|\delta J_{s,u,t}\|_{L_m(\Omega|\cff_v)}\le \Gamma_2w(s,t)^{\frac12}+C_3 \Gamma_2w(s,t)^{\frac12+\varepsilon} \label{con.dJ}
				\\\shortintertext{and}
				\label{con.EdJ}
				 \|\E_s \delta J_{s,u,t}\|_{L_m(\Omega|\cff_v)}\le\Gamma_1w(s,t)^{1+\varepsilon}
			\end{gather}
			for every $(s,u,t)$ in $\Delta_2([S,T])$.
			Then there exists a constant $N=N(\varepsilon,m)$, in particular independent from $\Gamma_1,\Gamma_2,C_1,C_2,S,T,v$ and $w$,
			 such that for every $(s,t)\in \Delta([S,T])$
			\begin{align}
				\|J_{s,t}\|_{L_m(\Omega|\cff_v)}&\le N\Gamma_2\left[(1+|\log \Gamma_2|)w(s,t)^{\frac12}+C_1w(s,t)^{1+\varepsilon}+(C_2+C_3)w(s,t)^{\frac12+\varepsilon}\right]
				\nonumber\\&\quad+ N\Gamma_1w(s,t)^{1+\varepsilon}.
				\label{est.SDavie}
			\end{align}
	\end{lemma}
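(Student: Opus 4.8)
\emph{Proof strategy.} The plan is a dyadic stochastic sewing argument in the spirit of \cite{MR4089788,fhl1,SSLBanach}; the one genuinely new point is to \emph{truncate the dyadic expansion at a scale comparable to $\Gamma_2$}, which is what converts the critical exponent $\tfrac12$ in \eqref{con.dJ} into the logarithmic factor $1+|\log\Gamma_2|$. We may assume $m\ge2$ (when $1\le m<2$ one bounds the conditional $L_m$-norm by the conditional $L_2$-norm and argues with $m=2$), and it suffices to prove \eqref{est.SDavie} for $(s,t)=(S,T)$, since every subinterval of $[S,T]$ inherits all the hypotheses and still satisfies $v<s$. We may also assume $0<\Gamma_2<1$: if $\Gamma_2=0$ then \eqref{con.dJ} forces $\delta J\equiv0$, so $J$ is additive and $J_{S,T}=\sum_{[a,b]\in\pi}J_{a,b}$ for every partition $\pi$, which tends to $0$ in $L_m(\Omega|\cff_v)$ along a refining sequence (by the estimate for $J^{\pi^L}$ below); and if $\Gamma_2\ge1$, the crude bound $\|J_{S,T}\|_{L_m(\Omega|\cff_v)}\le C_2 w(S,T)^{\frac12+\varepsilon}\le\Gamma_2 C_2 w(S,T)^{\frac12+\varepsilon}$ from \eqref{con.sll1} already lies below the right-hand side of \eqref{est.SDavie}.

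Using continuity of $w$ and the intermediate value theorem, I would build nested partitions $\pi^0=\{S,T\}\subset\pi^1\subset\pi^2\subset\cdots$ of $[S,T]$, where $\pi^{k+1}$ is obtained from $\pi^k$ by bisecting each interval in $w$-content. Then $\pi^k$ has at most $2^k$ intervals, each of $w$-content $\le w(S,T)2^{-k}$, and $\sum_{[a,b]\in\pi^k}w(a,b)\le w(S,T)$ by superadditivity of the control. Writing $J^\pi:=\sum_{[a,b]\in\pi}J_{a,b}$ and $R_k:=\sum_{[a,b]\in\pi^k}\delta J_{a,m(a,b),b}$, with $m(a,b)$ the midpoint inserted in $\pi^{k+1}$, refining $\pi^k$ into $\pi^{k+1}$ changes $J^{\pi^k}$ by $-R_k$, whence the exact identity
\begin{align*}
	J_{S,T}=J^{\pi^L}+\sum_{k=0}^{L-1}R_k\qquad\text{for every integer }L\ge0 .
\end{align*}

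For each of the two blocks I would estimate the $L_m(\Omega|\cff_v)$-norm by decomposing each summand into its $\cff_a$-conditional mean plus a $\cff_a$-conditionally centered remainder, then applying the conditional Burkholder--Davis--Gundy inequality together with conditional Minkowski to the (martingale-transform) remainder, and the triangle inequality to the mean. From \eqref{con.sll1} this gives $\|J^{\pi^L}\|_{L_m(\Omega|\cff_v)}\lesssim(C_2 w(S,T)^{\frac12+\varepsilon}+C_1 w(S,T)^{1+\varepsilon})\,2^{-L\varepsilon}$, with implicit constant depending only on $m$, so that this block \emph{decays} in $L$. From \eqref{con.dJ}--\eqref{con.EdJ}, using $\max_{[a,b]\in\pi^k}w(a,b)\le w(S,T)2^{-k}$ to extract the decaying factors, it gives $\|R_k\|_{L_m(\Omega|\cff_v)}\lesssim\Gamma_2 w(S,T)^{\frac12}+(C_3\Gamma_2 w(S,T)^{\frac12+\varepsilon}+\Gamma_1 w(S,T)^{1+\varepsilon})\,2^{-k\varepsilon}$. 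Here is the obstacle: the term $\Gamma_2 w(S,T)^{1/2}$ carries \emph{no} decay in $k$, so one cannot let $L\to\infty$ as in the standard supercritical sewing lemma; summed over $k<L$ it contributes $L\,\Gamma_2 w(S,T)^{1/2}$, whereas the geometric tails contribute only $C_3\Gamma_2 w(S,T)^{1/2+\varepsilon}+\Gamma_1 w(S,T)^{1+\varepsilon}$ times a constant depending on $\varepsilon$.

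The argument closes by optimizing $L$: take $L$ to be the least nonnegative integer with $2^{-L\varepsilon}\le\Gamma_2$, so that (using $0<\Gamma_2<1$) one has both $2^{-L\varepsilon}\le\Gamma_2$ and $L\lesssim_\varepsilon 1+|\log\Gamma_2|$. Substituting this $L$ into the telescoping identity, the $J^{\pi^L}$-block becomes $\lesssim\Gamma_2(C_2 w(S,T)^{\frac12+\varepsilon}+C_1 w(S,T)^{1+\varepsilon})$, the critical contribution becomes $\lesssim_\varepsilon\Gamma_2(1+|\log\Gamma_2|)w(S,T)^{\frac12}$, and adding the remaining geometric tails produces exactly \eqref{est.SDavie} with a constant $N=N(\varepsilon,m)$ that is manifestly independent of $\Gamma_1,\Gamma_2,C_1,C_2,S,T,v,w$. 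The delicate point, as indicated, is this balance at the critical scale $2^{-L\varepsilon}\approx\Gamma_2$; the construction of the $w$-adapted nested partitions and the uniformity (in $v$ and $w$) of the conditional Burkholder--Davis--Gundy and Minkowski estimates are routine.
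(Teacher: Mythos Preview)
Your argument is correct and essentially identical to the paper's: build $w$-adapted nested dyadic partitions, telescope $J_{S,T}=J^{\pi^L}+\sum_{k<L}R_k$, bound each block via the conditional BDG/Minkowski inequality and \eqref{con.sll1}--\eqref{con.EdJ}, then truncate at the level $L$ with $2^{-L\varepsilon}\approx\Gamma_2$ to trade the critical $\tfrac12$-exponent for the logarithmic factor. One minor slip: your reduction for $1\le m<2$ (``bound the conditional $L_m$-norm by the conditional $L_2$-norm'') goes the wrong way, since the hypotheses are only given in $L_m$; the paper sidesteps this by invoking the conditional BDG inequality of \cite[Section 2]{SSLBanach} directly, and in any case every application in the paper has $m=p\ge2$.
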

	\begin{proof}
		For each $(s,t)\in \Delta([S,T])$, define 
		\begin{align*}
			u=\inf\{r\in[s,t]:w(s,r)\ge\frac12 w(s,t)\}
		\end{align*}
		and call $u$ the $w$-midpoint of $[s,t]$. Since $t$ trivially belongs to the set defining $u$ above, such a point always exists and uniquely defined. If $u$ is a $w$-midpoint of $[s,t]$, then it follows from continuity of $w$ that
		\begin{align*}
			w(s,u)\le \frac12 w(s,t)
			\tand 
			w(u,t)\le\frac12 w(s,t).
		\end{align*}
		See \cite{SSLBanach} for more detail.
		For convenience, we denote $(s|t)$ for the $w$-midpoint of $[s,t]$.

		Let $(s,t)$ be in $\Delta([S,T])$. 
		Define $d^0_0(s,t)=s$ and $d^0_1(s,t)=t$. For each integers $h\ge0$ and $i=0,\dots,2^{h+1}$, we set $d^{h+1}_i(s,t)=d^h_{i/2}(s,t)$ if $i$ is even and $d^{h+1}_i(s,t)$ equal to the $w$-midpoint of $[d^h_{(i-1)/2}(s,t),d^h_{(i+1)/2}(s,t)]$ if $i$ is odd.
		Set $D^h_w(s,t):=\{d^h_i(s,t)\}_{i=0}^{2^h}$ for each $h\ge0$. 
		It is  readily checked that for every integers $h\ge0$ and $i=0,\dots,2^h-1$, we have
		\begin{align}
			&D^h_w(s,t)\subset D^{h+1}_w(s,t),
			\\&[d^h_i(s,t),d^h_{i+1}(s,t)]=[d^{h+1}_{2i}(s,t),d^{h+1}_{2i+1}(s,t)]\cup[d^{h+1}_{2i+1}(s,t),d^{h+1}_{2i+2}(s,t)],
			\label{id.nested}
			\\&w(d^{h}_{i}(s,t),d^{h}_{i+1}(s,t))\le 2^{-h} w(s,t).\label{est.wdh}
		\end{align}
		
		Herein, we abbreviate $\|\cdot\|$ for $\|\cdot\|_{L_m(\Omega|\cff_v)}$. The implicit constants below only depend on $\varepsilon$ and $m$.
		By triangle inequality, we have
	    \begin{align*}
	      \|J_{s,t}\|
	      \le
	      \big\|\sum_{[u,v]\in D^h_w(s,t)}J_{u,v} \big\|+\big\|J_{s,t}-\sum_{[u,v]\in D^h_w(s,t)}J_{u,v}\big\|.
	    \end{align*}
	    We estimate the first term using conditional BDG inequality (\cite[Section 2]{SSLBanach}), condition \eqref{con.sll1} and \eqref{est.wdh},
	     \begin{align*}
	      \|\sum_{[u,v]\in D^h_w(s,t)}J_{u,v}\|
	      &\lesssim\sum_{[u,v]\in D^h_w(s,t)}\|\E_uJ_{u,v}\|+\left(\sum_{[u,v]\in D^h_w(s,t)}\|J_{u,v}\|^2\right)^{1/2}
	     \\& \lesssim C_1  2^{- h \varepsilon}w(s,t)^{1+\varepsilon}+C_2 2^{-h \varepsilon}w(s,t)^{\frac12+\varepsilon}.
	    \end{align*}		
	    For the second term, we derive from \eqref{id.nested} (cf. \cite[Lemma 3.6]{SSLBanach}) and conditional BDG inequality that for $ h\ge1 $ 
	    \begin{align*}
	    	&\|J_{s,t}-\sum_{[u,v]\in D^h_w(s,t)}J_{u,v}\|
	    	=\|\sum_{k=0}^{h-1}\sum_{[u,v]\in D^k_w(s,t)}\delta J_{u,(u|v),v}\|
	    	\\&\lesssim\sum_{k=0}^{h-1}\sum_{[u,v]\in D^k_w(s,t)}\|\E_u\delta J_{u,(u|v),v}\| +\sum_{k=0}^{h-1}\left(\sum_{[u,v]\in D^k_w(s,t)}\|\delta J_{u,(u|v),v}\|^2 \right)^{1/2}.
	    \end{align*}
	    Applying \eqref{con.EdJ}, \eqref{con.dJ} and \eqref{est.wdh}, we have
	    \begin{align*}
	       \sum_{[u,v]\in D^k_w(s,t)}\|\E_u\delta J_{u,(u|v),v}\|
	      &\lesssim  2^{-k \varepsilon}\Gamma_1w(s,t)^{1+ \varepsilon}
	    \end{align*}
	    and
	    \begin{align*}
	    	\left(\sum_{[u,v]\in D^k_w(s,t)}\|\delta J_{u,(u|v),v}\|^2\right)^{\frac12}
	    	\lesssim  \Gamma_2w(s,t)^{\frac12}+2^{-k \varepsilon}C_3 \Gamma_2 w(s,t)^{\frac12+\varepsilon}.
	    \end{align*}
	    Summing in $k$, we have
	    \begin{equation*}
	      \big\|J_{s,t}-\sum_{[u,v]\in D^h_w(s,t)}J_{u,v}\big\|\lesssim \Gamma_1w(s,t)^{1+\varepsilon}+h\Gamma_2w(s,t)^{\frac12}+C_3 \Gamma_2 w(s,t)^{\frac12+\varepsilon}.
	    \end{equation*}
	    Combining the previous estimates, we have shown that for every integer $h\geq1$ and each  $(s,t)\in \Delta$
    	\begin{multline}\label{tmp.Jst}
    		\|J_{s,t}\|\lesssim   2^{- h \varepsilon}\left[C_1w(s,t)^{1+\varepsilon}+C_2 w(s,t)^{\frac12+\varepsilon}\right]
			\\+\Gamma_1w(s,t)^{1+\varepsilon}+h\Gamma_2w(s,t)^{\frac12}+C_3\Gamma_2w(s,t)^{\frac12+\varepsilon}\,.
    	\end{multline}
    	If $\Gamma_2\ge1$, we choose $h=1$  while if $\Gamma_2<1$, we choose $h$ such that $2^{-h \varepsilon}\approx \Gamma_2$.
		In both cases, we obtain \eqref{est.SDavie} from \eqref{tmp.Jst}.
	\end{proof}
	Some controls which are relevant for our purpose are given below.
	\begin{example}\label{ex.control} (a)  For any  $\phi\in L_q([0,1])$, $q\in[1,\infty)$, $w(s,t)=\|\phi\|_{L_q([s,t])}^q$ is a continuous control on $\Delta([0,1])$.
	(b) For any $\nu\ge0$, $w(s,t)=s^{-\nu}(t-s)$ is a continuous control on $\Delta([S,T])$ for any $0<S\le T$. 
	(c) For any controls $w_1,w_2$ and any number $\theta\in[0,1]$, $w=w^\theta_1 w^{1- \theta}_2$ is another control. For further examples and basic properties of controls, we refer to \cite[Chapter 5]{MR2604669}.
	\end{example}

	The following result is an excerpt from \cite[Lemma 2.3]{bellingeri2021singular}.
	\begin{lemma}\label{lem.Bellising}
		Let $(\mathcal E,\|\cdot\|)$ be a normed vector space, $s_{-1},\tau_i,\eta_i\in[0,1]$, $i=1,\ldots,h$ be fixed numbers and let $Y:(0,1]\to \mathcal E$ be a function such that
		\begin{align}\label{con.Ysing}
			\|Y_t-Y_s\|\le \sum_{i=1}^h C_is^{-\eta_i}(t-s)^{\tau_i}\quad\forall s_{-1}\le s\le t\le1, s\neq 0
		\end{align}
		for some constant $C_1,\ldots,C_h\ge0$.
		Assume that $\tau_i-\eta_i>0$ for each $i$. Then
		\begin{align*}
			\|Y_t-Y_s\|\le\sum_{i=1}^h(1-2^{\eta_i- \tau_i})^{-1} C_i(t-s)^{\tau_i- \eta_i}\quad\forall s_{-1}\le s\le t\le1, s\neq0.
		\end{align*}	
	\end{lemma}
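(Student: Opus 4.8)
The plan is to prove this by a dyadic chaining argument on the interval $[s,t]$, turning the (potentially singular) prefactors into a convergent geometric series. Fix $s_{-1}\le s\le t\le 1$ with $s\neq 0$; we may assume $s<t$, since for $s=t$ both sides vanish because each $\tau_i-\eta_i>0$. For $k\ge 0$ set $t_k:=s+2^{-k}(t-s)$, so that $t_0=t$, the sequence $(t_k)_{k\ge0}$ decreases to $s$, and each $t_k$ lies in $[s,t]\subseteq[s_{-1},1]$ with $t_k\ge s>0$. First I would note that $Y$ is automatically right-continuous at $s$: applying \eqref{con.Ysing} with left endpoint $s$ and right endpoint $r\downarrow s$ gives $\|Y_r-Y_s\|\le\sum_{i=1}^h C_i s^{-\eta_i}(r-s)^{\tau_i}\to 0$, because each $\tau_i>\eta_i\ge 0$ forces $\tau_i>0$ while $s^{-\eta_i}$ is a fixed constant. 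In particular $\|Y_{t_k}-Y_s\|\to 0$ as $k\to\infty$, so the telescoping identity $Y_t-Y_s=\sum_{k\ge 0}(Y_{t_k}-Y_{t_{k+1}})$ holds as a convergent series in $\mathcal E$ (the partial sum up to $k=K-1$ equals $Y_t-Y_{t_K}$).

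Next I would estimate each increment. For each $k\ge 0$ the pair $(t_{k+1},t_k)$ satisfies $s_{-1}\le t_{k+1}\le t_k\le 1$ and $t_{k+1}>0$, so \eqref{con.Ysing} applies and gives $\|Y_{t_k}-Y_{t_{k+1}}\|\le\sum_{i=1}^h C_i\, t_{k+1}^{-\eta_i}\,(t_k-t_{k+1})^{\tau_i}$. Here $t_k-t_{k+1}=2^{-(k+1)}(t-s)$, and the key step: since $s\ge 0$ we have $t_{k+1}\ge 2^{-(k+1)}(t-s)$, hence $t_{k+1}^{-\eta_i}\le 2^{(k+1)\eta_i}(t-s)^{-\eta_i}$. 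Substituting, the factor $2^{(k+1)\eta_i}$ combines with $(t_k-t_{k+1})^{\tau_i}=2^{-(k+1)\tau_i}(t-s)^{\tau_i}$ to produce $\|Y_{t_k}-Y_{t_{k+1}}\|\le\sum_{i=1}^h C_i\, 2^{-(k+1)(\tau_i-\eta_i)}(t-s)^{\tau_i-\eta_i}$, in which the singularity has been entirely absorbed.

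Finally I would sum over $k\ge 0$: by the triangle inequality applied to the convergent series above, $\|Y_t-Y_s\|\le\sum_{i=1}^h C_i(t-s)^{\tau_i-\eta_i}\sum_{k\ge 0}2^{-(k+1)(\tau_i-\eta_i)}$, and since $\tau_i-\eta_i>0$ the inner geometric series equals $2^{-(\tau_i-\eta_i)}\bigl(1-2^{-(\tau_i-\eta_i)}\bigr)^{-1}\le\bigl(1-2^{\eta_i-\tau_i}\bigr)^{-1}$, which yields exactly the claimed inequality for all $s_{-1}\le s\le t\le 1$, $s\neq 0$. I do not expect a genuine obstacle here: this is a routine dyadic chaining, and the only points that require a moment's care are the justification of the telescoping series (which follows from the built-in right-continuity at $s$, with no extra hypothesis on $Y$) and the bookkeeping of constants in the geometric summation.
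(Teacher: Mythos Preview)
Your proof is correct and essentially identical to the paper's: the paper also sets $s_n=s+2^{-n}(t-s)$, uses the same lower bound $s_{n+1}\ge 2^{-n-1}(t-s)$, and sums the resulting geometric series. The only cosmetic difference is that you spell out the right-continuity at $s$ needed to justify the telescoping, whereas the paper just writes ``by continuity.''
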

	\begin{proof}
		Observe that \eqref{con.Ysing} implies that $Y$ is continuous on $[s_{-1},1]\setminus\{0\}$.
		We fix $s_{-1}\le s<t\le1$, $s\neq0$, and put $s_n=s+(t-s)2^{-n}$ for each integer $n\ge0$. By continuity and triangle inequality, we have
		\begin{align*}
			\|Y_t-Y_s\|
			\le \sum_{n=0}^\infty\|Y_{s_n}-Y_{s_{n+1}}\|
			\le \sum_{n=0}^\infty\sum_{i=1}^h C_is_{n+1}^{-\eta_i}(s_n-s_{n+1})^{\tau_i}.
		\end{align*}
		Note that $s_{n+1}\ge (t-s)2^{-n-1}$ and $s_n-s_{n+1}=(t-s)2^{-n-1}$. Hence, from the previous estimate, we have
		\begin{align*}
			\|Y_t-Y_s\|\le\sum_{i=1}^h\sum_{n=0}^{\infty}C_i(t-s)^{\tau_i- \eta_i}2^{-(n+1)(\tau_i- \eta_i)}.
		\end{align*}
		Because $\sum_{n=0}^{\infty}2^{-(n+1)(\tau_i- \eta_i)}\le (1-2^{\eta_i- \tau_i})^{-1}$ for each $i$, this yields the stated estimate.	
	\end{proof}
The following result is the Khasminskii's lemma\footnote{This result goes back at least to the paper \cite{MR123373} of Khasminskii, although in a less general form and with a smallness condition, then rediscovered without the smallness condition by Portenko \cite{MR0375483}, who considered \ref{con.Khasgamma} with $w(s,t)=t-s$. The general version here is based on \cite{MR1104660}. For a bit of history, see \cite[pg. 214]{MR644024}.} enhanced with some quantitative estimates.
\begin{lemma}[Quantitative Khasminskii's lemma]\label{lem.Khasminski}
Let $S,T$ be such that $0\le S\le T$ and let $\left\{\beta(t)\right\}_{t\in[S,T]}$ be a nonnegative measurable $(\mathcal{F}_t)$-adapted process. Assume that for all $S\leq s\leq t\leq T$,
\begin{align}\label{con.khas}
	\left\|\int_s^t \beta(r) dr\right\|_{L_1(\Omega|\cff_s)}\le \rho(s,t),
\end{align}
where $(s,t)\mapsto \rho(s,t)$ is a nonrandom function on $\Delta([S,T])$  satisfying the following conditions:
\begin{enumerate}[(i)]
	\item $\rho(t_1,t_2)\leq\rho(t_3,t_4)$ if $(t_1,t_2)\subset(t_3,t_4)$,
	\item $\lim_{h\downarrow 0}\sup_{S\leq s<t\leq T,|t-s|\leq h} \rho(s,t)=\kappa$, $\kappa\geq0$.
\end{enumerate}
Then for any real $\lambda<\kappa^{-1}$, (if $\kappa=0$, then $\kappa^{-1}=\infty$), and any integer $m\ge1$
\begin{gather*}
	\E\exp\Big(\lambda\int_S^T\beta(r)dr\Big)<\infty
	\tand
	\left\|\int_S^T \beta(r)dr\right\|_{L_m(\Omega)}\le  (m!)^{\frac1m}\rho(S,T).
\end{gather*}

Suppose additionally that there exist $\gamma>0$ and a continuous control $w$ on $\Delta([S,T])$ such that
\begin{enumerate}[resume*]
	\item\label{con.Khasgamma} $\rho(s,t)\le  w(s,t)^\gamma$ for each $(s,t)\in \Delta([S,T])$.
\end{enumerate}
Then for every $\lambda>0$,
\begin{align*}
	\E\exp\Big(\lambda\int_S^T\beta(r)dr\Big)\leq 2^{1+(2 \lambda )^{1/\gamma}w(S,T)}.
\end{align*}
\end{lemma}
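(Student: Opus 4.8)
The plan is to prove the three assertions in sequence, with a conditional $L_m$-moment bound serving as the engine for both exponential estimates via a partition-and-iteration argument.

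For the moment bound, set $A_{u,t}:=\int_u^t\beta(r)\,dr$ for $(u,t)\in\Delta([S,T])$. Since $\beta\ge0$, the simplex identity
\[
  A_{u,T}^m = m!\int_{\{u\le r_1\le\cdots\le r_m\le T\}}\beta(r_1)\cdots\beta(r_m)\,dr_1\cdots dr_m
\]
holds pathwise, and I would prove by induction on $m\ge1$ that, on every subinterval of $[S,T]$,
\[
  \E\big(A_{u,T}^m\,\big|\,\cff_u\big)\le m!\,\rho(u,T)^m\quad\text{a.s.},\qquad u\in[S,T].
\]
The case $m=1$ is exactly \eqref{con.khas}. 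For the step, peel off the outermost variable, $\tfrac1{m!}A_{u,T}^m=\int_u^T\beta(r_1)\,\tfrac1{(m-1)!}A_{r_1,T}^{m-1}\,dr_1$; using adaptedness of $\beta$, the tower property, conditional Tonelli, and then the inductive hypothesis together with the monotonicity (i) (which gives $\rho(r_1,T)\le\rho(u,T)$ because $(r_1,T)\subset(u,T)$), one arrives at
\begin{align*}
  \E\Big(\tfrac1{m!}A_{u,T}^m\,\Big|\,\cff_u\Big)
  &=\int_u^T\E\!\big(\beta(r_1)\,\E\big(\tfrac1{(m-1)!}A_{r_1,T}^{m-1}\,\big|\,\cff_{r_1}\big)\,\big|\,\cff_u\big)\,dr_1\\
  &\le\rho(u,T)^{m-1}\,\E\big(A_{u,T}\,\big|\,\cff_u\big)\le\rho(u,T)^m.
\end{align*}
Specialising to $u=S$ and taking expectations yields $\E A_{S,T}^m\le m!\,\rho(S,T)^m$, i.e.\ the claimed bound $\|\int_S^T\beta\|_{L_m(\Omega)}\le(m!)^{1/m}\rho(S,T)$.

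For the finiteness of the exponential moment when $\lambda<\kappa^{-1}$, I would use (ii): pick $\delta\in(\kappa,\lambda^{-1})$ and then $h_0>0$ with $\rho(s,t)\le\delta$ whenever $t-s\le h_0$, and split $[S,T]$ into $K=\lceil(T-S)/h_0\rceil$ consecutive blocks $[t_{i-1},t_i]$ of length $\le h_0$. On each block, summing the exponential series (monotone convergence) and invoking the conditional moment bound above gives
\[
  \E\big(\e^{\lambda A_{t_{i-1},t_i}}\,\big|\,\cff_{t_{i-1}}\big)\le\sum_{m\ge0}\lambda^m\rho(t_{i-1},t_i)^m=\frac1{1-\lambda\rho(t_{i-1},t_i)}\le\frac1{1-\lambda\delta}.
\]
Conditioning successively on $\cff_{t_{K-1}},\cff_{t_{K-2}},\dots,\cff_{t_0}$ and applying this at each step yields $\E\,\e^{\lambda A_{S,T}}\le(1-\lambda\delta)^{-K}<\infty$. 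For the refined bound, under the additional hypothesis $\rho(s,t)\le w(s,t)^\gamma$ with $w$ a continuous control, I would instead build the partition from $w$ itself: the map $t\mapsto w(S,t)$ is continuous, nondecreasing, with $w(S,S)=0$, so by the intermediate value theorem one finds $S=t_0<\cdots<t_K=T$ with $w(S,t_i)-w(S,t_{i-1})\le c$, where $c:=(2\lambda)^{-1/\gamma}$ and $K=\lceil w(S,T)/c\rceil\le1+(2\lambda)^{1/\gamma}w(S,T)$; superadditivity of the control gives $w(t_{i-1},t_i)\le w(S,t_i)-w(S,t_{i-1})\le c$, hence $\rho(t_{i-1},t_i)\le c^\gamma=\tfrac1{2\lambda}$. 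The same series estimate now gives $\E(\e^{\lambda A_{t_{i-1},t_i}}\mid\cff_{t_{i-1}})\le(1-\tfrac12)^{-1}=2$ on each block, and iterating the conditioning over the $K$ blocks yields $\E\,\e^{\lambda A_{S,T}}\le2^K\le2^{1+(2\lambda)^{1/\gamma}w(S,T)}$.

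The only mildly delicate points are justifying the conditional Tonelli interchange and the tower-property manipulation in the inductive moment step, checking that hypothesis (i) is applied in the correct direction ($(r_1,T)\subset(u,T)$), and the elementary greedy construction of the $w$-adapted partition with the stated bound on $K$. I do not expect a genuine obstacle: the argument is a quantitative sharpening of the classical Khasminskii--Portenko scheme, the novelty being only the use of a continuous control in place of Lebesgue length, which is what produces the clean constant $2^{1+(2\lambda)^{1/\gamma}w(S,T)}$.
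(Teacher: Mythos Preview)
Your proposal is correct and follows essentially the same approach as the paper: the simplex identity with iterated conditioning for the $L_m$ bound, and the partition-plus-geometric-series argument for the exponential moments. The only cosmetic difference is in the construction of the partition for the quantitative bound: you slice the range of the monotone map $t\mapsto w(S,t)$ via the intermediate value theorem, whereas the paper builds the partition greedily by setting $t_k=\sup\{t\in[t_{k-1},T]:\lambda w(t_{k-1},t)^\gamma\le 1/2\}$ and then uses superadditivity of $w$ in the form $\sum_k w(t_{k-1},t_k)\le w(S,T)$ to bound the number of blocks---both constructions yield $K\le 1+(2\lambda)^{1/\gamma}w(S,T)$ and the same final estimate.
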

\begin{proof}
	The former statement is an excerpt from {\cite[pg. 1 Lemma 1.1.]{MR1104660}}, which gives the following estimate
	\begin{align}\label{est.Portenko}
		\E\exp\left(\lambda\int_S^T \beta(r)dr\right)\le\prod_{k=1}^n(1- \lambda \rho(t_{k-1},t_k))^{-1}.
	\end{align}
	In the above, $S=t_0<t_1<\ldots<t_n=T$ are chosen so that $\sup_{k=1,\ldots,n}\lambda \rho(t_{k-1},t_k)<1$.

	To obtain the estimate in $L_m(\Omega)$-norm, we apply Tonelli theorem and the assumption to see that
	\begin{align*}
		\E\left(\int_S^T \beta(r)dr\right)^m
		&=m!\E\int_{S<r_1<\ldots<r_m<T}\beta(r_1)\ldots \beta(r_m)dr_1\ldots dr_m
		\\&\le m! \rho(0,T)\E\int_{S<r_1<\ldots<r_{m-1}<T}\beta(r_1)\ldots \beta(r_{m-1})dr_1\ldots dr_{m-1}.
	\end{align*}
	Iterating the above inequality, we obtain the stated estimate for $\|\int_S^T \beta(r)dr\|_{L_m(\Omega)}$.

	Under the additional condition \ref{con.Khasgamma}, we can choose $t_0=S$ and for each $k\ge1$,
	\begin{align*}
		t_k=\sup\{t\in [t_{k-1},T]: \lambda  w(t_{k-1},t)^\gamma\le 1/2\}.
	\end{align*}
	With this choice, we have $\lambda  w(t_{k-1},t_k)^\gamma=1/2$ for $k=1,\ldots,n-1$ and $\lambda  w(t_{n-1},t_n)^\gamma\le 1/2$. By definition of controls, we have
	\begin{align*}
		\frac{n-1}{(2 \lambda )^{\frac 1 \gamma}}\le \sum_{k=1}^n w(t_{k-1},t_k)\le w(S,T),
	\end{align*}
	which yields $n\le 1+ (2 \lambda )^{1/\gamma}w(S,T)$. Hence, from \eqref{est.Portenko}, we have
	\begin{align*}
		\E\exp\left(\lambda\int_S^T \beta(r)dr\right)\le 2^n\le 2^{1+(2 \lambda )^{1/\gamma}w(S,T)},
	\end{align*}
	completing the proof.	
\end{proof}
\begin{remark}\label{rmk.khas12}
	In the setting of \cref{lem.Khasminski}, if for each $(s,t)\in \Delta([S,T])$, $\rho(s,t)\le w_1(s,t)^{\gamma_1}+w_2(s,t)^{\gamma_2}$ for some continuous controls $w_1,w_2$ and some constants $0<\gamma_1\le \gamma_2$. Then we have
	\begin{align}\label{est.Khas12}
		\E\exp\left(\lambda\int_S^T \beta(r)dr\right)\le 2^{1+(4 \lambda)^{1/\gamma_1}\left(w_1(S,T)+w_2(S,T)^{\gamma_2/\gamma_1}\right)}.
	\end{align}
	Indeed, the function $w=w_1+w_2^{\gamma_2/\gamma_1}$ is a control (see \cite[Excersice 1.10]{MR2604669}) and we have $\rho(s,t)\le 2w(s,t)^{\gamma_1}$. Then \cref{lem.Khasminski}(iii) implies \eqref{est.Khas12}.
\end{remark}
\begin{remark}
	In \cref{lem.Khasminski}, we can assume without loss of generality that $\gamma\le 1$---for otherwise, condition \eqref{con.khas} implies the trivial identification $\beta\equiv0$. Furthermore, \cref{lem.Khasminski}(iii) implies that for every $\kappa>0$ and every $\rho\in(0,\frac1{1- \gamma})$, with $\frac1{1- \gamma}=\infty$ if $\gamma=1$, we have 
	\begin{align*}
		\E \exp\left(\kappa\left(\int_S^T \beta(r)dr\right)^\rho\right)<\infty.
	\end{align*}
	This follows from the same argument used in \cref{lem.expA} below.	
\end{remark}

	The next result is a kind of stochastic Gr\"onwall inequality, which is of independent interest.

	\begin{lemma}[Stochastic Gr\"onwall inequality]\label{lem.SGr\"onwall}
		Let $\xi_t,V_t$ be nonnegative nondecreasing processes,
		let $A_t$ be a continuous nondecreasing $\cff_t$-adapted process with $A_0=0$, and let $M_t$ be $\cff_t$-local martingale with $M_0=0$. Suppose that there exists a constant $\theta\in(0,\infty)$ such that with probability one,
		\begin{align}\label{Gron}
		\xi_t\leq \left(\int^t_0\xi_s^{1/ \theta}\dif A_s\right)^\theta+M_t+V_t,\quad \forall t\geq 0.
		\end{align}
		Then for any bounded stopping time $\tau$, we have
		\begin{gather}\label{est.gr\"onwall<1}
		 	\E 2^{-2^{1/\theta}A_\tau}\xi_\tau\le 2\E V_\tau \quad\text{when} \quad\theta\le1
		 	\\\shortintertext{and}
		 	\label{est.gr\"onwall>1}
		 	\E 2^{-2A_\tau^\theta}\xi_\tau\le 2\E V_\tau \quad\text{when} \quad\theta>1.
		\end{gather}		  
	\end{lemma}
	\begin{proof}
		We put $G=M+V$ and consider two cases.

		\textit{Case 1: when $\theta\le1$}. Define
		\begin{align*}
			\bar \xi_t=\left(\int^t_0\xi_s^{1/\theta}\dif A_s\right)^{\theta}+G_t
			\quad\text{so that}\quad 0\le \xi_t\le \bar \xi_t.
		\end{align*}
		We assume first that $M$ is a uniformly integrable martingale.
		For any $t\ge s\ge0$, we have
		\begin{align*}
			\delta \bar \xi_{s,t}
			=\left(\int^t_0\xi_r^{1/ \theta}\dif A_r\right)^{\theta}-\left(\int^s_0\xi_r^{1/ \theta}\dif A_r\right)^{\theta}+\delta G_{s,t}.
		\end{align*}
		We use the inequality $a^\theta-b^\theta\le (a-b)^\theta$ (valid for any $a\ge b\ge0$) to obtain from the previous identity that
		\begin{align}
			\delta \bar \xi_{s,t}
			\le\left(\int^t_s\xi_r^{1/ \theta}\dif A_r\right)^{\theta}+\delta G_{s,t}.
			\label{tmp.0926barxi}
		\end{align}
		
		Define $t_0=0$ and for each integer $j\ge1$, the stopping time
		\begin{align*}
		 	t_j=\inf\{t>t_{j-1}\ :\ A_t-A_{t_j-1}\ge2^{-1/\theta} \}.
		\end{align*}
		Let $j\ge1$ be fixed. For every $t\in[t_{j-1},t_j]$, we derive from \eqref{tmp.0926barxi} that
		\begin{align*}
			\delta\bar  \xi_{t_{j-1},t}
			\le \frac12\xi_{t}+\delta G_{t_{j-1},t}
			\le \frac12\bar\xi_{t}+\delta G_{t_{j-1},t}
		\end{align*}
		which yields $\bar \xi_{t}\le 2 \bar \xi_{t_{j-1}}+2 \delta G_{t_{j-1},t}$.
		By iteration and the fact that $\bar \xi_0=V_0$, we have
		\begin{align}\label{tmp.0926xitau}
			\bar \xi_{t}\le 2^jV_0+ \sum_{i=1}^{j}2^{j-i+1}\delta G_{t_{i-1},t_{i}\wedge t}, \quad \forall t\in[t_{j-1},t_j].
		\end{align}

		Next, let $\tau$ be a bounded stopping time and let $N$ be an (random) integer such that $t_N>\tau$. We have
		\begin{align*}
			2^{-2^{1/\theta}A_\tau}\bar \xi_\tau
			&=\sum_{j=1}^N\1_{[t_{j-1},t_j)}(\tau)2^{-2^{1/\theta}A_\tau}\bar \xi_\tau
			\le\sum_{j=1}^N\1_{[t_{j-1},t_j)}(\tau)2^{-(j-1)}\bar \xi_{\tau}.
		\end{align*}
		Using \eqref{tmp.0926xitau}, we have
		\begin{align*}
			2^{-2^{1/\theta}A_\tau}\bar \xi_\tau
			&\le2\sum_{j=1}^N \1_{[t_{j-1},t_j)}(\tau)\left(V_0+\sum_{i=1}^j2^{1-i}\delta G_{t_{i-1},\tau\wedge t_i}\right)
			\\&=2V_0+2\sum_{i=1}^N\sum_{j=i}^N\1_{[t_{j-1},t_j)}(\tau)2^{1-i}\delta G_{t_{i-1},\tau\wedge t_i}
			\\&=2V_0+2\sum_{i=1}^N\1_{[t_{i-1},t_N)}(\tau)2^{1-i}\delta G_{t_{i-1},\tau\wedge t_i},
		\end{align*}
		which is rewritten as
		\begin{multline}\label{tmp.0927Axi}
			2^{-2^{1/\theta}A_\tau}\bar \xi_\tau
			\le2V_0+2\sum_{i=1}^\infty\1_{[t_{i-1},\infty)}(\tau)2^{1-i}\delta V_{\tau\wedge t_{i-1},\tau\wedge t_i}
			\\+2\sum_{i=1}^\infty\1_{[t_{i-1},\infty)}(\tau)2^{1-i}\delta M_{\tau\wedge t_{i-1},\tau\wedge t_i}.
		\end{multline}
		By martingale property, boundedness of $\tau$ and uniform integrability, $\E | \delta M_{\tau\wedge t_{i-1},\tau\wedge t_i}|\le \E|M_{\tau\wedge t_{i-1}}|+\E|M_{\tau\wedge t_i}|\le 2\sup_{t\ge0}\E|M_t|<\infty$. 
		Hence, by Fubini theorem and martingale property,
		\begin{align*}
			\E \sum_{i=1}^\infty\1_{[t_{i-1},\infty)}(\tau)2^{1-i}\delta M_{\tau\wedge t_{i-1},\tau\wedge t_i}
			=\E \sum_{i=1}^\infty\1_{[t_{i-1},\infty)}(\tau)2^{1-i}\E(\delta M_{\tau\wedge t_{i-1},\tau\wedge t_i}|\cff_{\tau\wedge t_{i-1}})=0.
		\end{align*}
		Taking expectation in \eqref{tmp.0927Axi} gives
		\begin{align*}
			\E 2^{-2^{1/\theta}A_\tau}\bar \xi_\tau
			&\le \E\left(2 V_0+2\sum_{i=1}^\infty\1_{[t_{i-1},\infty)}(\tau)2^{1-i}\delta V_{\tau\wedge t_{i-1},\tau\wedge t_i} \right)
			\\&\le \E\left(2 V_0+2\sum_{i=1}^N\delta V_{\tau\wedge t_{i-1},\tau\wedge t_i} \right)=2\E V_\tau.
		\end{align*}
		In the general case when $M$ is a local martingale, let $\{\tau_n\}$ be a sequence of increasing stopping times such that $\lim_n \tau_n=\infty$ a.s. and for each $n$, $M_{\tau_n\wedge\cdot}$ is a uniformly integrable martingale. For a bounded stopping time $\tau$, the previous case implies that 
		\begin{align*}
			\E 2^{-2^{1/\theta}A_{\tau\wedge \tau_n}}\bar \xi_{\tau\wedge \tau_n}
			&\le2\E V_{\tau\wedge \tau_n}.
		\end{align*}
		Sending $n\to\infty$ yields \eqref{est.gr\"onwall<1}.

		\textit{Case 2: when $\theta>1$}. Using H\"older inequality and integration by parts
		\begin{align*}
			\left(\int_0^t \xi^{1/\theta} dA\right)^{\theta}\le \left(\int_0^t \xi dA \right)A_t^{\theta-1}
			=\int_0^t\int_0^s \xi dA (\theta-1)A_s^{\theta-2}dA_s+\int_0^t A^{\theta-1}\xi dA.
		\end{align*}		
		By monotonicity, $\int_0^s \xi dA \le \xi_s A_s$ so that
		\begin{align*}
		 	\int_0^t\int_0^s \xi dA (\theta-1)A_s^{\theta-2}dA_s\le \int_0^t \xi_s (\theta-1)A_s^{\theta-1}dA_s.
		 \end{align*}
		  Hence, we have
		  \begin{align*}
		 	\left(\int_0^t \xi^{1/\theta} dA\right)^{\theta}\le \int_0^t \xi_s \theta A_s^{\theta-1}dA_s=\int_0^t \xi_s dA^{\theta}_s.
		  \end{align*}
		  Together with \eqref{Gron}, we have
		  \begin{align*}
		  	\xi_t\le \int_0^t \xi dA^\theta+M_t+V_t, \quad \forall t\ge0.
		  \end{align*}
		  Using the result from the previous case, we obtain \eqref{est.gr\"onwall>1}.	
	\end{proof}
	\begin{remark}
		Stochastic Gr\"onwall inequality is useful in applications to obtain moment estimates for solutions to SDEs. Starting from \cite{MR3078830}, there have been several extensions,  for instance \cite{MR4260476,MR4079431,MR4165506}. The setting of \cref{lem.SGr\"onwall} is similar to that of \cite{MR4165506}, however, our result comes with less stringent hypotheses and stronger conclusions.  In particular,estimates \eqref{est.gr\"onwall<1} and \eqref{est.gr\"onwall>1} hold for any $\theta\in(0,\infty)$ and do not depend on the quadratic variation of the martingale part.
	\end{remark}

	\begin{lemma}\label{lem.intkn} Let $\varepsilon>0$, $s\in D_n$ and $r>s$. Then
		\begin{align}
			&\int_s^r(r- k_n(\theta))^{-1- \varepsilon}d \theta\le N_\varepsilon [\min(r-s,1/n)]^{-\varepsilon} +\1_{(r\notin D_n)}(r-k_n(r))^{-\varepsilon},
			\label{est.knminus}
			\\&\int_s^r(r- k_n(\theta))^{-1}d \theta\le \log(n(k_n(r)-s)) +2,
			\label{est.kn0}
			\\&\int_s^r(r- k_n(\theta))^{-1+\varepsilon}d \theta\le N_\varepsilon(r-s)^{\varepsilon}.
			\label{est.knplus}
		\end{align}		
	\end{lemma}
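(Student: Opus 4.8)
The plan is to reduce all three estimates to one elementary computation, exploiting that $k_n$ is constant on each interval of the grid $D_n$. Write $s=j_0/n$ and let $j_1\ge j_0$ be the integer with $k_n(r)=j_1/n$, so $j_1/n\le r<(j_1+1)/n$ and $r\in D_n$ precisely when $r=j_1/n$. Since $k_n$ is nondecreasing and $k_n(s)=s$, we have $k_n(\theta)=j/n$ for $\theta\in[j/n,(j+1)/n)$ with $j_0\le j\le j_1-1$, and $k_n(\theta)=j_1/n$ for $\theta\in[j_1/n,r]$. Hence, for any real $\beta$,
\[
  \int_s^r(r-k_n(\theta))^\beta\,d\theta=\sum_{j=j_0}^{j_1-1}\frac1n\,(r-\tfrac jn)^\beta+(r-\tfrac{j_1}{n})^{1+\beta},
\]
where the last term is read as $0$ when $r=j_1/n$ and the sum is empty when $j_1=j_0$. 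Taking $\beta\in\{-1-\varepsilon,\,-1,\,-1+\varepsilon\}$ gives the left-hand sides of \eqref{est.knminus}, \eqref{est.kn0} and \eqref{est.knplus} respectively.

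First I would dispatch the boundary term $(r-j_1/n)^{1+\beta}=(r-k_n(r))^{1+\beta}$: for $\beta=-1-\varepsilon$ it equals $(r-k_n(r))^{-\varepsilon}$, which is present only when $r\notin D_n$ and produces the second term of \eqref{est.knminus}; for $\beta=-1$ it is $\le 1$; for $\beta=-1+\varepsilon$ it is $(r-k_n(r))^\varepsilon\le(r-s)^\varepsilon$ because $k_n(r)\ge s$. For the sum I would use $r\ge j_1/n$, so $r-j/n\ge(j_1-j)/n$, and substitute $k=j_1-j$, setting $M:=j_1-j_0=n(k_n(r)-s)$; this bounds the sum by $\sum_{k=1}^{M}\tfrac1n(k/n)^\beta=n^{-1-\beta}\sum_{k=1}^M k^\beta$. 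The three cases then reduce to the elementary facts $\sum_{k\ge1}k^{-1-\varepsilon}\le 1+\tfrac1\varepsilon$, $\sum_{k=1}^M k^{-1}\le 1+\log M$, and $\sum_{k=1}^M k^{-1+\varepsilon}\le N_\varepsilon(1+M^\varepsilon)$, giving respectively $N_\varepsilon n^\varepsilon$, $1+\log M$ and $N_\varepsilon n^{-\varepsilon}(1+M^\varepsilon)$ as bounds for the sum.

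It remains to rewrite these in the stated form. If $j_1>j_0$ then $M\ge1$, hence $r\ge s+1/n$, so $\min(r-s,1/n)=1/n$ and $\log(n(k_n(r)-s))=\log M\ge0$; moreover $M/n=k_n(r)-s\le r-s$, whence $n^{-\varepsilon}\le(r-s)^\varepsilon$ and $n^{-\varepsilon}M^\varepsilon\le(r-s)^\varepsilon$. Combining with the boundary estimates yields \eqref{est.knminus}, \eqref{est.kn0} and \eqref{est.knplus}. If instead $j_1=j_0$, then $s<r<s+1/n$, so $r\notin D_n$, $k_n(r)=s$, the sum is empty and the integral equals $(r-s)^{1+\beta}$, i.e.\ $(r-s)^{-\varepsilon}=[\min(r-s,1/n)]^{-\varepsilon}$ for \eqref{est.knminus}, equal to $1\le2$ for \eqref{est.kn0}, and $(r-s)^\varepsilon$ for \eqref{est.knplus}; so all three hold (for \eqref{est.kn0} in this degenerate single-cell case one uses the trivial bound $1\le2$). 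The whole argument is elementary; the only points requiring care are isolating the terminal cell $[k_n(r),r]$ and handling this degenerate case where it is the only cell.
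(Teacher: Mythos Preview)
Your argument is correct and follows essentially the same cell-by-cell decomposition as the paper: both split the integral according to the grid $D_n$, bound the resulting sum $\sum_k k^\beta$, and isolate the final cell $[k_n(r),r]$. Your presentation is more unified, treating all three exponents at once; the paper instead handles \eqref{est.knplus} by the one-line observation $r-k_n(\theta)\ge r-\theta$, which is simpler (and also avoids the implicit restriction $\varepsilon<1$ in your monotonicity step $(r-j/n)^\beta\le((j_1-j)/n)^\beta$). Both proofs run into the same degeneracy for \eqref{est.kn0} when $s<r<s+1/n$: then $k_n(r)=s$ and $\log(n(k_n(r)-s))=-\infty$, so the stated bound is ill-posed there---this is an artifact of the lemma's formulation rather than a flaw in either argument.
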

	\begin{proof}
		If $r-s\le1/n$, we have
		\begin{align*}
			\int_s^r(r- k_n(\theta))^{-1- \varepsilon}d \theta=\int_s^r(r- s)^{-1- \varepsilon}d \theta=(r-s)^{-\varepsilon}.
		\end{align*}
		We now assume that $r-s>1/n$. If $r\in D_n$ then
		\begin{align*}
			\int_s^r(r- k_n(\theta))^{-1- \varepsilon}d \theta\le (1/n)^{-\varepsilon} \sum_{j=1}^\infty j^{-1- \varepsilon}=N(1/n)^{-\varepsilon}.
		\end{align*}
		If $r\notin D_n$, then we have $r>k_n(r)$ and
		\begin{align*}
			\int_s^r(r- k_n(\theta))^{-1- \varepsilon}d \theta
			&=\int_s^{k_n(r)}(r- k_n(\theta))^{-1- \varepsilon}d \theta+\int_{k_n(r)}^r(r- k_n(\theta))^{-1- \varepsilon}d \theta
			\\&\le N (1/n)^{-\varepsilon}+(r-k_n(r))^{-\varepsilon}.
		\end{align*}
		This shows \eqref{est.knminus}.

		When $\varepsilon=0$, we argue analogously. The only notable difference is the following estimate
		\begin{align*}
			\int_s^{k_n(r)}(r-k_n(\theta))^{-1}d \theta\le \sum_{j=1}^{n(k_n(r)-s)}j^{-1}\le \log(n(k_n(r)-s))+1.
		\end{align*}
		This shows \eqref{est.kn0}.

		Since $r-k_n(\theta)\ge r- \theta$, estimate \eqref{est.knplus} is obvious.
	\end{proof}
	\begin{lemma}[{\cite[Proposition 2.7]{DGL2021}}]\label{lem.pSig}
		Let $K>0$ be a constant and let $\Sigma,\bar \Sigma$ be symmetric invertible matrices such that $K^{-1}I\le \Sigma\bar \Sigma^{-1}\le KI$. Then for all $x\in\Rd$, one has the bound
		\begin{align}\label{est.psigma1}
			|p_\Sigma(x)-p_{\bar \Sigma}(x)|\le N\|I- \Sigma\bar \Sigma^{-1}\|\left(p_{\Sigma/2}(x)+p_{\bar \Sigma/2}(x)\right)
		\end{align}
		where $N$ is a constant depending only on $d, K$.
	\end{lemma}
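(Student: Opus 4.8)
The plan is to join the two Gaussians by a linear path of covariance matrices and differentiate along it. Since the hypothesis $K^{-1}I\le\Sigma\bar\Sigma^{-1}\le KI$ amounts to $K^{-1}\bar\Sigma\le\Sigma\le K\bar\Sigma$, both matrices are positive definite; put $\Sigma_\theta:=(1-\theta)\bar\Sigma+\theta\Sigma$ and $E:=\Sigma-\bar\Sigma$ for $\theta\in[0,1]$. A one-line computation then shows $K^{-1}\Sigma\le\Sigma_\theta\le K\Sigma$ and $K^{-1}\bar\Sigma\le\Sigma_\theta\le K\bar\Sigma$ for every such $\theta$, so in particular each $\Sigma_\theta$ is invertible with $\det\Sigma_\theta\ge K^{-d}\det\Sigma$ and $\det\Sigma_\theta\ge K^{-d}\det\bar\Sigma$, and $\langle\Sigma_\theta^{-1}x,x\rangle\ge K^{-1}\langle\Sigma^{-1}x,x\rangle$, and symmetrically with $\bar\Sigma$. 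Writing $p_\Sigma(x)-p_{\bar\Sigma}(x)=\int_0^1\partial_\theta p_{\Sigma_\theta}(x)\,d\theta$ and using $\partial_\theta\log\det\Sigma_\theta=\operatorname{tr}(\Sigma_\theta^{-1}E)$ together with $\partial_\theta\Sigma_\theta^{-1}=-\Sigma_\theta^{-1}E\Sigma_\theta^{-1}$, one finds
\[
\partial_\theta p_{\Sigma_\theta}(x)=\tfrac12\,p_{\Sigma_\theta}(x)\Bigl(\langle\Sigma_\theta^{-1}E\Sigma_\theta^{-1}x,\,x\rangle-\operatorname{tr}(\Sigma_\theta^{-1}E)\Bigr).
\]

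The next step is to estimate this derivative. Setting $B_\theta:=\Sigma_\theta^{-1/2}E\Sigma_\theta^{-1/2}$ (symmetric) and $u:=\Sigma_\theta^{-1/2}x$, the bracket equals $\langle B_\theta u,u\rangle-\operatorname{tr}B_\theta$, hence its absolute value is at most $\|B_\theta\|_{\mathrm{op}}(|u|^2+d)=\|B_\theta\|_{\mathrm{op}}(\langle\Sigma_\theta^{-1}x,x\rangle+d)$. To bound $\|B_\theta\|_{\mathrm{op}}$ by the specific quantity $\|I-\Sigma\bar\Sigma^{-1}\|$, write $A:=\bar\Sigma^{-1/2}\Sigma\bar\Sigma^{-1/2}$; then $E=\bar\Sigma^{1/2}(A-I)\bar\Sigma^{1/2}$, so $B_\theta=P_\theta(A-I)P_\theta^{*}$ with $P_\theta:=\Sigma_\theta^{-1/2}\bar\Sigma^{1/2}$, and $\|P_\theta\|_{\mathrm{op}}^{2}=\|\Sigma_\theta^{-1/2}\bar\Sigma\,\Sigma_\theta^{-1/2}\|_{\mathrm{op}}\le K$ since $\bar\Sigma\le K\Sigma_\theta$; thus $\|B_\theta\|_{\mathrm{op}}\le K\|A-I\|$. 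Finally $I-\Sigma\bar\Sigma^{-1}=\bar\Sigma^{1/2}(I-A)\bar\Sigma^{-1/2}$, and for any symmetric matrix $X$ and any symmetric positive definite $M$ one has $\|MXM^{-1}\|\ge\|X\|$ in Hilbert--Schmidt norm --- diagonalise $M$ by an orthogonal change of basis and apply the arithmetic--geometric mean inequality to each pair of conjugate off-diagonal entries --- so taking $M=\bar\Sigma^{1/2}$ and $X=I-A$ gives $\|A-I\|\le\|I-\Sigma\bar\Sigma^{-1}\|$ and therefore $\|B_\theta\|_{\mathrm{op}}\le K\|I-\Sigma\bar\Sigma^{-1}\|$. (Equivalently, this is the statement that the claim may be reduced to the case $\bar\Sigma=I$ through the substitution $x\mapsto\bar\Sigma^{-1/2}x$.)

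Putting the pieces together, $|\partial_\theta p_{\Sigma_\theta}(x)|\le N_{d,K}\|I-\Sigma\bar\Sigma^{-1}\|\,(\det\Sigma_\theta)^{-1/2}(\langle\Sigma_\theta^{-1}x,x\rangle+d)\,e^{-\frac12\langle\Sigma_\theta^{-1}x,x\rangle}$; absorbing the polynomial prefactor via $(t+d)e^{-t/2}\le N_d\,e^{-t/4}$ and inserting the two-sided bounds on $\det\Sigma_\theta$ and on $\langle\Sigma_\theta^{-1}x,x\rangle$ from the first paragraph, the right-hand side is controlled, up to a constant depending only on $d$ and $K$, by $\|I-\Sigma\bar\Sigma^{-1}\|\bigl(p_{\Sigma/2}(x)+p_{\bar\Sigma/2}(x)\bigr)$; integrating in $\theta$ over $[0,1]$ then yields \eqref{est.psigma1}. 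The one non-routine ingredient is the estimate $\|B_\theta\|_{\mathrm{op}}\le K\|I-\Sigma\bar\Sigma^{-1}\|$: the cofactor that emerges from differentiating the Gaussian must be measured precisely by the norm $\|I-\Sigma\bar\Sigma^{-1}\|$ appearing in the statement, and it is the Hilbert--Schmidt monotonicity under the conjugation $X\mapsto MXM^{-1}$ (for symmetric $X$ and symmetric positive definite $M$) that makes this possible; the surrounding Gaussian bookkeeping is entirely standard.
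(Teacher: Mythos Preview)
The paper does not give a proof; it simply cites \cite[Proposition~2.7]{DGL2021}. Your interpolation argument---joining $\bar\Sigma$ to $\Sigma$ linearly and differentiating the Gaussian along the path---is the standard self-contained proof of this type of estimate and is almost certainly the one in the cited reference as well. The derivative formula, the two-sided comparison $K^{-1}\Sigma\le\Sigma_\theta\le K\Sigma$, and the key inequality $\|B_\theta\|_{\mathrm{op}}\le K\|I-\Sigma\bar\Sigma^{-1}\|$ via the Hilbert--Schmidt monotonicity of $X\mapsto MXM^{-1}$ for symmetric $X$ and positive definite $M$ are all correct.

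There is, however, a slip in the very last line. After the absorption $(t+d)e^{-t/2}\le N_d e^{-t/4}$ with $t=\langle\Sigma_\theta^{-1}x,x\rangle$ you are left with $(\det\Sigma_\theta)^{-1/2}e^{-\frac14\langle\Sigma_\theta^{-1}x,x\rangle}$; feeding in $\Sigma_\theta^{-1}\ge K^{-1}\Sigma^{-1}$ gives at best $e^{-\frac1{4K}\langle\Sigma^{-1}x,x\rangle}$, i.e.\ a constant multiple of $p_{2K\Sigma}(x)$, \emph{not} $p_{\Sigma/2}(x)$, whose exponent $e^{-\langle\Sigma^{-1}x,x\rangle}$ decays far faster and cannot dominate the former for large $|x|$. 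In fact, with the convention $p_\Sigma=$ density of $\mathcal N(0,\Sigma)$ implicit in your derivative formula, the inequality \eqref{est.psigma1} as literally stated is false (take $\Sigma=(1+\varepsilon)\bar\Sigma$ and let $|x|\to\infty$). This is a normalisation mismatch between the present paper and \cite{DGL2021} rather than a defect of your argument: what you actually prove is the bound with $p_{c\Sigma}+p_{c\bar\Sigma}$ for some $c=c(K)>1$, which is the correct estimate and precisely what is used downstream (the paper only ever needs ``$p_{c(t-r)}$ for some universal constant $c$'').
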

	

\section{Regularizing properties of Brownian paths---a case study}\label{sec.BM}
We obtain various moment estimates for the following functionals of Brownian paths
\begin{align*}
	\int_s^t [f(r,B_r)-f(r,B_{k_n(r)})]dr
	\tand
	\int_s^t g(r,B_r)dr
\end{align*}
where $f,g$ are measurable functions in $ \LL^q_p([0,1])$. In typical applications herein, we take $f=b^n$ and $g=b-b^n$. Hence,  $f$ usually has an additional property of being in $\LL^q_\infty([0,1])$. 
To extract a rate from $b-b^n$, one has to measure $g$ with respect to a norm which is weaker than that of $\LL^q_p$. For this purpose, we  measure $g$ in $\LL^q_{-\nu,p}([0,1])$ for some $\nu\in[0,1]$ or in $\LL^{q_1}_{p_1}([0,1])$ for some $q_1\le q$ and some $p_1\le p$.

In later sections, analogous functionals of the solutions to \eqref{sde00} and \eqref{eqn.EMscheme} will play a central role in the proofs of \cref{thm.main,thm.alpha}. 
While not being applied directly in proving the main results, the analysis in the current section are relatively simpler, mostly due to the fact that statistical properties of Brownian motion are well-understood. In addition, some of the arguments generalizes directly when $B$ is replaced by another stochastic process which has similar analytic estimates. 
Therefore, we present these results at an early stage in the hope of easing out the technicalities and outlining our method. Readers who are familiar with the stochastic sewing techniques, of course, may go directly to the following sections.

Let $p_t(x):=(2 \pi t)^{-d/2}e^{-|x|^2/(2t)}$ and $P_{s,t}f(x):=p_{t-s}\ast f(x)$.

\begin{lemma} \label{lem.heat1}
	Let $p\in[1,\infty]$, $\delta\in(0,1)$, for $0<s<t$, there exists $N=N(d,p,\delta)>0$ such that for any $f\in L_p(\mathbb{R}^d)$ and $0<s<t$,
	\begin{align}\label{BMest}
	\Vert f(B_t)\Vert_{L_p(\Omega)}\leq N t^{-\frac{d}{2p}}\Vert f\Vert_{L_p(\mathbb{R}^d)}
	\end{align}
	and
    \begin{align}\label{heat-difference}
    \Vert P_{0,t} f-P_{0,s} f\Vert_{L_p(\mathbb{R}^d)}\leq N|t-s|^\delta|s|^{-\delta}\Vert f\Vert_{L_p(\mathbb{R}^d)}.
    \end{align}
\end{lemma}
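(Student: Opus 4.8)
The plan is to prove \eqref{BMest} first and then derive \eqref{heat-difference} from it. For \eqref{BMest}, the key observation is that $B_t$ has the explicit Gaussian density $p_t$, so that
\[
	\E|f(B_t)|^p=\int_{\Rd}|f(x)|^p\,p_t(x)\,dx\le \|p_t\|_{L_\infty(\Rd)}\int_{\Rd}|f(x)|^p\,dx
	= N\,t^{-d/2}\|f\|_{L_p(\Rd)}^p,
\]
using $\|p_t\|_{L_\infty(\Rd)}=(2\pi t)^{-d/2}$; taking $p$-th roots gives \eqref{BMest} with the stated power $t^{-d/(2p)}$. The case $p=\infty$ is immediate since $|f(B_t)|\le\|f\|_{L_\infty(\Rd)}$ a.s., and no time factor is needed there.

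For \eqref{heat-difference}, I would start from the semigroup identity $P_{0,t}f-P_{0,s}f=(P_{s,t}-\id)P_{0,s}f$ (valid for $0<s<t$), and then exploit the standard smoothing estimates of the heat semigroup. Concretely, writing $g=P_{0,s}f$, one has the $L_p$-contraction $\|P_{s,t}g-g\|_{L_p(\Rd)}\le N\,(t-s)\,\|\Delta g\|_{L_p(\Rd)}$ when $g$ is smooth enough, and this combines with the gain-of-two-derivatives estimate $\|\Delta P_{0,s}f\|_{L_p(\Rd)}\le N\,s^{-1}\|f\|_{L_p(\Rd)}$; interpolating between this and the trivial bound $\|P_{0,t}f-P_{0,s}f\|_{L_p(\Rd)}\le 2\|f\|_{L_p(\Rd)}$ yields the exponent $\delta\in(0,1)$ with the matching loss $s^{-\delta}$. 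Equivalently, and perhaps more cleanly, one can use that $\|(\id-P_{s,t})h\|_{L_p(\Rd)}\le N\,(t-s)^\delta\,\|(-\Delta)^\delta h\|_{L_p(\Rd)}$ together with the Bessel-potential smoothing bound $\|(-\Delta)^\delta P_{0,s}f\|_{L_p(\Rd)}\le N\,s^{-\delta}\|f\|_{L_p(\Rd)}$, which gives \eqref{heat-difference} directly. Both routes rely only on Young's convolution inequality applied to the derivatives of the Gaussian kernel, e.g. $\|\nabla p_r\|_{L_1(\Rd)}\le N\,r^{-1/2}$ and $\|\Delta p_r\|_{L_1(\Rd)}\le N\,r^{-1}$, followed by scaling.

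I do not expect a serious obstacle here; this is a classical heat-kernel computation. The only point requiring mild care is the endpoint behavior: for $p=\infty$ one uses $L_\infty$-contractivity of convolution against a probability density, and for $p=1$ one uses the $L_1$-bounds on the Gaussian and its derivatives — in all cases Young's inequality with the kernel in $L_1$ does the job uniformly in $p\in[1,\infty]$. One should also make sure the interpolation producing the fractional exponent $\delta$ is carried out at the level of $L_p(\Rd)$-norms (so no probabilistic input is needed for \eqref{heat-difference} at all), with the constant $N$ depending only on $d,p,\delta$ as claimed.
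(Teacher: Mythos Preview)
Your proposal is correct; both parts rely on the same Gaussian-kernel bounds the paper uses. For \eqref{BMest} your argument is exactly the standard one (which the paper simply cites). For \eqref{heat-difference} the paper takes a slightly more direct path than either of your two routes: it writes
\[
P_{0,t}f-P_{0,s}f=\int_s^t \partial_r P_{0,r}f\,dr=\int_s^t \Delta P_{0,r}f\,dr,
\]
uses $\|\Delta P_{0,r}f\|_{L_p(\Rd)}\lesssim r^{-1}\|f\|_{L_p(\Rd)}$ (from $\|\Delta p_r\|_{L_1(\Rd)}\lesssim r^{-1}$ and Young, just as you note), and then splits $r^{-1}=r^{\delta-1}\cdot r^{-\delta}\le r^{\delta-1}s^{-\delta}$ before integrating. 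Your Route~A (semigroup factorization plus interpolation between the $\delta=0$ and $\delta=1$ bounds) and Route~B (fractional smoothing) are equivalent repackagings of the same estimate; the paper's version just avoids invoking interpolation or fractional powers explicitly.
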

\begin{proof}
	Inequality \eqref{BMest} is taken from \cite[Lemma 2.5]{DGL2021}. We only show \eqref{heat-difference}.
	First
	\begin{align*}
	\Vert \nabla^2P_tf\Vert_{L_p(\mathbb{R}^d)}\leq &\Vert \nabla^2p_t\Vert_{L_{1}(\mathbb{R}^d)}\Vert f\Vert_{L_p(\mathbb{R}^d)}\les t^{-1}\Vert f\Vert_{L_p(\mathbb{R}^d)}.
	\end{align*}
	Then for $\delta\in(0,1)$, we have
	\begin{align*}
	\Vert P_{0,t} f-P_{0,s} f\Vert_{L_p(\mathbb{R}^d)}\leq &\int_s^t\Vert\partial_tP_{0,r}f\Vert_{L_p(\mathbb{R}^d)}\dif r =\int_s^t\Vert\Delta P_{0,r}f\Vert_{L_p(\mathbb{R}^d)}\dif r\\ \lesssim  &\int_s^t r^{-1+\delta }r^{-\delta}\dif r\Vert f\Vert_{L_p(\mathbb{R}^d)}\les s^{-\delta}(t-s)^{\delta}\Vert f\Vert_{L_p(\mathbb{R}^d)},
	\end{align*}
	completing the proof.
\end{proof}
While not being used directly, the following result is pivotal.
\begin{proposition}\label{prop.B1}
	Let $f\in\mathbb{L}_p^q([0,1])$, with $p,q\in[2,\infty)$ satisfying $\frac{d}{p}+\frac{2}{q}<1$.
	Then for all $2/n\le S\le T\le1$ and $n\in\mathbb{N}$ one has the bounds
	\begin{multline}\label{Davieest}
		\Vert \int_S^T(f(r,B_r)-f(r,B_{k_n(r)}))\dif r\Vert_{L_p(\Omega)}
		\\\leq N(1/n)^{\frac12}\log(n) \Vert f\Vert_{\mathbb{L}_p^q([S,T])}\left[S^{-\frac d{2p}} |T-S|^{\frac{1}{2}-\frac1q}+S^{-\frac d{p}} |T-S|^{1-\frac2q}\right]
	\end{multline}
	and
	\begin{align}\label{est.qdt2}
	\Vert \int_S^T(f(r,B_r)-f(r,B_{k_n(r)}))\dif r\Vert_{L_p(\Omega)}
	\leq N(1/n)^{\frac12}\log(n) \Vert f\Vert_{\mathbb{L}_p^q([S,T])} |T-S|^{\frac{1}{2}-\frac1q-\frac d{2p}}.
	\end{align}
\end{proposition}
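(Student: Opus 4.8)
The plan is to prove the weighted bound \eqref{Davieest} by stochastic sewing, and then deduce the cleaner bound \eqref{est.qdt2} from it via \cref{lem.Bellising}. First I would reduce \eqref{Davieest} to the case $S,T\in D_n$: peeling off the two boundary mesh cells $[S,(k_n(S)+1)/n]$ and $[k_n(T),T]$ (each of length $\le 1/n$, on which $k_n(r)\ge S/2$ since $S\ge 2/n$), inequality \eqref{BMest} bounds the corresponding integrals by $\lesssim S^{-d/(2p)}\|f\|_{\mathbb{L}_p^q(\mathrm{cell})}(1/n)^{1-1/q}$, which is absorbed into the right-hand side of \eqref{Davieest} because $q\ge2$.

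Assuming $S,T\in D_n$, I would split $f(r,B_r)-f(r,B_{k_n(r)})=\Phi_r+\Psi_r$, where $\Phi_r:=f(r,B_r)-P_{k_n(r),r}f_r(B_{k_n(r)})$ and $\Psi_r:=\big((P_{k_n(r),r}-\mathrm{id})f_r\big)(B_{k_n(r)})$. Since $P_{k_n(r),r}f_r(B_{k_n(r)})=\E_{k_n(r)}f(r,B_r)$, the cell integrals of $\Phi$ have vanishing conditional mean, so $\int_S^T\Phi_r\,dr$ is a sum of martingale increments over the mesh cells; a conditional Burkholder--Davis--Gundy estimate together with \eqref{BMest} (each cell integral having $L_p(\Omega)$-norm $\lesssim (j/n)^{-d/(2p)}\|f\|_{\mathbb{L}_p^q(\mathrm{cell}_j)}(1/n)^{1-1/q}$), H\"older's inequality in the cell index, and the elementary bound $\sum_{j:\,j/n\in[S,T]}(j/n)^{-\frac{dq}{p(q-2)}}\lesssim S^{-\frac{dq}{p(q-2)}}\,n(T-S)$ should give $\|\int_S^T\Phi_r\,dr\|_{L_p(\Omega)}\lesssim (1/n)^{1/2}S^{-d/(2p)}(T-S)^{1/2-1/q}\|f\|_{\mathbb{L}_p^q([S,T])}$, which is exactly the first term of \eqref{Davieest} (the case $q=2$ being handled by the cruder estimate $\sum_j(j/n)^{-d/p}\int_{\mathrm{cell}_j}\|f_r\|_{L_p}^2\le S^{-d/p}\|f\|^2_{\mathbb{L}_p^2([S,T])}$).

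For the term $\mathcal A_{s,t}:=\int_s^t\Psi_r\,dr$ I would apply \cref{lem.Davie_iteration} with $v=0$, $m=p$, to the process $J_{s,t}:=\mathcal A_{s,t}-A_{s,t}$, where $A_{s,t}:=\E_s\mathcal A_{s,t}=\int_s^t\big[(P_{s,r}-P_{s,k_n(r)})f_r\big](B_s)\,dr$ (using $k_n(r)\ge s$ for $r\ge s\in D_n$ and the Markov property). Then $\E_sJ_{s,t}=0$ and $\E_s\delta J_{s,u,t}=0$, so $C_1=\Gamma_1=0$. The key step is the germ bound: factoring $P_{s,r}-P_{s,k_n(r)}=P_{s,k_n(r)}(P_{k_n(r),r}-\mathrm{id})$, applying \eqref{BMest} to $B_s$ (whose density is $\le NS^{-d/2}$) followed by the smoothing inequality \eqref{heat-difference} with $r-k_n(r)\le1/n$, and summing over the mesh cells (cf.\ \cref{lem.intkn}), I expect for a suitable $\delta\in(0,1)$ a bound of the form $\|A_{s,t}\|_{L_p(\Omega)}\lesssim (1/n)^{1/2}\log(n)\,\|f\|_{\mathbb{L}_p^q([s,t])}\big(S^{-d/(2p)}(t-s)^{1/2-1/q}+S^{-d/p}(t-s)^{1-2/q}\big)$, the logarithm arising in the borderline case $q=2$. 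This gives $\|\delta J_{s,u,t}\|_{L_p(\Omega)}\le2\|A_{u,t}\|_{L_p(\Omega)}\le\Gamma_2 w(s,t)^{1/2}$ with $\Gamma_2\asymp(1/n)^{1/2}\log n$ and a suitable continuous control $w$ assembled, via \cref{ex.control}, from $\|f\|^q_{\mathbb{L}_p^q([\cdot])}$ and powers of $(t-s)$; combined with a crude a priori bound $\|J_{s,t}\|_{L_p(\Omega)}\le C_2 w(s,t)^{1/2+\varepsilon}$ coming from \eqref{BMest} (reducing first to bounded $f$ by density if needed), \cref{lem.Davie_iteration} and \cref{lem.lenglart} give $\|\mathcal A_{S,T}\|_{L_p(\Omega)}$ bounded by a constant times $\Gamma_2(1+|\log\Gamma_2|)w(S,T)^{1/2}$ plus lower-order terms, i.e.\ the right-hand side of \eqref{Davieest}.

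Finally, \eqref{est.qdt2} follows from \eqref{Davieest}: applying the latter on subintervals $[s,t]\subset[S,T]$ with $s\ge 2/n$ and bounding $\|f\|_{\mathbb{L}_p^q([s,t])}\le\|f\|_{\mathbb{L}_p^q([S,T])}$, the process $Y_t:=\int_0^t[f(r,B_r)-f(r,B_{k_n(r)})]\,dr$ obeys $\|Y_t-Y_s\|_{L_p(\Omega)}\le C_1 s^{-d/(2p)}(t-s)^{1/2-1/q}+C_2 s^{-d/p}(t-s)^{1-2/q}$ with $C_1=C_2=N(1/n)^{1/2}\log n\,\|f\|_{\mathbb{L}_p^q([S,T])}$. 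Since $\frac dp+\frac2q<1$, both $\frac12-\frac1q-\frac d{2p}$ and $1-\frac2q-\frac dp$ are strictly positive, so \cref{lem.Bellising} (with $s_{-1}=2/n$) upgrades this to $\|Y_t-Y_s\|\lesssim C_1(t-s)^{\frac12-\frac1q-\frac d{2p}}+C_2(t-s)^{1-\frac2q-\frac dp}$; and since $1-\frac2q-\frac dp\ge\frac12-\frac1q-\frac d{2p}>0$ and $t-s\le1$, the second term is dominated by the first, which yields \eqref{est.qdt2}. The main obstacle throughout is the appearance of the critical exponent $\tfrac12$ in the germ bound (sharp already when $q=2$): this is exactly the borderline for stochastic sewing, below the $\tfrac12+\varepsilon$ required by the classical sewing lemmas, and is why the critical-exponent version \cref{lem.Davie_iteration}, together with a careful control-function construction, is needed --- at the price of the $\log n$ factor.
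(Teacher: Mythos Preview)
Your overall architecture (sewing for the germ, then \cref{lem.Bellising}) matches the paper's, but the execution is more involved than necessary and contains a genuine confusion about where the $\log n$ comes from.

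\textbf{The paper's route.} There is no need for the $\Phi/\Psi$ split, the preliminary reduction to $S,T\in D_n$, or the separate BDG argument. The paper sews the \emph{full} increment directly: for arbitrary $S\le s\le t\le T$ set $A_{s,t}:=\E_s\int_s^t[f(r,B_r)-f(r,B_{k_n(r)})]\,dr$, and handle the first cell-and-a-half by a crude bound (your ``Case 1'' when $t\le k_n(s)+2/n$); for $r\ge k_n(s)+2/n$ one has $k_n(r)\ge s$ and the Markov property gives $\E_s[f(r,B_r)-f(r,B_{k_n(r)})]=[P_{s,r}f_r-P_{s,k_n(r)}f_r](B_s)$. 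Then \eqref{heat-difference} with $\delta=1/2$ and H\"older yield the clean germ bound
\[
\|A_{s,t}\|_{L_p(\Omega)}\lesssim (1/n)^{1/2}\,s^{-d/(2p)}\|f\|_{\LL^q_p([s,t])}(t-s)^{1/2-1/q},
\]
with \emph{no} logarithm. Your formula for $A_{s,t}$ is only valid at grid points $s\in D_n$; the two-case split is precisely how the paper avoids this restriction while still applying \cref{lem.Davie_iteration} over all of $[S,T]$.

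\textbf{The logarithm.} Under the stated hypotheses $d/p+2/q<1$ with $d\ge1$, the case $q=2$ is vacuous (it would force $d/p<0$), so $q>2$ always and the H\"older step in the germ bound produces $(t-s)^{1/2-1/q}$ without any borderline log. The $\log n$ does \emph{not} live in the germ; it is produced by the critical sewing lemma itself, via the factor $\Gamma_2(1+|\log\Gamma_2|)$ with $\Gamma_2\asymp(1/n)^{1/2}$. Your choice $\Gamma_2\asymp(1/n)^{1/2}\log n$ would instead yield $(1/n)^{1/2}(\log n)^2$ in the output --- too much. Set $\Gamma_2=(1/n)^{1/2}$ and take $C_2$ from the crude bound $\|J_{s,t}\|\lesssim s^{-d/(2p)}\|f\|_{\LL^q_p([s,t])}(t-s)^{1-1/q}\le w(s,t)$, exactly as in the paper. (The appeal to \cref{lem.lenglart} is also unnecessary here: one just uses the triangle inequality $\|\delta\mathcal A_{s,t}\|\le\|J_{s,t}\|+\|A_{s,t}\|$ after sewing.)
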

\begin{proof}
	\eqref{est.qdt2} is a direct consequence of \eqref{Davieest} and \cref{lem.Bellising}.
	We show \eqref{Davieest} below.
	Let $2/n\le S\le T\le 1$ be fixed. By linearity, we can assume that $\|f\|_{\LL^q_p([S,T])}=1$.
	For $S\le s\le t\le T$, let
	\begin{align*}
	A_{s,t}:=\E_s\int_s^t(f(r,B_r)-f(r,B_{k_n(r)}))\dif r.
	\end{align*}
	We treat two cases $t\leq k_n(s)+\frac{2}{n}$ and $t>k_n(s)+\frac{2}{n}$ separately as following.
	
	\textit{Case 1.} For $t\in(s,k_n(s)+\frac{2}{n}]$, by triangle inequality and \eqref{BMest} we have
	\begin{align*}
	\|A_{s,t}\|_{L_p(\Omega)}
	&\leq \int_s^t\Vert f(r,B_r)\Vert_{L_p(\Omega)}+\Vert f(r,B_{k_n(r)})\Vert_{L_p(\Omega)} \dif r
	\\&\lesssim \int_s^t k_n(r)^{-\frac{d}{2p}}\Vert f(r,\cdot)\Vert_{L_p(\mathbb{R}^d)}\dif r.
	\end{align*}
	Note that $k_n(r)\ge k_n(s)\ge s/2$, applying H\"older inequality and the fact that $t-s\le2/n$, we have
	\begin{align*}
		\int_s^t k_n(r)^{-\frac{d}{2p}}\Vert f(r,\cdot)\Vert_{L_p(\mathbb{R}^d)}\dif r
		&\les s^{-\frac d{2p}}\|f\|_{\LL^q_p([s,t])}(t-s)^{1-\frac1q}
		\\&\les (1/n)^{1/2} s^{-\frac d{2p}}\|f\|_{\LL^q_p([s,t])}(t-s)^{\frac12-\frac1q}.
	\end{align*}
	This gives
	\begin{align}\label{est1}
		\|A_{s,t}\|_{L_p(\Omega)}\les (1/n)^{\frac12} s^{-\frac d{2p}}\|f\|_{\LL^q_p([s,t])}(t-s)^{\frac12-\frac1q}.
	\end{align}
	
	\textit{Case 2.} When $t\in(k_n(s)+\frac{2}{n},1]$, by triangle inequality,
	\begin{align*}
	\|A_{s,t}\|_{L_p(\Omega)}&\le\int_s^{k_n(s)+\frac{2}{n}}\Vert \E_s(f(r,B_r)-f(r,B_{k_n(r)}))\Vert_{L_p(\Omega)} \dif r
	\\&\quad+\int_{k_n(s)+\frac{2}{n}}^t\Vert \E_s(f(r,B_r)-f(r,B_{k_n(r)}))\Vert_{L_p(\Omega)} \dif r=:I_1+I_2.
	\end{align*}
	For $I_1$, from \eqref{est1} we know that
	\begin{align*}
	I_1\lesssim (1/n)^{\frac12} s^{-\frac d{2p}}\|f\|_{\LL^q_p([s,t])}\left(k_n(s)-s+\frac2n\right)^{\frac12-\frac1q}.
	\end{align*}
	Because  $k_n(s)-s+\frac2n\le t-s$,
	we get
	\begin{align*}
		I_1\lesssim (1/n)^{\frac12} s^{-\frac d{2p}}\|f\|_{\LL^q_p([s,t])}(t-s)^{\frac12-\frac1q}.
	\end{align*}
	By \eqref{BMest} and \eqref{heat-difference} we have for $I_2$
	\begin{align*}
	I_2&\lesssim\int_{k_n(s)+\frac{2}{n}}^t \Vert P_{s,r}f(r,B_s)-P_{s,k_n(r)}f(r,B_s)\Vert_{L_p(\Omega)}\dif r
	\\&\lesssim\int_{k_n(s)+\frac{2}{n}}^ts^{-\frac{d}{2p}} \Vert P_{s,r}f(r,\cdot)-P_{s,k_n(r)}f(r,\cdot)\Vert_{L_p(\mathbb{R}^d)}\dif r
	\\&\lesssim \int_{k_n(s)+\frac{2}{n}}^ts^{-\frac{d}{2p}} n^{-\frac{1}{2}}(k_n(r)-s)^{-\frac{1}{2}}\Vert f(r,\cdot)\Vert_{L_p(\mathbb{R}^d)}\dif r
	\\&\lesssim (1/n)^{\frac{1}{2}}s^{-\frac{d}{2p}}\|f\|_{\LL^q_p([s,t])}(t-s)^{\frac{1}{2}-\frac{1}{q}}.
	\end{align*}
	Combining these two cases together we obtain that for $S\le s\le t\le T$,
	\begin{align}\label{A}
	\Vert A_{s,t}\Vert_{L_p(\Omega)}\lesssim (1/n)^{\frac{1}{2}}s^{-\frac{d}{2p}}\|f\|_{\LL^q_p([s,t])}(t-s)^{\frac{1}{2}-\frac{1}{q}}.
	\end{align}
	Furthermore, for $u\in(s,t)$, we have $\E_s \delta A_{s,u,t}=0$.
	Let $w$ be the continuous control defined by
	\begin{align*}
		w(s,t)=\left[s^{-\frac d{2p}}\|f\|_{\LL^q_p([s,t])}(t-s)^{\frac12-\frac 1q}\right]^2
		+s^{-\frac d{2p}}\|f\|_{\LL^q_p([s,t])}(t-s)^{1-\frac1q}.
	\end{align*}
	(See \cref{ex.control} for  a justification that $w$ is a control.)
	Denote
	\begin{align*}
	 \mathcal{A}_{t}:=\int_0^t(f(r,B_r)-f(r,B_{k_n(r)}))\dif r,\quad J_{s,t}:=\delta\mathcal{A}_{s,t}-A_{s,t}.
	\end{align*}
	Using similar estimates leading to \eqref{est1}, we have
	\begin{align*}
		\|J_{s,t}\|_{L_p(\Omega)}\les s^{-\frac d{2p}}\|f\|_{\LL^q_p([s,t])}(t-s)^{1-\frac1q}\les w(s,t).
	\end{align*}
	Furthermore, $\delta J_{s,u,t}=-\delta A_{s,u,t}$ and we derive from \eqref{A} that
	\begin{align*}
		\|\delta J_{s,u,t}\|_{L_p(\Omega)}\les(1/n)^{\frac12} w(s,t)^{\frac12}.
	\end{align*}
	It is obvious that $\E_s J_{s,t}=0$ and hence $\E_s \delta J_{s,u,t}=0$.
	Applying \cref{lem.Davie_iteration}, we have
	\begin{align*}
		\|J_{s,t}\|_{L_p(\Omega)}\les (1/n)^{\frac12}\log(n)\left[w(s,t)^{\frac12}+w(s,t)\right]
	\end{align*}
	for every $S\le s\le t\le T$.
	By triangle inequality and \eqref{A}, this implies that
	\begin{align*}
		\|\delta\caa_{s,t}\|_{L_p(\Omega)}
		\les (1/n)^{\frac12}\log(n)\left[w(s,t)^{\frac12}+w(s,t)\right].
	\end{align*}
	Because $\|f\|_{\LL^q_p([s,t])}\le\|f\|_{\LL^q_p([S,T])}=1$ and $t-s\le 1$, we have $w(s,t)\le2 s^{-\frac dp}(t-s)^{1-\frac2q}$. Hence, we deduce \eqref{Davieest} from the above estimate.
\end{proof}
In the following result, the $L_p(\Omega)$-norm in \eqref{est.qdt2} is improved to $L_p(\Omega|\cff_v)$-norm.
\begin{proposition}
	Let $f\in\mathbb{L}_p^q([0,1])$, with $p,q\in[2,\infty)$ satisfying $\frac{d}{p}+\frac{2}{q}<1$. Let $v\in[0,1-2/n]$ be a fixed number. Then for all $v+2/n\le S\le T\le 1$ and all $n$, one has the bound
	\begin{align}\label{est.qdt3}
		\|\int_S^T[f(r,B_r)-f(r,B_{k_n(r)})]dr\|_{L_p(\Omega|\cff_v)}\le N(1/n)^{\frac12}\log(n)\|f\|_{\LL^q_p([S,T])}|T-S|^{\frac12-\frac1q-\frac d{2p}},
	\end{align}
	where $N=N(p,d)$ is a constant.
\end{proposition}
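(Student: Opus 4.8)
The strategy is to rerun the stochastic-sewing proof of \cref{prop.B1}, systematically replacing the unconditional norm $\|\cdot\|_{L_p(\Omega)}$ by the conditional norm $\|\cdot\|_{L_p(\Omega|\cff_v)}$. This transfer is routine once two points are noticed. First, \eqref{BMest} has a conditional analogue: for $g\in L_p(\Rd)$ and $t>v$,
\begin{equation*}
	\|g(B_t)\|_{L_p(\Omega|\cff_v)}\le N\,(t-v)^{-\frac d{2p}}\|g\|_{L_p(\Rd)},
\end{equation*}
since $\E\big(|g(B_t)|^p\,\big|\,\cff_v\big)=\big(p_{t-v}\ast|g|^p\big)(B_v)\le\|p_{t-v}\|_{L_\infty(\Rd)}\|g\|_{L_p(\Rd)}^p$; used with $g=P_{s,r}f(r,\cdot)-P_{s,k_n(r)}f(r,\cdot)$ together with \eqref{heat-difference}, it replaces the appeal to \eqref{BMest}--\eqref{heat-difference} in Case~2 of \cref{prop.B1}. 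Second, since $v\le S-2/n<s$ throughout the relevant range, $\cff_v\subset\cff_s$; conditional Jensen then lets one pull $\E_s$ out of $L_p(\Omega|\cff_v)$-norms, and the identities $\E_sJ_{s,t}=0$, $\E_s\delta A_{s,u,t}=0$ from \cref{prop.B1} survive the extra conditioning.

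Concretely, on the fixed interval $[v+2/n,1]$ I would set $A_{s,t}=\E_s\int_s^t(f(r,B_r)-f(r,B_{k_n(r)}))\,\dif r$ for $v+2/n\le s\le t\le1$ and repeat the two-case split of \cref{prop.B1}. The only new bookkeeping concerns the singular weight: for $r\ge s$ one has $(r-v)^{-\frac d{2p}}\le(s-v)^{-\frac d{2p}}$, and since $k_n(r)\ge k_n(s)>s-1/n$ while $s-v\ge2/n$ forces $1/n\le\tfrac12(s-v)$, also $k_n(r)-v>\tfrac12(s-v)$; hence every heat-kernel factor is controlled by a multiple of $(s-v)^{-\frac d{2p}}$. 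This yields $\|A_{s,t}\|_{L_p(\Omega|\cff_v)}\les(1/n)^{\frac12}(s-v)^{-\frac d{2p}}\|f\|_{\LL^q_p([s,t])}(t-s)^{\frac12-\frac1q}$ and $\E_s\delta A_{s,u,t}=0$, exactly as in \cref{prop.B1}. Replacing $s^{-\frac d{2p}}$ by $(s-v)^{-\frac d{2p}}$ in the control $w$ of \cref{prop.B1} still gives a continuous control on $\Delta([v+2/n,1])$, by the same use of \cref{ex.control} after translating the time variable by $v$ (here $S-v>0$). With $\caa_t:=\int_{v+2/n}^t(f(r,B_r)-f(r,B_{k_n(r)}))\,\dif r$ and $J_{s,t}:=\delta\caa_{s,t}-A_{s,t}$, the bounds $\|J_{s,t}\|_{L_p(\Omega|\cff_v)}\les w(s,t)$, $\|\delta J_{s,u,t}\|_{L_p(\Omega|\cff_v)}\les(1/n)^{\frac12}w(s,t)^{\frac12}$, $\E_sJ_{s,t}=0$ and $\E_s\delta J_{s,u,t}=0$ hold as before, so \cref{lem.Davie_iteration} (with conditioning $\sigma$-algebra $\cff_v$, $\varepsilon=\tfrac12$, $C_1=\Gamma_1=0$, $C_2\les1$, $\Gamma_2\les(1/n)^{\frac12}$) produces the conditional form of \eqref{Davieest}:
\begin{equation*}
	\Big\|\int_s^t(f(r,B_r)-f(r,B_{k_n(r)}))\,\dif r\Big\|_{L_p(\Omega|\cff_v)}\les(1/n)^{\frac12}\log(n)\big(w(s,t)^{\frac12}+w(s,t)\big)
\end{equation*}
for all $v+2/n\le s\le t\le1$.

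Finally, as in the passage from \eqref{Davieest} to \eqref{est.qdt2}, I would use $s-v\le1$ and $t-s\le1$ to bound $w(s,t)^{\frac12}+w(s,t)\les(s-v)^{-\frac d{2p}}\|f\|_{\LL^q_p([s,t])}(t-s)^{\frac12-\frac1q}+(s-v)^{-\frac dp}\|f\|_{\LL^q_p([s,t])}(t-s)^{1-\frac2q}$, and then apply \cref{lem.Bellising} to the $L_p(\Omega|\cff_v)$-valued map $\sigma\mapsto\int_{v+2/n}^{v+\sigma}(f(r,B_r)-f(r,B_{k_n(r)}))\,\dif r$ on $[2/n,1-v]$ (constant-extended on $(1-v,1]$), with exponents $(\eta_1,\tau_1)=(\tfrac d{2p},\tfrac12-\tfrac1q)$ and $(\eta_2,\tau_2)=(\tfrac dp,1-\tfrac2q)$, both admissible since $\tfrac dp+\tfrac2q<1$. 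Taking the increment between $\sigma=S-v$ and $\sigma=T-v$ collapses the $(S-v)$-singularity and leaves $(T-S)^{\frac12-\frac1q-\frac d{2p}}+(T-S)^{1-\frac2q-\frac dp}\les(T-S)^{\frac12-\frac1q-\frac d{2p}}$, which is \eqref{est.qdt3}. I do not expect a genuine obstacle here: the entire subtlety is the lower bound $k_n(r)-v\gtrsim s-v$ --- which is precisely where the standing hypothesis $S\ge v+2/n$ enters --- together with the remark that conditioning on $\cff_v$ is harmless because $v$ lies strictly below the whole interval on which the sewing is performed.
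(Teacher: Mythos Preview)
Your proposal is correct and follows essentially the same route as the paper: replace $\|\cdot\|_{L_p(\Omega)}$ by $\|\cdot\|_{L_p(\Omega|\cff_v)}$ via the conditional analogue of \eqref{BMest}, rerun the sewing argument of \cref{prop.B1} to obtain the conditional form of \eqref{Davieest} with $(s-v)^{-d/(2p)}$ in place of $s^{-d/(2p)}$, and then apply \cref{lem.Bellising}. The paper's own proof is a terse three-line sketch of exactly this; your version spells out the bookkeeping (notably $k_n(r)-v\gtrsim s-v$ from $s-v\ge 2/n$) that the paper leaves implicit.
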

\begin{proof}
	We follow the argument used in \cref{prop.B1}, replacing the $L_p(\Omega)$-norm by the $L_p(\Omega|\cff_v)$-norm. The estimate \eqref{BMest} used therein (whose purpose is to deduce the analytic $L_p(\Rd)$-norm from the probabilistic $L_p(\Omega)$-norm) is replaced by the following estimate
	\begin{align}
	 	\|g(B_t)\|_{L_p(\Omega|\cff_v)}\le N(t-v)^{-\frac d{2p}}\|g\|_{L_p(\Rd)} \quad \forall g\in L_p(\Rd), t>v,
	\end{align}
	with the same constant $N$ as in \eqref{BMest}.
	This yields the following estimate, which corresponds to \eqref{Davieest},
	\begin{multline}\label{tmp.842}
		\Vert \int_S^T[f(r,B_r)-f(r,B_{k_n(r)})]dr \Vert_{L_p(\Omega|\cff_v)}
		\\\leq N(1/n)^{\frac12}\log(n) \Vert f\Vert_{\mathbb{L}_p^q([S,T])}\left[(S-v)^{-\frac d{2p}} |T-S|^{\frac{1}{2}-\frac1q}+(S-v)^{-\frac d{p}} |T-S|^{1-\frac2q}\right].
	\end{multline}
	Applying \cref{lem.Bellising}, we obtain \eqref{est.qdt3}.
\end{proof}
An advantage of the conditional norms over the usual ones is realized the next result.
\begin{proposition}\label{prop.supB}
	Let $f$ be a Borel function in  $\LL_p^q([0,1])\cap\LL_\infty^q([0,{1}])$ for some $p,q\in[2,\infty)$ satisfying $\frac{d}{p}+\frac{2}{q}<1$.
	
	We put $\beta_n(f)=\sup_{r\in D_n}\|f\|_{\LL^q_\infty([r,r+1/n])}$.
	Then for any $\bar p\in(0,p)$, there exists a constant $N=N(d,p,q,\bar p)$ such that
	\begin{multline*}
		\|\sup_{t\in[0,1]}|\int_0^t[f(r,B_r)-f(r,B_{k_n(r)})]dr|\|_{L_{\bar p}(\Omega)}
		\\\le N\left[(1/n)^{1-\frac1q}\beta_n(f)+ (1/n)^{\frac12}\log(n)\|f\|_{\LL^q_p([0,1])}\right].
	\end{multline*}
\end{proposition}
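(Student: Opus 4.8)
The plan is to apply \cref{lem.lenglart}. Set $\caa_t:=\int_0^t[f(r,B_r)-f(r,B_{k_n(r)})]\,dr$; since $f\in\LL^q_\infty([0,1])$ the integrand is pathwise bounded by $\|f(r,\cdot)\|_{L_\infty(\Rd)}$, so $\caa$ is well defined, continuous and $\caa_0=0$. By \cref{lem.lenglart} (part \ref{item.Asup}) with exponents $\bar p<p$, it then suffices to prove the uniform increment bound
\[
	\sup_{0\le s\le t\le1}\|\delta\caa_{s,t}\|_{L_p(\Omega|\cff_s)}\le N\Big[(1/n)^{1-\frac1q}\beta_n(f)+(1/n)^{\frac12}\log(n)\,\|f\|_{\LL^q_p([0,1])}\Big],
\]
after which the claimed estimate for $\|\sup_{t\in[0,1]}|\caa_t|\|_{L_{\bar p}(\Omega)}$ is immediate. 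The essential new point compared with \cref{prop.B1} and with \eqref{est.qdt3} is that here the left endpoint $s$ may be arbitrarily close to $0$, where those estimates carry a diverging prefactor ($S^{-d/(2p)}$); this near-origin degeneracy is precisely what the quantity $\beta_n(f)$ is designed to absorb, at the modest cost of replacing the local rate $(1/n)^{1/2}$ by $(1/n)^{1-1/q}\le(1/n)^{1/2}$ near $0$.

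\emph{Short increments}, $t-s\le 3/n$: by the conditional Minkowski inequality and the pointwise bounds $\|f(r,B_r)\|_{L_p(\Omega|\cff_s)}\le\|f(r,\cdot)\|_{L_\infty(\Rd)}$ and $\|f(r,B_{k_n(r)})\|_{L_p(\Omega|\cff_s)}\le\|f(r,\cdot)\|_{L_\infty(\Rd)}$, followed by H\"older's inequality in $r$,
\[
	\|\delta\caa_{s,t}\|_{L_p(\Omega|\cff_s)}\le 2\int_s^t\|f(r,\cdot)\|_{L_\infty(\Rd)}\,dr\le 2\,\|f\|_{\LL^q_\infty([s,t])}\,(t-s)^{1-\frac1q}.
\]
Since $[s,t]\subset[k_n(s),k_n(s)+4/n)$ is covered by at most four consecutive blocks $[r,r+1/n]$ with $r\in D_n$, one has $\|f\|_{\LL^q_\infty([s,t])}\le 4^{1/q}\beta_n(f)$, and together with $(t-s)^{1-1/q}\le(3/n)^{1-1/q}$ this regime contributes at most $N(q)\,(1/n)^{1-1/q}\beta_n(f)$.

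\emph{Long increments}, $t-s>3/n$: put $s':=k_n(s)+3/n\in D_n$, so that $2/n<s'-s\le3/n$ and hence $s\le s'<t$, and split $\delta\caa_{s,t}=\delta\caa_{s,s'}+\delta\caa_{s',t}$. The first summand is handled by the short-increment bound (as $s'-s\le3/n$). For the second, I apply the conditional estimate \eqref{est.qdt3} with conditioning level $v=s$ (admissible since $t-s>3/n$ forces $s<1-3/n\le1-2/n$) and with $S=s'\ge s+2/n=v+2/n$, $T=t$; because $\tfrac12-\tfrac1q-\tfrac d{2p}>0$ (this is exactly the hypothesis $\tfrac dp+\tfrac2q<1$), while $|t-s'|\le1$ and $\|f\|_{\LL^q_p([s',t])}\le\|f\|_{\LL^q_p([0,1])}$, we obtain $\|\delta\caa_{s',t}\|_{L_p(\Omega|\cff_s)}\le N(1/n)^{1/2}\log(n)\|f\|_{\LL^q_p([0,1])}$. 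Adding the two pieces yields the displayed uniform bound, and \cref{lem.lenglart} (part \ref{item.Asup}) finishes the proof. I expect the only genuinely delicate point to be this treatment of small $s$: choosing the shift $s'$ to be a grid point lying at distance at least $2/n$ beyond $s$, so that \eqref{est.qdt3} can be invoked with $v=s$ while the discarded piece $\delta\caa_{s,s'}$ stays of size $O\big((1/n)^{1-1/q}\beta_n(f)\big)$; everything else is routine bookkeeping on top of the estimates already established in this section.
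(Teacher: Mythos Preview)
Your proof is correct and follows essentially the same approach as the paper: reduce to a uniform conditional increment bound via \cref{lem.lenglart}, handle short increments by the trivial $L_\infty$-bound (producing the $\beta_n(f)$ term), and handle long increments by splitting off a short initial piece and applying \eqref{est.qdt3} with $v=s$ to the remainder. The only cosmetic differences are that the paper uses the threshold $2/n$ and splits at the (non-grid) point $s+2/n$, whereas you use $3/n$ and the grid point $k_n(s)+3/n$; neither choice affects the argument.
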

\begin{proof}
	Put $\caa_t=\int_0^t(f(r,B_r)-f(r,B_{k_n(r)}))\dif r$ which has continuous sample paths by  \eqref{tmp.86206}.
	In view of \cref{lem.lenglart}, it suffices to show that there exists a constant $N=N(d,p,q)$ such that
	\begin{align}\label{tmp.86212}
		\|\delta\caa_{s,t}\|_{L_p(\Omega|\cff_s)}\le N \left[(1/n)^{1-\frac1q}\beta_n(f)+ (1/n)^{\frac12}\log(n)\|f\|_{\LL^q_p([0,1])}\right]
	\end{align}
	for every $(s,t)\in \Delta$.
	
	Indeed, by assumption and H\"older inequality, we have
	\begin{align}\label{tmp.86206}
		|\delta \caa_{s,t}|
		\le2\int_s^t\|f_r\|_{L_\infty(\Rd)}dr
		\les \|f\|_{\LL^q_\infty([s,t])}(t-s)^{1-\frac1q}
	\end{align}
	for every $(s,t)\in \Delta$.
	For every $(s,t)\in \Delta$ satisfying $t-s\ge2/n$, we obtain from \eqref{tmp.86206} and \eqref{est.qdt3} that
	\begin{align*}
		\|\delta \caa_{s,t}\|_{L_p(\Omega|\cff_s)}
		&\le \|\delta \caa_{s,s+2/n}\|_{L_p(\Omega|\cff_s)}+\|\delta \caa_{s+2/n,t}\|_{L_p(\Omega|\cff_s)}
		\\&\les (1/n)^{1-\frac1q}\|f\|_{\LL^q_\infty([s,s+2/n])}+(1/n)^{\frac12}\log(n)\|f\|_{\LL^q_p([0,1])}.
	\end{align*}
	Note that $\|f\|_{\LL^q_\infty([s,s+2/n])}\les \beta_n(f)$.
	For every $(s,t)\in \Delta$ satisfying $t-s\le2/n$, \eqref{tmp.86206} trivially implies that
	\begin{align*}
		\|\delta \caa_{s,t}\|_{L_p(\Omega|\cff_s)}
		\les(1/n)^{1-\frac1q}\|f\|_{\LL^q_\infty([s,s+2/n])}
		\les(1/n)^{1-\frac1q}\beta_n(f).
	\end{align*}
	Hence, in both cases, we have obtained \eqref{tmp.86212}.
\end{proof}

Next, we turn to the functional $\int_s^t g(r,B_r)dr$.
\begin{lemma}\label{lem.gLp}
	Let $g$ be a function in $\LL^q_p([0,1])$ for some $p,q\in[1,\infty]$ satisfying $\frac d{p}+\frac2q<2$. Then for every $m\ge1$, there exists a constant $N=N(m,d,p,q)$ such that for every $(s,t)\in \Delta$,
	\begin{align*}
	 	\|\int_s^t g(r,B_r)dr\|_{L_m(\Omega)}\le N\|g\|_{\LL^q_p([s,t])}(t-s)^{1-\frac d{2p}-\frac1q}.
	\end{align*}
\end{lemma}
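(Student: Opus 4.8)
The plan is to deduce the $L_m(\Omega)$-bound from a conditional $L_1$-estimate via the quantitative Khasminskii lemma, \cref{lem.Khasminski}. Since $\big|\int_s^t g(r,B_r)\,dr\big|\le \int_s^t |g(r,B_r)|\,dr$, it suffices to bound $\int_s^t\beta(r)\,dr$ with $\beta(r):=|g(r,B_r)|$, which is a nonnegative $(\cff_r)$-adapted, jointly measurable process (as $g$ is Borel and $B$ is continuous and adapted).

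The first step is the conditional $L_1$-estimate. Conditionally on $\cff_s$, the random variable $B_r$ is Gaussian with mean $B_s$ and covariance $(r-s)I$ for $r>s$; hence by translation invariance of the $L_p(\Rd)$-norm and H\"older's inequality, for $h\in L_p(\Rd)$,
\[
\E\big(|h(B_r)|\,\big|\,\cff_s\big)=\int_{\Rd}|h(B_s+y)|\,p_{r-s}(y)\,dy\le \|p_{r-s}\|_{L_{p'}(\Rd)}\|h\|_{L_p(\Rd)}\le N(r-s)^{-\frac d{2p}}\|h\|_{L_p(\Rd)},
\]
the conditional analogue of \eqref{BMest}, with $p'$ the conjugate exponent of $p$ (and no decay factor when $p=\infty$). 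Taking $h=g(r,\cdot)$, integrating over $r\in[s,t]$ and applying H\"older's inequality in time with exponents $q,q'$ gives
\[
\Big\|\int_s^t\beta(r)\,dr\Big\|_{L_1(\Omega|\cff_s)}\le N\Big(\int_s^t(r-s)^{-\frac d{2p}q'}\,dr\Big)^{1/q'}\|g\|_{\LL^q_p([s,t])}.
\]
The time integral converges exactly when $\frac d{2p}q'<1$, which is equivalent to the hypothesis $\frac dp+\frac 2q<2$, and then equals a constant times $(t-s)^{\frac1{q'}-\frac d{2p}}=(t-s)^{1-\frac1q-\frac d{2p}}$ (with the usual modification $q'=1$ if $q=\infty$). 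Thus, setting $\rho(s,t):=N\|g\|_{\LL^q_p([s,t])}(t-s)^{1-\frac1q-\frac d{2p}}$, we have verified \eqref{con.khas} on every subinterval $[s,t]\subset[0,1]$.

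Finally I would check the hypotheses on $\rho$ in \cref{lem.Khasminski}: monotonicity under interval inclusion (condition (i)) holds since $[s,t]\mapsto\|g\|_{\LL^q_p([s,t])}$ is nondecreasing and the exponent $1-\frac1q-\frac d{2p}$ is positive, and condition (ii) holds with $\kappa=0$ because $\rho(s,t)\le N\|g\|_{\LL^q_p([0,1])}(t-s)^{1-\frac1q-\frac d{2p}}\to 0$ as $t-s\to0$. Applying the $L_m$-part of \cref{lem.Khasminski} on the interval $[s,t]$ then yields $\|\int_s^t\beta\,dr\|_{L_m(\Omega)}\le (m!)^{1/m}\rho(s,t)$, which after absorbing $(m!)^{1/m}N$ into a constant $N(m,d,p,q)$ is the assertion. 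There is no serious obstacle in this argument; the only points requiring attention are the precise identification of the integrability threshold $\frac d{2p}q'<1$ with the stated condition $\frac dp+\frac2q<2$, the bookkeeping of the resulting time exponent, and the degenerate cases $p=\infty$ or $q=\infty$.
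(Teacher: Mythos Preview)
Your proof is correct and follows essentially the same route as the paper: establish the conditional $L_1$-bound $\E_s\int_s^t|g(r,B_r)|\,dr\les\|g\|_{\LL^q_p([s,t])}(t-s)^{1-\frac d{2p}-\frac1q}$ via the heat kernel estimate and H\"older in time, then feed this into the quantitative Khasminskii lemma. The only minor omission is that \cref{lem.Khasminski} yields the $L_m$-bound for integer $m\ge1$, so to cover all real $m\ge1$ one should add a one-line H\"older interpolation, as the paper does.
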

\begin{proof}
	We can assume that $g$ is nonnegative.
	Using standard estimate for the heat kernel and H\"older inequality, we have for every $(s,t)\in \Delta$ that
	\begin{align*}
		\E_s\int_s^t g(r,B_r)dr
		&=\int_s^t P_{s,r}g(r,B_s)dr\les\int_s^t(r-s)^{-\frac d{2p}}\|g_r\|_{L_p(\Rd)}dr
		\\&\les\|g\|_{\LL^q_p([s,t])}(t-s)^{1-\frac d{2p}-\frac 1q}.
	\end{align*}
	Applying \cref{lem.Khasminski}, we obtain the stated estimate for every integer $m\ge1$. This and the H\"older interpolation inequality implies the estimate for any real $m\ge1$.
\end{proof}

\begin{proposition}\label{prop.B2}
	Let $p\in[2,\infty)$ and $q\in[2,\infty]$. Let $\Gamma$ be a positive number, $w_0$ be a continuous control on $\Delta$ and $g$ be a function in $\LL^q_p([0,1])$ such that for any $(s,t)\in \Delta$,
	\begin{align*}
		\|g\|_{\LL^q_{-1,p}([s,t])}\le \Gamma w_0(s,t)^{\frac1q}
		\tand \|g\|_{\LL^q_p([s,t])}\le w_0(s,t)^{\frac1q}.
	\end{align*}
	(a) Then for every $0\le v<S\le T\le1$
	\begin{multline}\label{est.-1pmoment}
		\|\int_S^Tg(r,B_r)dr\|_{L_p(\Omega|\cff_v)}\le N w_0(S,T)^{\frac1q} \Gamma(1+|\log(\Gamma)|)
		\\\times\left[(S-v)^{-\frac d{2p}}(T-S)^{\frac12-\frac1q}+(S-v)^{-\frac d{p}}(T-S)^{1-\frac2q}\right].
	\end{multline}
	(b) If furthermore $p,q$ satisfy $\frac d{p}+\frac 2q<1$, then for any $\bar p\in(0,p)$, there exists a constant $N=N(d,p,q,\bar p)$ such that
	\begin{align*}
		\|\sup_{t\in[0,1]}|\int_0^t g(r,B_r)dr|\|_{L_{\bar p}(\Omega)}\le N \Gamma(1+|\log(\Gamma)|)w_0(0,1).
	\end{align*}
\end{proposition}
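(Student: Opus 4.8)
The plan is to run the stochastic-sewing machinery of \cref{sec.BM}. Set $\caa_t=\int_0^tg(r,B_r)\,\dif r$; by \cref{lem.Khasminski} applied to conditional expectations (exactly as in the proof of \cref{lem.gLp}, using $\|g\|_{\LL^q_p([s,t])}\le w_0(s,t)^{1/q}$ and, for~(b), the extra room $\tfrac dp+\tfrac2q<1$) the process $\caa$ is well defined, has finite moments and continuous sample paths. For $(s,t)\in\Delta$ put $A_{s,t}=\E_s\delta\caa_{s,t}$ and $J_{s,t}=\delta\caa_{s,t}-A_{s,t}$, so $\E_sJ_{s,t}=0$ and $\delta J_{s,u,t}=-\delta A_{s,u,t}=-(I-\E_s)A_{u,t}$. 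The basic inputs are two bounds on $A_{u,t}=\int_u^t(P_{u,r}g_r)(B_u)\,\dif r$, valid for $v<S\le u\le t$: a \emph{strong-norm} one, obtained from \eqref{BMest} and bounding the heat-kernel weight by $(S-v)^{-d/(2p)}$,
\[
\|A_{u,t}\|_{L_p(\Omega|\cff_v)}\le\Big\|\int_u^t|g(r,B_r)|\,\dif r\Big\|_{L_p(\Omega|\cff_v)}\lesssim(S-v)^{-\frac d{2p}}\,w_0(u,t)^{\frac1q}(t-u)^{1-\frac1q}
\]
(the same bound controls $\|\delta\caa_{u,t}\|_{L_p(\Omega|\cff_v)}$, and $\|A_{u,t}\|\le\|\delta\caa_{u,t}\|$ by conditional Jensen); and a \emph{weak-norm} one, obtained from \eqref{BMest}, the smoothing estimate $\|P_{u,r}h\|_{L_p(\Rd)}\lesssim(r-u)^{-1/2}\|h\|_{L_{-1,p}(\Rd)}$ (for $r-u\le1$), H\"older's inequality in $r$ (here $q>2$ is used) and $\|g\|_{\LL^q_{-1,p}([u,t])}\le\Gamma\,w_0(u,t)^{1/q}$,
\[
\|A_{u,t}\|_{L_p(\Omega|\cff_v)}\lesssim(S-v)^{-\frac d{2p}}\int_u^t(r-u)^{-1/2}\|g_r\|_{L_{-1,p}}\,\dif r\lesssim(S-v)^{-\frac d{2p}}\,\Gamma\,w_0(u,t)^{\frac1q}(t-u)^{\frac12-\frac1q}.
\]

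For part~(a), I would apply \cref{lem.Davie_iteration} on $[S,T]$ with conditioning time $v$ and control
\[
w(s,t):=(S-v)^{-\frac dp}w_0(s,t)^{\frac2q}(t-s)^{1-\frac2q}+(S-v)^{-\frac d{2p}}w_0(s,t)^{\frac1q}(t-s)^{1-\frac1q},
\]
which by \cref{ex.control} is a continuous control (each summand is the constant weight $(S-v)^{-\bullet}$ times a degree-one product $w_0^{\theta}(t-s)^{1-\theta}$ of controls). Then the two bounds above and monotonicity of $w_0$ give $\|\delta J_{s,u,t}\|_{L_p(\Omega|\cff_v)}\le2\|A_{u,t}\|\lesssim\Gamma\,w(s,t)^{1/2}$ and the rough bound $\|J_{s,t}\|_{L_p(\Omega|\cff_v)}\le2\|\delta\caa_{s,t}\|\lesssim w(s,t)\le w(S,T)^{1/2-\varepsilon}w(s,t)^{1/2+\varepsilon}$, while $\E_sJ_{s,t}=\E_s\delta J_{s,u,t}=0$. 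With $C_1=\Gamma_1=C_3=0$, $\Gamma_2\simeq\Gamma$, $C_2\simeq w(S,T)^{1/2-\varepsilon}$, \cref{lem.Davie_iteration} yields $\|J_{s,t}\|_{L_p(\Omega|\cff_v)}\lesssim\Gamma(1+|\log\Gamma|)w(s,t)^{1/2}+\Gamma\,w(S,T)^{1/2-\varepsilon}w(s,t)^{1/2+\varepsilon}$. Using $\delta\caa=A+J$, adding $\|A_{s,t}\|_{L_p(\Omega|\cff_v)}\lesssim\Gamma\,w(s,t)^{1/2}$, and bounding $w_0(s,t)\le w_0(S,T)$, $t-s\le T-S\le1$ gives \eqref{est.-1pmoment}.

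For part~(b), where $\tfrac dp+\tfrac2q<1$ makes both exponents $\tfrac12-\tfrac1q-\tfrac d{2p}$ and $1-\tfrac2q-\tfrac dp$ strictly positive, I would first produce a uniform conditional bound and then invoke \cref{lem.lenglart}. Fix $0\le s<t\le1$, let $t_k=s+(t-s)2^{-k}$ (so $t_{k+1}$ is the midpoint of $[s,t_k]$) and write $\delta\caa_{s,t}=\sum_{k\ge0}\delta\caa_{t_{k+1},t_k}$. Applying part~(a) with $v=s$, $S=t_{k+1}$, $T=t_k$ (hence $S-v=T-S=(t-s)2^{-k-1}$) gives
\[
\|\delta\caa_{t_{k+1},t_k}\|_{L_p(\Omega|\cff_s)}\lesssim\Gamma(1+|\log\Gamma|)\,w_0(t_{k+1},t_k)^{\frac1q}\Big[\big((t-s)2^{-k-1}\big)^{\frac12-\frac1q-\frac d{2p}}+\big((t-s)2^{-k-1}\big)^{1-\frac2q-\frac dp}\Big].
\]
Summing over $k$, using H\"older's inequality in $k$ with exponents $q,q'$, the fact that $\sum_kw_0(t_{k+1},t_k)\le w_0(s,t)$ (the $[t_{k+1},t_k]$ partition $[s,t]$ and $w_0$ is a control), and convergence of the remaining geometric series, one obtains $\sup_{0\le s\le t\le1}\|\delta\caa_{s,t}\|_{L_p(\Omega|\cff_s)}\lesssim\Gamma(1+|\log\Gamma|)\,w_0(0,1)$. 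Since $\bar p<p$, \cref{lem.lenglart}\ref{item.Asup} then converts this into the asserted bound on $\|\sup_{t\in[0,1]}|\caa_t|\|_{L_{\bar p}(\Omega)}$.

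The main obstacle is that the estimate for $\delta J$ sits at the \emph{critical} H\"older exponent $\tfrac12$ in the control, so nothing weaker than the critical-exponent stochastic sewing lemma \cref{lem.Davie_iteration} can be used---and it is precisely this criticality that produces the unavoidable $(1+|\log\Gamma|)$ factor, just as in \cref{prop.B1,prop.supB}. A second delicate point is bookkeeping: the singular prefactors $(u-v)^{-d/(2p)}$ are \emph{not} monotone in the interval, so to build an honest control one must freeze them at $S-v$ and repackage the genuine $(t-s)$- and $w_0$-dependence as degree-one products $w_0^{\theta}(t-s)^{1-\theta}$; this dictates the shape of $w$ and the use of the strong-norm estimate (with exponent $1-\tfrac1q>\tfrac12$), rather than a weak-norm one, for the rough a priori bound on $\|J\|$.
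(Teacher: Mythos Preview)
Your approach is essentially the paper's. For (a), you use the same strong-norm and weak-norm estimates on $A_{s,t}$ and feed them into the critical-exponent sewing \cref{lem.Davie_iteration}; the only cosmetic difference is that you freeze the singular prefactor at $(S-v)$ while the paper keeps the live factor $(s-v)^{-d/(2p)}$ in its control (via \cref{ex.control}(b))---both coincide at $(s,t)=(S,T)$. One small gap: the paper begins by normalising $w_0(S,T)=1$ via linearity in $g$. Without this, your closing sentence ``bounding $w_0(s,t)\le w_0(S,T)$, $t-s\le T-S\le1$'' does not quite yield \eqref{est.-1pmoment}: the term $\Gamma\,w(S,T)$ carries a factor $w_0(S,T)^{2/q}$ in its first summand, and $w(S,T)^{1/2}$ carries $w_0(S,T)^{1/(2q)}$ from its second, neither matching the required $w_0(S,T)^{1/q}$ when $w_0(S,T)\neq1$. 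The normalisation fixes this immediately.

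For (b), your dyadic telescoping $[s,t]=\bigcup_k[t_{k+1},t_k]$ towards the conditioning time is precisely the proof of \cref{lem.Bellising}, which the paper cites as a black box; your additional H\"older-in-$k$ step to recombine the factors $w_0(t_{k+1},t_k)^{1/q}$ is a mild refinement over the paper's blunt bound $w_0(S,T)^{1/q}\le w_0(0,1)^{1/q}$ before invoking that lemma.
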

\begin{proof}
	(a) Let $v,S,T$ be fixed such that $0\le v<S<T\le1$.	
	We can assume without loss of generality that $w_0(S,T)=1$. 
	For each $(s,t)\in \Delta([S,T])$, let
	\begin{gather*}
		A_{s,t}=\E_s\int_s^t g(r,B_r)dr=\int_s^t P_{s,r}g(r,B_s)dr
		\tand
		J_{s,t}=\int_s^t g(r,B_r)dr-A_{s,t}.
	\end{gather*}
	In the above, we can interchange the conditional expectation and the integration due to Fubini theorem and \cref{lem.gLp}.
	Define the continuous control $w$ on $\Delta([S,T])$ by
	\begin{align*}
		w(s,t)=\left[(s-v)^{-\frac d{2p}}(t-s)^{\frac12-\frac1q}w_0(s,t)^{\frac1q}\right]^2+(s-v)^{-\frac d{2p}}(t-s)^{1-\frac1q}w_0(s,t)^{\frac1q}.
	\end{align*}
	
	Applying Minkowski inequality, \eqref{BMest} and H\"older inequality, we have
	\begin{align*}
		\|J_{s,t}\|_{L_p(\Omega|\cff_v)}\le 2\int_s^t\|g(r,B_r)\|_{L_p(\Omega|\cff_v)}dr
		&\les\int_s^t (r-v)^{-\frac d{2p}}\|g(r,\cdot)\|_{L_p(\Rd)}dr
		\\&\les (s-v)^{-\frac d{2p}} \|g\|_{\LL^q_p([s,t])}(t-s)^{1-\frac1q}\les w(s,t).
	\end{align*}
	Furthermore, $\E_s J_{s,t}=0$, showing that \eqref{con.sll1} is satisfied.

	On the other hand, we have
	\begin{align*}
		\|A_{s,t}\|_{L_p(\Omega|\cff_v)}
		&\le \int_s^t\|P_{s,r}g(r,B_s)\|_{L_p(\Omega|\cff_v)}dr
		\\&\les \int_s^t (s-v)^{-\frac d{2p}}\|P_{s,r}g(r,\cdot)\|_{L_p(\Rd)}dr
		\\&\les \int_s^t (s-v)^{-\frac d{2p}}(r-s)^{-\frac12} \|g(r,\cdot)\|_{L_{-1,p}(\Rd)}dr
		\\&\les  (s-v)^{-\frac d{2p}}(t-s)^{\frac12-\frac1q} \|g\|_{\LL^q_{-1,p}([s,t])}.
	\end{align*}
	Combining with our assumption on $g$ leads to $\|A_{s,t}\|_{L_p(\Omega)}\les \Gamma w(s,t)^{1/2}$. Since $\delta J_{s,u,t}=- \delta A_{s,u,t}$, this implies that $J$ satisfies \eqref{con.dJ}. The condition \eqref{con.EdJ} is trivial because $\E_s J_{s,t}=0$.
	Applying \cref{lem.Davie_iteration}, we have for every $(s,t)\in \Delta([S,T])$,
	\begin{align*}
		\|J_{s,t}\|_{L_p(\Omega|\cff_v)}
		\les \Gamma(1+|\log \Gamma|)w(s,t)^{\frac12}+\Gamma w(s,t)
	\end{align*}
	and by triangle inequality,
	\begin{align*}
		\|\int_s^t g(r,B_r)dr\|_{L_p(\Omega)}
		\les \Gamma(1+|\log \Gamma|)w(s,t)^{\frac12} +\Gamma w(s,t).
	\end{align*}
	Because $w_0(s,t)\le1$ and $t-s\le 1$, we have $w(s,t)\le2 (s-v)^{-\frac dp}(t-s)^{1-\frac2q}$. Hence, we deduce \eqref{est.-1pmoment} from the above estimate by taking $(s,t)=(S,T)$.	

	(b) Applying \cref{lem.Bellising} and part (a), we have
	\begin{align*}
		\|\int_s^t g(r,B_r)dr\|_{L_p(\Omega|\cff_v)}\les \Gamma(1+|\log(\Gamma)|)	w_0(0,1)	
	\end{align*}
	for every $v<s\le t\le1$.
	In view of \cref{lem.gLp} and Kolmogorov continuity theorem, it is easy to see that the process $t\mapsto\int_0^t g(r,B_r)dr$ has a continuous version.
	Hence, the above inequality holds for every $0\le v= s\le t\le1$. Applying \cref{lem.lenglart}, we obtain the result.
\end{proof}

\begin{proposition}\label{prop.B3}
	Let $\nu\in[0,1)$, $p\in[2,\infty)$ and $q\in[2,\infty]$, $\frac d{p}+\frac 2q+\nu<2$. Let  $g$ be a function in $\LL^q_p([0,1])\cap \LL^q_{-\nu,p}([0,1])$
	Then for any $\bar p\in(0,p)$, there exists a constant $N=N(\nu,d,p,q,\bar p)$ such that
	\begin{align*}
		\|\sup_{t\in[0,1]}|\int_0^t g(r,B_r)dr|\|_{L_{\bar p}(\Omega)}\le N \|g\|_{\LL^q_{-\nu,p}([0,1])}.
	\end{align*}
\end{proposition}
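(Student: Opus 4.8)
The plan is to reduce, via Lenglart's inequality (\cref{lem.lenglart}), the bound for the supremum to a uniform conditional-moment estimate, and then to obtain that estimate through a Zvonkin-type decomposition together with conditional Burkholder--Davis--Gundy and the quantitative Khasminskii lemma. Since $g\in\LL^q_p([0,1])$ and $\frac dp+\frac2q<2$, \cref{lem.gLp} and Kolmogorov's continuity theorem show that $\caa_t:=\int_0^tg(r,B_r)\dif r$ has a continuous modification; so, by \cref{lem.lenglart}, it suffices to find a constant $N$ with $\|\delta\caa_{s,t}\|_{L_p(\Omega|\cff_s)}\le N\|g\|_{\LL^q_{-\nu,p}([0,1])}$ for all $(s,t)\in\Delta$. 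Fix $(s,t)$, put $F_r(x):=\int_r^tP_{r,u}g(u,\cdot)(x)\dif u$ for $r\in[s,t)$ (which is smooth in $x$ and solves $\p_rF_r+\tfrac12\Delta F_r=-g(r,\cdot)$, $F_t=0$), and apply It\^o's formula:
\begin{gather*}
	\int_s^tg(r,B_r)\dif r=A_{s,t}+J_{s,t},\\
	A_{s,t}:=\int_s^tP_{s,r}g(r,\cdot)(B_s)\dif r=F_s(B_s),\qquad J_{s,t}:=\int_s^t\nabla F_r(B_r)\cdot\dif B_r,
\end{gather*}
where $A_{s,t}$ is $\cff_s$-measurable and $J_{s,t}$ is a centered martingale increment.

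The term $A_{s,t}$ is handled head-on: being $\cff_s$-measurable, $\|A_{s,t}\|_{L_p(\Omega|\cff_s)}=\esssup_\omega|A_{s,t}|\le\int_s^t\|P_{s,r}g(r,\cdot)\|_{L_\infty(\Rd)}\dif r$, and the standard heat-kernel bound $\|P_{s,r}h\|_{L_\infty(\Rd)}\les(r-s)^{-\frac d{2p}-\frac\nu2}\|h\|_{L_{-\nu,p}(\Rd)}$ combined with H\"older's inequality---whose time singularity is integrable exactly because $\frac dp+\frac2q+\nu<2$---gives $\|A_{s,t}\|_{L_p(\Omega|\cff_s)}\les(t-s)^{1-\frac d{2p}-\frac\nu2-\frac1q}\|g\|_{\LL^q_{-\nu,p}([s,t])}\les\|g\|_{\LL^q_{-\nu,p}([0,1])}$.

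The estimate for $J_{s,t}$ is the heart of the matter: it must be controlled by the weak norm $\|g\|_{\LL^q_{-\nu,p}}$ alone and \emph{not} by $\|g\|_{\LL^q_p}$, which is why one works through the martingale representation, whose integrand $\nabla F$ only sees $g$ through the smoothing operators $\nabla P$. Conditional BDG and the identity $\|X^{1/2}\|_{L_p}=\|X\|_{L_{p/2}}^{1/2}$ give $\|J_{s,t}\|_{L_p(\Omega|\cff_s)}\les\big\|\int_s^t|\nabla F_r(B_r)|^2\dif r\big\|_{L_{p/2}(\Omega|\cff_s)}^{1/2}$. I would then fix a spatial integrability exponent $p_*\in[p,\infty)$---equivalently a parameter $\theta:=\tfrac d2(\tfrac1p-\tfrac1{p_*})\ge0$---with $\tfrac d{p_*}<1$ and $\tfrac12+\tfrac\nu2+\theta<1$; both conditions can be met simultaneously precisely because $\frac dp+\frac2q+\nu<2$. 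From $\|\nabla P_{r,u}h\|_{L_{p_*}(\Rd)}\les(u-r)^{-\frac12-\frac\nu2-\theta}\|h\|_{L_{-\nu,p}(\Rd)}$ and a Hardy--Littlewood--Sobolev inequality in the time variable one gets $\nabla F\in\LL^{q_*}_{p_*}([s,t])$ with $\|\nabla F\|_{\LL^{q_*}_{p_*}([s,t])}\les\|g\|_{\LL^q_{-\nu,p}([s,t])}$ for a suitable $q_*\in[2,\infty)$, and a short computation (in which the $\theta$'s cancel) shows $\tfrac d{p_*}+\tfrac2{q_*}<1$, again equivalent to the hypothesis. Applying the conditional version of \cref{lem.gLp}---that is, running the quantitative Khasminskii estimate of \cref{lem.Khasminski} under $\mP(\cdot\,|\,\cff_s)$, the deterministic bound $\|P_{s',r}(|\nabla F_r|^2)\|_{L_\infty(\Rd)}\les(r-s')^{-d/p_*}\|\nabla F_r\|_{L_{p_*}(\Rd)}^2$ (for $s\le s'<r\le t$) supplying its hypothesis---to $(r,x)\mapsto|\nabla F_r(x)|^2\in\LL^{q_*/2}_{p_*/2}([s,t])$ then yields $\big\|\int_s^t|\nabla F_r(B_r)|^2\dif r\big\|_{L_{p/2}(\Omega|\cff_s)}\les(t-s)^{1-\frac d{p_*}-\frac2{q_*}}\|\nabla F\|_{\LL^{q_*}_{p_*}([s,t])}^2$, whence $\|J_{s,t}\|_{L_p(\Omega|\cff_s)}\les\|g\|_{\LL^q_{-\nu,p}([0,1])}$.

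Adding the two bounds gives the desired uniform conditional estimate, and \cref{lem.lenglart} finishes the proof for any $\bar p\in(0,p)$. The main obstacle is the estimate for $J_{s,t}$: to make the bound depend only on the negative-order norm one is forced into the It\^o/BDG route, where the genuine unknown is $\nabla F=\int_\cdot^t\nabla P_{\cdot,u}g(u)\,\dif u$, and one must choose the integrability exponent $p_*$ so that the mixed index $\frac d{p_*}+\frac2{q_*}$ of $|\nabla F|^2$ stays strictly below the subcriticality threshold $1$---which is exactly the content of $\frac dp+\frac2q+\nu<2$. (Alternatively one could run stochastic sewing as in the proof of \cref{prop.B2}, with a control built from \cref{ex.control} adapted to the exponent $\nu$; no logarithmic loss arises here since the relevant sewing coefficient is of order one.)
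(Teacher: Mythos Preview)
Your proof is correct and takes a genuinely different route from the paper. The paper argues via the stochastic sewing lemma (\cref{lem.Davie_iteration}): with $A_{s,t}=\E_s\int_s^tg(r,B_r)\dif r$ and $J_{s,t}=\delta\caa_{s,t}-A_{s,t}$, one has $\|A_{s,t}\|_{L_p(\Omega|\cff_v)}\lesssim w(s,t)^{1-\nu/2}$ for the control $w(s,t)=\big[(s-v)^{-d/(2p)}(t-s)^{1-\nu/2-1/q}\|g\|_{\LL^q_{-\nu,p}([s,t])}\big]^{1/(1-\nu/2)}$; since $\E_sJ_{s,t}=0$ and $1-\nu/2>1/2$, sewing upgrades the crude a~priori bound on $J$ (which depends on $\|g\|_{\LL^q_p}$) to $\|J_{s,t}\|_{L_p(\Omega|\cff_v)}\lesssim w(s,t)^{1-\nu/2}$ with a constant independent of $\|g\|_{\LL^q_p}$. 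One then removes the singularity at $v$ via \cref{lem.Bellising} and concludes with \cref{lem.lenglart}.

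Your Zvonkin/It\^o decomposition bypasses sewing entirely: the $\cff_s$-measurable piece $A_{s,t}=F_s(B_s)$ is bounded in $L_\infty(\Omega)$ directly by the heat-kernel estimate, and the martingale piece is handled by conditional BDG followed by the quantitative Khasminskii lemma applied to $|\nabla F|^2$. This is more elementary and avoids both the sewing machinery and the singularity-removal step of \cref{lem.Bellising}. The price is that it leans on the explicit heat-semigroup representation of $F$ and It\^o's formula, which are particular to Brownian motion; the paper's sewing argument, by contrast, is the template that is later reused verbatim for the processes $\bar X^n$ and $\bar X$ in \cref{sec:regularizing_properties_of_the_discrete_paths,sec:regularizing_properties_of_the_continuum_paths}, where such an explicit PDE solution and classical It\^o formula are not as readily available. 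One minor point: your ``suitable $q_*\in[2,\infty)$'' may in fact be $q_*=\infty$ when $q$ is large (the HLS exponent $1/q_*=1/q-1/2+\nu/2+\theta$ can be nonpositive); this is harmless since then $\nabla F\in\LL^\infty_{p_*}$ and your Khasminskii step only needs $d/p_*<1$, which you have imposed.
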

\begin{proof}
	In view of \cref{lem.lenglart}, it suffices to show that
	\begin{align}\label{tmp.8241036}
		\sup_{(s,t)\in \Delta}\|\int_s^t g(r,B_r)dr\|_{L_p(\Omega|\cff_s)}\les \|g\|_{\LL^q_{-\nu,p}([0,1])}.
	\end{align}
	The proof is similar to that of \cref{prop.B2}, however, the control can be chosen in a simpler way.
	Let $v\in[0,1]$ be fixed but arbitrary. For each $(s,t)\in \Delta$, $s>v$, define  $A_{s,t}$ and $J_{s,t}$ as in the proof of \cref{prop.B2}. As in the aforementioned proof,
	we have for every $v<s\le u\le t$, $\E_s \delta A_{s,u,t}=0$ and
	\begin{align*}
		\|J_{s,t}\|_{L_p(\Omega|\cff_v)}+ \|A_{s,t}\|_{L_p(\Omega|\cff_v)}\les (s-v)^{-\frac d{2p}}(t-s)^{1-\frac \nu2-\frac1q}\|g\|_{\LL^q_{-\nu,p}([s,t])}.
	\end{align*}
	Let $w$ be the control on $\Delta((v,1])$ defined by
	\begin{align*}
		w(s,t)=\left[(s-v)^{-\frac d{2p}}(t-s)^{1-\frac \nu2-\frac1q}\|g\|_{\LL^q_{-\nu,p}([s,t])}\right]^{1/(1- \nu/2)}.
	\end{align*}
	The previous estimate yields $\|J_{s,t}\|_{L_p(\Omega|\cff_v)}+\|A_{s,t}\|_{L_p(\Omega|\cff_v)}\les w(s,t)^{1- \nu/2}$. It is evident that $\E_sJ_{s,t}=0$. Noting that $1- \nu/2>1/2$ and applying \cref{lem.Davie_iteration}
	\begin{align*}
		\|J_{s,t}\|_{L_p(\Omega|\cff_v)}\les w(s,t)^{1-\frac \nu2}.
	\end{align*}
	By triangle inequality and the previous estimate for $\|A_{s,t}\|_{L_p(\Omega|\cff_v)}$, we have for every $(s,t)\in \Delta$, $s>v$,
	\begin{align*}
		\|\int_s^t g(r,B_r)dr\|_{L_p(\Omega|\cff_v)}\les w(s,t)^{1-\frac \nu2}
		\les (s-v)^{-\frac d{2p}}(t-s)^{1-\frac \nu2-\frac1q}\|g\|_{\LL^q_p([0,1])}.
	\end{align*}
	An application of \cref{lem.Bellising} gives
	\begin{align*}
		\|\int_s^t g(r,B_r)dr\|_{L_p(\Omega|\cff_v)}\les \|g\|_{\LL^q_p([0,1])}
	\end{align*}
	for every $(s,t)\in \Delta$, $s>v$.
	In view of \cref{lem.gLp} and Kolmogorov continuity theorem, the process $t\mapsto \int_0^t g(r,B_r)dr$ has a continuous version. For this version, we see that the previous estimate holds for every $(s,t)\in \Delta$ and $v=s$, which shows \eqref{tmp.8241036}.
\end{proof}

\section{Analysis of the discrete paths} 
\label{sec:regularizing_properties_of_the_discrete_paths}
	We extend the results of \cref{sec.BM} to functionals of the solution to the discrete scheme \eqref{eqn.EMscheme}.
	\begin{theorem}\label{prop.gf}
		Assume that Conditions \ref{con.A}\ref{con.Aholder} and \ref{con.B} hold.
		Let $X^n$ be the solution to \eqref{eqn.EMscheme} and let $f\in\mathbb{L}_p^q([0,1])\cap \mathbb{L}^q_\infty([0,1])$ and $g\in \LL^q_{1,p}([0,1])\cap\LL^\infty_\infty([0,1])$.
		Define $\beta_n(f)=\sup_{t\in D_n}\|f\|_{\LL^q_\infty([t,t+1/n])}$.
		Then for any $\bar p\in(0,p)$, there exists a constant $N=N(d,p,q,\bar p)$ such that
		\begin{align}\label{gfX}
			\Big\|\sup_{t\in[0,1]}&|\int_0^tg(r,X^n_r) [f(r,X^n_r)-f(r,X^n_{k_n(r)})]dr|\Big\|_{L_{\bar p}(\Omega)}
			\\&\le N\left[\|g\|_{\LL^\infty_\infty([0,1])}+\|g\|_{\LL^q_{1,p}([0,1])}\right]
			\nonumber\\&\quad\times\left[(1/n)^{1-\frac1q}\beta_n(f)+ (1/n)^{\frac \alpha2}\|f\|_{\LL^q_p([0,1])}+(1/n)^{\frac12}\log(n)\|f\|_{\LL^q_p([0,1])}\right].
			\nonumber
		\end{align}
	\end{theorem}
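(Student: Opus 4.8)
The plan is to adapt the stochastic sewing argument of \cref{prop.B1,prop.supB} to the discrete process $X^n$, while carrying along the spatial weight $g$. Set $\caa_t:=\int_0^tg(r,X^n_r)[f(r,X^n_r)-f(r,X^n_{k_n(r)})]\dif r$, which has continuous sample paths since $g(r,\cdot)$ and $f(r,\cdot)$ are bounded. Writing $R$ for the bracketed product on the right of \eqref{gfX} (without the constant), by \cref{lem.lenglart} it suffices to prove
\begin{align*}
	\|\delta\caa_{s,t}\|_{L_p(\Omega|\cff_s)}\le NR\qquad\text{for all }(s,t)\in\Delta.
\end{align*}
For $t-s\le 2/n$ this is immediate from H\"older's inequality, $|g(r,\cdot)|\le\|g\|_{\LL^\infty_\infty([0,1])}$ and $|f(r,X^n_r)|,|f(r,X^n_{k_n(r)})|\le\|f(r,\cdot)\|_{L_\infty(\Rd)}$, giving $|\delta\caa_{s,t}|\les\|g\|_{\LL^\infty_\infty}\beta_n(f)(1/n)^{1-1/q}$. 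For $t-s>2/n$, let $s'$ be the least point of $D_n$ with $s'\ge s+2/n$; then $\delta\caa_{s,s'}$ is again controlled by the trivial bound, so it remains to bound $\|\delta\caa_{s',t}\|_{L_p(\Omega|\cff_s)}$ for $s+2/n\le s'\le t$, $s'\in D_n$.

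On $[s',t]$ I would apply the sewing lemma \cref{lem.Davie_iteration} conditionally on $\cff_s$. For $s'\le a\le b\le t$ put $A_{a,b}:=\E_a\delta\caa_{a,b}$ and $J_{a,b}:=\delta\caa_{a,b}-A_{a,b}$, so that $\E_aJ_{a,b}=0$, $\delta J_{a,u,b}=-\delta A_{a,u,b}$, and, since $\E_a(\E_a-\E_u)=0$ for $a\le u$, also $\E_a\delta J_{a,u,b}=0$; thus $C_1=\Gamma_1=0$ in the notation of \cref{lem.Davie_iteration}. Two uniform-in-$n$ facts about $X^n$ are needed. First, on each cell $[jh,r]$, $h=1/n$, the $\cff_{jh}$-conditional law of $X^n_r$ is Gaussian with mean $X^n_{jh}+\int_{jh}^rb^n(\theta,X^n_{jh})\dif\theta$ --- a shift of size $\les h^{1/2}$ by \eqref{con.locGK} --- and covariance $\int_{jh}^r(\sigma\sigma^*)(\theta,X^n_{jh})\dif\theta\asymp(r-jh)I$ by ellipticity; iterating these cell-wise Gaussians and controlling $b^n\in\LL^q_p([0,1])$ (of norm $\le K_4$) by the quantitative Khasminskii lemma \cref{lem.Khasminski} yields the Krylov-type bound $\|\psi(X^n_b)\|_{L_p(\Omega|\cff_a)}\les(b-a)^{-d/(2p)}\|\psi\|_{L_p(\Rd)}$. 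Second, comparing these cell-wise Gaussians with the genuine heat semigroup through \cref{lem.pSig} gives the $X^n$-analogues of \eqref{BMest} and \eqref{heat-difference}; the only discrepancy from the Brownian case is that across one grid step the frozen matrix $(\sigma\sigma^*)(\theta,X^n_{jh})$ is replaced by $(\sigma\sigma^*)(\theta,X^n_{(j-1)h})$, a change of conditional $L_p$-size $\les|X^n_{jh}-X^n_{(j-1)h}|^\alpha\les(1/n)^{\alpha/2}$ by the $\alpha$-H\"older continuity of $\sigma\sigma^*$ in \cref{con.A}. These discrepancies enter the Duhamel expansion additively and each comes with an extra half power of the time increment, so they combine into an overall contribution of order $(1/n)^{\alpha/2}$ attached to a control raised to a power strictly above $1/2$. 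Using the Krylov bound, $|g|\le\|g\|_{\LL^\infty_\infty}$ and H\"older's inequality in $r$ (as in the proofs of \cref{prop.B1,prop.supB}) gives the crude estimate $\|\delta\caa_{a,b}\|_{L_p(\Omega|\cff_s)}\les\|g\|_{\LL^\infty_\infty}\|f\|_{\LL^q_p([a,b])}(a-s)^{-d/(2p)}(b-a)^{1-1/q}$, whence $J$ satisfies \eqref{con.sll1} with $C_1=0$ and a continuous control $w$ built (cf. \cref{ex.control}) from $\|f\|_{\LL^q_p([\cdot,\cdot])}^q$, the powers $(b-a)$ and $(a-s)^{-d/(2p)}$, and the factor $(\|g\|_{\LL^\infty_\infty}+\|g\|_{\LL^q_{1,p}})^2$.

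The crux is the bound on $A_{a,b}=\E_a\int_a^bg(r,X^n_r)[f(r,X^n_r)-f(r,X^n_{k_n(r)})]\dif r$, for which one should get, with some small $\varepsilon>0$,
\begin{align*}
	\|A_{a,b}\|_{L_p(\Omega|\cff_s)}\les (1/n)^{\frac12}w(a,b)^{\frac12}+(1/n)^{\frac\alpha2}w(a,b)^{\frac12+\varepsilon}.
\end{align*}
Here I would use the identity $g[f(X^n_r)-f(X^n_{k_n(r)})]=[(gf)(X^n_r)-(gf)(X^n_{k_n(r)})]-f(X^n_{k_n(r)})[g(X^n_r)-g(X^n_{k_n(r)})]$, splitting $A_{a,b}=A^{(1)}_{a,b}-A^{(2)}_{a,b}$. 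Since $\|gf\|_{\LL^q_p}\le\|g\|_{\LL^\infty_\infty}\|f\|_{\LL^q_p}$, $A^{(1)}$ is the $X^n$-counterpart of the quantity $A_{s,t}$ of \cref{prop.B1} with $f$ replaced by $gf$: repeating that computation for $X^n$ (condition on $\cff_{k_n(r)}$, express $\E_a$ of the single-cell increment through the cell-wise Gaussian, sum via the Duhamel expansion above) reproduces the Brownian bound $(1/n)^{1/2}w^{1/2}$ together with the additional $(1/n)^{\alpha/2}w^{1/2+\varepsilon}$ coming from the frozen $\sigma\sigma^*$. For $A^{(2)}$, since $f(r,X^n_{k_n(r)})$ is $\cff_{k_n(r)}$-measurable, conditioning on $\cff_{k_n(r)}$ turns the bracket into $(\E_{k_n(r)}g(r,X^n_r))-g(r,X^n_{k_n(r)})$, a single-cell ``semigroup minus identity'' applied to $g(r,\cdot)$; since $g(r,\cdot)\in L_{1,p}(\Rd)$ this is of size $\les(1/n)^{1/2}\|g(r,\cdot)\|_{L_{1,p}}$ in the appropriate $L_p$-sense (a small drift shift being removed by a Girsanov change of measure), and pairing it with the Krylov bound for $|f(r,X^n_{k_n(r)})|$ and H\"older's inequality in $r$ --- where $\frac dp+\frac2q<1$ is exactly what makes the $r$-integral converge --- gives $\|A^{(2)}_{a,b}\|_{L_p(\Omega|\cff_s)}\les(1/n)^{1/2}w(a,b)^{1/2}$.

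Finally, \cref{lem.Davie_iteration} with $C_1=\Gamma_1=0$, $\Gamma_2\asymp(1/n)^{1/2}$, $C_3\asymp(1/n)^{(\alpha-1)/2}$ (so $C_3\Gamma_2\asymp(1/n)^{\alpha/2}$) and $C_2$ absorbing the crude bound yields
\begin{align*}
	\|J_{a,b}\|_{L_p(\Omega|\cff_s)}\les(1/n)^{\frac12}(1+\log n)\,w(a,b)^{\frac12}+(1/n)^{\frac\alpha2}w(a,b)^{\frac12+\varepsilon}+(1/n)^{\frac12}w(a,b)^{\frac12+\varepsilon}.
\end{align*}
Combining with the bound on $A_{a,b}$ controls $\|\delta\caa_{a,b}\|_{L_p(\Omega|\cff_s)}$; as in the passage from \eqref{tmp.842} to \eqref{est.qdt3}, \cref{lem.Bellising} removes the $(a-s)^{-d/(2p)}$ singularity --- legitimate because $\frac12-\frac1q-\frac d{2p}>0$ --- so that, taking $(a,b)=(s',t)$ and adding the short-interval bound, $\|\delta\caa_{s,t}\|_{L_p(\Omega|\cff_s)}\le NR$ for all $(s,t)\in\Delta$; \cref{lem.lenglart} then gives the claimed estimate for $\|\sup_{t\in[0,1]}|\caa_t|\|_{L_{\bar p}(\Omega)}$, $\bar p\in(0,p)$. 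The main obstacle I expect is the estimate on $A_{a,b}$: decoupling the weight $g$ from the singular factor $f$ --- here via the splitting above, at the price of one spatial derivative of $g$ in $A^{(2)}$ --- while keeping the conditional-expectation cancellation that powers the sewing, and tracking the error from the merely $\alpha$-H\"older, spatially frozen diffusion coefficient carefully enough to see that it lands on a control of exponent strictly above $1/2$, hence --- unlike the genuinely critical $(1/n)^{1/2}$ term --- without a logarithmic factor.
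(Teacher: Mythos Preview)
Your high-level structure --- stochastic sewing with $A_{a,b}=\E_a\delta\caa_{a,b}$, $J_{a,b}=\delta\caa_{a,b}-A_{a,b}$, then \cref{lem.Bellising} and \cref{lem.lenglart} --- matches the paper exactly, as does the algebraic splitting $g[f(X^n_r)-f(X^n_{k_n(r)})]=[(gf)(X^n_r)-(gf)(X^n_{k_n(r)})]-f(X^n_{k_n(r)})[g(X^n_r)-g(X^n_{k_n(r)})]$. The difference is in how the drift $b^n$ is handled.

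The paper does \emph{not} run the sewing argument for $X^n$ directly. Instead it first proves the whole statement for the driftless scheme $\bar X^n$ of \eqref{eqn.EMnob} (\cref{prop.gfnob}), using the analytic machinery of \cref{sub:estimates_of_densities_for_euler_s_approximations}: the transition operators $Q^n_{s,t}$, the discrete Duhamel formula \eqref{id.duhamel}, and the uniform bounds of \cref{thm.QLp12}, \cref{cor.Qfsut} and \eqref{est.Qgg}. All the ingredients you list --- the Krylov-type bound, the heat-kernel comparison, the ``semigroup minus identity'' acting on $g$ --- are obtained there for $\bar X^n$, where the cell-wise conditional law is \emph{exactly} centered Gaussian and the operator calculus is clean. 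Only after \cref{prop.gfnob} is established does a single global Girsanov step (with density $\rho$ built from $\sigma^{-1}b^n$) transfer the estimate to $X^n$; the exponential integrability of $\rho$ is supplied by \cref{Girsanov}, which in turn uses \cref{con.B} and the quantitative Khasminskii lemma.

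Your route --- working with $X^n$ throughout and absorbing $b^n$ cell-by-cell via Khasminskii/local Girsanov --- is plausible but considerably heavier. The claim that ``iterating cell-wise Gaussians and controlling $b^n\in\LL^q_p$ by \cref{lem.Khasminski} yields the Krylov bound $\|\psi(X^n_b)\|_{L_p(\Omega|\cff_a)}\lesssim(b-a)^{-d/(2p)}\|\psi\|_{L_p}$'' is precisely what the paper avoids proving: \cref{lem.Khasminski} controls exponential moments of additive functionals, not transition densities, and turning that into a uniform conditional-$L_p$ density bound for the scheme \emph{with} drift would itself require a Girsanov argument --- at which point you have reproduced the paper's strategy inside each sewing step rather than once at the end. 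The paper's separation of concerns (analytics for $\bar X^n$, then one Girsanov) is what makes the argument tractable.

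One small correction: for $A^{(2)}$ the single-cell ``semigroup minus identity'' applied to $g$ is of order $(1/n)^{\alpha/2}\|g\|_{L_{1,p}}$, not $(1/n)^{1/2}$; see \eqref{est.Qgg}, whose exponent $\alpha/2$ comes from the $\alpha$-H\"older (not Lipschitz) regularity of $\sigma\sigma^*$. Correspondingly, this term enters the control with exponent $1-\tfrac{d}{2p}>\tfrac12$ (cf.\ the third summand of $w$ in the proof of \cref{prop.fXX2}) and lands in the $C_3\Gamma_2 w^{1/2+\varepsilon}$ slot of \cref{lem.Davie_iteration}, not in the critical $\Gamma_2 w^{1/2}$ slot.
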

	The rest of the current section is devoted for the proof of \cref{prop.gf}.
	We follow the idea described in \cref{sec.BM}.
	First we derive some analytic estimates on the transition operators associated the discrete Euler--Maruyama scheme without drift.
	These estimates are similar to the ones in \cref{lem.heat1}. By means of the stochastic sewing techniques (\cref{lem.Davie_iteration}) and Girsanov theorem, these analytic estimates are utilized to obtain the desired moment bound.
	In what follows, we carry out this program in more detail. Conditions \ref{con.A}\ref{con.Aholder} and \ref{con.B} are enforced throughout the current \lcnamecref{sec:regularizing_properties_of_the_discrete_paths} unless indicated otherwise.
\subsection{Analytic estimates} 
\label{sub:estimates_of_densities_for_euler_s_approximations}
	For each $s\in D_n$ and $x\in\Rd$, let $\bar X^n(s,x)$ be the solution to the following Euler--Maruyama scheme 
	\begin{equation}
		\bar X^n_t=x
		+\int_s^t \sigma(r,\bar X^n_{k_n(r)})dB_r, \quad t\ge s.
	\end{equation}
	For each $t\ge s$ and bounded measurable function $f$, we define the function $Q^n_{s,t}f$ by
	\begin{align*}
		Q^n_{s,t}f(x)=\E f(\bar X^n_t(s,x)).
	\end{align*}
	Let
	\begin{align}\label{cTf}
		T^*_{s,t}f(y)=\E f\left(y+\int_s^t \sigma(r,y)dB_r\right),
	\end{align}
	and let the operator $T_{s,t}$ be conjugate to $T^*_{s,t}$ in $L_2$-sense, which can be computed explicitly
	\begin{align}\label{fml.Tf}
		T_{s,t}f(x)=\int_\Rd f(y)p_{\Sigma_{s,t}(y)}(y-x)dy, \quad\text{when}\quad s<t
	\end{align}
	and $T_{s,s}f(x)=f(x)$,
	where $\Sigma^{ij}_{s,t}=\frac12\int_s^ta_r^{ij}dr$, $a_r=\sigma_r \sigma^*_r$.
	Whenever $s<t$, the function $T_{s,t}f(x)$ is infinitely differentiable and satisfies
	\begin{equation*}
		\partial_s T_{s,t}f(x)=-\partial^2_{x_ix_j}T_{s,t}[a^{ij}_sf](x).
	\end{equation*}
	We also define for every $r<t$
	\begin{align*}
	 	\eta_r(x)=
	 	\int_{k_n(r)}^r \sigma(v,x)dB_v
	\end{align*}
	  and
	\begin{align*}
		H^n_{r,t}f(x)
		&=
		\E \left[a^{ij}_r(x)(\partial^2_{x_ix_j}T_{r,t}f)(x+\eta_r(x))-(\partial^2_{x_ix_j}T_{r,t}[a^{ij}_r f])(x+\eta_r(x))\right].
	\end{align*}
	The function $\eta$ also depends on $n$, however, we omit this dependence in the notation.
	By direct computations (see also \cite[p.153]{MR1392450} or \cite[p.11]{gyongy2021existence}), we have
	\begin{align*}
		H^n_{r,t}f(x)=\int_\Rd K^n_{r,t}(x,y)f(y)dy
	\end{align*}
	where	
	\begin{align*}
		&K^n_{r,t}(x,y)
		=
		[a^{ij}_r(x)-a^{ij}_r(y)] \partial^2_{ij} p_{\Sigma_{k_n(r),r}(x)+ \Sigma_{r,t}(y)} (y-x)
		\\&=
		[a^{ij}_r(x)-a^{ij}_r(y)]
		[(A_{r,t}(x,y)z)^i(A_{r,t}(x,y)z)^j-A^{ij}_{r,t}(x,y)]
		p_{\Sigma_{k_n(r),r}(x)+ \Sigma_{r,t}(y)} (z)\Big|_{z=y- x}
	\end{align*}
	and
	\begin{align*}
		A_{r,t}(x,y)=(\Sigma_{k_n(r),r}(x)+ \Sigma_{r,t}(y))^{-1}.
	\end{align*}
	The relation between $Q^n$ and $T,H^n$ is described in \cref{lem.discreteDuhamel}, which is a kind of Duhamel formula. It is, however, convenient to obtain first analytic estimates for $H^n$ and $T$.
	We make use of the following result, inspired by \cite[Lemma 4.1]{MR1392450}.\footnote{\cite{gyongy2021existence} recently corrected \cite[Lemma 4.1]{MR1392450}. Our assumptions and conclusions are different from both works.}

	\begin{lemma}\label{lem.GK}
		Let $\lambda,\ell,\varepsilon>0$, $\alpha\in[0,1]$ be fixed numbers, let $a_1$ be a symmetric $d\times d$ matrix
		and let $a(x),\bar a(x)$ be $d\times d$ matrix-valued functions.
		Assume that for each $x$, $a(x),\bar a(x)$ are symmetric, $\lambda^{-1} \ell I\le a(x)+a_1,\bar a(x)+a_1\le \lambda \ell I$, where $I$ is the $d\times d$ unit matrix, and $\sup_{y}\|\bar a(y)-a(y)\|\le \varepsilon$.
		Let $g(x)$ be a real function such that $|g(x)-g(y)|\le \lambda|x-y|^\alpha$ for all $x,y$. Let $\xi$ and $\eta$ be independent $d$-dimensional Gaussian vectors with zero means. Assume that $\xi$ has distribution $\mathcal N(0,I)$ and $\eta$ has distribution $\mathcal N(0,\sqrt{a_1})$. Define an operator $T^*$ by the formula $T^*f(y)=\E f(y+\sqrt{a(y)}\xi)$ and let $T$ be the conjugate for $T^*$ in $L_2$-sense. 
		Let $1\le i,j\le d$ be fixed and define
		\begin{align*}
			H(x)=\E \left[g(x)(\partial^2_{x_ix_j}Tf)(x+\eta)-(\partial^2_{x_ix_j}T[g f])(x+\eta)\right].
		\end{align*}
		Define $\bar T^*,\bar T$ and $\bar H(x)$ analogously with $\bar a$ replacing $a$.
		Then for any $p\in[1,\infty]$ and bounded Borel $f$,
		\begin{align}\label{est.1H}
			&\sup_{x\in\Rd}|H(x)|\le N \|f\|_{L_p(\Rd)}\ell^{\frac \alpha2-1-\frac d{2p}}
			,\quad \|H\|_{L_p(\Rd)}\le N \|f\|_{L_p(\Rd)}\ell^{\frac \alpha2-1},
			\\&\label{est.GK2}
			|\bar H(x)-H(x)|\le N \|f\|_{L_p(\Rd)} \varepsilon \ell^{\frac \alpha2-2-\frac d{2p}}
			\tand
			\|\bar H-H\|_{L_p(\Rd)}\le N\|f\|_{L_p(\Rd)}\varepsilon \ell^{\frac \alpha2-2},
		\end{align}
		where the constant $N$ depends only on $\lambda,d,p,\alpha$.
	\end{lemma}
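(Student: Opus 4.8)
The idea is to first collapse $H$ into a single integral operator acting on $f$ whose kernel is (a Hölder increment of $g$) times (the Hessian of a non-degenerate Gaussian density of variance $\sim\ell$), and then to estimate that kernel pointwise by a Hölder gain times a fixed Gaussian. The two bounds in \eqref{est.1H} then follow from Hölder's and Young's inequalities, and the two bounds in \eqref{est.GK2} from the same argument once the kernel is replaced by the difference of two such Hessians, which is controlled by a second-order analogue of \cref{lem.pSig}.

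\textbf{Step 1: kernel representation.} Since $\xi$ and $\eta$ are independent and all the Gaussian integrands decay fast enough, one may freely interchange $\E_\eta$, the Hessian $\partial^2_{ij}$ and the spatial integration. Recalling that $T$ conjugate to $T^\ast$ acts by a Gaussian kernel whose covariance is a fixed positive multiple of $a(y)$, write $Tf(x)=\int_{\Rd}f(y)\,p_{\Sigma(y)}(y-x)\,dy$, where $\Sigma(\cdot)$ is proportional to $a(\cdot)$. Averaging the spatial variable over $\eta$ convolves this kernel with a centred Gaussian of covariance $\Theta$ (the same fixed multiple of $a_1$), so that, setting $P_y:=p_{\Sigma(y)+\Theta}$,
\[
  \E_\eta\big[(\partial^2_{ij}Tf)(\cdot+\eta)\big](x)=\int_{\Rd}f(y)\,(\partial^2_{ij}P_y)(y-x)\,dy .
\]
Applying this to $f$ and to $gf$ and subtracting, the part of $g$ that does not depend on $y$ cancels, leaving
\[
  H(x)=\int_{\Rd}\big(g(x)-g(y)\big)\,f(y)\,(\partial^2_{ij}P_y)(y-x)\,dy .
\]
By hypothesis $\lambda^{-1}\ell I\le a(y)+a_1\le\lambda\ell I$ (and $a_1\ge0$, $a(y)\ge0$ implicitly, so one never needs $a(y)$ alone to be elliptic — the smoothing by $\eta$ supplies the non-degeneracy); hence $\Sigma(y)+\Theta$ is comparable to $\ell I$ uniformly in $y$, i.e.\ $P_y$ is a Gaussian density of variance $\sim\ell$.

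\textbf{Step 2: the estimates in \eqref{est.1H}.} Using $|g(x)-g(y)|\le\lambda|x-y|^\alpha$ together with the standard Gaussian bound $|z|^\alpha|(\partial^2_{ij}p_C)(z)|\les\ell^{\frac\alpha2-1}p_{c\ell}(z)$, valid whenever $\lambda^{-1}\ell I\le C\le\lambda\ell I$ (the polynomial factor $|z|^\alpha$ and the determinant factor $\sim\ell^{-d/2}$ are absorbed into a slightly wider Gaussian), we obtain the pointwise bound $|(g(x)-g(y))(\partial^2_{ij}P_y)(y-x)|\les\ell^{\frac\alpha2-1}p_{c\ell}(y-x)$. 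Integrating in $y$: Hölder's inequality and $\|p_{c\ell}\|_{L_{p'}(\Rd)}\les\ell^{-\frac d{2p}}$ (with $p'$ the conjugate exponent) give $\sup_x|H(x)|\les\|f\|_{L_p(\Rd)}\ell^{\frac\alpha2-1-\frac d{2p}}$, while Young's inequality and $\|p_{c\ell}\|_{L_1(\Rd)}=1$ give $\|H\|_{L_p(\Rd)}\les\|f\|_{L_p(\Rd)}\ell^{\frac\alpha2-1}$. All implicit constants depend only on $\lambda,d,p,\alpha$, proving \eqref{est.1H}.

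\textbf{Step 3: the difference \eqref{est.GK2}, and the main obstacle.} The same representation gives $\bar H(x)-H(x)=\int_{\Rd}(g(x)-g(y))f(y)\big[(\partial^2_{ij}\bar P_y)(y-x)-(\partial^2_{ij}P_y)(y-x)\big]dy$ with $\bar P_y=p_{\bar\Sigma(y)+\Theta}$; both covariances lie in $[\lambda^{-1}c'\ell I,\lambda c'\ell I]$ and their difference has norm $\les\|\bar a(y)-a(y)\|\le\varepsilon$. A Lipschitz-in-the-covariance estimate for the Hessian of a Gaussian — the second-order analogue of \cref{lem.pSig}, obtained by writing $\partial^2_{ij}p_C(z)=\big[(C^{-1}z)_i(C^{-1}z)_j-(C^{-1})_{ij}\big]p_C(z)$, interpolating $C$ linearly between $\bar\Sigma(y)+\Theta$ and $\Sigma(y)+\Theta$, applying \cref{lem.pSig} to the Gaussian factor and estimating the polynomial prefactor directly (exactly the scheme behind the $\nabla p_\Sigma$ bound alluded to after \cref{lem.pSig}) — yields $|(\partial^2_{ij}\bar P_y)(z)-(\partial^2_{ij}P_y)(z)|\les\varepsilon\,\ell^{-2}\,p_{c\ell}(z)$. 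Multiplying by the Hölder factor $\lambda|x-y|^\alpha$ and absorbing $|x-y|^\alpha$ into a slightly wider Gaussian as before gives the pointwise bound $\les\varepsilon\,\ell^{\frac\alpha2-2}\,p_{c'\ell}(y-x)$, and the Hölder/Young arguments of Step 2 then produce both inequalities in \eqref{est.GK2}. The only genuinely technical point in the whole proof is this Gaussian bookkeeping in Steps 2 and 3 — in particular the second-order covariance-perturbation estimate and the uniformity of the constants in $\ell$ — together with the care needed because only $a(y)+a_1$, not $a(y)$, is assumed elliptic, so the $\eta$-smoothing must be carried through every estimate.
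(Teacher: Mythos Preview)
Your proof is correct and follows essentially the same route as the paper's. Both begin with the kernel representation $H(x)=\int(g(x)-g(y))f(y)\,\partial^2_{ij}p_{a(y)+a_1}(y-x)\,dy$, bound the integrand pointwise by $\ell^{\frac\alpha2-1}p_{c\ell}(y-x)|f(y)|$, and then use H\"older (for the sup bound) and Minkowski/Young (for the $L_p$ bound); for \eqref{est.GK2} the paper expands $\partial^2_{ij}p_C(z)=[(C^{-1}z)_i(C^{-1}z)_j-(C^{-1})_{ij}]p_C(z)$, estimates the difference of the polynomial prefactors directly and the Gaussian factors via \cref{lem.pSig}, arriving exactly at your $\varepsilon\ell^{-2}p_{c\ell}$ bound, which you package as a single second-order covariance-perturbation estimate.
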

	\begin{proof}
		By direct computations (see also \cite[p.153]{MR1392450} or \cite[p.11]{gyongy2021existence}), we have
		\begin{align*}
			H(x)=\int_\Rd[g(x)-g(y)]f(y)[(A(y)z)^i(A(y)z)^j-A^{ij}(y)]p_{a(y)+a_1}(z)\Big|_{z=y-x}dy,
		\end{align*}
		where $A(y)=(a(y)+a_1)^{-1}$.
		A similar formula for $\bar H(x)$ is valid with $\bar A(y)=(\bar a(y)+a_1)^{-1}$.
		By ellipticity of $a(y)+a_1$ and H\"older continuity of $g$, we have
		\begin{align*}
			|H(x)|
			\les\int_{\Rd}|x-y|^\alpha|f(y)|\left(\ell^{-2}|y-x|^2+\ell^{-1}\right)p_{2\lambda\ell}(y-x)dy.
		\end{align*}
		Setting $q=\frac p{p-1}$ and applying H\"older inequality, we get
		\begin{align*}
			|H(x)|
			\les\|f\|_{L_p(\Rd)}\left(\int_{\Rd}|y|^{q \alpha}(\ell^{-2}|y|^2+\ell^{-1})^qp_{2\lambda\ell}(y)^qdy \right)^{\frac1q}
			\les\|f\|_{L_p(\Rd)}\ell^{\frac \alpha2-1-\frac d{2p}}.
		\end{align*}
		  This shows the first estimate in \eqref{est.1H}. The second estimate in \eqref{est.1H} is obtained by a similar argument, using instead the Minkowski inequality.
		
		It is evident that $\|A(y)\|,\|\bar A(y)\|\les \ell^{-1}$ uniformly in $y$.
		For two invertible matrices $C,D$, we have $C^{-1}-D^{-1}=C^{-1}(D-C)D^{-1}$ so that $\|C^{-1}-D^{-1}\|\le \|C^{-1}\|\|D^{-1}\|\|D-C\|$.
		Thus $\|\bar A(y)-A(y)\|\le\|A(y)\|\|\bar A(y)\| \| \bar a(y)-a(y)\|\les \ell^{-2} \varepsilon$ and similarly
		\begin{align*}
			\|(\bar A(y)z)^i(\bar A(y)(y-x))^j-(A(y)z)^i(A(y)(y-x))^j\|\les |z|^2\ell^{-3} \varepsilon.
		\end{align*}
		Using our assumptions, it is straightforward to verify that $K^{-1}I\le (a(y)+a_1)(\bar a(y)+a_1)^{-1}\le K I$ and $\|I-(a(y)+a_1)(\bar a(y)+a_1)^{-1}\|\le K  \ell^{-1}\varepsilon$ for some constant $K>0$. Hence, from \eqref{est.psigma1}, we have
		\begin{align*}
			|p_{\bar a(y)+a_1}(z)-p_{a(y)+a_1}(z)|
			&\les \ell^{-1}\varepsilon\left(p_{(\bar a(y)+a_1)/2}(z)+p_{(a(y)+a_1)/2}(z)\right)
			\\&\les \ell^{-1}\varepsilon p_{\lambda \ell}(z),
		\end{align*}
		where we have used the fact that $\lambda^{-1}t\le a(y)+a_1,\bar a(y)+a_1\le \lambda t$.
		It follows that
		\begin{align*}
			|\bar H(x)-H(x)|
			\les \varepsilon \int_\Rd|f(y)||y-x|^\alpha\left(|y-x|^2\ell^{-3}+\ell^{-2}\right)p_{\lambda \ell}(y-x)dy.
		\end{align*}
		From here, we apply H\"older inequality and Minkowski inequality as previously to obtain \eqref{est.GK2}.
	\end{proof}
	\begin{lemma}\label{lem.HLp}
		Let $p_1,p_2\in[1,\infty]$ be such that $p_1\le p_2$ and let $f$ be a function in $L_{p_1}(\Rd)$. For every $r<t\le1$, we have
		\begin{align}\label{est.Tlp1}
			\|T_{r,t}f\|_{L_{p_2}(\Rd)}\le N(t-r)^{\frac d{2p_2}-\frac d{2p_1}}\|f\|_{L_{p_1}(\Rd)}
		\end{align}	
		and	
		\begin{align}\label{est.Hlp1}
			\|H^n_{r,t}f\|_{L_{p_2}(\Rd)}\le N(t-k_n(r))^{\frac \alpha2-1+\frac d{2p_2}-\frac d{2p_1}}\|f\|_{L_{p_1}(\Rd)},
		\end{align}
		where the constant $N$ depends only on $d,p_1,p_2,K_1,K_2$.
	\end{lemma}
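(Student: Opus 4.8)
The plan is to reduce both bounds to Young's convolution inequality, after replacing the genuinely $(x,y)$-dependent kernels of $T_{r,t}$ and $H^n_{r,t}$ by translation-invariant Gaussian-type kernels of comparable size. This is precisely the mechanism already behind \cref{lem.GK}, only carried out for an arbitrary exponent pair $p_1\le p_2$ rather than $p_2\in\{p_1,\infty\}$. The one point that needs care is exactly that $T_{r,t}$ and $H^n_{r,t}$ are \emph{not} convolution operators --- their kernels depend on $x$ and $y$ separately, through $a_r(x)-a_r(y)$, through $A_{r,t}(x,y)$, and through a variable covariance. One first absorbs the H\"older factor $|x-y|^\alpha$ (using the H\"older continuity of $a=\sigma\sigma^*$ postulated in \cref{con.A}) and dominates each variable-covariance Gaussian by a fixed Gaussian of comparable variance (using the uniform ellipticity \eqref{ellptic-con}); after that the argument is routine bookkeeping of exponents, which I expect to be the only, and rather mild, obstacle.

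Concretely, for \eqref{est.Tlp1}: by \eqref{ellptic-con} the symmetric matrix $\Sigma_{r,t}(y)=\tfrac12\int_r^t a_v(y)\,dv$ satisfies $(2K_1)^{-1}(t-r)I\le\Sigma_{r,t}(y)\le\tfrac12K_1(t-r)I$ uniformly in $y$, so $p_{\Sigma_{r,t}(y)}(z)\le N\,p_{c(t-r)}(z)$ for every $z$, with $c=c(d,K_1)$. From \eqref{fml.Tf} this gives $|T_{r,t}f(x)|\le N\,(p_{c(t-r)}*|f|)(x)$, and Young's inequality with $\tfrac1\rho=1+\tfrac1{p_2}-\tfrac1{p_1}$ (where $\rho\ge1$ exactly because $p_1\le p_2$) yields $\|T_{r,t}f\|_{L_{p_2}(\Rd)}\le N\,\|p_{c(t-r)}\|_{L_\rho(\Rd)}\|f\|_{L_{p_1}(\Rd)}$. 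Since $\|p_s\|_{L_\rho(\Rd)}=N\,s^{-\frac d2(1-\frac1\rho)}=N\,s^{\frac d{2p_2}-\frac d{2p_1}}$, this is \eqref{est.Tlp1}.

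For \eqref{est.Hlp1} I would use the explicit formula for $K^n_{r,t}(x,y)$ recorded just above. Put $\ell=t-k_n(r)$; since $r-k_n(r)+t-r=\ell$, positivity of the two summands together with \eqref{ellptic-con} gives $c^{-1}\ell I\le\Sigma_{k_n(r),r}(x)+\Sigma_{r,t}(y)\le c\,\ell I$, hence $\|A_{r,t}(x,y)\|\le c\,\ell^{-1}$ and $p_{\Sigma_{k_n(r),r}(x)+\Sigma_{r,t}(y)}(z)\le N\,p_{c\ell}(z)$. Feeding these bounds and $|a^{ij}_r(x)-a^{ij}_r(y)|\le K_2|x-y|^\alpha$ into the expression for $K^n_{r,t}$ gives the pointwise estimate
\[
	|K^n_{r,t}(x,y)|\le N\,|x-y|^{\alpha}\big(\ell^{-2}|x-y|^2+\ell^{-1}\big)\,p_{c\ell}(x-y)=:\Psi_\ell(x-y),
\]
whence $|H^n_{r,t}f(x)|\le(\Psi_\ell*|f|)(x)$. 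Young's inequality with the same $\rho$ then gives $\|H^n_{r,t}f\|_{L_{p_2}(\Rd)}\le N\,\|\Psi_\ell\|_{L_\rho(\Rd)}\|f\|_{L_{p_1}(\Rd)}$, and a parabolic rescaling $z=\sqrt\ell\,w$ shows $\|\Psi_\ell\|_{L_\rho(\Rd)}=N\,\ell^{\frac\alpha2-1-\frac d2(1-\frac1\rho)}=N\,\ell^{\frac\alpha2-1+\frac d{2p_2}-\frac d{2p_1}}$ (the $w$-integral being finite for $\rho\ge1$ by Gaussian decay and local integrability of $|w|^{\rho\alpha}$), which is \eqref{est.Hlp1}.
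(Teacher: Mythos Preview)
Your argument is correct and rests on the same kernel domination as the paper: uniform ellipticity to control the covariance, H\"older continuity of $a$ for the $|x-y|^\alpha$ factor, and a pointwise Gaussian-type majorant for the kernel. The only difference is organizational: the paper first quotes \cref{lem.GK} to get the endpoint bounds $\|H^n_{r,t}f\|_{L_\infty}\les(t-k_n(r))^{\alpha/2-1-d/(2p_1)}\|f\|_{L_{p_1}}$ and $\|H^n_{r,t}f\|_{L_{p_1}}\les(t-k_n(r))^{\alpha/2-1}\|f\|_{L_{p_1}}$, then interpolates via $\|H\|_{L_{p_2}}\le\|H\|_{L_{p_1}}^{p_1/p_2}\|H\|_{L_\infty}^{1-p_1/p_2}$, whereas you bypass the interpolation by applying Young's inequality once with the intermediate exponent $\rho$. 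Your route is marginally more direct; the paper's route has the advantage that the endpoint estimates in \cref{lem.GK} are reused elsewhere (for \eqref{est.GK2}).
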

	\begin{proof}
		By uniform ellipticity, there exists a  constant $\lambda>0$ such that for every $x,y\in\Rd$,
		\begin{align}\label{tmp.elliptic}
			\lambda^{-1}(t- k_n(r))\le \int_r^t a_\theta(y)d\theta+\int_{ k_n(r)}^r a_\theta(x)d\theta\le \lambda(t-  k_n(r)).
		\end{align}
		From here, we can derive \eqref{est.Tlp1} using standard Gaussian estimates.
		
		Applying \eqref{est.1H}, we have
		\begin{align*}
			&\|H^n_{r,t}f\|_{L_\infty(\Rd)}\les (t-k_n(r))^{\frac \alpha2-1-\frac d{2p_1}}\|f\|_{L_{p_1}(\Rd)}	
			\\\shortintertext{and}
			&\|H^n_{r,t}f\|_{L_{p_1}(\Rd)}\les (t-k_n(r))^{\frac \alpha2-1}\|f\|_{L_{p_1}(\Rd)}	.
		\end{align*}
		From the above estimates and the H\"older interpolation inequality
		\begin{align*}
			\|H\|_{L_{p_2}(\Rd)}\le\|H\|_{L_{p_1}(\Rd)}^{\frac{p_1}{p_2}}\|H\|_{L_{\infty}(\Rd)}^{1-\frac{p_1}{p_2}},
		\end{align*}
		we deduce \eqref{est.Hlp1}.		
	\end{proof}
	The estimate \eqref{est.Hlp1} shows in particular that whenever $r<t$, $H^n_{r,t}$ maps bounded measurable functions to bounded measurable functions. It is evident from their definitions that $Q^n_{r,t},T_{r,t}$ also map bounded measurable functions to bounded measurable functions.
	\begin{lemma}\label{lem.discreteDuhamel}
		Let $s\in D_n$ and $f$ be a bounded uniformly continuous function. Then, for every $t>s$ and $x\in\Rd$
		\begin{align}\label{id.duhamel}
			Q^n_{s,t}f(x)=T_{s,t}f(x)
			+\int_s^t Q^n_{s,k_n(r)}[H^n_{r,t}f](x)dr.
		\end{align}
	\end{lemma}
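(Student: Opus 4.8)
The plan is to derive \eqref{id.duhamel} from It\^o's formula applied to $r\mapsto T_{r,t}f(\bar X^n_r)$ on $[s,t)$, where $\bar X^n=\bar X^n(s,x)$. The structural facts I would use are: (i) for $r<t$ the function $T_{r,t}f$ is smooth and bounded in $x$, with $v\mapsto T_{v,t}f(x)$ absolutely continuous on compact subintervals of $[s,t)$ and $\partial_vT_{v,t}f=-\partial^2_{x_ix_j}T_{v,t}[a^{ij}_vf]$; (ii) on each interval $[k_n(v),v)$ one has $k_n(w)=k_n(v)$, hence $\bar X^n_v=\bar X^n_{k_n(v)}+\eta_v(\bar X^n_{k_n(v)})$ with $\eta_v(y)=\int_{k_n(v)}^v\sigma(w,y)dB_w$ a functional of the Brownian increment on $[k_n(v),v]$, independent of $\cff_{k_n(v)}$. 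Moreover $Q^n_{s,t}f(x)=\E[T_{t,t}f(\bar X^n_t)]$ since $T_{t,t}=\mathrm{id}$, and $T_{s,t}f(x)=\E[T_{s,t}f(\bar X^n_s)]$ since $\bar X^n_s\equiv x$, so it suffices to evaluate $\E[T_{t,t}f(\bar X^n_t)-T_{s,t}f(\bar X^n_s)]$.

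First I would fix $t'\in(s,t)$ and apply It\^o's formula on $[s,t']$, using $d\bar X^n_v=\sigma(v,\bar X^n_{k_n(v)})dB_v$ and $d\langle\bar X^{n,i},\bar X^{n,j}\rangle_v=a^{ij}_v(\bar X^n_{k_n(v)})dv$. Since $T_{v,t}f\in C^2_b$ uniformly for $v\le t'$ (by ellipticity the relevant Gaussian covariances stay bounded below by a positive multiple of $t-t'$), the stochastic integral is a true martingale on $[s,t']$, so taking expectations and using Fubini (legitimate since the integrand is bounded on $[s,t']$) yields
\[
\E[T_{t',t}f(\bar X^n_{t'})]-T_{s,t}f(x)=\int_s^{t'}\E\Big[(\partial_vT_{v,t}f)(\bar X^n_v)+\tfrac12\textstyle\sum_{i,j}a^{ij}_v(\bar X^n_{k_n(v)})(\partial^2_{ij}T_{v,t}f)(\bar X^n_v)\Big]dv.
\]
Then, for each fixed $v$, I would condition on $\cff_{k_n(v)}$: writing $Y=\bar X^n_{k_n(v)}$ and invoking the independence in (ii) together with the freezing lemma, the conditional expectation of the integrand equals $\E\big[(\partial_vT_{v,t}f)(y+\eta_v(y))+\tfrac12\sum_{i,j}a^{ij}_v(y)(\partial^2_{ij}T_{v,t}f)(y+\eta_v(y))\big]$ evaluated at $y=Y$. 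Substituting $\partial_vT_{v,t}f=-\partial^2_{ij}T_{v,t}[a^{ij}_vf]$, this becomes precisely the defining expression of $H^n_{v,t}f$ (the normalizations of $T$, of $\eta$ and of $H^n$ are fixed so that the It\^o constant matches). Hence $\E[\cdots]=\E[H^n_{v,t}f(\bar X^n_{k_n(v)})]=Q^n_{s,k_n(v)}[H^n_{v,t}f](x)$, and the displayed identity reads $\E[T_{t',t}f(\bar X^n_{t'})]-T_{s,t}f(x)=\int_s^{t'}Q^n_{s,k_n(v)}[H^n_{v,t}f](x)\,dv$.

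Finally I would let $t'\uparrow t$. On the left-hand side, uniform continuity of $f$ gives $\|T_{t',t}f-f\|_{L_\infty(\Rd)}\to0$, and sample-path continuity of $\bar X^n$ then gives $T_{t',t}f(\bar X^n_{t'})\to f(\bar X^n_t)$ boundedly, so the left-hand side converges to $Q^n_{s,t}f(x)-T_{s,t}f(x)$. On the right-hand side, $\|Q^n_{s,k_n(v)}g\|_{L_\infty}\le\|g\|_{L_\infty}$, while \cref{lem.HLp} (equivalently \cref{lem.GK}) with both exponents equal to $\infty$ gives $\|H^n_{v,t}f\|_{L_\infty(\Rd)}\les(t-k_n(v))^{\frac\alpha2-1}\|f\|_{L_\infty(\Rd)}$, whose exponent $\frac\alpha2-1$ is $>-1$; hence $v\mapsto Q^n_{s,k_n(v)}[H^n_{v,t}f](x)$ is integrable on $[s,t)$ and $\int_s^{t'}\to\int_s^t$ by dominated convergence, which gives \eqref{id.duhamel}. \textbf{The delicate point} is this last integrability near $v=t$: the individual terms $\partial^2_{ij}T_{v,t}f$ and $\partial^2_{ij}T_{v,t}[a^{ij}_vf]$ each blow up like $(t-v)^{-1}$, and only their difference — which, after conditioning on $\cff_{k_n(v)}$, is $H^n_{v,t}f$ — is integrable; this cancellation is exactly where the H\"older regularity of $\sigma$ from Condition \ref{con.A}\ref{con.Aholder} (via \cref{lem.GK}) is essential.
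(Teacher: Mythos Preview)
Your proposal is correct and follows essentially the same route as the paper: apply It\^o's formula to $r\mapsto T_{r,t}f(\bar X^n_r)$ on $[s,\tau]$ with $\tau<t$, condition on $\cff_{k_n(r)}$ using $\bar X^n_r=\bar X^n_{k_n(r)}+\eta_r(\bar X^n_{k_n(r)})$ to recognize $H^n_{r,t}f(\bar X^n_{k_n(r)})$, and then pass $\tau\uparrow t$ using uniform continuity of $f$ on the left and the bound $\|H^n_{r,t}f\|_{L_\infty}\les(t-k_n(r))^{\frac\alpha2-1}\|f\|_{L_\infty}$ from \eqref{est.Hlp1} on the right. Your emphasis on the cancellation that makes the integrand integrable near $r=t$ is exactly the point, and the paper invokes the same estimate.
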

	\begin{proof}
		Let $\bar X^n=\bar X^n(s,x)$ and $\tau\in(s,t)$.
		Applying It\^o formula for $r\mapsto T_{r,t}f(\bar X^n_r)$, for any $t>s$, we obtain that
		\begin{multline*}
			\E T_{\tau,t} f(\bar X^n_\tau)
			\\=T_{s,t}f(\bar X^n_{s})
			+\E\int_{s}^\tau \left[a^{ij}_r(\bar X^n_{k_n(r)})(\partial^2_{x_ix_j}T_{r,t}f)(\bar X^n_r)-(\partial^2_{x_ix_j}T_{r,t}[a^{ij}_r f])(\bar X^n_r)\right]d r.
		\end{multline*}
		Writing $\bar X^n_r=\bar X^n_{ k_n(r)}+\eta_r(\bar X^n_{ k_n(r)})$, we  take conditional expectation given $\cff_{ k_n(r)}\supset\cff_s$. This yields
		\begin{align}\label{fml.Esf}
			\E T_{\tau,t} f(\bar X^n_\tau)
			&=T_{s,t}f(\bar X^n_s)
			+\int_s^\tau\E H^n_{r,t}f(\bar X^n_{ k_n(r)})d r.
		\end{align}
		We now take the limit $\tau\uparrow t$ in the above formula.
		By uniform continuity of $f$ and a.s. continuity of $\bar X^n$, $\lim_{\tau\uparrow t}T_{\tau,t}f(\bar X^n_\tau)=f(\bar X^n_t)$.
		From  \eqref{est.Hlp1}, we have
		\begin{align*}
			\int_\tau^t|\E H^n_{r,t}f(\bar X^n_{ k_n(r)})|d r
			&\les\int_\tau^t (t-k_n(r))^{\frac \alpha2-1}\|f\|_{L_\infty(\Rd)}dr
			\les\|f\|_{L_\infty(\Rd)}(t- \tau)^{\frac \alpha2},
		\end{align*}
		which allows one to apply the limit $\tau\uparrow t$ to the last  term in \eqref{fml.Esf}. Hence, we have
		\begin{align*}
			\E  f(\bar X^n_t)
			&=T_{s,t}f(\bar X^n_s)
			+\int_s^t\E H^n_{r,t}f(\bar X^n_{ k_n(r)})d r,
		\end{align*}
		which deduces to \eqref{id.duhamel}.
	\end{proof}
	\begin{theorem}\label{thm.QLp12}
		Assume that \cref{con.A}\ref{con.Aholder} holds.
		Let $p_1,p_2\in[1,\infty]$, $p_1\le  p_2$, $p_1<\infty$ and let $f$ be a function in $L_{p_1}(\Rd)$. There exists a constant $N=N(d,p_1,p_2,\alpha,K_1,K_2)$ such that for every $s\in D_n$ and $t\in(s,1]$, we have
		\begin{align}
			\|Q^n_{s,t}f\|_{L_{p_2}(\Rd)}\le N(t-s)^{\frac d{2p_2} -\frac d{2p_1}}\|f\|_{L_{p_1}(\Rd)}.
			\label{est.Q1}
		\end{align}
	\end{theorem}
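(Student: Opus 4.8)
The plan is to run a parametrix (Duhamel) iteration based on the identity \eqref{id.duhamel}, in the following order: first the diagonal case $p_1=p_2$ on a short interval, then its extension to $[0,1]$ via the Markov property of the scheme, and finally the off-diagonal case by iterating \eqref{id.duhamel} enough times to spread the gain of integrability over several factors. Throughout one may assume $f$ smooth with compact support: for such $f$, $Q^n_{s,t}f$ lies in every $L_{p_2}(\Rd)$ (since $\bar X^n_t(s,x)$ has Gaussian tails in $x$), so the operator norms appearing below are a priori finite — or, if one prefers, finiteness can be secured by first smoothing $\sigma$ while keeping the constants dependent on $\sigma$ only through $K_1,K_2$ — and the bound for general $f\in L_{p_1}(\Rd)$ then follows by density, the right side of \eqref{est.Q1} being continuous in $f$.

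\emph{Diagonal case.} Fix $\tau_0\in(0,1]$ and set $\Lambda:=\sup\{\|Q^n_{s,t}\|_{L_p\to L_p}:s\in D_n,\ s<t\le(s+\tau_0)\wedge 1\}$, with the convention $Q^n_{s,s}=\mathrm{id}$. From \eqref{id.duhamel}, the estimate $\|T_{s,t}g\|_{L_p}\le N\|g\|_{L_p}$ (which is \eqref{est.Tlp1} with $p_1=p_2$) and $\|H^n_{r,t}g\|_{L_p}\le N(t-k_n(r))^{\frac\alpha2-1}\|g\|_{L_p}$ (which is \eqref{est.Hlp1} with $p_1=p_2$), one obtains
\begin{align*}
\|Q^n_{s,t}g\|_{L_p}\le N\|g\|_{L_p}+N\Lambda\|g\|_{L_p}\int_s^t(t-k_n(r))^{\frac\alpha2-1}\,dr,
\end{align*}
using that $k_n(r)\in D_n$ and $0\le k_n(r)-s<\tau_0$. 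By \cref{lem.intkn}, in particular \eqref{est.knplus} with $\varepsilon=\alpha/2$, the integral is $\le N(t-s)^{\alpha/2}\le N\tau_0^{\alpha/2}$, so $\Lambda\le N+N^2\tau_0^{\alpha/2}\Lambda$; choosing $\tau_0$ (depending only on $d,p,\alpha,K_1,K_2$, not on $n$) with $N^2\tau_0^{\alpha/2}\le\frac12$ yields $\Lambda\le2N$. This works verbatim for every $p\in[1,\infty]$. Since \eqref{eqn.EMscheme} without drift is Markov along the grid, $Q^n_{s,t}=Q^n_{s,s'}\circ Q^n_{s',t}$ for $s\le s'\le t$ in $D_n$; chaining $\le 1/\tau_0+1$ such factors of length $\le\tau_0$ gives, for every $p\in[1,\infty]$, a constant $N_p$ with $\|Q^n_{s,t}\|_{L_p\to L_p}\le N_p$ for all $n$, $s\in D_n$, $t\in(s,1]$.

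\emph{Off-diagonal case.} Put $\kappa:=\frac d{2p_1}-\frac d{2p_2}>0$ and fix an integer $M>2\kappa/\alpha$. Iterating \eqref{id.duhamel} $M$ times gives $Q^n_{s,t}=\sum_{\ell=0}^{M-1}\mathcal I_\ell+\mathcal R_M$, where $\mathcal R_M$ is an $M$-fold time integral of $Q^n_{s,k_n(r_M)}\circ H^n_{r_M,k_n(r_{M-1})}\circ\cdots\circ H^n_{r_1,t}$ and each $\mathcal I_\ell$ ($\ell<M$) has the analogous form with $T_{s,k_n(r_\ell)}$ in place of $Q^n_{s,k_n(r_M)}$ and $\ell$ factors $H^n$ (with $T_{s,s}=\mathrm{id}$ on the set where $k_n(r_\ell)=s$). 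In $\mathcal R_M$ I would choose $p_1=q_0<q_1<\cdots<q_M=p_2$ with $\frac d{2q_{i-1}}-\frac d{2q_i}=\kappa/M<\alpha/2$, so that, by \eqref{est.Hlp1}, the step $H^n\colon L_{q_{i-1}}\to L_{q_i}$ carries a time-kernel with exponent $\frac\alpha2-1-\frac\kappa M>-1$, while $Q^n_{s,k_n(r_M)}\colon L_{p_2}\to L_{p_2}$ is bounded by the diagonal case, so no negative power of $k_n(r_M)-s$ occurs; the nested time integral is then bounded by iterating \eqref{est.knplus} and successively pulling out the accumulated positive powers, giving $\les(t-s)^{M\alpha/2-\kappa}\le(t-s)^{-\kappa}$ uniformly in $n$ (distinct grid points force gaps $\ge1/n$, and the ensuing discrete corrections are absorbed as in the diagonal case via \cref{lem.intkn}). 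For the finitely many terms $\mathcal I_\ell$, the outermost $T_{s,k_n(r_\ell)}$ must supply the residual smoothing $L_{\tilde q}\to L_{p_2}$ left over after the $\ell$ factors $H^n$: when this residual smoothing is $<1$ the same scheme applies directly, and when it is $\ge1$ one first performs a bounded number of further Duhamel iterations to break it into pieces $<1$, again closing with the diagonal bound on the innermost $Q^n$ and with \cref{lem.intkn} on the resulting integrals. Summing over $\ell$ and adding $\mathcal R_M$ yields \eqref{est.Q1}, which then extends to all $f\in L_{p_1}(\Rd)$ by density.

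The main obstacle is the off-diagonal case: one must distribute the total gain $\kappa$ over enough $H^n$- and $T$-factors that every time-kernel remains integrable, and simultaneously control, uniformly in $n$, the errors introduced by $k_n$ inside the nested integrals — the key mechanism being \eqref{est.knplus} (together with \eqref{est.kn0}) combined with the extraction of accumulated positive powers and the lower bound $1/n$ on the gaps between distinct grid points. The diagonal fixed point, by contrast, is routine once the uniform-in-$n$ bound $\int_s^t(t-k_n(r))^{\alpha/2-1}\,dr\le N(t-s)^{\alpha/2}$ from \cref{lem.intkn} is in hand.
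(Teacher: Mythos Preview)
Your diagonal argument is sound and is a valid alternative to the paper's mechanism; the a-priori finiteness of $\Lambda$ is easily supplied (on the first grid interval $k_n(r)=s$ in \eqref{id.duhamel}, so $Q^n_{s,k_n(r)}=\mathrm{id}$ and the bound is direct; then induct over grid intervals). The paper proceeds differently and does not separate the diagonal case: for any $\rho=\frac d{2p_1}-\frac d{2p_2}<1$ it first establishes a rough bound with $j$-dependent constants by induction over grid intervals (Step~1), then closes via an exponential weight --- setting $m_t=e^{-\lambda(t-s)}(t-s)^{\rho}\sup_{\|f\|_{p_1}=1}\|Q^n_{s,t}f\|_{p_2}$, a single application of \eqref{id.duhamel} yields $m_t^*\lesssim 1+\lambda^{-\alpha/2}m_t^*$, whence $m_1^*$ is bounded for $\lambda$ large (Step~2). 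For $\rho\ge1$ the paper does not iterate Duhamel at all but invokes the Markov identity $Q^n_{s,t}=Q^n_{k_n(u),t}\,Q^n_{s,k_n(u)}$ at the midpoint $u=(s+t)/2$, choosing an intermediate $L_{p_3}$ so that each factor carries smoothing $\rho/2$; iterating $\lfloor\log_2(d/2)\rfloor+1$ times reaches every admissible $\rho$ (Step~3).

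The genuine gap in your off-diagonal argument is the treatment of the terms $\mathcal I_\ell$ for small $\ell$ when $\kappa>1$. With $\kappa/M$ allocated to each $H^n$-factor, the factor $T_{s,k_n(r_\ell)}$ must supply residual smoothing $\tau=(M-\ell)\kappa/M$, producing $(k_n(r_\ell)-s)^{-\tau}$ in the integrand. Whenever $\tau\ge1$ --- which occurs for all $\ell\le M(1-1/\kappa)$ as soon as $\kappa>1$ --- this is not uniformly integrable in $n$: on the slab $r_\ell\in[s+1/n,s+2/n)$ one has $k_n(r_\ell)-s=1/n$, so the contribution scales like $n^{\tau-1}\to\infty$. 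Your proposed remedy, ``further Duhamel iterations to break it into pieces $<1$'', does not apply here: the identity \eqref{id.duhamel} iterates $Q^n$, not $T$, and more iterations only redistribute the defect among other $\mathcal I_\ell$-terms. The difficulty \emph{is} repairable --- for instance by splitting each $r$-integral at the midpoint and allocating all of $\kappa$ to the $H^n$-factor on the half where $t-k_n(r)\gtrsim t-s$ and all of it to $T$ on the half where $k_n(r)-s\gtrsim t-s$, then inducting on $\ell$; or, more transparently, by following the paper's Step~3 once the case $\rho<1$ is in hand --- but as written your scheme does not close.
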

	\begin{proof}
		We put $\rho=\frac d{2p_1}-\frac d{2p_2}$.

		\textit{Step 1.} We show some rough estimates for $\|Q^n_{s,t}f\|_{L_{p_2}(\Rd)}$ in terms of $\|f\|_{L_{p_1}(\Rd)}$. Assume first that $f$ is a bounded uniformly continuous function. From \eqref{id.duhamel} and \cref{lem.HLp}, we have for every $t\in[s,s+1/n]$
		\begin{align}
			\|Q^n_{s,t}f\|_{L_{p_2}(\Rd)}
			&\le \|T_{s,t}f\|_{L_{p_2}(\Rd)}
			+\int_s^{t}\|H^n_{r,t}f\|_{L_{p_2}(\Rd)}dr
			\nonumber\\&\les(t-s)^{-\rho}\|f\|_{L_{p_1}(\Rd)}
			+\int_s^t (t-k_n(r))^{\frac \alpha2-1- \rho}\|f\|_{L_{p_1}(\Rd)}dr
			\nonumber\\&\les (t-s)^{- \rho}\|f\|_{L_{p_1}(\Rd)},
			\nonumber
		\end{align}
		where the last inequality follows from the fact that $k_n(r)=s$ for $r\in[s,s+1/n)$.
		Since smooth functions are dense in $L_{p_1}(\Rd)$, it follows that that $\|Q^n_{s,t}\|_{L_{p_2(\Rd)}}\les (t-s)^{-\rho}\|f\|_{L_{p_1(\Rd)}}$ for any function $f$ in $L_{p_1}(\Rd)$.

		We proceed inductively.
		Let $j\ge1$ be an integer. Suppose that for every $t\in[s,s+j/n]$ and every function $f\in L_{p_1}(\Rd)$,
		\begin{align}\label{tmp.inductive}
			\|Q^n_{s,t}f\|_{L_{p_2}(\Rd)}\le C_j(t-s)^{-\rho}\|f\|_{L_{p_1}(\Rd)}
		\end{align}
		for some constant $C_j$, independent from $n,s,t,f$.

		Let $f$ be a bounded uniformly continuous function.
		Then for each $t\in(s+j/n,s+(j+1)/n]$, we obtain from \eqref{id.duhamel}, \cref{lem.HLp} and the inductive hypothesis that
		\begin{multline}\label{tmp.0909302}
			\|Q^n_{s,t}f\|_{L_{p_2}(\Rd)}
			\les \|T_{s,t}f\|_{L_{p_2}(\Rd)}
			+\int_s^{s+1/n}\|H^n_{r,t}f\|_{L_{p_2}(\Rd)}dr
			\\+C_j\int_{s+1/n}^{t}(k_n(r)-s)^{-\rho} \|H^n_{r,t}f\|_{L_{p_1}(\Rd)}dr.
		\end{multline}
		The first two terms are estimated as previously, $\|T_{s,t}f\|_{L_{p_2}(\Rd)}\les (t-s)^{-\rho}\|f\|_{L_{p_1}(\Rd)}$ and
		\begin{align*}
			\int_s^{s+1/n}\|H^n_{r,t}f\|_{L_{p_2}(\Rd)}dr\les (1/n)^{-\rho}\|f\|_{L_{p_1}(\Rd)}.
		\end{align*}
		Using \cref{lem.HLp}, we have
		\begin{align*}
			\|H^n_{r,t}f\|_{L_{p_1}(\Rd)})\les(t-k_n(r))^{\frac \alpha2-1}\|f\|_{L_{p_1}(\Rd)}.
		\end{align*}
		Using the above estimate and the fact that $k_n(r)-s\ge 1/n$ for any $r\ge s+1/n$ and \cref{lem.intkn}, we have
		\begin{align*}
			\int_{s+1/n}^{t}(k_n(r)-s)^{-\rho} \|H^n_{r,t}f\|_{L_{p_1}(\Rd)}dr
			&\les (1/n)^{-\rho}\int_{s+1/n}^t (t-k_n(r))^{\frac \alpha2-1}dr\|f\|_{L_{p_1}(\Rd)}
			\\&\les (1/n)^{-\rho}\|f\|_{L_{p_1}(\Rd)}.
		\end{align*}
		Observe furthermore that $(1/n)^{-\rho}\le(j+1)^\rho(t-s)^{-\rho}$.
		Putting these estimates in \eqref{tmp.0909302}, we see that \eqref{tmp.inductive} holds for $t\in(s+j/n,s+(j+1)/n]$ with a bounded uniformly continuous function $f$ and some constant $C_{j+1}$.
		By approximation, we can extend the inequality to all functions $f\in L_{p_1}(\Rd)$.

		To show that \eqref{tmp.inductive} actually holds uniformly in $j$, we proceed as follows.

		\textit{Step 2.} Assuming that $\rho=\frac d{2p_2}-\frac d{2p_1} <1$, we show that there exists a constant $C>0$, independent from $n$, such that
		\begin{align}\label{est.Qp12}
			\|Q^n_{s,t}f\|_{L_{p_2}(\Rd)}\le C(t-s)^{\frac d{2p_2}-\frac d{2p_1}}\|f\|_{L_{p_1}(\Rd)}
			\text{ for every $t>s$ and function  $f\in L_{p_1}(\Rd)$.}
		\end{align}

		In view of \eqref{tmp.inductive}, it suffices to consider $t\ge s+2/n$.
		Let $\lambda>1$ be a constant to be chosen later.
		For each $t\in[s+2/n,1]$, define
		\begin{align*}
			m_t=e^{-\lambda(t-s)} (t-s)^{\rho}\sup_{g\in L_{p_1}(\Rd)\ :\ \|g\|_{L_{p_1}(\Rd)}=1}\|Q^n_{s,t}g\|_{L_{p_2}(\Rd)}
		\end{align*}
		and $m^*_t=\sup_{r\in[s+2/n,t]} m_r$,
		which are finite by the previous step.
		In particular, for every $t\ge s+2/n$ and $g\in L_{p_1}(\Rd)$, we have (the case $\|g\|_{L_{p_1}(\Rd)}=0$ is treated by \eqref{tmp.inductive})
		\begin{align}\label{tmp.811541}
			\|Q^n_{s,t}g\|_{L_{p_2}(\Rd)}\le m^*_t e^{\lambda(t-s)}(t-s)^{-\rho}\|g\|_{L_{p_1}(\Rd)}.
		\end{align}
		
		Let $t\ge s+2/n$ and $f$ be a bounded uniformly continuous function, $\|f\|_{L_{p_1}(\Rd)}=1$.
		From \eqref{id.duhamel}, \eqref{tmp.inductive} and \eqref{tmp.811541}, we have that
		\begin{multline}\label{tmp.0909Qnf}
			\|Q^n_{s,t}f\|_{L_{p_2}(\Rd)}
			\les \|T_{s,t}f\|_{L_{p_2}(\Rd)}+\int_s^{s+2/n}\|H^n_{r,t}f\|_{L_{p_2}(\Rd)}dr
			\\+m^*_t\int_{s+2/n}^t e^{\lambda(k_n(r)-s)} (k_n(r)-s)^{-\rho}\|H^n_{r,t}f\|_{L_{p_1}(\Rd)}dr.
		\end{multline}
		From \cref{lem.HLp}, we have $\|T_{s,t}f\|_{L_{p_2}(\Rd)}\les (t-s)^{-\rho}$ and
		\begin{align*}
			\int_s^{s+2/n}\|H^n_{r,t}f\|_{L_{p_2}(\Rd)}dr
			\les\int_s^{s+2/n}(t-k_n(r))^{\frac \alpha2-1-\rho} dr
			&\les(1/n)(t-s-1/n)^{\frac \alpha2-1- \rho}
			\\&\les(t-s)^{-\rho},
		\end{align*}
		where we have used the fact that $1/n\le t-s\le1$ and $t-s-1/n\ge(t-s)/2$.
		Similarly, using \cref{lem.HLp}, we have
		\begin{align*}
			&\int_{s+2/n}^t e^{\lambda(k_n(r)-s)} (k_n(r)-s)^{-\rho}\|H^n_{r,t}f\|_{L_{p_1}(\Rd)}dr
			\\&\les\int_{s+2/n}^t e^{\lambda(k_n(r)-s)} (k_n(r)-s)^{-\rho}(t-k_n(r))^{\frac \alpha2-1} dr
			\\&\les\int_{s+1/n}^t e^{\lambda(r-s)}(r-s-1/n)^{-\rho}(t-r)^{\frac \alpha2-1} dr.
		\end{align*}
		To estimate the integrals on the right-hand sides above, we split them into two regions, putting $\bar s=s+1/n$,
		\begin{multline*}
			\int_{\bar s}^{(\bar s+t)/2} e^{-\lambda(t-r)}(r-\bar s)^{-\rho}(t-r)^{\frac \alpha2-1} dr
			\les e^{-\frac\lambda2(t-\bar s)}(t-\bar s)^{\frac \alpha2-1}\int_{\bar s}^{(\bar s+t)/2}(r-\bar s)^{-\rho}dr
			\\\les e^{-\frac\lambda2(t-\bar s)}(t-\bar s)^{\frac \alpha2- \rho}
			\les \lambda^{-\frac \alpha2}(t-\bar s)^{-\rho}	
		\end{multline*}
		and
		\begin{multline*}
			\int_{(\bar s+t)/2}^t e^{-\lambda(t-r)}(r-\bar s)^{-\rho}(t-r)^{\frac \alpha2-1} dr
			\le (t-\bar s)^{-\rho} \int_{(\bar s+t)/2}^t e^{-\lambda(t-r)}(t-r)^{\frac \alpha2-1}dr
			\\\le(t-\bar s)^{-\rho} \int_0^\infty e^{-\lambda u}u^{\frac \alpha2-1}du\les \lambda^{-\frac \alpha2}(t-\bar s)^{-\rho}.
		\end{multline*}
		In the above, all integrals are finite because $\alpha\in(0,1]$ and $\rho<1$.
		Observe furthermore that $(t-s)/(t-\bar s)\le 2$. Thus we have
		\begin{align*}
			\int_{s+2/n}^t e^{\lambda(k_n(r)-s)} (k_n(r)-s)^{-\rho}(t-k_n(r))^{\frac \alpha2-1} dr\les \lambda^{-\frac \alpha2}e^{\lambda(t-s)} (t-s)^{-\rho}.
		\end{align*}

		Putting the previous estimates altogether into \eqref{tmp.0909Qnf}, we have
		\begin{align*}
			\|Q^n_{s,t}f\|_{L_{p_2}(\Rd)}
			\les (t-s)^{-\rho}(1+m^*_t \lambda^{-\frac \alpha2}e^{\lambda(t-s)}).
		\end{align*}
		By approximations, the above estimate also holds for any function $f\in L_{p_1}(\Rd)$ with $\|f\|_{L_{p_1}(\Rd)}=1$.
		It follows that $m_t\les 1 +\lambda^{- \frac \alpha2}m^*_t$ for every $t\ge s+2/n$. By choosing $\lambda$ sufficiently large, we conclude that $m^*_1$ is bounded by a constant independent from $n$ and thus obtain \eqref{est.Qp12}.

		\textit{Step 3.} We remove the restriction $\rho<1$ in the previous step.

		Suppose that $\rho:=\frac d{2p_1}-\frac d{2p_2}<2$. Define $ p_3\in[p_1,p_2]$  by $\frac d{p_3}=\frac d{2p_1}+\frac d{2p_2}$ so that $\frac d{2p_1}-\frac d{2p_3}=\frac d{2p_3}-\frac d{2p_2}=\frac \rho2<1$. Let $t\ge s+4/n$ and $u=(s+t)/2$. Then by Markov property of the Euler--Maruyama scheme, we have $Q^n_{s,t}=Q^n_{k_n(u),t}Q^n_{s,k_n(u)}$. It is easy to see that $t>k_n(u)>s$ so that by \eqref{est.Qp12}, we have for every $f\in L_{p_1}(\Rd)$ that
		\begin{align*}
			\|Q^n_{s,t}f\|_{L_{p_2}(\Rd)}
			&\les(t-k_n(u))^{-\frac \rho2}\|Q^n_{s,k_n(u)}f\|_{L_{p_3}(\Rd)}
			\\&\les (t-k_n(u))^{-\frac \rho2}(k_n(u)-s)^{-\frac \rho2}\|f\|_{L_{p_1}(\Rd)}.
		\end{align*}
		It is straightforward to see that $t-k_n(u)\ge (t-s)/2$ and $k_n(u)-s\ge(t-s)/4$. Hence, from the above estimate, we deduce that $\|Q^n_{s,t}f\|_{L_{p_2}(\Rd)}\les (t-s)^{-\rho}\|f\|_{L_{p_1}(\Rd)}$ for any $t\ge s+4/n$. Combining with \eqref{tmp.inductive} from step 1., we see that \eqref{est.Qp12} holds for any $p_1,p_2\in[1,\infty]$ satisfying $\frac d{2p_1}-\frac d{2p_2}<2$.
		We iterate the argument. After $\lfloor\log_2(d/2)\rfloor+1$ iterations, we see that \eqref{est.Qp12} holds whenever $\rho\le d/2$, which is trivially satisfied for any $p_1,p_2\in[1,\infty]$. Hence, we have shown \eqref{est.Q1}.
	\end{proof}
	\begin{remark}
		\cref{thm.QLp12} complements previous works.
		It is shown in \cite{MR2735377} that for each $s,t\in D_n$, $s<t$, the operator $Q^n_{s,t}$ has a kernel density which has Gaussian upper bounds. From here, one can deduce estimate \eqref{est.Q1} for discrete times $s,t\in D_n$.
		Gaussian upper bounds for the kernel density of $Q^n_{s,t}$, which are valid for all times $t>s$, $s\in D_n$, are also established in \cite{bao2020convergence,jourdain2021convergence} under Lipschitz regularity of $a$. Some related estimates are also obtained in \cite{MR1392450,gyongy2021existence} under the additional condition $\alpha>d/p$. We are able to remove this condition herein mainly due to \eqref{est.Hlp1}, which was known previously with the factor $(t-r)$ on its right-hand side.
	\end{remark}
	\begin{lemma}
		Let $p_1\in[1,\infty]$ and let $r<k_n(t)$.
		Then for every $f\in L_{p_1}(\Rd)$,
		\begin{align}\label{est.Tlp2}
			\|T_{r,t}f-T_{r,k_n(t)}f\|_{L_{p_1}(\Rd)}\le N(1/n)(t-r)^{-1}\|f\|_{L_{p_1}(\Rd)}
		\end{align}
		and
		\begin{align}\label{est.Hlp2}
			\|H^n_{r,t}f-H^n_{r,k_n(t)}f\|_{L_{p_1}(\Rd)}\le N(1/n)(t-k_n(r))^{\frac \alpha2-2}\|f\|_{L_{p_1}(\Rd)},
		\end{align}	
		where the constant $N$ depends only on $d,p_1,K_1,K_2$.
	\end{lemma}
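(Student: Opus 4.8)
The plan is, for each of the two estimates, to bound the difference of the integral kernels of the operators \emph{pointwise} by a scalar times a fixed convolution kernel, and then to conclude by Young's inequality. We may assume $t\notin D_n$, since otherwise $k_n(t)=t$ and both left-hand sides vanish; thus $0<t-k_n(t)<1/n$ throughout.

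\emph{For \eqref{est.Tlp2}.} Recall $T_{r,\tau}g(x)=\int_\Rd g(y)\,p_{\Sigma_{r,\tau}(y)}(y-x)\,dy$, and that by the ellipticity \eqref{ellptic-con} there are constants $c_1,c_2$ depending only on $K_1$ with $c_1(\tau-r)I\le\Sigma_{r,\tau}(y)\le c_2(\tau-r)I$ for all $y$. If $t-r\le2/n$, then $(1/n)(t-r)^{-1}\ge1/2$ and it suffices to use the crude bound
\[
\|T_{r,t}f-T_{r,k_n(t)}f\|_{L_{p_1}(\Rd)}\le\|T_{r,t}f\|_{L_{p_1}(\Rd)}+\|T_{r,k_n(t)}f\|_{L_{p_1}(\Rd)}\les\|f\|_{L_{p_1}(\Rd)},
\]
which is \eqref{est.Tlp1} with $p_2=p_1$. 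If $t-r>2/n$, then $t-k_n(t)<1/n<(t-r)/2$ gives $k_n(t)-r>(t-r)/2$, so $\Sigma_{r,t}(y)$ and $\Sigma_{r,k_n(t)}(y)$ are both comparable to $(t-r)I$, uniformly in $y$, with constants depending only on $K_1$; hence \cref{lem.pSig} applies and, since
\[
\big\|I-\Sigma_{r,t}(y)\,\Sigma_{r,k_n(t)}(y)^{-1}\big\|\le\big\|\Sigma_{k_n(t),t}(y)\big\|\,\big\|\Sigma_{r,k_n(t)}(y)^{-1}\big\|\les(t-k_n(t))(k_n(t)-r)^{-1}\les\tfrac1n(t-r)^{-1},
\]
it yields $|p_{\Sigma_{r,t}(y)}(z)-p_{\Sigma_{r,k_n(t)}(y)}(z)|\les\tfrac1n(t-r)^{-1}p_{c(t-r)}(z)$ with $c=c(K_1)$. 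Therefore $|T_{r,t}f(x)-T_{r,k_n(t)}f(x)|\les\tfrac1n(t-r)^{-1}(p_{c(t-r)}\ast|f|)(x)$, and \eqref{est.Tlp2} follows from Young's inequality.

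\emph{For \eqref{est.Hlp2}.} Since $r<k_n(t)$ we have $k_n(r)<k_n(t)$, hence $k_n(t)-k_n(r)\ge1/n$ and $t-k_n(r)=(k_n(t)-k_n(r))+(t-k_n(t))\le2(k_n(t)-k_n(r))$; thus the time scale $\ell:=t-k_n(r)$ is comparable to $k_n(t)-k_n(r)$, and by ellipticity the matrices $C_\tau(x,y):=\Sigma_{k_n(r),r}(x)+\Sigma_{r,\tau}(y)$, $\tau\in\{t,k_n(t)\}$, are both comparable to $\ell I$ with constants depending only on $K_1$, uniformly in $x,y$. With $A_\tau(x,y):=C_\tau(x,y)^{-1}$ one has $C_t(x,y)-C_{k_n(t)}(x,y)=\Sigma_{k_n(t),t}(y)$ of norm $\les1/n$, so $\|A_t(x,y)-A_{k_n(t)}(x,y)\|\les\tfrac1n\ell^{-2}$, and \cref{lem.pSig} gives $|p_{C_t(x,y)}(z)-p_{C_{k_n(t)}(x,y)}(z)|\les\tfrac1n\ell^{-1}p_{c\ell}(z)$. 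Inserting these, together with the uniform bounds $|(A_\tau z)^i(A_\tau z)^j-A^{ij}_\tau|\les|z|^2\ell^{-2}+\ell^{-1}$, $p_{C_\tau}(z)\les p_{c\ell}(z)$ and the H\"older bound $|a^{ij}_r(x)-a^{ij}_r(y)|\le K_2|x-y|^\alpha$ from \cref{con.A}\ref{con.Aholder}, into the explicit formula for $K^n_{r,\tau}(x,y)$ (exactly as in the proof of \cref{lem.GK}) gives
\[
|K^n_{r,t}(x,y)-K^n_{r,k_n(t)}(x,y)|\les\tfrac1n\,|x-y|^\alpha\big(|x-y|^2\ell^{-3}+\ell^{-2}\big)p_{c\ell}(y-x)=:\tfrac1n\,\mathcal K_\ell(x-y),
\]
and $\|\mathcal K_\ell\|_{L_1(\Rd)}\les\ell^{\frac\alpha2-2}$ by the Gaussian moment bound $\int_\Rd|z|^\beta p_{c\ell}(z)\,dz\les\ell^{\beta/2}$. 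Young's inequality then yields $\|H^n_{r,t}f-H^n_{r,k_n(t)}f\|_{L_{p_1}(\Rd)}\le\tfrac1n\|\mathcal K_\ell\ast|f|\|_{L_{p_1}(\Rd)}\les\tfrac1n\,\ell^{\frac\alpha2-2}\|f\|_{L_{p_1}(\Rd)}$, which is \eqref{est.Hlp2}.

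The only non-mechanical points are the following. In \eqref{est.Tlp2} the case distinction $t-r\le2/n$ versus $t-r>2/n$ is genuinely needed: in the small-gap regime the smaller span $k_n(t)-r$ may be much smaller than $t-r$, so $\Sigma_{r,t}(y)$ and $\Sigma_{r,k_n(t)}(y)$ need not be comparable and \cref{lem.pSig} is unavailable, and one simply falls back on the uniform $L_{p_1}$-boundedness of $T_{r,\tau}$. In \eqref{est.Hlp2} the sole use of the hypothesis $r<k_n(t)$ is to force $k_n(t)-k_n(r)\ge1/n$, which makes the forward time scales $t-k_n(r)$ and $k_n(t)-k_n(r)$ comparable, so that the elliptic Gaussian bounds for $H^n_{r,t}$ and $H^n_{r,k_n(t)}$ live at the common scale $\ell=t-k_n(r)$; it is this comparability that converts the perturbation $\|C_t-C_{k_n(t)}\|\les1/n$ into the factor $\tfrac1n\ell^{-1}$, hence into the gain $(1/n)$ in \eqref{est.Hlp2}.
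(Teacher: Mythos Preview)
Your proof is correct and follows essentially the same route as the paper: for \eqref{est.Tlp2} you apply \cref{lem.pSig} to $\Sigma_{r,t}(y)$ and $\Sigma_{r,k_n(t)}(y)$ and bound $\|I-\Sigma\bar\Sigma^{-1}\|\les(1/n)(t-r)^{-1}$, and for \eqref{est.Hlp2} you redo the computation underlying \eqref{est.GK2} with $\varepsilon\les1/n$ and $\ell=t-k_n(r)$; the paper simply cites \eqref{est.GK2} at that point.

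One remark: your case distinction $t-r\le 2/n$ versus $t-r>2/n$ in \eqref{est.Tlp2} is genuinely needed for the lemma as stated. The hypothesis $r<k_n(t)$ alone does not force $k_n(t)-r$ to be comparable to $t-r$ (take $r$ just below $k_n(t)$), so the ratio $\Sigma_{r,t}(y)\Sigma_{r,k_n(t)}(y)^{-1}$ need not be uniformly bounded and \cref{lem.pSig} cannot be invoked directly with a fixed $K$. The paper's proof is terse here and glosses over this point; your fallback to the crude $L_{p_1}$-bound when $(1/n)(t-r)^{-1}\ge 1/2$ is the clean fix. For \eqref{est.Hlp2} this issue does not arise, since (as you observe) $r<k_n(t)$ forces $k_n(t)-k_n(r)\ge 1/n$ and hence $\ell=t-k_n(r)$ and $k_n(t)-k_n(r)$ are automatically comparable.
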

	\begin{proof}
		We make use of \eqref{tmp.elliptic}. Observe furthermore that $1/2\le(k_n(t)-k_n(r))/(t-k_n(r))\le1$.
		It is then straightforward to verify the hypothesis of \cref{lem.pSig} for $\Sigma=\Sigma_{r,t}(y)$ and $\bar\Sigma=\Sigma_{r,k_n(t)}(y)$. In addition
		\begin{align*}
		 	\|I- \Sigma\bar \Sigma^{-1}\|\les (t-k_n(t))(t-k_n(r))^{-1}\les (1/n)(t-k_n(r))^{-1}.
		\end{align*}
		From the definition of $T$, we apply \eqref{est.psigma1} to get that
		\begin{align*}
			&|T_{r,t}f(x)-T_{r,k_n(t)}f(x)|
			\les(1/n) (t-r)^{-1}\int_\Rd p_{c(t-r)}(y) |f(y-x)|dy
		\end{align*}
		for some universal constant $c$. From here, we apply Minkowski inequality to obtain \eqref{est.Tlp2}.
		\eqref{est.Hlp2} is obtained analogously, using  \eqref{est.GK2}.
	\end{proof}
	
	\begin{corollary}\label{cor.Qfsut}
		Assuming \cref{con.A}\ref{con.Aholder}.
		Let $f$ be a function in $L_{p_1}(\Rd)$, $p_1\in[1,\infty)$. Then for any $s\in D_n$ and $t\in(s+2/n,1]$,
		\begin{align*}
			\|Q^n_{s,t}f-Q^n_{s,k_n(t)}f\|_{L_{p_1}(\Rd)}\le N  (1/n)^{\frac \alpha2} \|f\|_{L_{p_1}(\Rd)}(t-s)^{-\frac  \alpha2},
		\end{align*}
		where the constant $N$ depends only on $d,p_1,K_1,K_2$.
	\end{corollary}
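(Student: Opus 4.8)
The plan is to start from the discrete Duhamel formula \eqref{id.duhamel}, apply it both at time $t$ and at time $k_n(t)$, and estimate the difference term by term using the kernel bounds of \cref{sub:estimates_of_densities_for_euler_s_approximations}. For a bounded, uniformly continuous $f$ (the general case being recovered at the end by density), \cref{lem.discreteDuhamel} gives, after splitting $\int_s^t=\int_s^{k_n(t)}+\int_{k_n(t)}^t$ in the expansion of $Q^n_{s,t}f$,
\begin{align*}
	Q^n_{s,t}f-Q^n_{s,k_n(t)}f
	&=\bigl[T_{s,t}f-T_{s,k_n(t)}f\bigr]
	+\int_s^{k_n(t)}Q^n_{s,k_n(r)}\bigl[H^n_{r,t}f-H^n_{r,k_n(t)}f\bigr]\,dr\\
	&\qquad+\int_{k_n(t)}^t Q^n_{s,k_n(r)}\bigl[H^n_{r,t}f\bigr]\,dr
	=:I_0+I_1+I_2.
\end{align*}
The hypotheses $s\in D_n$ and $t\in(s+2/n,1]$ force $k_n(t)\ge s+2/n>s$, so every operator $T_{s,k_n(t)}$, $H^n_{r,k_n(t)}$, $Q^n_{s,k_n(r)}$ appearing above is legitimate; moreover $k_n(r)=k_n(t)$ for all $r\in[k_n(t),t)$, and $t-s\le1$ since $t\le1$.

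Next I would bound each piece. For $I_0$, \eqref{est.Tlp2} gives $\|I_0\|_{L_{p_1}(\Rd)}\le N(1/n)(t-s)^{-1}\|f\|_{L_{p_1}(\Rd)}$; since $(1/n)(t-s)^{-1}\le1$ (because $t-s>2/n$) and $0<\alpha/2\le1$, this is at most $N(1/n)^{\alpha/2}(t-s)^{-\alpha/2}\|f\|_{L_{p_1}(\Rd)}$. For $I_1$ and $I_2$ I use that $Q^n_{s,u}$ is bounded on $L_{p_1}(\Rd)$ uniformly in $s\le u$, which is \eqref{est.Q1} with $p_2=p_1$ (the case $u=s$ being the identity). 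For $I_1$, \eqref{est.Hlp2} gives $\|H^n_{r,t}f-H^n_{r,k_n(t)}f\|_{L_{p_1}(\Rd)}\le N(1/n)(t-k_n(r))^{\frac\alpha2-2}\|f\|_{L_{p_1}(\Rd)}$; since $t-k_n(r)\ge k_n(t)-k_n(r)$ for $r\le k_n(t)$, applying \eqref{est.knminus} at the grid point $k_n(t)\in D_n$ with $\varepsilon=1-\alpha/2$ (so $\min(k_n(t)-s,1/n)=1/n$ and the indicator term vanishes) yields $\int_s^{k_n(t)}(t-k_n(r))^{\frac\alpha2-2}\,dr\le N(1/n)^{\frac\alpha2-1}$, whence $\|I_1\|_{L_{p_1}(\Rd)}\le N(1/n)^{\frac\alpha2}\|f\|_{L_{p_1}(\Rd)}$. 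For $I_2$, \eqref{est.Hlp1} gives $\|H^n_{r,t}f\|_{L_{p_1}(\Rd)}\le N(t-k_n(r))^{\frac\alpha2-1}\|f\|_{L_{p_1}(\Rd)}$, and with $k_n(r)=k_n(t)$ on $[k_n(t),t)$ the integral over that interval equals $(t-k_n(t))^{\frac\alpha2}\le(1/n)^{\frac\alpha2}$, so $\|I_2\|_{L_{p_1}(\Rd)}\le N(1/n)^{\frac\alpha2}\|f\|_{L_{p_1}(\Rd)}$. Using $(t-s)^{-\alpha/2}\ge1$ to reinsert the time factor in the bounds for $I_1,I_2$ and summing $I_0+I_1+I_2$ gives the stated estimate for bounded uniformly continuous $f$.

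The general case $f\in L_{p_1}(\Rd)$, $p_1<\infty$, then follows by approximating $f$ in $L_{p_1}(\Rd)$ by bounded uniformly continuous functions and passing to the limit, which is justified since $Q^n_{s,t}$ and $Q^n_{s,k_n(t)}$ are bounded on $L_{p_1}(\Rd)$ by \eqref{est.Q1}. The one delicate point—essentially the heart of the matter—is the integral $\int_s^{k_n(t)}(t-k_n(r))^{\frac\alpha2-2}\,dr$: the exponent $\frac\alpha2-2<-1$ makes the continuous integral divergent, but the piecewise-constant argument $k_n(r)$ together with the extra factor $1/n$ supplied by \eqref{est.Hlp2} turns it into $(1/n)^{\frac\alpha2-1}$ times a convergent series $\sum_{j\ge1}j^{-(2-\frac\alpha2)}$, producing exactly the balanced rate $(1/n)^{\frac\alpha2}$; getting this discrete-versus-continuum interplay right (which is precisely what \cref{lem.intkn} encodes) is the only step requiring care, everything else being a direct application of the density estimates already established.
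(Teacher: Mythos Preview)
Your proof is correct and follows essentially the same route as the paper's own argument: the same three-term decomposition via the discrete Duhamel formula, the same use of \eqref{est.Tlp2}, \eqref{est.Hlp2}, \eqref{est.Hlp1}, the $L_{p_1}$-boundedness of $Q^n$ from \cref{thm.QLp12}, and \cref{lem.intkn} to handle the singular integral. You are merely more explicit in a couple of places (bounding $(t-k_n(r))^{\frac\alpha2-2}$ by $(k_n(t)-k_n(r))^{\frac\alpha2-2}$ before invoking \cref{lem.intkn}, and converting the $I_0$ bound $(1/n)(t-s)^{-1}$ to $(1/n)^{\alpha/2}(t-s)^{-\alpha/2}$), steps the paper leaves implicit.
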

	\begin{proof}
		By approximation, we can assume that $f$ is bounded and uniformly continuous.
		We put $u=k_n(t)$.
		From \eqref{id.duhamel}, $Q^n_{s,t}f-Q^n_{s,u}f=I_1+I_2+I_3$,	where
		\begin{align*}
			&I_1=T_{s,t}f-T_{s,u}f,
			\\&I_2=\int_{s}^uQ^n_{s,k_n(r)}[H^n_{r,t}f-H^n_{r,u}f]dr,
			\quad I_3=\int_u^t Q^n_{s,k_n(r)}[H^n_{r,t}f ]dr.
		\end{align*}
		We estimate each $I_1,I_2,I_3$ below.
		Note that $s< t-1/n$ implies $s<k_n(t)$.
		Applying \eqref{est.Tlp2}, we have $\|I_1\|_{L_p(\Rd)}\les (1/n)(t-s)^{-1} \|f\|_{L_p(\Rd)}$.

		From \cref{thm.QLp12}, we have $\|Q^n_{s,k_n(r)}f\|_{L_p(\Rd)}\les\|f\|_{L_p(\Rd)}$
		for every $r>s$. It follows that
		\begin{align*}
			\|I_2\|_{L_p(\Rd)}\les\int_s^u\|H^n_{r,t}f-H^n_{r,u}f\|_{L_p(\Rd)}dr.
		\end{align*}
 		Applying \eqref{est.Hlp2} and \cref{lem.intkn} (noting that $u-s\ge1/n$), we have
		\begin{align*}
			\|I_2\|_{L_p(\Rd)}\les\int_s^u (1/n)(u-k_n(r))^{\frac \alpha2-2}dr\|f\|_{L_p(\Rd)}
			\les (1/n)^{\frac \alpha2}\|f\|_{L_p(\Rd)}.
		\end{align*}
		Applying \cref{thm.QLp12} and \cref{lem.HLp}, we have
		\begin{align*}
			\|I_3\|_{L_p(\Rd)}\les\int_u^t\|H^n_{r,t}f\|_{L_p(\Rd)}dr
			\les\int_u^t(t-k_n(r))^{\frac \alpha2-1}\|f\|_{L_p(\Rd)}dr
			\les (1/n)^{\frac \alpha2}\|f\|_{L_p(\Rd)}.
		\end{align*}
		Combining the previous estimates, we obtain the result.
	\end{proof}
	\begin{corollary}
		Let $g$ be a function in $L_{1,p}(\Rd)$ for some $p\in[1,\infty)$
		\begin{align}\label{est.Qgg}
			\|Q^n_{s,t}g-g\|_{L_p(\Rd)}\les\|g\|_{L_{1,p}(\Rd)}(t-s)^{\frac \alpha2}.
		\end{align}
	\end{corollary}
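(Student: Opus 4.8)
The plan is to run the discrete Duhamel formula \eqref{id.duhamel}. Since $C_c^\infty(\mathbb{R}^d)$ is dense in $L_{1,p}(\mathbb{R}^d)$ (here $p<\infty$), since $L_{1,p}(\mathbb{R}^d)$ embeds continuously into $L_p(\mathbb{R}^d)$, and since $Q^n_{s,t}$ is bounded on $L_p(\mathbb{R}^d)$ (\cref{thm.QLp12} with $p_1=p_2=p$), both sides of \eqref{est.Qgg} are continuous in $g$ for the $L_{1,p}$-topology, so it suffices to prove \eqref{est.Qgg} for $g\in C_c^\infty(\mathbb{R}^d)$. For such $g$, formula \eqref{id.duhamel} gives
\[
	Q^n_{s,t}g-g=\big(T_{s,t}g-g\big)+\int_s^t Q^n_{s,k_n(r)}\big[H^n_{r,t}g\big]\,dr ,
\]
and I would bound the two terms separately, with constants independent of $n$; only the first one uses the full $L_{1,p}$-norm.

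For the integral remainder, \cref{thm.QLp12} (with $p_1=p_2=p$, and the identity operator when $k_n(r)=s$) gives $\|Q^n_{s,k_n(r)}\psi\|_{L_p(\mathbb{R}^d)}\lesssim\|\psi\|_{L_p(\mathbb{R}^d)}$ for every $r>s$, while \cref{lem.HLp} (again $p_1=p_2=p$) gives $\|H^n_{r,t}g\|_{L_p(\mathbb{R}^d)}\lesssim(t-k_n(r))^{\frac\alpha2-1}\|g\|_{L_p(\mathbb{R}^d)}$. Since $t-k_n(r)\ge t-r$ and $\frac\alpha2-1<0$, one has $\int_s^t(t-k_n(r))^{\frac\alpha2-1}\,dr\le\int_s^t(t-r)^{\frac\alpha2-1}\,dr\lesssim(t-s)^{\frac\alpha2}$ — the computation already carried out at the end of the proof of \cref{lem.discreteDuhamel} — whence $\big\|\int_s^t Q^n_{s,k_n(r)}[H^n_{r,t}g]\,dr\big\|_{L_p(\mathbb{R}^d)}\lesssim(t-s)^{\frac\alpha2}\|g\|_{L_p(\mathbb{R}^d)}$.

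It remains to prove $\|T_{s,t}g-g\|_{L_p(\mathbb{R}^d)}\lesssim(t-s)^{\frac\alpha2}\|g\|_{L_{1,p}(\mathbb{R}^d)}$; this is where the variable-coefficient nature of $T_{s,t}$ enters, and I expect it to be the main obstacle (the naive route, differentiating $\tau\mapsto T_{\tau,t}g$ via the evolution equation, fails because $a$ is only Hölder, not weakly differentiable). Using the kernel representation \eqref{fml.Tf} together with $\int_{\mathbb{R}^d}p_{\Sigma_{s,t}(x)}(y-x)\,dy=1$, I would split
\[
	T_{s,t}g(x)-g(x)=\int_{\mathbb{R}^d}\big[g(y)-g(x)\big]p_{\Sigma_{s,t}(y)}(y-x)\,dy+g(x)\int_{\mathbb{R}^d}\big[p_{\Sigma_{s,t}(y)}(y-x)-p_{\Sigma_{s,t}(x)}(y-x)\big]\,dy .
\]
In the first term, substituting $h=y-x$, taking the $L_p$-norm in $x$, and applying Minkowski's inequality with the uniform Gaussian bound $\sup_y p_{\Sigma_{s,t}(y)}(h)\lesssim(t-s)^{-d/2}e^{-c|h|^2/(t-s)}$ (from the uniform ellipticity in \cref{con.A}) and the $L_p$-modulus of continuity $\|g(\cdot+h)-g(\cdot)\|_{L_p(\mathbb{R}^d)}\lesssim|h|^{\alpha}\|g\|_{L_{1,p}(\mathbb{R}^d)}$, a Gaussian moment produces the factor $(t-s)^{\alpha/2}$. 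For the second term, \cref{con.A}\ref{con.Aholder} yields $\|\Sigma_{s,t}(y)-\Sigma_{s,t}(x)\|\lesssim(t-s)|y-x|^\alpha$, hence — with ellipticity — $\|I-\Sigma_{s,t}(y)\Sigma_{s,t}(x)^{-1}\|\lesssim|y-x|^\alpha$, so \cref{lem.pSig} gives $|p_{\Sigma_{s,t}(y)}(z)-p_{\Sigma_{s,t}(x)}(z)|\lesssim|y-x|^\alpha\,p_{c(t-s)}(z)$; integrating in $z=y-x$ bounds the inner integral by $\int_{\mathbb{R}^d}|z|^\alpha p_{c(t-s)}(z)\,dz\lesssim(t-s)^{\alpha/2}$, and thus the $L_p$-norm of the second term is $\lesssim(t-s)^{\alpha/2}\|g\|_{L_p(\mathbb{R}^d)}$. (Alternatively, this "mass defect" is controlled by applying \eqref{id.duhamel} to $f\equiv1$, using $Q^n_{s,t}1=1$ and the $L_\infty$-instance of \cref{lem.HLp}.) Adding the three contributions and using $t-s\le1$ yields \eqref{est.Qgg}.
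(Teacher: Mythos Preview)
Your proof is correct and follows essentially the same route as the paper: the discrete Duhamel formula \eqref{id.duhamel}, the identical treatment of the $H^n$-integral via \cref{thm.QLp12} and \cref{lem.HLp}, and the same two-term splitting of $T_{s,t}g-g$ into a ``kernel-variation'' piece (handled by \cref{lem.pSig}) and an ``increment-of-$g$'' piece. The only cosmetic difference is that you pair $g(x)$ with the kernel difference whereas the paper pairs $g(y)$; correspondingly, for the increment term you use the uniform Gaussian upper bound and the $L_p$-modulus of continuity $\|g(\cdot+h)-g\|_{L_p}\lesssim|h|\,\|g\|_{L_{1,p}}$, while the paper invokes the pointwise Hardy--Littlewood maximal inequality $|g(y)-g(x)|\le|x-y|(h(x)+h(y))$ --- your variant is slightly more elementary but otherwise equivalent.
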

	\begin{proof}
		By approximation, we can assume $g$ is continuously differentiable and has bounded derivatives.
		From \eqref{id.duhamel}, we have $Q^n_{s,t}g-g=I_1+I_2$, where $I_1=T_{s,t}g-g$ and $I_2=\int_s^t Q^n_{s,k_n(r)}[H^n_{r,t}g]dr$.
		From \eqref{fml.Tf}, we have
		\begin{align*}
			I_1
			&=\int_{\Rd} [p_{\Sigma_{s,t}(y)}(y-x)g(y)-p_{\Sigma_{s,t}(x)}(y-x)g(x)]dy
			\\&=\int_{\Rd} [p_{\Sigma_{s,t}(y)}(y-x)-p_{\Sigma_{s,t}(x)}(y-x)]g(y)dy+\int_{\Rd} [g(y)-g(x)]p_{\Sigma_{s,t}(x)}(y-x)dy
			\\&=I_{11}+I_{12}.
		\end{align*}
		Using \cref{con.A}\ref{con.Aholder}, it is straightforward to verify that $\lambda^{-1}I\le \Sigma_{s,t}(y)\Sigma_{s,t}(x)^{-1}\le \lambda I$ and $\|I- \Sigma_{s,t}(x)\Sigma_{s,t}(y)^{-1}\|\le \lambda |x-y|^\alpha$ for some finite constant $\lambda$. Then, we apply \cref{lem.pSig} to get
		\begin{align*}
			|I_{11}|\les\int_{\Rd}|x-y|^\alpha p_{N(t-s)}(x-y)|g(y)|dy.
		\end{align*}
		Using Minkowski inequality, we obtain from the above that $\|I_{11}\|_{L_p(\Rd)}\les (t-s)^{\frac \alpha2}\|g\|_{L_p(\Rd)}$.
		From the Hardy--Littlewood maximal inequality, there is a non-negative function $h\in L_p(\Rd)$ such that $\|h\|_{L_p(\Rd)}\les\|\nabla g\|_{L_p(\Rd)}$ and
		\begin{align*}
			|g(y)-g(x)|\le|x-y|(h(x)+h(y)),\quad \text{a.e.}\quad x,y\in\Rd.
		\end{align*}
		Using ellipticity and the above estimate, we have that
		\begin{align*}
			|I_{12}|
			&\le\int_{\Rd}|x-y|(h(x)+h(y))p_{\Sigma_{s,t}(x)}(x-y)dy
			\\&\les h(x)(t-s)^{\frac12}+ \int_{\Rd}|x-y|h(y)p_{N(t-s)}(x-y)dy.
		\end{align*}
		Applying Minkowski inequality, we obtain that $\|I_{12}\|_{L_p(\Rd)}\les (t-s)^{1/2}\|h\|_{L_p(\Rd)}\les (t-s)^{1/2}\|\nabla g\|_{L_p(\Rd)}$.
		Applying \cref{thm.QLp12} and \cref{lem.HLp}, we have
		\begin{align*}
			\|I_2\|_{L_p(\Rd)}
			\les\int_s^t\|H^n_{r,t}g\|_{L_p(\Rd)}dr
			\les\int_s^t(t-r)^{\frac \alpha2-1}\|g\|_{L_p(\Rd)}dr
			\les (t-s)^{\frac \alpha2}\|g\|_{L_p(\Rd)}.
		\end{align*}
		Combining the estimates for $I_{11},I_{12}$ and $I_2$, we obtain the result.
	\end{proof}

\subsection{Moment estimates} 
\label{sub:regularizing_properties_of_the_discrete_paths}
	We consider the Euler--Maruyama scheme
	\begin{align}\label{eqn.EMnob}
		\bar X^n_t=x+\int_0^t \sigma(s,\bar X^n_{k_n(s)})dB_s,
	\end{align}
	where $x_0$ is a $\cff_0$-random variable.
	By Markov property, for every $s\in D_n$ and bounded measurable $f$, we have $\E[f(\bar X^n_t)|\cff_s]=Q^n_{s,t}f(\bar X^n_s)$.
	
	\begin{proposition}\label{prop.momentg}
		Let $\bar X^n$ be the solution to \eqref{eqn.EMnob}.
		
		(i) Let $h$ be a measurable function such that $\|h\|_{L_\rho(\Rd)}$ is finite for some $\rho\in(0,\infty]$.  Then for every $r,v\in[0,1]$, $r-v\ge2/n$,
		\begin{align}\label{est.1fLp}
			\|h(\bar X^n_r)\|_{L_ \rho(\Omega|\cff_v)}\le N \|h\|_{L_ \rho(\Rd)}\left(r-v\right)^{-\frac d{2 \rho}}.
		\end{align}	

		(ii) Let $f$ be a function in $L_p(\Rd)$, $g$ be a function in $L_{1,p}(\Rd)\cap L_\infty(\Rd)$ for some $p\in[1,\infty)$.
		Then, for every $r,s,v\in[0,1]$ such that $r-v\ge2/n$, $r> k_n(s)+3/n$ and $s-v\ge2/n$,
		\begin{equation}\label{est.ffLp}
			\|\E_s(f(\bar X^n_r)-f(\bar X^n_{k_n(r)}))\|_{L_p(\Omega|\cff_v)}\le N(1/n)^{\frac \alpha2}\|f\|_{L_p(\Rd)}\left(s-v\right)^{-\frac d{2p}}\left(r-s-\frac2n\right)^{-\frac \alpha2}
		\end{equation}
		and
		\begin{multline}\label{est.fgLp}
			\|\E_s(g(\bar X^n_r)f(\bar X^n_r)-g(\bar X^n_r)f(\bar X^n_{k_n(r)}))\|_{L_p(\Omega|\cff_v)}
			\le N (1/n)^{\frac \alpha2}\|f\|_{L_p(\Rd)}\left(s-v\right)^{-\frac d{2p}}
			\\\times\left[\|g\|_{L_\infty(\Rd)}\left(r-s-\frac 2n\right)^{-\frac \alpha2}+ \|g\|_{L_{1,p}(\Rd)} \left(r-s-\frac2n\right)^{-\frac d{2p}}\right].
		\end{multline}		
	\end{proposition}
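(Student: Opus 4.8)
The plan is to use the Markov property of the drift-free Euler--Maruyama scheme $\bar X^n$, together with the $L_{p_1}\to L_{p_2}$ smoothing of its transition operators $Q^n$ from \cref{thm.QLp12} and the short-time cancellation bound of \cref{cor.Qfsut}. The conditional-moment norms are handled, as in \cref{sec.BM}, by restarting the scheme at an appropriate grid point and then conditioning further down to $\cff_v$.

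For (i), let $s_0$ be the least point of $D_n$ with $s_0\ge v$, so $s_0-v\le 1/n$; since $r-v\ge 2/n$ this forces $s_0<r$ and $r-s_0\ge\tfrac12(r-v)$. Writing $|h|^\rho=\lim_k\min(|h|^\rho,k)$ and using the Markov property for the bounded truncations, monotone convergence gives $\E[|h(\bar X^n_r)|^\rho\mid\cff_{s_0}]\le\|Q^n_{s_0,r}(|h|^\rho)\|_{L_\infty(\Rd)}$ a.s. By \cref{thm.QLp12} with $p_1=1$, $p_2=\infty$ the right side is at most $N(r-s_0)^{-d/2}\|h\|_{L_\rho(\Rd)}^\rho$; conditioning further on $\cff_v\subset\cff_{s_0}$ preserves this bound, and taking $\rho$-th roots together with $r-s_0\ge\tfrac12(r-v)$ yields \eqref{est.1fLp} (for $\rho=\infty$ it is the trivial bound $\|h\|_{L_\infty}$).

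For (ii), set $u_0:=k_n(s)+1/n\in D_n$ and $u:=k_n(r)\in D_n$. Then $s<u_0$, $u_0-v\ge s-v\ge 2/n$ (so part (i) applies at time $u_0$), and the hypothesis $r>k_n(s)+3/n=u_0+2/n$ gives $u>u_0$ as well as the lower bounds $r-u_0\ge r-s-2/n>0$ and $u-u_0\ge r-s-2/n>0$. Using the tower property ($\cff_v\subset\cff_s\subset\cff_{u_0}$), the Markov property, and $\E_u g(\bar X^n_r)=T^*_{u,r}g(\bar X^n_u)$, one obtains
\[
\E_s\big(f(\bar X^n_r)-f(\bar X^n_{k_n(r)})\big)=\E_s\big[\phi(\bar X^n_{u_0})\big],\qquad\phi:=Q^n_{u_0,r}f-Q^n_{u_0,u}f,
\]
\[
\E_s\big(g(\bar X^n_r)\big(f(\bar X^n_r)-f(\bar X^n_{k_n(r)})\big)\big)=\E_s\big[R(\bar X^n_{u_0})\big],\qquad R:=Q^n_{u_0,r}(gf)-Q^n_{u_0,u}\big[f\,T^*_{u,r}g\big].
\]
Conditional Jensen and part (i) at time $u_0$ bound the left sides in $L_p(\Omega|\cff_v)$ by $N(u_0-v)^{-d/(2p)}\le N(s-v)^{-d/(2p)}$ times $\|\phi\|_{L_p(\Rd)}$, resp.\ $\|R\|_{L_p(\Rd)}$, so it remains to estimate these $L_p(\Rd)$-norms. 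For $\phi$, \cref{cor.Qfsut} (applicable since $r>u_0+2/n$) gives $\|\phi\|_{L_p(\Rd)}\le N(1/n)^{\alpha/2}\|f\|_{L_p(\Rd)}(r-u_0)^{-\alpha/2}\le N(1/n)^{\alpha/2}\|f\|_{L_p(\Rd)}(r-s-2/n)^{-\alpha/2}$, which is \eqref{est.ffLp}. For $R$, add and subtract $Q^n_{u_0,u}(gf)$ to split $R=\big[Q^n_{u_0,r}(gf)-Q^n_{u_0,u}(gf)\big]+Q^n_{u_0,u}\big[f\,(g-T^*_{u,r}g)\big]$: the first bracket is handled by \cref{cor.Qfsut} applied to $gf$ (with $\|gf\|_{L_p}\le\|g\|_{L_\infty}\|f\|_{L_p}$), producing the term with $\|g\|_{L_\infty}(r-s-2/n)^{-\alpha/2}$; for the second, since $r-u\le 1/n$, the argument behind \eqref{est.Qgg} gives $\|g-T^*_{u,r}g\|_{L_p(\Rd)}\lesssim(r-u)^{1/2}\|g\|_{L_{1,p}(\Rd)}\le(1/n)^{\alpha/2}\|g\|_{L_{1,p}(\Rd)}$, and then H\"older ($\|f(g-T^*_{u,r}g)\|_{L_{p/2}}\le\|f\|_{L_p}\|g-T^*_{u,r}g\|_{L_p}$) combined with the $L_{p/2}\to L_p$ smoothing of $Q^n_{u_0,u}$ from \cref{thm.QLp12} (gain $(u-u_0)^{-d/(2p)}$) and $u-u_0\ge r-s-2/n$ produces the term with $\|g\|_{L_{1,p}}(r-s-2/n)^{-d/(2p)}$, giving \eqref{est.fgLp}.

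The routine but delicate ingredient is the bookkeeping of the mesh-dependent time gaps—replacing $s$, $u_0=k_n(s)+1/n$ and $u=k_n(r)$ by one another up to multiples of $1/n$—so that every invocation of \cref{thm.QLp12}, \cref{cor.Qfsut} and part (i) is legitimate and the lower bounds above hold; the one genuinely new step is the decomposition of $R$, which is precisely what separates the two scalings $(r-s-2/n)^{-\alpha/2}$ and $(r-s-2/n)^{-d/(2p)}$ appearing in \eqref{est.fgLp}.
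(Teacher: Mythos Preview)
The proposal is correct and follows essentially the same route as the paper. The only cosmetic difference is that you phrase the decomposition for \eqref{est.fgLp} at the level of the transition operator $R$, whereas the paper splits at the level of random variables:
\[
g(\bar X^n_r)f(\bar X^n_r)-g(\bar X^n_r)f(\bar X^n_{k_n(r)})
=[(gf)(\bar X^n_r)-(gf)(\bar X^n_{k_n(r)})]
+[(g(\bar X^n_{k_n(r)})-g(\bar X^n_r))f(\bar X^n_{k_n(r)})];
\]
after applying $\E_{u_0}$ this is exactly your two pieces of $R$. Note also that since $u=k_n(r)\in D_n$ and $r\in(u,u+1/n]$, one has $T^*_{u,r}=Q^n_{u,r}$, so you can cite \eqref{est.Qgg} directly (exponent $\alpha/2$) instead of re-running its argument; either way $(r-u)^{\alpha/2}\le(1/n)^{\alpha/2}$ and the conclusion is the same.
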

	\begin{proof}
		(i)
		Put $\bar v=k_n(v)+1/n$.
		In the case when $\rho<\infty$, applying \cref{thm.QLp12} (with the choice $p_1=1$ and $p_2=\infty$), we have
		\begin{align*}
			\E[|h(\bar X^n_r)|^\rho|\cff_{\bar v}]=Q^n_{\bar v,r}[|h|^\rho](\bar X^n_{\bar v})\les \left(r-\bar v\right)^{-\frac d2}\||h|^\rho \|_{L_1(\Rd)}.
		\end{align*}
		Noting that $r-\bar v\ge (r-v)/2$, we obtain \eqref{est.1fLp} for any $\rho\in(0,\infty)$ from the above. When $\rho=\infty$, \eqref{est.1fLp} is trivial.

		(ii) Put $\bar s=k_n(s)+1/n$.
		Applying \eqref{est.1fLp} and \cref{cor.Qfsut} (noting that $\bar s-v\ge2/n$ and $r -\bar s>2/n$), we have
		\begin{align*}
			\|\E_{\bar s}(f(\bar X^n_r)-f(\bar X^n_{k_n(r)}))\|_{L_p(\Omega|\cff_v)}
			&=\|Q^n_{\bar s,r}f(\bar X^n_{\bar s})-Q^n_{\bar s,k_n(r)}f(\bar X^n_{\bar s})\|_{L_p(\Omega|\cff_v)}
			\\&\les\left(\bar s- v\right)^{-\frac d{2p}}\|Q^n_{\bar s,r}f-Q^n_{\bar s,k_n(r)}f\|_{L_p(\Rd)}
			\\&\les (1/n)^{\frac \alpha2}\|f\|_{L_p(\Rd)}\left(\bar s-v\right)^{-\frac d{2p}}(k_n(r)-\bar s)^{-\frac \alpha2}.
		\end{align*}
		We observe that $\bar s- v\ge s-v$, $k_n(r)-\bar s\ge r-s-2/n$ and
		\begin{align*}
			\|\E_s(f(\bar X^n_r)-f(\bar X^n_{k_n(r)}))\|_{L_p(\Omega|\cff_v)}\le \|\E_{\bar s}(f(\bar X^n_r)-f(\bar X^n_{k_n(r)}))\|_{L_p(\Omega|\cff_v)}.
		\end{align*}
		From here, we obtain \eqref{est.ffLp} by combining the previous estimates.

		Lastly, we show \eqref{est.fgLp}. We write
		\begin{align*}
			&g(\bar X^n_r)f(\bar X^n_r)-g(\bar X^n_r)f(\bar X^n_{k_n(r)})
			\\&=[(gf)(\bar X^n_r)-(gf)(\bar X^n_{k_n(r)})]
			+[(g(\bar X^n_{k_n(r)})-g(\bar X^n_r))f(\bar X^n_{k_n(r)})].
		\end{align*}
		We observe that $\|fg\|_{L_p(\Rd)}\le\|f\|_{L_p(\Rd)}\|g\|_{L_\infty(\Rd)}$ and apply \eqref{est.ffLp} to see that
		\begin{align*}
			\left\|\E_s\left((fg)(\bar X^n_r)-(fg)(\bar X^n_{k_n(r)})\right)\right\|_{L_p(\Omega|\cff_v)}
		\end{align*}
		is smaller than the right-hand side of \eqref{est.fgLp}. It suffices to estimate the $L_p(\Omega|\cff_v)$-norm of
		\begin{align*}
			A:=\E_{\bar s}[(g(\bar X^n_{k_n(r)})-g(\bar X^n_r))f(\bar X^n_{k_n(r)})].
		\end{align*}
		By conditioning on $\cff_{k_n(r)}$, we have
		\begin{align*}
			A=\E_{\bar s}[h(\bar X^n_{k_n(r)})]=Q^n_{\bar s,k_n(r))}h(\bar X^n_{\bar s}),
			\quad\text{where}\quad h=(g-Q^n_{k_n(r),r}g)f.
		\end{align*}
		Applying \eqref{est.1fLp},
		\begin{align*}
			\|A\|_{L_p(\Omega|\cff_v)}\les(\bar s-v)^{-\frac d{2p}} \|Q^n_{\bar s,k_n(r)} h\|_{L_p(\Rd)}.
		\end{align*}
		We continue by applying \cref{thm.QLp12} (with $p_1=p/2$ and $p_2=p$),
		\begin{align*}
			\|A\|_{L_p(\Omega|\cff_v)}\les(\bar s-v)^{-\frac d{2p}}(k_n(r)-\bar s)^{-\frac d{2p}}\|h\|_{L_{p/2}(\Rd)}.
		\end{align*}
		By H\"older inequality
		\begin{align*}
			\|h\|_{L_{p/2}(\Rd)}\le \|Q_{k_n(r),r}g-g\|_{L_p(\Rd)}\|f\|_{L_p(\Rd)}
		\end{align*}
		and by \eqref{est.Qgg},
		\begin{align*}
			\|Q_{k_n(r),r}g-g\|_{L_p(\Rd)}\les(r-k_n(r))^{\frac \alpha2}\|g\|_{L_{1,p}(\Rd)}.
		\end{align*}
		Combining the previous estimates, we see that $\|A\|_{L_p(\Omega|\cff_v)}$ is also smaller than the right-hand side of \eqref{est.fgLp}, finishing the proof.
	\end{proof}
	\begin{remark}\label{rmk.vinDn}
		Concerning \cref{prop.momentg}(i), if $v\in D_n$, then there exists a constant $N=N(d,\rho,\alpha,K_1,K_2)$ such that for every $r>v$, we have
		\begin{align}\label{est.1fLprmk}
			\|h(\bar X^n_r)\|_{L_\rho(\Omega|\cff_v)}\le N \|h\|_{L_\rho(\Rd)}\left(r-v\right)^{-\frac d{2\rho}}.
		\end{align}	
		Indeed, the inequality is trivial when $\rho=\infty$. When $\rho<\infty$ and when $v\in D_n$, \cref{thm.QLp12} is applied directly, which yields
		\begin{align*}
			\E[|h(\bar X^n_r)|^\rho|\cff_{ v}]=Q^n_{ v,r}[|h|^\rho](\bar X^n_{ v})\les \left(r- v\right)^{-\frac d2}\||h|^\rho \|_{L_1(\Rd)}.
		\end{align*}
		From here, we deduce \eqref{est.1fLprmk}.		
	\end{remark}

	\begin{proposition}\label{prop.fXX2}
		Let $\bar X^n$ be the solution to \eqref{eqn.EMnob}.
		Let $f\in\mathbb{L}_p^q([0,1])$ and $g\in\LL^q_{1,p}([0,1])\cap\LL^\infty_\infty([0,1])$, with $p,q\in[2,\infty)$ satisfying $\frac{d}{p}+\frac{2}{q}<1$. Let $v\in[0,1-4/n]$ be a fixed number, $n\ge4$ is an integer. Then for every $v+\frac 4n\le S\le T\le1$, one has the bound
		\begin{multline}\label{est.fgXX}
			\Vert \int_S^Tg(r,\bar X^n_r)[ f(r,\bar X^n_r)-f(r,\bar X^n_{k_n(r)})]\dif r\Vert_{L_p(\Omega|\cff_v)}
			\\\leq N[(1/n)^{\frac \alpha2} +(1/n)^{\frac12}\log(n)](\|g\|_{\LL^\infty_\infty([S,T])}+ \|g\|_{\LL^q_{1,p}([S,T])} )\Vert f\Vert_{\mathbb{L}_p^q([S,T])},
		\end{multline}
		where $N=N(d,p,q,K_1,K_2)$ is a constant.
	\end{proposition}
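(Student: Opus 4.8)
The plan is to follow the template of \cref{prop.B1}, with $\bar X^n$ in place of Brownian motion and with \cref{prop.momentg} supplying the analytic input; the weight $g$ is accommodated through the splitting $g(\bar X^n_r)f(\bar X^n_r)-g(\bar X^n_r)f(\bar X^n_{k_n(r)})=[(gf)(\bar X^n_r)-(gf)(\bar X^n_{k_n(r)})]+[(g(\bar X^n_{k_n(r)})-g(\bar X^n_r))f(\bar X^n_{k_n(r)})]$ already underlying \eqref{est.fgLp}. By linearity I would assume $\|f\|_{\LL^q_p([S,T])}=1$, and $S<T$. Set $\caa_t=\int_S^tg(r,\bar X^n_r)[f(r,\bar X^n_r)-f(r,\bar X^n_{k_n(r)})]\,dr$, which is continuous and adapted by the integrability from \cref{thm.QLp12}, and for $S\le s\le t\le T$ take the germ $A_{s,t}=\E_s\delta\caa_{s,t}$ together with $J_{s,t}=\delta\caa_{s,t}-A_{s,t}$. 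Since $\delta\caa$ is additive, $\E_sJ_{s,t}=0$, $A_{s,t}=A_{s,u}+\E_sA_{u,t}$, hence $\delta J_{s,u,t}=-\delta A_{s,u,t}=A_{u,t}-\E_sA_{u,t}$; in particular $\E_s\delta J_{s,u,t}=0$ and, by conditional Jensen (recall $v\le s$), $\|\delta J_{s,u,t}\|_{L_p(\Omega|\cff_v)}\le 2\|A_{u,t}\|_{L_p(\Omega|\cff_v)}$.

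Next I would produce the moment inputs. For $J$ (and the trivial part of $A$): Minkowski's inequality, boundedness of $g$ and \eqref{est.1fLp} applied at $r$ and $k_n(r)$ (valid since $r-v,k_n(r)-v\ge 2/n$ and $k_n(r)-v\ge\tfrac34(r-v)$ when $r\ge v+4/n$) give $\|J_{s,t}\|_{L_p(\Omega|\cff_v)}\le 2\|\delta\caa_{s,t}\|_{L_p(\Omega|\cff_v)}\les\|g\|_{\LL^\infty_\infty([S,T])}(s-v)^{-\frac d{2p}}\|f\|_{\LL^q_p([s,t])}(t-s)^{1-\frac1q}$. For $A_{s,t}=\int_s^t\E_s[g(r,\bar X^n_r)(f(r,\bar X^n_r)-f(r,\bar X^n_{k_n(r)}))]\,dr$ I would split the integral at $k_n(s)+3/n$: on $[s,k_n(s)+3/n]$ (length $\le 3/n$, nonempty only if $t-s>2/n$) use the pointwise bound above and extract a factor $(t-s)^{1/2}\les(1/n)^{1/2}$; on $(k_n(s)+3/n,t]$ insert \eqref{est.fgLp} and apply H\"older in time, for which one needs $\alpha<2-2/q$ (automatic, since $\tfrac dp+\tfrac2q<1$ forces $q>2$ and $\alpha\le1$) and $\tfrac dp+\tfrac4q<2$ (a consequence of $\tfrac dp+\tfrac2q<1$). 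The upshot is
\[
\|A_{s,t}\|_{L_p(\Omega|\cff_v)}\les\Big[(1/n)^{\frac12}\|g\|_{\LL^\infty_\infty}(t-s)^{\frac12-\frac1q}+(1/n)^{\frac\alpha2}\big(\|g\|_{\LL^\infty_\infty}(t-s)^{1-\frac1q-\frac\alpha2}+\|g\|_{\LL^q_{1,p}}(t-s)^{1-\frac2q-\frac d{2p}}\big)\Big](s-v)^{-\frac d{2p}}\|f\|_{\LL^q_p([s,t])},
\]
and, since $\tfrac d{2p}<\tfrac12-\tfrac1q$, the two time-exponents in the $(1/n)^{\alpha/2}$ group strictly exceed $\tfrac12-\tfrac1q$, and exceed $\tfrac12$ whenever $\alpha<1$.

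To assemble these, I would introduce a continuous control $w$ on $\Delta([S,T])$ built (via \cref{ex.control}) from $(s-v)^{-\mu}(t-s)$, $\|f\|_{\LL^q_p([s,t])}^q$, $\|g\|_{\LL^q_{1,p}([s,t])}^q$ and powers of $(t-s)$, calibrated (using $t-s\le1$, $s-v\le1$, and that $\|f\|^q_{\LL^q_p([s,t])},\|g\|^q_{\LL^q_{1,p}([s,t])}\le1$) so that $\|J_{s,t}\|_{L_p(\Omega|\cff_v)}\le C_2w(s,t)^{\frac12+\varepsilon}$ and $\|\delta J_{s,u,t}\|_{L_p(\Omega|\cff_v)}\le\Gamma_2w(s,t)^{\frac12}+C_3\Gamma_2w(s,t)^{\frac12+\varepsilon}$ hold with $\Gamma_2=(1/n)^{1/2}$, $C_3\Gamma_2=(1/n)^{\alpha/2}$ and a fixed $\varepsilon>0$ small enough (here one needs $\varepsilon\le\min\{(1-\alpha)/2,(1-d/p)/2\}$; when $\alpha=1$ the $\|g\|_{\LL^\infty_\infty}$-part of the $(1/n)^{\alpha/2}$ group is instead routed into the $\Gamma_2w^{1/2}$ term, harmless since then $(1/n)^{\alpha/2}=(1/n)^{1/2}$), while $\E_sJ_{s,t}=\E_s\delta J_{s,u,t}=0$ so $\Gamma_1=C_1=0$. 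Then \cref{lem.Davie_iteration} gives $\|J_{s,t}\|_{L_p(\Omega|\cff_v)}\les[(1/n)^{1/2}\log n+(1/n)^{\alpha/2}]w(s,t)^{1/2}+(1/n)^{1/2}C_2w(s,t)^{1/2+\varepsilon}$, and, adding $\|A_{s,t}\|$ and simplifying with $\|f\|_{\LL^q_p([s,t])}\le1$ and $t-s\le1$, one gets $\|\delta\caa_{s,t}\|_{L_p(\Omega|\cff_v)}\les[(1/n)^{\alpha/2}+(1/n)^{1/2}\log n](\|g\|_{\LL^\infty_\infty}+\|g\|_{\LL^q_{1,p}})\sum_i(s-v)^{-\eta_i}(t-s)^{\tau_i}$ for finitely many pairs $\tau_i>\eta_i\ge0$, for all $S\le s\le t\le T$. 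Applying \cref{lem.Bellising} to the translate $u\mapsto\caa_{v+u}$ (with lower cut-off $S-v$ in place of $s_{-1}$) removes every prefactor $(s-v)^{-\eta_i}$; specializing to $(s,t)=(S,T)$, bounding $(T-S)^{\tau_i-\eta_i}\le1$, and undoing the normalization of $f$ gives \eqref{est.fgXX}.

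The main obstacle is entirely in this assembly step, and it is caused by criticality: every time-regularity exponent produced by \cref{prop.momentg} equals exactly $\tfrac12$ or $\tfrac\alpha2$, never $\tfrac12+\varepsilon$, so the classical stochastic sewing lemma is inapplicable and one must run \cref{lem.Davie_iteration} with a genuine control — which is precisely what forces the $\log n$ onto the $(1/n)^{1/2}$ term. The delicate point is to keep the $(1/n)^{\alpha/2}$ contribution (coming from the conditional-expectation estimate \eqref{est.fgLp}) free of a logarithm, which requires feeding it into the higher-regularity slot $C_3\Gamma_2w^{1/2+\varepsilon}$ of \cref{lem.Davie_iteration} — legitimate exactly because, for $\alpha<1$, the exponents $1-\tfrac1q-\tfrac\alpha2$ and $1-\tfrac2q-\tfrac d{2p}$ from the previous step beat $\tfrac12$ — rather than into the $\Gamma_2w^{1/2}$ slot, and simultaneously designing $w$ so that each singular prefactor $(s-v)^{-\cdot}$ is dominated by a strictly larger power of $(t-s)$, which is what makes \cref{lem.Bellising} applicable.
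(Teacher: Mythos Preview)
Your strategy is correct and matches the paper's: define $A_{s,t}=\E_s\delta\caa_{s,t}$, $J_{s,t}=\delta\caa_{s,t}-A_{s,t}$; split the time integral at a point near $k_n(s)+O(1/n)$ (the paper uses $k_n(s)+4/n$, you use $k_n(s)+3/n$, both are fine for \eqref{est.fgLp}); feed the resulting bounds on $\|A_{s,t}\|_{L_p(\Omega|\cff_v)}$ and $\|J_{s,t}\|_{L_p(\Omega|\cff_v)}$ into \cref{lem.Davie_iteration} via a suitable control; then apply \cref{lem.Bellising}. Your single-$\varepsilon$ packaging with $\varepsilon\le\min\{(1-\alpha)/2,(1-d/p)/2\}$ is legitimate (the relevant terms raised to $1/(1/2+\varepsilon)$ are products of basic controls with total exponent $\ge1$), and your handling of the case $\alpha=1$ is correct. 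The paper instead keeps two exponents $w^{1-\alpha/2}$ and $w^{1-d/(2p)}$ and implicitly invokes the obvious extension of \cref{lem.Davie_iteration} to several $\varepsilon$'s, but this is only a cosmetic difference.

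There is, however, a false justification in your final paragraph: the claim that the \emph{time} exponents $1-\tfrac1q-\tfrac\alpha2$ and $1-\tfrac2q-\tfrac d{2p}$ ``beat $\tfrac12$'' is not true in general. For instance with $d=2$, $p=6$, $q=4$ (so $\tfrac dp+\tfrac2q=\tfrac56<1$) and $\alpha=\tfrac35$ one gets $1-\tfrac14-\tfrac3{10}=\tfrac9{20}<\tfrac12$ and $1-\tfrac12-\tfrac16=\tfrac13<\tfrac12$. Fortunately your argument does not actually use this claim. What makes the routing into the $C_3\Gamma_2 w^{1/2+\varepsilon}$ slot work is that the \emph{control} exponents $1-\tfrac\alpha2$ and $1-\tfrac d{2p}$ exceed $\tfrac12$ (equivalently $\alpha<1$ and $\tfrac dp<1$, both automatic here), and this is precisely what your condition $\varepsilon\le\min\{(1-\alpha)/2,(1-d/p)/2\}$ encodes. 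So replace that sentence by the correct reason and the proof is complete.
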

	\begin{proof}
		Let $S,T$ be such that $v+4/n\le S\le T\le1$.
		By linearity, we can assume that $\|f\|_{\LL^q_p([S,T])}=\|g\|_{\LL^\infty_\infty([S,T])}+ \|g\|_{\LL^q_{1,p}([S,T])}= 1$.

		For each $(s,t)\in \Delta_2([S,T])$, define
		\begin{align*}
		A_{s,t}:=\E_s\int_s^tg(r,\bar X^n_r) (f(r,\bar X^n_r)-f(r,\bar X^n_{k_n(r)}))\dif r.
		\end{align*}
		We treat two cases $t\leq k_n(s)+\frac{4}{n}$ and $t>k_n(s)+\frac{4}{n}$ separately as following.

		\textit{Case 1.} For $t\in(s,k_n(s)+\frac{4}{n}]$, by triangle inequality and \eqref{est.1fLp} (note that $k_n(r)-v\ge k_n(s)-v\ge s-v-1/n\ge2/n$) we have
		\begin{align*}
		\|A_{s,t}\|_{L_p(\Omega|\cff_v)}
		&\leq\|g\|_{\LL^\infty_\infty([S,T])} \int_s^t\Vert f(r,\bar X^n_r)\Vert_{L_p(\Omega|\cff_v)}+\Vert f(r,\bar X^n_{k_n(r)})\Vert_{L_p(\Omega|\cff_v)} \dif r
		\\&\lesssim\int_s^t \left(k_n(r)-v\right)^{-\frac{d}{2p}}\Vert f(r,\cdot)\Vert_{L_p(\mathbb{R}^d)}\dif r.
		\end{align*}
		Note that $k_n(r)-v\ge k_n(s)-v\ge (s-v)/2$, applying H\"older inequality and the fact that $t-s\le4/n$, we have
		\begin{align*}
			\int_s^t \left(k_n(r)-v\right)^{-\frac{d}{2p}}\Vert f(r,\cdot)\Vert_{L_p(\mathbb{R}^d)}\dif r
			&\les (s-v)^{-\frac d{2p}}\|f\|_{\LL^q_p([s,t])}(t-s)^{1-\frac1q}
			\\&\les (1/n)^{\frac12} (s-v)^{-\frac d{2p}}\|f\|_{\LL^q_p([s,t])}(t-s)^{\frac12-\frac1q}.
		\end{align*}
		This gives
		\begin{align}\label{X.est1}
			\|A_{s,t}\|_{L_p(\Omega|\cff_v)}\les (1/n)^{\frac12} (s-v)^{-\frac d{2p}}\|f\|_{\LL^q_p([s,t])}(t-s)^{\frac12-\frac1q}.
		\end{align}
		
		\textit{Case 2.} When $t\in(k_n(s)+\frac{4}{n},1]$, by triangle inequality,
		\begin{align*}
		\|A_{s,t}\|_{L_p(\Omega|\cff_v)}&\le\|A_{s,k_n(s)+4/n}\|_{L_p(\Omega|\cff_v)}
		\\&\quad+\int_{k_n(s)+\frac{4}{n}}^t\Vert \E_s[g(r,\bar X^n_r)(f(r,\bar X^n_r)-f(r,\bar X^n_{k_n(r)}))]\Vert_{L_p(\Omega|\cff_v)} \dif r
		\\&=:I_1+I_2.
		\end{align*}
		For $I_1$, from \eqref{X.est1} we know that
		\begin{align*}
		I_1\lesssim (1/n)^{\frac12} (s-v)^{-\frac d{2p}}\|f\|_{\LL^q_p([s,t])}\left(k_n(s)+\frac4n-s\right)^{\frac12-\frac1q}.
		\end{align*}
		Because  $k_n(s)+\frac4n-s\le t-s$,
		we get
		\begin{align*}
			I_1\lesssim (1/n)^{\frac12} (s-v)^{-\frac d{2p}}\|f\|_{\LL^q_p([s,t])}(t-s)^{\frac12-\frac1q}.
		\end{align*}
		Applying \eqref{est.fgLp} and H\"older inequality, we have for $I_2$
		\begin{multline*}
			I_2\lesssim (1/n)^{\frac \alpha2}(s-v)^{-\frac{d}{2p}} \|g\|_{\LL^\infty_\infty([S,T])}\int_{k_n(s)+\frac{4}{n}}^t \left(r-s-\frac2n\right)^{-\frac{\alpha}{2}} \Vert f(r,\cdot)\Vert_{L_p(\mathbb{R}^d)}\dif r
			\\\quad+(1/n)^{\frac \alpha2}(s-v)^{-\frac{d}{2p}}\int_{k_n(s)+\frac{4}{n}}^t \left(r-s-\frac2n\right)^{-\frac d{2p}}\|g(r,\cdot)\|_{L_{1,p}(\Rd)} \Vert f(r,\cdot)\Vert_{L_p(\mathbb{R}^d)}\dif r
			\\\lesssim (1/n)^{\frac \alpha2} (s-v)^{-\frac{d}{2p}}\left[\|f\|_{\LL^q_p([s,t])}(t-s)^{1-\frac \alpha2-\frac1q}+  \|g\|_{\LL^q_{1,p}([s,t])} \|f\|_{\LL^q_p([s,t])}(t-s)^{1-\frac d{2p}-\frac 2q}\right].
		\end{multline*}

		Combining these two cases together we obtain that for $v+4/n\le s\le t\le 1$,
		\begin{multline}\label{X.A}
		\Vert A_{s,t}\Vert_{L_p(\Omega|\cff_v)}
		\lesssim (1/n)^{\frac12}(s-v)^{-\frac d{2p}}\|f\|_{\LL^q_p([s,t])}(t-s)^{\frac12-\frac1q}
		\\+ (1/n)^{\frac \alpha2}(s-v)^{-\frac{d}{2p}}\left[\|f\|_{\LL^q_p([s,t])}(t-s)^{1-\frac{\alpha}{2}-\frac{1}{q}}+\|g\|_{\LL^q_{1,p}([s,t])} \|f\|_{\LL^q_p([s,t])}(t-s)^{1-\frac d{2p}-\frac 2q}\right].
		\end{multline}
		Furthermore, for $u\in(s,t)$, we have $\E_s \delta A_{s,u,t}=0$.
		Let $w$ be the continuous control on $\Delta([v+4/n,1])$ defined by
		\begin{align*}
			w(s,t)&=\left[(s-v)^{-\frac d{2p}}\|f\|_{\LL^q_p([s,t])}(t-s)^{\frac12-\frac 1q}\right]^2
			\\&\quad+\left[(s-v)^{-\frac d{2p}}\|f\|_{\LL^q_p([s,t])}(t-s)^{1-\frac \alpha2-\frac 1q}\right]^{1/(1-\frac \alpha2)}
			\\&\quad+\left[(s-v)^{-\frac d{2p}}\|g\|_{\LL^q_{1,p}([s,t])} \|f\|_{\LL^q_p([s,t])}(t-s)^{1-\frac d{2p}-\frac 2q}\right]^{1/(1-\frac d{2p})}
			\\&\quad+(s-v)^{-\frac d{2p}}\|f\|_{\LL^q_p([s,t])}(t-s)^{1-\frac1q}.
		\end{align*}
		Denote
		\begin{align*}
		 \mathcal{A}_{t}:=\int_0^t(f(r,\bar X^n_r)-f(r,\bar X^n_{k_n(r)}))\dif r,\quad J_{s,t}:=\delta\mathcal{A}_{s,t}-A_{s,t}.
		\end{align*}
		Using similar estimates leading to \eqref{X.est1}, we have
		\begin{align*}
			\|J_{s,t}\|_{L_p(\Omega|\cff_v)}\les (s-v)^{-\frac d{2p}}\|f\|_{\LL^q_p([s,t])}(t-s)^{1-\frac1q}\les w(s,t).
		\end{align*}
		Furthermore, $\delta J_{s,u,t}=-\delta A_{s,u,t}$ and we derive from \eqref{X.A} that
		\begin{align*}
			\|\delta J_{s,u,t}\|_{L_p(\Omega|\cff_v)}\les(1/n)^{\frac12}w(s,t)^{\frac12}+  (1/n)^{\frac \alpha2}\left[w(s,t)^{1-\frac \alpha2}+w(s,t)^{1-\frac d{2p}}\right].
		\end{align*}
		It is obvious that $\E_s J_{s,t}=0$ and hence $\E_s \delta J_{s,u,t}=0$.
		Applying \cref{lem.Davie_iteration}, we have
		\begin{multline*}
			\|J_{s,t}\|_{L_p(\Omega|\cff_v)}
			\\\les [(1/n)^{\frac \alpha2} +(1/n)^{\frac12}\log(n)]\left[w(s,t)^{\frac12}+w(s,t)^{1-\frac \alpha2} +w(s,t)^{1-\frac d{2p}}+ w(s,t)\right].
		\end{multline*}
		By triangle inequality and \eqref{X.A}, this implies that
		\begin{multline*}
			\|\delta\caa_{s,t}\|_{L_p(\Omega|\cff_v)}
			\\\les [(1/n)^{\frac \alpha2} +(1/n)^{\frac12}\log(n)]\left[w(s,t)^{\frac12}+w(s,t)^{1-\frac \alpha2} +w(s,t)^{1-\frac d{2p}}+ w(s,t)\right].
		\end{multline*}
		Because $\|f\|_{\LL^q_p([s,t])}\le\|f\|_{\LL^q_p([S,T])}=1$, we have
		\begin{align}
		 	w(s,t)&\le \left[(s-v)^{-\frac d{2p}}(t-s)^{\frac12-\frac1q}\right]^2
		 	\nonumber\\&\quad+\left[(s-v)^{-\frac d{2p}}(t-s)^{1-\frac \alpha2-\frac1q}\right]^{1/(1-\frac \alpha2)}
		 	+\left[(s-v)^{-\frac d{2p}}(t-s)^{1-\frac d{2p}-\frac2q}\right]^{1/(1-\frac d{2p})}
		 	\nonumber\\&\quad+(s-v)^{-\frac d{2p}}(t-s)^{1-\frac1q}
		 	\label{tmp.89253}
		\end{align}
		and hence
		\begin{align*}
			w(s,t)^{\frac12}+w(s,t)^{1-\frac \alpha2} +w(s,t)^{1-\frac d{2p}}+w(s,t)
			\les \sum_{i=1}^{16}(s-v)^{-\eta_i}(t-s)^{\tau_i},
		\end{align*}
		where for each $i=1,\ldots,16$; $\eta_i,\tau_i\in[0,1]$ are some constants such that $\tau_i- \eta_i>0$. The constants $\eta_i,\tau_i$'s can be  calculated explicitly by applying the powers $1/2$, $1- \alpha/2$, $1-d/(2p)$ and $1$ to the singularity exponents and H\"older exponents in the right-hand side of \eqref{tmp.89253}, however, their exact values are non-essential. 
		That $\tau_i- \eta_i$ is positive  for each $i$ because the sums of the H\"older exponents and the corresponding singular exponents of each factor on the right-hand side of \eqref{tmp.89253} are positive.
		Hence, we deduce from the above estimate that
		\begin{multline*}
			\Vert \int_S^Tg(r,\bar X^n_r) (f(r,\bar X^n_r)-f(r,\bar X^n_{k_n(r)}))\dif r\Vert_{L_p(\Omega|\cff_v)}
			\\\leq N[(1/n)^{\frac \alpha2} +(1/n)^{\frac12}\log(n)] \sum_{i=1}^{16}(S-v)^{-\eta_i}(T-S)^{\tau_i},
		\end{multline*}
		which holds for every $v+4/n\le S\le T\le1$. We then apply \cref{lem.Bellising} to obtain \eqref{est.fgXX}.
	\end{proof}
	\begin{proposition}\label{prop.gfnob}
		Let $\bar X^n$ be the solution to \eqref{eqn.EMnob}.
		Let $f\in\mathbb{L}_p^q([0,1])\cap \mathbb{L}^q_\infty([0,1])$ and $g\in \LL^q_{1,p}([0,1])\cap\LL^\infty_\infty([0,1])$, with $p,q\in[2,\infty)$ satisfying $\frac{d}{p}+\frac{2}{q}<1$.
		
		As in \cref{prop.supB}, we put $\beta_n(f)=\sup_{r\in D_n}\|f\|_{\LL^q_\infty([r,r+1/n])}$.
		Then for any $\bar p\in(0,p)$, there exists a constant $N=N(d,p,q,\bar p)$ such that
		\begin{multline*}
			\|\sup_{t\in[0,1]}|\int_0^tg(r,\bar X^n_r) [f(r,\bar X^n_r)-f(r,\bar X^n_{k_n(r)})]dr|\|_{L_{\bar p}(\Omega)}
			\le N\left[\|g\|_{\LL^\infty_\infty([0,1])}+\|g\|_{\LL^q_{1,p}([0,1])}\right]
			\\\times\left[(1/n)^{1-\frac1q}\beta_n(f)+ (1/n)^{\frac \alpha2}\|f\|_{\LL^q_p([0,1])}+(1/n)^{\frac12}\log(n)\|f\|_{\LL^q_p([0,1])}\right].
		\end{multline*}
	\end{proposition}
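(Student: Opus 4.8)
The plan is to follow the proof of \cref{prop.supB} almost verbatim, replacing the Brownian‑path estimate \eqref{est.qdt3} by its counterpart \cref{prop.fXX2} for the drift‑free Euler--Maruyama process $\bar X^n$, and then to pass from a uniform bound on conditional increments to a bound on the running maximum via \cref{lem.lenglart}. Write
\begin{align*}
	\caa_t=\int_0^t g(r,\bar X^n_r)\,[f(r,\bar X^n_r)-f(r,\bar X^n_{k_n(r)})]\,dr .
\end{align*}
Since $\|g\|_{\LL^\infty_\infty([0,1])}$ is finite and $f\in\LL^q_\infty([0,1])$, H\"older's inequality yields the crude deterministic bound $|\delta\caa_{s,t}|\le 2\|g\|_{\LL^\infty_\infty([0,1])}\|f\|_{\LL^q_\infty([s,t])}(t-s)^{1-\frac1q}$ for all $(s,t)\in\Delta$; in particular $\caa$ has absolutely continuous sample paths, is adapted, and $\caa_0=0$, so \cref{lem.lenglart} will apply once a uniform bound on $\|\delta\caa_{s,t}\|_{L_p(\Omega|\cff_s)}$ is in hand.

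To obtain that uniform bound I would split on the size of $t-s$. If $t-s\le 4/n$, then $[s,t]$ meets at most five of the cells $[r,r+1/n]$, $r\in D_n$, hence $\|f\|_{\LL^q_\infty([s,t])}\les\beta_n(f)$ and the crude bound already gives $\|\delta\caa_{s,t}\|_{L_p(\Omega|\cff_s)}\les\|g\|_{\LL^\infty_\infty([0,1])}\beta_n(f)(1/n)^{1-\frac1q}$. If $t-s\ge 4/n$ (so $s\le 1-4/n$), I would peel off the first cell, $\delta\caa_{s,t}=\delta\caa_{s,s+4/n}+\delta\caa_{s+4/n,t}$: the first term is bounded as in the previous case, and to the second I would apply \cref{prop.fXX2} with $v=s$, $S=s+4/n$ and $T=t$ --- its hypotheses $v\in[0,1-4/n]$ and $v+4/n\le S\le T\le 1$ being exactly what this splitting arranges --- which produces a bound of order $[(1/n)^{\alpha/2}+(1/n)^{1/2}\log n]\,(\|g\|_{\LL^\infty_\infty([0,1])}+\|g\|_{\LL^q_{1,p}([0,1])})\,\|f\|_{\LL^q_p([0,1])}$. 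Adding the two contributions bounds $\sup_{(s,t)\in\Delta}\|\delta\caa_{s,t}\|_{L_p(\Omega|\cff_s)}$ by the right‑hand side of the asserted inequality, and part (ii) of \cref{lem.lenglart} then delivers the estimate for $\|\sup_{t\in[0,1]}|\caa_t|\|_{L_{\bar p}(\Omega)}$ for any $\bar p\in(0,p)$. The finitely many small $n$ (say $n\le 3$) are disposed of directly from the crude bound over all of $[0,1]$ together with $\|f\|_{\LL^q_\infty([0,1])}\le n^{1/q}\beta_n(f)$.

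The substantial work has already been done in \cref{prop.fXX2} --- the stochastic sewing estimate (\cref{lem.Davie_iteration}) with its tailored control and the removal of the time singularities via \cref{lem.Bellising}. In the present argument the only point requiring care is the boundary bookkeeping near $t=1$ and the verification of the hypothesis $v+4/n\le S$ of \cref{prop.fXX2}, which is precisely why the cell $[s,s+4/n]$ is separated off before invoking it; I expect this to be the only obstacle, and it is routine. (This proposition is the noise‑only ingredient; \cref{prop.gf} will follow from it by a Girsanov change of measure, as outlined at the start of the section.)
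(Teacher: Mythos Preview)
Your proposal is correct and follows essentially the same approach as the paper: the paper's proof is literally ``This result is a consequence of \cref{prop.fXX2} and \cref{lem.lenglart}. The proof is analogous to that of \cref{prop.supB}, hence, omitted.'' You have correctly unpacked this, including the necessary adjustment of the splitting threshold from $2/n$ to $4/n$ to match the hypothesis $v+4/n\le S$ of \cref{prop.fXX2}.
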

	\begin{proof}
		This result is a consequence of \cref{prop.fXX2} and \cref{lem.lenglart}.
		The proof is analogous to that of \cref{prop.supB}, hence,  omitted.
	\end{proof}
	\begin{lemma}\label{Girsanov}
		Let $\bar X^n$ be the solution to \eqref{eqn.EMnob} and $f$ be a function in $\LL^{q_1}_{p_1}([0,1])$ for some $p_1,q_1\in[1,\infty]$ satisfying $\frac d{p_1}+\frac2{q_1}<2$.
		Then
		\begin{align}\label{est.expnok}
			\E\exp\left(\int_0^1 f(r,\bar X^n_r)dr\right)\le 2\exp\left(N\|f\|_{\LL^{q_1}_{p_1}([0,1])}^{1/(1-d/(2p_1))} \right),
		\end{align}
		where $N$ depends only on $d,\alpha,p_1,q_1, K_1,K_2$.

		Assume additionally that there are continuous control $w_0$ on $\Delta$ and positive constants $M,\gamma_0$ such that
		\begin{align}\label{con.fGirs}
			(1/n)^{1-\frac1{q_1}}\|f\|_{\LL^{q_1}_\infty([s,t])}\le w_0(s,t)^{ \gamma_0} \quad\forall \ 0\le t-s\le1/n
		\end{align}
		and
		\begin{align*}
			\|f\|_{\LL^{q_1}_{p_1}([0,1])}+w_0(0,1)\le M.
		\end{align*}
		Then there exists a finite constant $\bar N$ which depends only on $M,\gamma_0,d,\alpha,p_1,q_1, K_1,K_2$ such that
		\begin{align}\label{est.expk}
			\E\exp\left(\int_0^1 f(r,\bar X^n_{k_n(r)})dr\right)\le \bar N.
		\end{align}
	\end{lemma}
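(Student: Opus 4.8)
The plan is to apply the quantitative Khasminskii lemma (\cref{lem.Khasminski} and \cref{rmk.khas12}) to the nonnegative adapted process $r\mapsto |f|(r,\bar X^n_r)$ for \eqref{est.expnok} and to $r\mapsto |f|(r,\bar X^n_{k_n(r)})$ for \eqref{est.expk}; passing from $f$ to $|f|$ is harmless since $\exp$ is monotone and $f,|f|$ have identical $\LL^{q_1}_{p_1}$- and $\LL^{q_1}_\infty$-norms. So in each case it suffices to exhibit a deterministic $\rho(s,t)$ with $\|\int_s^t|f|(r,\cdot)\,dr\|_{L_1(\Omega|\cff_s)}\le\rho(s,t)$ which is dominated by a sum of powers of continuous controls whose values at $(0,1)$ are controlled by the stated parameters; then \cref{lem.Khasminski}(iii) (resp.\ \cref{rmk.khas12}) with $\lambda=1$, $S=0$, $T=1$ yields the claim. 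Write $p_1'$ for the conjugate exponent of $p_1$; the hypothesis $\frac d{p_1}+\frac2{q_1}<2$ is used in the form $\frac d{2p_1}q_1'<1$, so $1-\frac1{q_1}-\frac d{2p_1}>0$. I treat only $p_1<\infty$, the case $p_1=\infty$ being simpler (replace every heat-kernel factor by the trivial $L_\infty$ bound).

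For \eqref{est.expnok}, fix $0\le s\le t\le1$ and set $\bar s:=k_n(s)+1/n\in D_n$, so $s<\bar s\le s+1/n$. On $[s,\bar s)$ the scheme freezes its coefficient, $\bar X^n_r=\bar X^n_s+\int_s^r\sigma(u,\bar X^n_{k_n(s)})\,dB_u$ with $\bar X^n_s,\bar X^n_{k_n(s)}\in\cff_s$; hence, conditionally on $\cff_s$, $\bar X^n_r$ is Gaussian with mean $\bar X^n_s$ and covariance $\int_s^r(\sigma\sigma^*)(u,\bar X^n_{k_n(s)})\,du$, whose norm is comparable to $r-s$ by \cref{con.A}, so that $\E_s|f|(r,\bar X^n_r)\le\|f(r,\cdot)\|_{L_{p_1}}\|p_{\int_s^r(\sigma\sigma^*)(u,\bar X^n_{k_n(s)})du}\|_{L_{p_1'}}\les(r-s)^{-\frac d{2p_1}}\|f(r,\cdot)\|_{L_{p_1}}$. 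On $[\bar s,t]$, since $\bar s\in D_n$, \cref{rmk.vinDn} (valid for all times $>\bar s$) together with conditional Jensen give $\|f(r,\bar X^n_r)\|_{L_1(\Omega|\cff_{\bar s})}\le\|f(r,\bar X^n_r)\|_{L_{p_1}(\Omega|\cff_{\bar s})}\les(r-\bar s)^{-\frac d{2p_1}}\|f(r,\cdot)\|_{L_{p_1}}$, and because $\bar s\ge s$ one has $\|\cdot\|_{L_1(\Omega|\cff_s)}\le\|\cdot\|_{L_1(\Omega|\cff_{\bar s})}$. Splitting $\int_s^t=\int_s^{\bar s\wedge t}+\int_{\bar s\wedge t}^t$, taking $L_1(\Omega|\cff_s)$-norms inside the integrals and applying Hölder's inequality in time yields $\|\int_s^t|f|(r,\bar X^n_r)\,dr\|_{L_1(\Omega|\cff_s)}\les\|f\|_{\LL^{q_1}_{p_1}([s,t])}(t-s)^{1-\frac1{q_1}-\frac d{2p_1}}=:\rho(s,t)$. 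By \cref{ex.control}, $w(s,t):=\big(C\|f\|_{\LL^{q_1}_{p_1}([s,t])}(t-s)^{1-\frac1{q_1}-\frac d{2p_1}}\big)^{1/(1-\frac d{2p_1})}$ is a continuous control (a product of powers of the controls $\int_s^t\|f(r,\cdot)\|_{L_{p_1}}^{q_1}dr$ and $t-s$) and $\rho=w^{1-\frac d{2p_1}}$, so \cref{lem.Khasminski}(iii) gives \eqref{est.expnok}.

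For \eqref{est.expk}, the continuum value is unavailable and there is no smoothing over $[s,k_n(s)+c/n]$, which is exactly where \eqref{con.fGirs} enters. If $t-s\le3/n$, cover $[s,t]$ by at most three intervals of length $\le1/n$; on any such piece $I$, $\int_I|f|(r,\bar X^n_{k_n(r)})\,dr\le\|f\|_{\LL^{q_1}_\infty(I)}|I|^{1-\frac1{q_1}}\le\|f\|_{\LL^{q_1}_\infty(I)}(1/n)^{1-\frac1{q_1}}\le w_0(I)^{\gamma_0}\le w_0(s,t)^{\gamma_0}$ by \eqref{con.fGirs} and the control property, so the whole integral is $\le 3w_0(s,t)^{\gamma_0}$. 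If $t-s>3/n$, split $\int_s^t=\int_s^{k_n(s)+3/n}+\int_{k_n(s)+3/n}^t$; the first term, of length $\le3/n$, is bounded by $3w_0(s,t)^{\gamma_0}$ as above. For the second term put $\bar s:=k_n(s)+1/n\in D_n$; for $r\ge k_n(s)+3/n$ one has $k_n(r)>\bar s$ and $k_n(r)-\bar s\ge\frac12(r-s)$, and by the Markov property of \eqref{eqn.EMnob}, $\E_{\bar s}|f|(r,\bar X^n_{k_n(r)})=Q^n_{\bar s,k_n(r)}|f(r,\cdot)|(\bar X^n_{\bar s})\le\|Q^n_{\bar s,k_n(r)}|f(r,\cdot)|\|_{L_\infty(\Rd)}\les(k_n(r)-\bar s)^{-\frac d{2p_1}}\|f(r,\cdot)\|_{L_{p_1}}$ by \cref{thm.QLp12} (with $p_2=\infty$). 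Since $\bar s\ge s$, $\|\cdot\|_{L_1(\Omega|\cff_s)}\le\|\cdot\|_{L_1(\Omega|\cff_{\bar s})}$, and Hölder in time gives $\les\|f\|_{\LL^{q_1}_{p_1}([s,t])}(t-s)^{1-\frac1{q_1}-\frac d{2p_1}}$. Thus $\|\int_s^t|f|(r,\bar X^n_{k_n(r)})\,dr\|_{L_1(\Omega|\cff_s)}\les w_0(s,t)^{\gamma_0}+\|f\|_{\LL^{q_1}_{p_1}([s,t])}(t-s)^{1-\frac1{q_1}-\frac d{2p_1}}$, a sum of powers of two continuous controls (the second being $w$ from the previous paragraph, up to a constant, with exponent $1-\frac d{2p_1}$). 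Applying \cref{rmk.khas12} with $\lambda=1$, $S=0$, $T=1$ (after relabelling the controls so that the smaller exponent plays the role of $\gamma_1$), and using $w_0(0,1)\le M$ and $\|f\|_{\LL^{q_1}_{p_1}([0,1])}\le M$, bounds $\E\exp(\int_0^1|f|(r,\bar X^n_{k_n(r)})\,dr)$ by a constant depending only on $M,\gamma_0,d,\alpha,p_1,q_1,K_1,K_2$, which is \eqref{est.expk}.

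The main obstacle is the mismatch between the conditioning level $\cff_s$ demanded by the conditional Khasminskii estimate and the fact that the Euler scheme is adapted to the grid $D_n$. This forces two adjustments: near the diagonal $r\approx s$, where the transition-operator bounds \eqref{est.1fLp}/\cref{rmk.vinDn} do not apply, one must supply a separate short-time estimate — the frozen-coefficient conditional Gaussian computation for \eqref{est.expnok}, and the a priori hypothesis \eqref{con.fGirs} for \eqref{est.expk} — and, away from the diagonal, one must shift the conditioning from $\cff_s$ to the grid point $\bar s=k_n(s)+1/n\ge s$, which is legitimate precisely because $\|\cdot\|_{L_1(\Omega|\cff_s)}\le\|\cdot\|_{L_1(\Omega|\cff_{\bar s})}$ whenever $s\le\bar s$. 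Checking that the singular and Hölder exponents assemble into genuine controls (via \cref{ex.control}) and tracking the dependence of all constants is then routine.
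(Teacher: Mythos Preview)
Your proof is correct and follows essentially the same approach as the paper's: verify the conditional Khasminskii hypothesis \eqref{con.khas} by splitting $[s,t]$ into a short initial piece (handled by the frozen-coefficient Gaussian computation for \eqref{est.expnok}, and by the hypothesis \eqref{con.fGirs} for \eqref{est.expk}) and a remaining piece on which the transition-operator bound from \cref{thm.QLp12}/\cref{rmk.vinDn} applies after shifting the conditioning to a grid point. The only differences are cosmetic: the paper splits at $k_n(s)+2/n$ rather than $k_n(s)+3/n$ in the second part and cites \eqref{est.1fLp} directly rather than spelling out the $\cff_s\to\cff_{\bar s}$ shift, but the underlying mechanism is identical.
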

	
	\begin{proof}
		We can assume without loss of generality that $f$ is nonnegative.
		We rely on \cref{lem.Khasminski}.
		For each $(s,t)\in \Delta$, define $\bar s=k_n(s)+1/n$. For each $r\in(s,\bar s)$, we write $\bar X^n_r=\bar X^n_s+\int_s^r \sigma(\theta,\bar X^n_{k_n(s)})dB_\theta$ so that
		\begin{align*}
			\E_s\int_s^{\bar s \wedge t}f(r,\bar X^n_r)dr= \int_s^{\bar s\wedge t}P_{\Sigma_{s,r}(\bar X^n_{k_n(s)})}f_r(\bar X^n_s)dr.
		\end{align*}
		Using ellipticity of $\Sigma_{s,r}$ and Gaussian estimates, we see that $\sup_y\|P_{\Sigma_{s,r}(y)}f_r\|_{L_\infty(\Rd)}\les (r-s)^{-d/(2p_1)}\|f_r\|_{L_{p_1}(\Rd)}$. Hence, using this estimate and H\"older inequality, we have
		\begin{align*}
			\E_s\int_s^{\bar s \wedge t}f(r,\bar X^n_r)dr 
			\les\int_s^t (r-s)^{-\frac d{2p_1}}\|f_r\|_{L_{p_1}(\Rd)}dr\les\|f\|_{\LL^{q_1}_{p_1}([s,t])}(t-s)^{1-\frac d{2p_1}-\frac 1{q_1}}.
		\end{align*}
		On the interval $(\bar s\wedge t,t)$, we use \eqref{est.1fLprmk} and H\"older inequality to see that
		\begin{align*}
			\E_s\int_{\bar s\wedge t}^tf(r,\bar X^n_r)dr
			\les\int_{\bar s\wedge t}^t(r-\bar s)^{-\frac d{2p_1}}\|f_r\|_{L_{p_1}(\Rd)}dr
			\les \|f\|_{\LL^{q_1}_{p_1}([s,t])}(t-s)^{1-\frac d{2p_1}-\frac 1{q_1}}.
		\end{align*}
		It follows that
		\begin{align*}
			\E_s\int_s^t f(r,\bar X^n_r)dr\les\|f\|_{\LL^{q_1}_{p_1}([s,t])}(t-s)^{1-\frac d{2p_1}-\frac 1{q_1}}.
		\end{align*}
		Observe that $w$ defined by $w(s,t)^{1-\frac d{2p_1}}=\|f\|_{\LL^{q_1}_{p_1}([s,t])}(t-s)^{1-\frac d{2p_1}-\frac 1{q_1}}$ is a continuous control on $\Delta$. Applying \cref{lem.Khasminski}, we obtain \eqref{est.expnok}.
		
		The second part is obtained in an analogous way.
		For each $(s,t)\in \Delta$, define $\tilde s=k_n(s)+2/n$, using H\"older inequality, \eqref{con.fGirs} and \eqref{est.1fLp}, we have
		\begin{align*}
			\E_s\int_s^{\tilde s\wedge t}f(r,\bar X^n_{k_n(r)})dr\le \int_s^{\tilde s\wedge t}\|f_r\|_{L_\infty(\Rd)}dr
			\les \|f\|_{\LL^{q_1}_\infty([s,\tilde s\wedge t])}(1/n)^{1-\frac1{q_1}}
			\les w_0(s,t)^{ \gamma_0}
		\end{align*}
		and
		\begin{align*}
			\E_s\int_{\tilde s \wedge t}^tf(r,\bar X^n_{k_n(r)})dr
			&=\int_{\tilde s \wedge t}^t\E_{ s}f(r,\bar X^n_{k_n(r)})dr
			\les\int_{\tilde s\wedge t}^t (k_n(r)- s)^{-\frac d{2p_1}}\|f_r\|_{L_{p_1}(\Rd)}dr
			\\&\les\int_{\tilde s\wedge t}^t (r- s)^{-\frac d{2p_1}}\|f_r\|_{L_{p_1}(\Rd)}dr
			\les\|f\|_{\LL^{q_1}_{p_1}([s,t])}(t-s)^{1-\frac d{2p_1}-\frac1{q_1}}.
		\end{align*}
		Hence, $\E_s\int_s^t f(r,\bar X^n_{k_n(r)})dr\les w_0(s,t)^{\gamma_0}+w(s,t)^{1-\frac d{2p_1}}$, where $w$ is the control defined previously.
		Applying \cref{lem.Khasminski} and \cref{rmk.khas12}, we obtain \eqref{est.expk}.
	\end{proof}
	\begin{remark}
		From \cref{rmk.khas12}, one can compute $\bar N$ explicitly, however \eqref{est.expk} is sufficient for our considerations.
	\end{remark}

	\begin{proof}[\bf Proof of \cref{prop.gf}]
	For any  continuous process $Z$, we define
	\begin{align*}
	   \chh(Z)=\sup_{t\in[0,1]}\Big|\int_0^tg(r,Z_r) [f(r,Z_r)-f(r,Z_{k_n(r)})]dr\Big|.
	\end{align*}
	 Let $\bar X^n$	be the solution to \eqref{eqn.EMnob} and
	\begin{align*}
	    \rho:=&\exp\left(-\int_0^1(\sigma^{-1}b^n)(r,\bar X_{k_n(r)}^n)dB_r-\frac{1}{2}\int_0^1\Big|(\sigma^{-1}b^n)(r,\bar X_{k_n(r)}^n)\Big|^2dr\right).
	\end{align*}
	Using the fact that $\sigma^{-1}b^n\in \LL^q_\infty([0,1])$, we see that $\rho$ is a probability density.
	 It follows from Girsanov theorem  (see e.g. \cite[IV Corollary of Theorem 4.2]{IW})
and H\"older inequality for $\frac{1}{\gamma'}+\frac{1}{\gamma}=1$ with $\gamma>1$ close enough to $1$ such that $\gamma\bar p<p$
	\begin{align}\label{hX}
	    \mE \chh(X^n)^{\bar p}=\mE (\rho\chh(\bar X^n)^{\bar p})\leq [ \mE\chh(\bar  X^n)^{\gamma\bar p}]^{1/\gamma}[\mE\rho^{\gamma'}] ^{1/\gamma'}.
	\end{align}
	From \cref{prop.gfnob} 
	we immediately get that
	\begin{multline}\label{hbar}
	(\E\chh(\bar X^n)^{\gamma\bar p})^{\frac1{\gamma\bar p}}=
	 \Vert \chh(\bar X^n) \Vert_{L_{\gamma\bar p}(\Omega)}
	 \leq N\left[\|g\|_{\LL^\infty_\infty([0,1])}+\|g\|_{\LL^q_{1,p}([0,1])}\right]
		\\\times\left[(1/n)^{1-\frac1q}\beta_n(f)+ (1/n)^{\frac \alpha2}\|f\|_{\LL^q_p([0,1])}+(1/n)^{\frac12}\log(n)\|f\|_{\LL^q_p([0,1])}\right].
	\end{multline}
	Using 
	Cauchy--Schwarz inequality, we have
	\begin{align*}
	    \mE\rho^{\gamma'}
	    &=\E\exp\left(-\gamma'\int_0^1(\sigma^{-1}b^n)(r,\bar X_{k_n(r)}^n)dB_r-\frac{\gamma'}{2}\int_0^1\Big|(\sigma^{-1}b^n)(r,\bar X_{k_n(r)}^n)\Big|^2dr\right)
	    \\&\le \left[\E\exp\left(-2\gamma'\int_0^1(\sigma^{-1}b^n)(r,\bar X_{k_n(r)}^n)dB_r-2 {\gamma'}^2\int_0^1\Big|(\sigma^{-1}b^n)(r,\bar X_{k_n(r)}^n)\Big|^2dr\right)\right]^{\frac12}
	    \\&\quad\times\left[\E\exp\left((2 {\gamma'}^2-\gamma')\int_0^1\Big|(\sigma^{-1}b^n)(r,\bar X_{k_n(r)}^n)\Big|^2dr\right)\right]^{\frac12}.
	\end{align*}
	In the right-hand side above, the first factor is identical to $1$ by martingale properties. For the second factor, we recall \cref{con.B} and the uniform ellipticity of $\sigma$, which imply that the function $f:=|\sigma^{-1}b^n|^2$ belongs to $\LL^{q/2}_{p/2}([0,1])\cap \LL^{q/2}_{\infty}([0,1])$ and satisfies
	\begin{align*}
		(1/n)^{1-\frac2q}\|f\|_{\LL^{q/2}_\infty([s,t])}\les \left[(1/n)^{\frac12-\frac1q}\|b^n\|_{\LL^q_\infty([s,t])}\right]^2\les \mu(s,t)^{2 \theta}, \quad\forall \ 0\le t-s\le1/n.
	\end{align*}
	Applying \cref{Girsanov}, we see that $\E\exp\left((2 {\gamma'}^2- \gamma')\int_0^1|(\sigma^{-1}b^n)(r,\bar X_{k_n(r)}^n)|^2dr\right)$ is bounded uniformly by a finite constant.
	Hence, we have shown that $ \E \rho^{\gamma'}$ is bounded uniformly in $n$.
	Combining with \eqref{hX} and \eqref{hbar}, we obtain \eqref{gfX}.
	\end{proof}
	
\section{Analysis of the continuum paths} 
\label{sec:regularizing_properties_of_the_continuum_paths}
	To show \cref{thm.alpha}, we need the following result, extending the results of \cref{sec.BM} to functionals of solutions to \eqref{sde00}.
	\begin{theorem}\label{thm.prealpha}
		Let $X$ be the solution to \eqref{sde00}.

		(i) Assuming Conditions \ref{con.A}\ref{con.Aholder} and \ref{con.B}. Let $h$ be a function in $\LL^{q_1}_{p_1}([0,1])$ for some $p_1,q_1\in[1,\infty]$ satisfying $\frac d{p_1}+\frac2{q_1}<2$. Then for  every $m\ge1$, there exists a constant $N=N(d,p,q,m)$ such that
		\begin{align*}
			\|\int_0^1 h(r,X_r)dr\|_{L_m(\Omega)}\le N\|h\|_{\LL^{q_1}_{p_1}([0,1])}.
		\end{align*}

		(ii) Assuming  \crefrange{con.A}{con.B} with $q_0=\infty$ and $\frac1p+\frac1{p_0}<1$. Let $g$ be a function in $\LL^{q_2}_{p_2}([0,1])$ and let $\nu\in[0,1)$ such that $\frac{d}{p_2}+\frac{2}{q_2}+\nu<2$. Then for any $\bar p\in(0,p_2)$, there exists a constant $N=N(\nu,d,p,q,p_2,q_2,\bar p)$ such that
		\begin{align*}
			\|\sup_{t\in[0,1]}|\int_0^t g(r,X_r)dr|\|_{L_{\bar p}(\Omega)}\le
			N \|g\|_{\LL^{q_2}_{-\nu,p_2}([0,1])}.
		\end{align*}

		(iii) Assuming  \crefrange{con.A}{con.B} with $q_0=\infty$ and $\frac1p+\frac1{p_0}<1$. Let $g$ be a function in $\LL^{q_2}_{p_2}([0,1])$ with $\frac{d}{p_2}+\frac{2}{q_2}<1$, $\Gamma$ be a nonnegative number and $w_0$ be a continuous control on $\Delta$. We assume that for every $(s,t)\in \Delta$,
		\begin{align*}
		 	\|g\|_{\LL^{q_2}_{-1,p_2}([s,t])}\le \Gamma w_1(s,t)^{\frac1{q_2}}
		 	\tand
		 	\|g\|_{\LL^{q_2}_{p_2}([s,t])}\le w_1(s,t)^{\frac1{q_2}}.
		\end{align*}
		Then for any $\bar p\in(0,p_2)$, there exists a constant $N=N(\nu,d,p_2,q_2,p,q,\bar p)$ such that
		\begin{align*}
			\|\sup_{t\in[0,1]}|\int_0^t g(r,X_r)dr|\|_{L_{\bar p}(\Omega)}\le
			N \Gamma(1+|\log(\Gamma)|)w_1(0,1)^{\frac1{q_2}}.
		\end{align*}
	\end{theorem}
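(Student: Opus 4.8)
\emph{Proof proposal.} The three parts are the ``continuum'' analogues of \cref{lem.gLp}, \cref{prop.B3} and \cref{prop.B2}, and the strategy is to re-run the Khasminskii/stochastic-sewing arguments of \cref{sec.BM}, with the Gaussian estimates \eqref{BMest}--\eqref{heat-difference} for Brownian motion replaced by transition-density estimates for the solution $X$ of \eqref{sde00}. Two ingredients supply these. First, a Girsanov change of measure removing the drift: let $\widehat X$ be the drift-free diffusion $d\widehat X_t=\sigma(t,\widehat X_t)\,d\widehat B_t$, whose generator $\tfrac12 a^{ij}\partial^2_{ij}$ ($a=\sigma\sigma^*$) is uniformly elliptic and H\"older by \cref{con.A}\ref{con.Aholder}, so that Aronson two-sided Gaussian bounds and the ensuing Krylov-type estimates hold for $\widehat X$; since $|\sigma^{-1}b|^2\in\LL^{q/2}_{p/2}([0,1])$ with $\tfrac{d}{p/2}+\tfrac{2}{q/2}=2(\tfrac dp+\tfrac2q)<2$, the argument of \cref{Girsanov} applied to $\widehat X$ in place of $\bar X^n$, together with \cref{lem.Khasminski}(iii) and Cauchy--Schwarz as in the proof of \cref{prop.gf}, gives finite moments of all orders for the exponential martingale $\rho$ of $-\sigma^{-1}b$; hence $\E^{\mathbb P}[F(X)]=\E^{\mathbb P}[\rho\,F(\widehat X)]$ and H\"older's inequality dominates every moment of a functional of $X$ by a slightly larger moment of the same functional of $\widehat X$. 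Second, for parts (ii)--(iii) we invoke the maximal-regularity estimates for parabolic equations with distributional forcing from \cref{sec:parabolic_equations_with_variable_coefficients}, which control the Kolmogorov semigroup $P^X_{s,r}g:=\E[g(X_r)\mid X_s=\cdot]$ applied to $g$ in a negative-order Bessel space; it is here that $q_0=\infty$ and $\tfrac1p+\tfrac1{p_0}<1$ are needed, to absorb the term $b\cdot\nabla$. For part (i) the Girsanov transfer reduces the claim to $\widehat X$; Aronson's upper bound and H\"older's inequality give $\|\int_s^t|h(r,\widehat X_r)|\,dr\|_{L_1(\Omega|\cff_s)}\lesssim\|h\|_{\LL^{q_1}_{p_1}([s,t])}(t-s)^{1-\frac d{2p_1}-\frac1{q_1}}$; since $\tfrac d{p_1}+\tfrac2{q_1}<2$ forces both $1-\tfrac d{2p_1}-\tfrac1{q_1}>0$ and $1-\tfrac d{2p_1}>0$, the right-hand side equals $w(s,t)^{1-d/(2p_1)}$ for a continuous control $w$ (cf. \cref{ex.control}), so \cref{lem.Khasminski} gives $\|\int_0^1 h(r,\widehat X_r)\,dr\|_{L_m(\Omega)}\lesssim\|h\|_{\LL^{q_1}_{p_1}([0,1])}$ for every $m\ge1$, and the moment bound on $\rho$ transfers it to $X$.

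For part (ii) I would repeat the proof of \cref{prop.B3} with $B$ replaced by $X$. By \cref{lem.lenglart} --- using part (i) and Kolmogorov's criterion to get a continuous version of $t\mapsto\int_0^t g(r,X_r)\,dr$ --- it suffices to bound $\|\int_s^t g(r,X_r)\,dr\|_{L_{p_2}(\Omega|\cff_v)}$ uniformly over $(s,t)\in\Delta$ with $s>v$. Fix $v<s\le t$, put $A_{s,t}=\E_s\int_s^t g(r,X_r)\,dr=\int_s^t P^X_{s,r}g_r(X_s)\,dr$ (the Markov property and the Fubini interchange being justified by part (i)) and $J_{s,t}=\int_s^t g(r,X_r)\,dr-A_{s,t}$. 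The analytic inputs are the Krylov estimate $\|g(r,X_r)\|_{L_{p_2}(\Omega|\cff_v)}\lesssim(r-v)^{-\frac d{2p_2}}\|g_r\|_{L_{p_2}}$ (from the Gaussian upper bound for $X$, e.g. via the Girsanov transfer or a Duhamel perturbation as in \cref{sub:estimates_of_densities_for_euler_s_approximations}) and the smoothing bound $\|P^X_{s,r}g_r\|_{L_{p_2}}\lesssim(r-s)^{-\nu/2}\|g_r\|_{L_{-\nu,p_2}}$ from \cref{sec:parabolic_equations_with_variable_coefficients}; together they give $\|J_{s,t}\|_{L_{p_2}(\Omega|\cff_v)}+\|A_{s,t}\|_{L_{p_2}(\Omega|\cff_v)}\lesssim(s-v)^{-\frac d{2p_2}}(t-s)^{1-\frac\nu2-\frac1{q_2}}\|g\|_{\LL^{q_2}_{-\nu,p_2}([s,t])}$. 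Taking the control $w(s,t)=\big[(s-v)^{-\frac d{2p_2}}(t-s)^{1-\frac\nu2-\frac1{q_2}}\|g\|_{\LL^{q_2}_{-\nu,p_2}([s,t])}\big]^{1/(1-\nu/2)}$, one has $1-\nu/2>\tfrac12$ and $\E_s\delta A_{s,u,t}=0$, so \cref{lem.Davie_iteration} yields $\|J_{s,t}\|_{L_{p_2}(\Omega|\cff_v)}\lesssim w(s,t)^{1-\nu/2}$, hence $\|\int_s^t g(r,X_r)\,dr\|_{L_{p_2}(\Omega|\cff_v)}\lesssim(s-v)^{-\frac d{2p_2}}(t-s)^{1-\frac\nu2-\frac1{q_2}}\|g\|_{\LL^{q_2}_{-\nu,p_2}([0,1])}$. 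Since $\tfrac d{p_2}+\tfrac2{q_2}+\nu<2$ the H\"older exponent exceeds the singular one, so \cref{lem.Bellising} removes the $(s-v)$-singularity, and \cref{lem.lenglart} (with $v=s$ and $\bar p<p_2$) concludes.

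For part (iii), the same scheme is run with $\nu=1$, which makes the relevant control carry the \emph{critical} exponent $\tfrac12$ --- the source of the logarithmic factor. Fix $v<S<T$ and normalise $w_1(S,T)=1$. On $\Delta([S,T])$, with $A,J$ as above, use $\|J_{s,t}\|\lesssim(s-v)^{-\frac d{2p_2}}(t-s)^{1-\frac1{q_2}}\|g\|_{\LL^{q_2}_{p_2}([s,t])}\le(s-v)^{-\frac d{2p_2}}(t-s)^{1-\frac1{q_2}}w_1(s,t)^{\frac1{q_2}}$ and, from the gradient bound $\|P^X_{s,r}g_r\|_{L_{p_2}}\lesssim(r-s)^{-1/2}\|g_r\|_{L_{-1,p_2}}$, $\|A_{s,t}\|\lesssim\Gamma(s-v)^{-\frac d{2p_2}}(t-s)^{\frac12-\frac1{q_2}}w_1(s,t)^{\frac1{q_2}}$. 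With the control $w(s,t)=\big[(s-v)^{-\frac d{2p_2}}(t-s)^{\frac12-\frac1{q_2}}w_1(s,t)^{\frac1{q_2}}\big]^2+(s-v)^{-\frac d{2p_2}}(t-s)^{1-\frac1{q_2}}w_1(s,t)^{\frac1{q_2}}$ one gets $\|J_{s,t}\|\lesssim w(s,t)$, $\|\delta J_{s,u,t}\|=\|\delta A_{s,u,t}\|\lesssim\Gamma w(s,t)^{1/2}$ and $\E_sJ_{s,t}=0$, so \cref{lem.Davie_iteration} (with $\varepsilon=\tfrac12$, $\Gamma_2\asymp\Gamma$, $\Gamma_1=C_1=0$) gives $\|J_{s,t}\|\lesssim\Gamma(1+|\log\Gamma|)w(s,t)^{1/2}+\Gamma w(s,t)$. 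Since $w_1(S,T)=1$, $t-s\le1$ and $s-v\le1$, one checks $w(s,t)\le 2(s-v)^{-\frac d{p_2}}(t-s)^{1-\frac2{q_2}}$, so by the triangle inequality $\|\int_s^t g(r,X_r)\,dr\|_{L_{p_2}(\Omega|\cff_v)}\lesssim\Gamma(1+|\log\Gamma|)\big[(s-v)^{-\frac d{2p_2}}(t-s)^{\frac12-\frac1{q_2}}+(s-v)^{-\frac d{p_2}}(t-s)^{1-\frac2{q_2}}\big]$; because $\tfrac d{p_2}+\tfrac2{q_2}<1$ both H\"older exponents beat their singular exponents, so \cref{lem.Bellising} removes the $(s-v)$-singularity, undoing the normalisation restores the factor $w_1(0,1)^{1/q_2}$, and \cref{lem.lenglart} (with $v=s$ and $\bar p<p_2$) gives the stated supremum bound.

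The main obstacle is not the sewing, which by now is mechanical, but the analytic input: obtaining, uniformly, the Gaussian transition-density bound for $X$ and --- for parts (ii)--(iii) --- the semigroup-smoothing estimates $P^X_{s,r}\colon L_{-\nu,p_2}\to L_{p_2}$ at the sharp rate $(r-s)^{-\nu/2}$ (in particular $\nu=1$, rate $(r-s)^{-1/2}$). This is precisely what forces $q_0=\infty$ and $\tfrac1p+\tfrac1{p_0}<1$ and is furnished by \cref{sec:parabolic_equations_with_variable_coefficients}. The remaining work is bookkeeping of the Girsanov moment bounds and justifying, via part (i), the Markov property and the identity $\E_s\int_s^t g(r,X_r)\,dr=\int_s^t P^X_{s,r}g_r(X_s)\,dr$ for merely integrable (or distributional) $g$.
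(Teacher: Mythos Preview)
Your overall plan is correct and is essentially the paper's: prove the three estimates for the driftless diffusion $\widehat X$ (the paper's $\bar X$) via Khasminskii/sewing exactly as in \cref{sec.BM}, then transfer to $X$ by Girsanov and H\"older, using that $\E\rho^r<\infty$ for all $r$. One difference worth noting: the paper obtains the $L_{p_1}\to L_{p_2}$ bound for the driftless transition operator $Q_{s,t}$ not via Aronson but by passing to the limit $n\to\infty$ in \cref{thm.QLp12} along the weak convergence of $\bar X^n$ to $\bar X$. Your Aronson route is a legitimate alternative.

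There is, however, a genuine inconsistency in parts (ii)--(iii). In the opening paragraph you reduce to $\widehat X$, but in the detailed sewing you work with $X$ and its operator $P^X$. The smoothing estimate $\|P_{s,r}g_r\|_{L_{p_2}}\lesssim (r-s)^{-\nu/2}\|g_r\|_{L_{-\nu,p_2}}$ that you cite from \cref{sec:parabolic_equations_with_variable_coefficients} is for the \emph{driftless} equation $\partial_s u+\tfrac12 a^{ij}\partial^2_{ij}u=f$ (this is \cref{thm.pde-1}), not for the equation with $b\cdot\nabla$. So either (a) you carry out the sewing for $\widehat X$ and apply Girsanov only at the very end to the final supremum moment---which is exactly what the paper does in \cref{prop.Xnob} and the proof of \cref{thm.prealpha}---or (b) you must separately establish the same smoothing for $P^X$, which is extra work not in the paper. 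The paper chooses (a); in that route the identity $\E_s\int_s^t g(r,\bar X_r)dr=u^t_s(\bar X_s)$ is obtained by the It\^o formula of \cite[Lemma 4.1]{XXZZ} for the $\LL^q_{2,p}$-solution $u^t$, not by a semigroup formula. Relatedly, your explanation of why $q_0=\infty$ and $\tfrac1p+\tfrac1{p_0}<1$ are needed is off: these conditions concern the regularity of $a=\sigma\sigma^*$ (they are the hypotheses of \cref{con.A1} in the appendix, needed for the variable-coefficient maximal regularity), not the drift $b$; the drift is removed by Girsanov and never enters the parabolic estimate.
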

	Similar to the methods in \cref{sec.BM,sec:regularizing_properties_of_the_discrete_paths}, first we derive some analytic estimates on the transition operators associated to solutions of \eqref{sde00} without drift.
	Using these estimates, one can apply stochastic sewing techniques (\cref{lem.Davie_iteration}) and Girsanov theorem to obtain the desired moment bounds. 
	
	We begin with  moment bounds on the solutions to the driftless SDEs.
	Let $\bar X$ be a solution to SDE
	\begin{align}\label{eqn.SDEnob}
		d\bar X_t=\sigma(t,\bar X_t)dB_t, \quad \bar X_0=x\in\Rd.
	\end{align}
	Under \cref{con.A}, it is well-known (see \cite{MR2190038}) that the probability law of $\bar X$ is unique and Markov. In fact, solutions to equation \eqref{eqn.SDEnob} are strongly unique under \cref{con.A}. This follows from the arguments in the proof of \cref{thm.main} in the following section. However, only the law of $\bar X$ is relevant to our considerations herein.
	Let $Q_{s,t}$ be the transition operator associated to $\bar X$. In particular, we have $\E (f(\bar X_t)|\cff_s)=Q_{s,t}f(\bar X_s)$ for any bounded measurable function $f$.
	
	\begin{lemma}
		Assuming \cref{con.A}\ref{con.Aholder}.
		Let $p_1,p_2\in[1,\infty]$, $p_1\le p_2$. There exists a constant $N=N(\alpha,d,p_1,p_2,K_1,K_2)$ such that for every $f\in L_{p_1}(\Rd)$ and $s\le t$,
		\begin{align}\label{cQ}
			\|Q_{s,t}f\|_{L_{p_2}(\Rd)}\le N(t-s)^{\frac d{2p_2}-\frac d{2p_1}}\|f\|_{L_{p_1}(\Rd)}.
		\end{align}
	\end{lemma}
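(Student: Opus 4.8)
The plan is to establish \eqref{cQ} by the parametrix-type argument used for the discrete operator $Q^n_{s,t}$ in \cref{thm.QLp12}, with the mesh map $k_n(r)$ replaced by $r$ throughout. Freezing the diffusion coefficient, introduce the Gaussian transition operator $T_{s,t}f(x)=\int_{\Rd}f(y)\,p_{\Sigma_{s,t}(y)}(y-x)\,dy$ with $\Sigma^{ij}_{s,t}(y)=\tfrac12\int_s^t(\sigma\sigma^*)^{ij}_r(y)\,dr$, together with the first parametrix correction
\[
H_{r,t}f(x)=\sum_{i,j=1}^d\Bigl[(\sigma\sigma^*)^{ij}_r(x)\,\partial^2_{ij}T_{r,t}f(x)-\partial^2_{ij}T_{r,t}\bigl[(\sigma\sigma^*)^{ij}_rf\bigr](x)\Bigr].
\]
These are exactly the objects of \cref{sec:regularizing_properties_of_the_discrete_paths} evaluated at $k_n(r)=r$, so that the auxiliary noise $\eta_r$ vanishes and \cref{lem.GK} applies with $a_1=0$, $\ell=t-r$. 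Alternatively, one may simply quote the classical parametrix construction of the fundamental solution of $\partial_s+\tfrac12(\sigma\sigma^*)^{ij}\partial^2_{ij}$ under \cref{con.A}\ref{con.Aholder}, which yields a transition density $q(s,x;t,y)$ with a Gaussian upper bound $q(s,x;t,y)\le N(t-s)^{-d/2}p_{c(t-s)I}(y-x)$; then \eqref{cQ} is immediate from Young's convolution inequality, since $\|p_{c\tau I}\|_{L_r(\Rd)}\simeq\tau^{-d/2+d/(2r)}$ with $1+1/p_2=1/r+1/p_1$. Below I outline the self-contained route.

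The key steps are as follows. First, from \cref{lem.GK} (with $a_1=0$, $\ell=t-r$) and the interpolation in \cref{lem.HLp} one gets the analytic bounds $\|T_{s,t}f\|_{L_{p_2}(\Rd)}\les(t-s)^{\frac d{2p_2}-\frac d{2p_1}}\|f\|_{L_{p_1}(\Rd)}$ (a standard Gaussian estimate) and $\|H_{r,t}f\|_{L_{p_2}(\Rd)}\les(t-r)^{\frac\alpha2-1+\frac d{2p_2}-\frac d{2p_1}}\|f\|_{L_{p_1}(\Rd)}$, i.e. \eqref{est.Tlp1} and \eqref{est.Hlp1} with $k_n(r)=r$. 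Second, applying It\^o's formula to $r\mapsto T_{r,t}f(\bar X_r)$ for bounded uniformly continuous $f$, using the identity $\partial_sT_{s,t}f=-\tfrac12\partial^2_{ij}T_{s,t}[(\sigma\sigma^*)^{ij}_s\,\cdot\,]$, taking expectations (the martingale part drops out), and letting the upper endpoint tend to $t$ — the uniform bound $\|H_{r,t}f\|_{L_\infty(\Rd)}\les(t-r)^{\alpha/2-1}\|f\|_{L_\infty(\Rd)}$ makes the boundary term integrable and vanishing, exactly as in \cref{lem.discreteDuhamel} — yields the exact Duhamel identity
\[
Q_{s,t}f(x)=T_{s,t}f(x)+\int_s^tQ_{s,r}[H_{r,t}f](x)\,dr .
\]
Third, iterate this identity into the associated parametrix (Neumann) series $Q_{s,t}=\sum_{k\ge0}T_{s,\cdot}\circ H^{\circ k}$; using the $L_{p_1}\!\to\!L_{p_1}$ bound for the intermediate $H$-factors, the $L_{p_1}\!\to\!L_{p_2}$ smoothing of $T_{s,r_1}$, and the Beta-function evaluation of the time-ordered iterated integrals $\int_{s<r_1<\dots<r_k<t}$, the $k$-th term is $\les C^k\Gamma(1+k\alpha/2-\rho)^{-1}(t-s)^{-\rho}\|f\|_{L_{p_1}(\Rd)}$ with $\rho:=\frac d{2p_1}-\frac d{2p_2}$, so the series converges uniformly for $t-s\le1$ whenever $\rho<1$ and gives \eqref{cQ} in that range. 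For $\rho\in[1,d/2]$ one then removes the restriction by the semigroup splitting $Q_{s,t}=Q_{u,t}Q_{s,u}$ at $u=(s+t)/2$ through intermediate exponents, halving $\rho$ at each step, exactly as in Step 3 of \cref{thm.QLp12}; finally arbitrary $f\in L_{p_1}(\Rd)$ is reached from bounded uniformly continuous $f$ by density.

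The main obstacle — and the place where the continuum argument differs from the discrete one — is the convergence of the parametrix series that supplies both the a priori finiteness of $\|Q_{s,t}f\|_{L_{p_2}(\Rd)}$ (in \cref{thm.QLp12} this was provided by the discrete inductive step) and the sharp bound \eqref{cQ} for $\rho<1$; this hinges entirely on the integrable singularity $(t-r)^{\alpha/2-1}$ in the $L_\infty$-estimate for $H_{r,t}$, which in turn comes from the improved estimate \eqref{est.Hlp1}. Everything else is a verbatim transcription of the computations of \cref{sec:regularizing_properties_of_the_discrete_paths} with $k_n(r)$ deleted, together with the elementary bound $\sup_{s\le r}\|Q_{s,r}\|_{L_p(\Rd)\to L_p(\Rd)}<\infty$, which is the special case $p_1=p_2$ already produced by the series.
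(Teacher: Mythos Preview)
Your argument is correct but takes a genuinely different route from the paper. The paper does \emph{not} redo the parametrix construction in the continuum; instead it observes that the driftless Euler scheme $\bar X^n$ converges in law to $\bar X$ (tightness plus identification of the limit via the Stroock--Varadhan martingale problem, using the spatial H\"older continuity of $a$), and then simply passes to the limit in the already-established discrete bound \eqref{est.Q1} of \cref{thm.QLp12}. This is cheaper in context, since all the analytic work has been done for $Q^n_{s,t}$, and only the soft weak-convergence step is new.

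Your approach is the classical continuum parametrix: derive the Duhamel identity $Q_{s,t}f=T_{s,t}f+\int_s^tQ_{s,r}[H_{r,t}f]\,dr$ by It\^o's formula, iterate into a Neumann series whose convergence for $\rho<1$ follows from the integrable singularity $(t-r)^{\alpha/2-1}$ in $\|H_{r,t}f\|_{L_p}$ (\cref{lem.GK} with $a_1=0$), and then bootstrap via Chapman--Kolmogorov as in Step~3 of \cref{thm.QLp12}. This is self-contained and bypasses the weak-convergence argument altogether; it also makes explicit that the bound depends only on the spatial H\"older regularity of $a$, with no time-continuity needed. One small caution: your ``alternatively'' shortcut---quoting Gaussian upper bounds for the fundamental solution---would, if taken from the standard references (Friedman, Lady\v{z}enskaja et al.), require parabolic H\"older continuity of $a$, which is \emph{not} part of \cref{con.A}\ref{con.Aholder}; your detailed series argument avoids this, so it is the right route here. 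For the It\^o step you implicitly use weak existence of $\bar X$ under \cref{con.A}\ref{con.Aholder}, which the paper also invokes (citing \cite{MR2190038}); this is unproblematic.
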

	\begin{proof}
		Let $\bar X^n$ be the solution to the Euler--Maruyama scheme \eqref{eqn.EMnob}.
		It suffices to show that the laws of $\bar X^n$ converge to the law of $\bar X$ for \eqref{cQ} is then derived from \cref{thm.QLp12}.
		Let $\bP^n$ be the probability law of $\bar X^n$ on $C([0,1])$. Here $C([0,1])$ is the space of continuous functions $\omega:[0,1]\to\Rd$ equipped with the topology of uniform convergence, the Borel $\sigma$-algebra and the filtration $t\mapsto\cgg_t=\sigma\{\omega_s:s\in[0,t]\}$.
		Let $\phi$ be a smooth function with bounded derivatives.
		By It\^o formula, we see that
		\begin{align*}
			M^n_t(\omega)= \phi(\omega_t)- \phi(x)-\frac12\int_0^t a^{ij}(r,\omega_{k_n(r)}) \partial^2_{ij} \phi(\omega_{r})dr
		\end{align*}
		is a martingale under $\bP^n(d \omega)$. Define
		\begin{align*}
			M_t(\omega)= \phi(\omega_t)- \phi(x)-\frac12\int_0^t a^{ij}(r,\omega_r) \partial^2_{ij}\phi(\omega_{r})dr.
		\end{align*}
		
		It is easy to see that $\|\bar X^n_t-\bar X^n_s\|_{L_p(\Omega)}\les (t-s)^{1/2}$
		for any $p\ge2$ and $s\le t$. This implies that the probability laws $\{\bP^n\}_n$ are tight. Let $\bP$ be a probability measure such that $\bP^n$ converges to $\bP$ through a subsequence, which we still denote by $\bP^n$. Let $s\le t$ be fixed and $G\in\cgg_s$. We have
		\begin{align*}
			\int \delta M_{s,t}\1_G d\bP
			&=\int \delta M_{s,t}\1_G (d\bP-d\bP^n)
			+
			\int (\delta M_{s,t}-\delta M^n_{s,t})\1_G d\bP^n
			+\int \delta M^n_{s,t}\1_G d\bP^n
			\\&=:I_1+I_2+I_3.
		\end{align*}
		It is evident that $\lim_n I_1=0$. Using H\"older continuity of $a$   
		we have
		\begin{align*}
			|I_2|\les\int \left[\int_s^t|\omega_r- \omega_{k_n(r)}|^\alpha dr\right] d\bP^n(\omega)
			&\les\int_s^t\E|\bar X^n_r-\bar X^n_{k_n(r)}|^\alpha dr
			\\&\les\int_s^t|r-{k_n(r)}|^{\alpha/2} dr.
		\end{align*}
		This implies that $\lim_nI_2=0$. Because $M^n$ is a martingale under $\bP^n$, $I_3=0$. It follows that $\int \delta M_{s,t}\1_G d\bP=0$, and hence $M$ is a martingale under $\bP$. In other words, $\bP$ is a solution to the martingale problem associated to equation \eqref{eqn.SDEnob}, which is unique (\cite{MR2190038}). We have shown that $\{\bP^n\}_n$ has exactly one accumulating point, which is the law of \eqref{eqn.SDEnob}. This also means that $\bar X^n$ converges weakly to $\bar X$.		
	\end{proof}
	\begin{lemma}\label{lem.LrhobarX}
		Assuming \cref{con.A}\ref{con.Aholder}.
		Let $h$ be a measurable function on $\Rd$ such that $\|h\|_{L_\rho(\Rd)}$ is finite for some $\rho\in(0,\infty]$. Then there exists a finite constant $N=N(\alpha,d,\rho,K_1,K_2)$ such that for every $r,v\in[0,1]$, $r>v$,
		\begin{align}\label{est.Qf}
			\|h(\bar X_r)\|_{L_\rho(\Omega|\cff_v)}\le N(r-v)^{-\frac d{2 \rho}}\|h\|_{L_\rho(\Rd)}.
		\end{align}
	\end{lemma}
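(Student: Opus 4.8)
The plan is to deduce this directly from the smoothing estimate \eqref{cQ} for the transition operator $Q_{s,t}$, using the Markov property of $\bar X$. First I would dispose of the case $\rho=\infty$, which is trivial: $|h(\bar X_r)|\le\|h\|_{L_\infty(\Rd)}$ pointwise, so $\|h(\bar X_r)\|_{L_\infty(\Omega|\cff_v)}\le\|h\|_{L_\infty(\Rd)}$ and the asserted bound holds with $N=1$ and no time singularity, since $\frac d{2\rho}=0$.

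For $\rho\in(0,\infty)$, observe that finiteness of $\|h\|_{L_\rho(\Rd)}$ means $|h|^\rho\in L_1(\Rd)$ with $\||h|^\rho\|_{L_1(\Rd)}=\|h\|_{L_\rho(\Rd)}^\rho$. Since, under \cref{con.A}, the law of $\bar X$ is the unique solution of the martingale problem associated to \eqref{eqn.SDEnob} and is therefore Markov with transition operator $Q$ (this is exactly the fact established in the preceding lemma), one has for the deterministic time $v\le r$ the identity
\begin{align*}
	\E\big(|h(\bar X_r)|^\rho\,\big|\,\cff_v\big)=Q_{v,r}[|h|^\rho](\bar X_v)\qquad\text{a.s.}
\end{align*}
Here $Q_{v,r}$ is well-defined on $L_1(\Rd)$ by \eqref{cQ} itself (with $p_1=p_2=1$), and the identity, clear for bounded $h$, extends to $|h|^\rho\in L_1(\Rd)$ by monotone approximation. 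Applying \eqref{cQ} with $p_1=1$, $p_2=\infty$ gives
\begin{align*}
	\big\|Q_{v,r}[|h|^\rho]\big\|_{L_\infty(\Rd)}\le N(r-v)^{-\frac d2}\||h|^\rho\|_{L_1(\Rd)}=N(r-v)^{-\frac d2}\|h\|_{L_\rho(\Rd)}^\rho.
\end{align*}
Hence $\E(|h(\bar X_r)|^\rho|\cff_v)\le N(r-v)^{-d/2}\|h\|_{L_\rho(\Rd)}^\rho$ a.s.; taking the essential supremum over $\omega$ and then the $\rho$-th root yields $\|h(\bar X_r)\|_{L_\rho(\Omega|\cff_v)}\le N^{1/\rho}(r-v)^{-\frac d{2\rho}}\|h\|_{L_\rho(\Rd)}$, which is the claim after renaming the constant.

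There is no genuine obstacle here; the proof is a short consequence of \eqref{cQ} combined with conditioning. The only point to be handled with a bit of care is the justification of the conditional expectation formula above, i.e. the Markov property, which is available because the law of $\bar X$ is uniquely determined under \cref{con.A}; everything else is the endpoint case $L_1\to L_\infty$ of \eqref{cQ} plus the definition of the conditional $L_\rho$-norm.
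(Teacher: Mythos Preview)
Your proof is correct and follows essentially the same approach as the paper, which simply remarks that the argument is analogous to that of \cref{prop.momentg}(i): apply the Markov property to write $\E(|h(\bar X_r)|^\rho|\cff_v)=Q_{v,r}[|h|^\rho](\bar X_v)$, then use the $L_1\to L_\infty$ case of \eqref{cQ}. The only simplification over the discrete analogue is that here $v$ need not be adjusted to a grid point.
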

	\begin{proof}
		This is a direct consequence of \eqref{cQ}.  The argument is similar to that of \cref{prop.momentg}(i), hence, is omitted.
	\end{proof}

	The next result is a special case of \cref{thm.prealpha} when $b=0$, which is an analogue of \cref{prop.B2,prop.B3}.
	\begin{proposition}\label{prop.Xnob}
		Let $p\in(1,\infty)$, $q\in(2,\infty)$. 
		Let $\bar X$ be a solution to \eqref{eqn.SDEnob}.

		(i) Assuming \cref{con.A}\ref{con.Aholder}. Let $h$ be a function in $\LL^{q_1}_{p_1}([0,1])$ for some $p_1,q_1\in[1,\infty]$ satisfying $\frac d{p_1}+\frac2{q_1}<2$. Then for  every $m\ge1$, there exists a constant $N=N(d,p_1,q_1,m)$ such that
		\begin{align*}
			\|\int_0^1 h(r,\bar X_r)dr\|_{L_m(\Omega)}\le N\|h\|_{\LL^{q_1}_{p_1}([0,1])}.
		\end{align*}

		(ii) Assuming  \cref{con.A} with $q_0=\infty$ and $\frac1p+\frac1{p_0}<1$.
		Let $g$ be a function in $\LL^{q}_{p}([0,1])$ and let $\nu\in[0,1)$ such that $\frac{d}{p}+\frac{2}{q}+\nu<2$. Then for any $\bar p\in(0,p)$, there exists a constant $N=N(\nu,d,p,q,\bar p)$ such that
		\begin{align*}
			\|\sup_{t\in[0,1]}|\int_0^t g(r,\bar X_r)dr|\|_{L_{\bar p}(\Omega)}\le
			N \|g\|_{\LL^{q}_{-\nu,p}([0,1])}.
		\end{align*}

		(iii) Assuming  \cref{con.A} with $q_0=\infty$, $\frac1p+\frac1{p_0}<1$ and $\frac dp+\frac2q<1$. Let $g$ be a function in $\LL^q_p([0,1])$, $\Gamma$ be a nonnegative number and $w_1$ be a continuous control on $\Delta$. We assume that for every $(s,t)\in \Delta$,
		\begin{align*}
		 	\|g\|_{\LL^q_{-1,p}([s,t])}\le \Gamma w_1(s,t)^{\frac1q}
		 	\tand
		 	\|g\|_{\LL^q_p([s,t])}\le w_1(s,t)^{\frac1q}.
		\end{align*}
		Then for any $\bar p\in(0,p)$, there exists a constant $N=N(\nu,d,p,q,\bar p)$ such that
		\begin{align*}
			\|\sup_{t\in[0,1]}|\int_0^t g(r,\bar X_r)dr|\|_{L_{\bar p}(\Omega)}\le
			N \Gamma(1+|\log(\Gamma)|)w_1(0,1)^{\frac1q}.
		\end{align*}	
	\end{proposition}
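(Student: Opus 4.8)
The plan is to derive \cref{prop.Xnob} as the driftless specialization of \cref{thm.prealpha}, following line by line the Brownian arguments of \cref{lem.gLp}, \cref{prop.B2} and \cref{prop.B3} with the diffusion $\bar X$ in place of $B$. The only analytic facts about $\bar X$ that enter are: the smoothing bound \eqref{cQ} for its transition operator $Q_{s,t}$ (mirroring \eqref{BMest}), the conditional moment bound \eqref{est.Qf}, and the maximal regularity estimate \eqref{est.u-nu} of \cref{thm.pde-1} for the backward operator $\partial_s+\tfrac12 a^{ij}\partial^2_{ij}$ with distributional forcing; the standing hypotheses $q_0=\infty$ and $1/p+1/p_0<1$ in parts (ii)--(iii) are imposed precisely so that \cref{thm.pde-1} is available.

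For part (i) I would assume $h\ge0$ without loss of generality, use the Markov property to write $\E_s\int_s^t h(r,\bar X_r)\,dr=\int_s^t Q_{s,r}h_r(\bar X_s)\,dr$, bound $\|Q_{s,r}h_r\|_{L_\infty(\Rd)}\les (r-s)^{-d/(2p_1)}\|h_r\|_{L_{p_1}(\Rd)}$ using \eqref{cQ} with $p_2=\infty$, and deduce by H\"older's inequality that
\[
\Big\|\int_s^t h(r,\bar X_r)\,dr\Big\|_{L_1(\Omega|\cff_s)}\les \|h\|_{\LL^{q_1}_{p_1}([s,t])}\,(t-s)^{1-\frac d{2p_1}-\frac1{q_1}}=w(s,t)^{1-\frac d{2p_1}}
\]
for a suitable continuous control $w$ (cf. \cref{ex.control}); here $1-\tfrac d{2p_1}-\tfrac1{q_1}>0$ and $1-\tfrac d{2p_1}>0$ follow from $\tfrac d{p_1}+\tfrac2{q_1}<2$. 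The quantitative Khasminskii lemma \cref{lem.Khasminski} then gives $\|\int_0^1 h(r,\bar X_r)\,dr\|_{L_m(\Omega)}\le (m!)^{1/m} w(0,1)^{1-d/(2p_1)}\les\|h\|_{\LL^{q_1}_{p_1}([0,1])}$ for integer $m$, and H\"older interpolation extends it to all real $m\ge1$. This is the verbatim analogue of \cref{lem.gLp}; combined with Kolmogorov's continuity theorem it also supplies the continuity of $t\mapsto\int_0^t h(r,\bar X_r)\,dr$ needed in the remaining parts.

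For parts (ii) and (iii) I would set $\caa_t=\int_0^t g(r,\bar X_r)\,dr$, $A_{s,t}=\E_s\int_s^t g(r,\bar X_r)\,dr$ and $J_{s,t}=\delta\caa_{s,t}-A_{s,t}$, so that $\E_sJ_{s,t}=0$ and $\delta J_{s,u,t}=-\delta A_{s,u,t}$. The germ is handled via Feynman--Kac: solving \eqref{eqn.pde} with forcing $g$ on $[s,t]$ and applying It\^o's formula to $r\mapsto u^t_r(\bar X_r)$ (after mollifying $g$, since a priori $u^t\in\LL^q_{2,p}$ resp. $\LL^q_{1,p}$ by \cref{thm.pde-1}) yields $A_{s,t}=-u^t_s(\bar X_s)$; combining \eqref{est.Qf} with \eqref{est.u-nu} then gives, for $v<s<t$,
\[
\|A_{s,t}\|_{L_p(\Omega|\cff_v)}\les (s-v)^{-\frac d{2p}}(t-s)^{1-\frac\nu2-\frac1q}\,\|g\|_{\LL^q_{-\nu,p}([s,t])},
\]
while \eqref{est.Qf} alone controls the fluctuation via an estimate of the form $\|J_{s,t}\|_{L_p(\Omega|\cff_v)}\les(s-v)^{-d/(2p)}(t-s)^{1-1/q}\|g\|_{\LL^q_p([s,t])}$. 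Choosing the continuous control $w$ on $\Delta((v,1])$ exactly as in \cref{prop.B3} for (ii) (with sewing exponent $1-\nu/2>1/2$) and as in \cref{prop.B2}(a) for (iii) (using the hypothesized control $w_1$ and the cancellation parameter $\Gamma$), the stochastic sewing lemma \cref{lem.Davie_iteration} bounds $\|J_{s,t}\|_{L_p(\Omega|\cff_v)}$, hence $\|\delta\caa_{s,t}\|_{L_p(\Omega|\cff_v)}$, by a constant multiple of $w(s,t)^{1-\nu/2}$ in case (ii), and of $\Gamma(1+|\log\Gamma|)$ times a singular weight built from $(s-v)^{-1}$ and $w_1$ in case (iii). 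Letting $v\uparrow s$ (legitimate by the continuity from part (i)), stripping the singular factor $(s-v)^{-d/(2p)}$ by means of \cref{lem.Bellising}, and finally invoking \cref{lem.lenglart} to pass from the conditional increment bounds to an $L_{\bar p}(\Omega)$-bound on $\sup_{t\in[0,1]}|\caa_t|$, one reaches the stated estimates. The endpoint value $\nu=1$ and the extra hypothesis $\tfrac dp+\tfrac2q<1$ in (iii) are used, respectively, to apply \eqref{est.u-nu} at the borderline regularity and to guarantee $\tfrac12-\tfrac1q>0$ so that the sewing exponent stays above $1/2$.

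The main obstacle is the sharp germ estimate for $A_{s,t}$ expressed through the negative-order norm $\|g\|_{\LL^q_{-\nu,p}}$, notably the borderline case $\nu=1$: unlike the Brownian situation there is no explicit transition kernel, so one must first solve the backward Kolmogorov equation \eqref{eqn.pde} with a genuinely distributional right-hand side and control its solution in $L_p(\Rd)$ with the correct time weight, which is exactly the content of \cref{thm.pde-1} and is where the structural assumptions $q_0=\infty$ and $1/p+1/p_0<1$ are consumed. A secondary technical point is justifying the Feynman--Kac identity $A_{s,t}=-u^t_s(\bar X_s)$ by approximation given the limited regularity of $u^t$ and the need for the martingale term to have vanishing conditional expectation.
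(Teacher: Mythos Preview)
Your proposal is correct and follows essentially the same route as the paper: part~(i) via the conditional Khasminskii estimate built on \eqref{cQ}/\eqref{est.Qf}, and parts~(ii)--(iii) via the sewing/Bellising/Lenglart machinery with the germ $A_{s,t}=u^t_s(\bar X_s)$ controlled by \cref{thm.pde-1}. Two small remarks on presentation: the order at the end should be to apply \cref{lem.Bellising} first (with $v$ fixed) to remove the $(s-v)^{-d/(2p)}$ singularity, and only then pass to $v=s$ by continuity; and for the It\^o identity the paper avoids mollification by invoking the It\^o formula for non-degenerate diffusions with $u^t\in\LL^q_{2,p}$ (which holds since $g\in\LL^q_p$), the $\LL^q_{-1,p}$-norm entering only through the bound \eqref{est.u-nu}.
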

	\begin{proof}
		(i) Using Minkowski inequality, \cref{lem.LrhobarX} and H\"older inequality, we have
		\begin{multline*}
			\|\int_s^th(r,\bar X_r)dr\|_{L_{p_1}(\Omega|\cff_s)}\le\int_s^t\|h(r,\bar X_r)\|_{L_{p_1}(\Omega|\cff_s)}dr
			\\\les\int_s^t (r-s)^{-\frac d{2p_1}}\|h_r\|_{L_{p_1}(\Rd)}dr
			\les(t-s)^{1-\frac d{2p_1}-\frac 1{q_1}}\|h\|_{\LL^{q_1}_{p_1}([s,t])}.
		\end{multline*}
		This implies that
		\begin{align}\label{tmp.2809Esh}
			\E_s\int_s^t|h(r,\bar X_r)|dr\les (t-s)^{1-\frac d{2p_1}-\frac 1{q_1}}\|h\|_{\LL^{q_1}_{p_1}([s,t])}.
		\end{align}
		From here, we apply \cref{lem.Khasminski} to obtain part (i).

		The proofs of parts (ii,iii) are similar to those of \cref{prop.B2,prop.B3}. The statistical estimates for Brownian motion are replaced by those obtained in \cref{thm.pde-1}.
		The condition $\frac dp+\frac 2q+\nu<2$ appears when applying \cref{lem.Bellising}.
		We only provide proof of (iii) while the proof of (ii) is left to the readers.

		(iii) For each $(s,t)\in \Delta$, put
		\begin{align*}
			A_{s,t}=\E_s\int_s^t g(r,\bar X_r)dr
			\tand J_{s,t}=\int_s^tg(r,\bar X_r)dr-A_{s,t}.
		\end{align*}
		Define the control $w$ by
		\begin{align*}
			w(s,t)=\left[(s-v)^{-\frac d{2p}}(t-s)^{\frac12-\frac1q}w_1(s,t)^{\frac1q}\right]^2+(s-v)^{-\frac d{2p}}(t-s)^{1-\frac1q}w_1(s,t)^{\frac1q}.
		\end{align*}
		
		Let $u^t\in\LL^q_{2,p}([0,t])$ be the solution (\cite[Theorem 2.1]{LX}) to
		\begin{align*}
		 	(\partial_s+\frac12 a^{ij}\partial^2_{ij})u+g=0, \quad u(t,\cdot)=0.
		\end{align*}
		Applying It\^o formula for non-degenerate diffusions (see \cite[Lemma 4.1]{XXZZ}), we see that $A_{s,t}=u^t_s(\bar X_s)$.
		Applying \eqref{est.Qf} and \cref{thm.pde-1}, we have
		\begin{align*}
			\|A_{s,t}\|_{L_p(\Omega|\cff_v)}
			&\les (s-v)^{-\frac d{2p}}\|u^t_s\|_{L_p(\Rd)}
			\les  (s-v)^{-\frac d{2p}}(t-s)^{\frac12-\frac1q}\|g\|_{\LL^q_{-1,p}([s,t])}.
		\end{align*}
		By our assumption, the previous estimate implies that $\|\delta J_{s,u,t}\|_{L_p(\Omega|\cff_v)}\les \Gamma w(s,t)^{1/2}$ for every $v<s\le u\le t\le1$. It is evident that $\E_sJ_{s,t}=0$ and hence $\E_s \delta J_{s,u,t}=0$. This verifies the conditions \eqref{con.dJ} and \eqref{con.EdJ} of \cref{lem.Davie_iteration}.

		On the other hand, using Minkowski inequality, \eqref{est.Qf} and H\"older inequality, we have
		\begin{align*}
			\|J_{s,t}\|_{L_p(\Omega|\cff_v)}&\le2\int_s^t\|g(r,\bar X_r)\|_{L_p(\Omega|\cff_v)}dr
			\les\int_s^t (r-v)^{-\frac d{2p}}\|g_r\|_{L_p(\Rd)}dr
			\\&\les (r-v)^{-\frac d{2p}}(t-s)^{1-\frac1q}\|g\|_{\LL^q_p([s,t])}
			\les w(s,t),
		\end{align*}
		verifying condition \eqref{con.sll1}.
		An application of \cref{lem.Davie_iteration} yields that $\|J_{s,t}\|_{L_p(\Omega|\cff_v)}\les \Gamma(1+|\log(\Gamma)|)w(s,t)^{1/2}+\Gamma w(s,t)$ for every $v<s\le u\le t\le1$. From here, the argument follows analogously as in the proof of \cref{prop.B2}, using \cref{lem.Bellising} to remove the singularity near $v$ then using \cref{lem.lenglart} to obtain the desired estimate for the supremum.
	\end{proof}
	\begin{proof}[\bf Proof of \cref{thm.prealpha}]
		Let $\bar X$ be a solution to \eqref{eqn.SDEnob}.
		Similar to the proof of \cref{prop.gf}, we use Girsanov transformation to deduce the moment estimates for $X$ from those obtained in \cref{prop.Xnob}.
		Indeed, define the measure $\bar \mP:=\rho\mP$ where
		\begin{align*}
		    \rho:=\exp\left(-\int_0^1(\sigma^{-1}b)(r,\bar X_r)dB_r-\frac{1}{2}\int_0^1\Big|(\sigma^{-1}b)(r,\bar X_r)\Big|^2dr\right).
		\end{align*}
		From \cite[Lemma 4.1]{XXZZ} and Novikov criterion, we see that $\E \rho^{r}<\infty$ for every $r\in\R$.
		By Girsanov theorem, $\bar\mP$ is a probability measure and the law of $X$ under $\bar\mP$ is the same as the law of $\bar X$ under $\mP$. From here, we deduce \cref{thm.prealpha} from \cref{prop.Xnob}, using similar computations as in the proof of \cref{prop.gf}.
	\end{proof}
	\begin{remark}\label{rmk.weakuniq}
		Observe that pathwise uniqueness is not necessary and only weak uniqueness of \eqref{sde00} is used in the above proof.
		In addition, reasoning as in \cite[Lemmas 3.2, 3.3 and Remark 3.5]{MR2117951}, one can derive weak uniqueness for \eqref{sde00}  from \cref{prop.Xnob}(i). Consequently, \cref{thm.prealpha} holds for any adapted solution to \eqref{sde00}.
	\end{remark}

\section{Proof of the main results}
\label{sec:proof}
We present in the current section the proofs of \cref{thm.main,thm.alpha}.
We state a maximal regularity result for parabolic equations, which is a direct consequence of \cite[Theorem 3.2]{XXZZ}.
\begin{lemma}\label{pde}
	Assume \crefrange{con.A}{con.B}.
	Let $f\in{{\mathbb{L}}}_p^{q}([0,1])$ and $M>0$ be such that
	\begin{align*}
		\|f\|_{\LL^q_p([0,1])}+\|b\|_{\LL^q_p([0,1])}\le M.
	\end{align*}
	Then there exists $\lambda_0=\lambda_0(M,a)\geq 1$ such that that for all $\lambda\ge\lambda_0$, there is a unique solution $u$ in $\mathbb{L}_{2,p}^{q}([0,1])$ to the equation
	\begin{align}\label{pdelambda}
		\partial_tu+\frac{1}{2}a^{ij}\partial^2_{ij} u+b\cdot\nabla u+f=\lambda u, \quad u(1,\cdot)=0.
	\end{align}
	Furthermore, for any $\gamma\in[0,2)$, $p_1\in[p,\infty)$, $q_1\in[q,\infty)$ with $\frac{d}{p}+\frac{2}{q}<2-\gamma+\frac{d}{p_1}+\frac{2}{q_1}$, there is a constant $C=C(M,\gamma,p_1,q_1)>0$ such that for any $\lambda\geq \lambda_0$,
	\begin{align*}
	\lambda^{\frac{1}{2}(2-\gamma+\frac{d}{p_1}+\frac{2}{q_1}-\frac{d}{p}-\frac{2}{q})}\Vert u\Vert_{\mathbb{L}_{\gamma, p_1}^{q_1}([0,1])}+\Vert \partial_tu\Vert_{\mathbb{L}_p^q([0,1])}+\Vert u\Vert_{\mathbb{L}_{2, p}^{q}([0,1])}\leq C\Vert f\Vert_{\mathbb{L}_{p}^{q}([0,1])}.
	\end{align*}
\end{lemma}
\begin{lemma}\label{lem.Xtkt}
	Let $X^n$ be the solution to \eqref{eqn.EMscheme}. Then for every $m\ge2$,
	\begin{align*}
		\sup_{t\in[0,1]}\|X^n_{t}-X^n_{k_n(t)}\|_{L_m(\Omega)}\les (1/n)^{\frac 12}.
	\end{align*}
\end{lemma}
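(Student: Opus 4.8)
The plan is to estimate the increment $X^n_t - X^n_{k_n(t)}$ directly from the definition of the scheme \eqref{eqn.EMscheme}. Fix $t\in[0,1]$ and write $k=k_n(t)$, so that $t-k\le 1/n$. By definition,
\[
	X^n_t - X^n_{k_n(t)} = \int_k^t b^n(r,X^n_{k_n(r)})\,dr + \int_k^t \sigma(r,X^n_{k_n(r)})\,dB_r.
\]
Note that on the interval $[k,t]$ we have $k_n(r)=k$ for all $r$, so both integrands are evaluated at the single $\cff_k$-measurable point $X^n_k$; in particular $b^n(r,X^n_k)$ and $\sigma(r,X^n_k)$ are $\cff_r$-adapted and the stochastic integral is a genuine martingale increment. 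First I would take the $L_m(\Omega)$-norm and split into the drift and the diffusion parts via the triangle inequality.

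For the drift part, I use the fact that $b^n\in\LL^q_\infty([0,1])$ together with H\"older's inequality in time: since $t-k\le 1/n$,
\[
	\Big\|\int_k^t b^n(r,X^n_k)\,dr\Big\|_{L_m(\Omega)}
	\le \int_k^t \|b^n(r,\cdot)\|_{L_\infty(\Rd)}\,dr
	\le \|b^n\|_{\LL^q_\infty([k,t])}\,(t-k)^{1-\frac1q}.
\]
By \cref{con.B}, namely \eqref{con.locGK}, the right-hand side is bounded by $(1/n)^{\frac1q-\frac12}\mu^n(k,t)^\theta(t-k)^{1-\frac1q} \le (1/n)^{1/2}\,\mu^n(0,1)^\theta \le (1/n)^{1/2}K_4^\theta$, using $t-k\le 1/n$ and monotonicity of the control $\mu^n$. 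So the drift contributes $\les (1/n)^{1/2}$. For the diffusion part, I apply the Burkholder--Davis--Gundy inequality and boundedness of $\sigma$ (from the ellipticity bound \eqref{ellptic-con}, $\|\sigma(r,x)\|\le \sqrt{d\,K_1}$):
\[
	\Big\|\int_k^t \sigma(r,X^n_k)\,dB_r\Big\|_{L_m(\Omega)}
	\les \Big\| \Big(\int_k^t \|\sigma(r,X^n_k)\|^2\,dr\Big)^{1/2}\Big\|_{L_m(\Omega)}
	\les (t-k)^{1/2} \le (1/n)^{1/2}.
\]
Combining the two bounds and taking the supremum over $t\in[0,1]$ (the bounds are uniform in $t$) yields $\sup_{t\in[0,1]}\|X^n_t - X^n_{k_n(t)}\|_{L_m(\Omega)} \les (1/n)^{1/2}$, with the implicit constant depending only on $m$, $d$, $K_1$, $K_4$, $\theta$.

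There is no real obstacle here: the scheme is, on each mesh subinterval, a Brownian-type increment with bounded (tamed) coefficients, so the standard Euler--Maruyama estimate applies verbatim. The only point requiring a moment of care is ensuring that the drift bound is genuinely of order $(1/n)^{1/2}$ rather than $(1/n)^{1-1/q}$ — this is exactly what condition \eqref{con.locGK} is designed to guarantee, trading the $L_\infty$-blow-up of the tamed drift against the short time interval. One should also note that $X^n$ is well-defined and has finite moments of all orders, which follows since $b^n\in\LL^q_\infty$ makes the drift integrable pathwise and $\sigma$ is bounded; this is implicit in \cref{con.B} and the standing assumptions.
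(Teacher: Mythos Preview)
Your proof is correct and follows essentially the same approach as the paper: decompose the increment into drift and diffusion, bound the diffusion via BDG and boundedness of $\sigma$, and bound the drift via H\"older in time together with \eqref{con.locGK} to recover the $(1/n)^{1/2}$ rate. You are in fact slightly more explicit than the paper in noting that $k_n(r)=k_n(t)$ on $[k_n(t),t]$ and in spelling out how \eqref{con.locGK} converts $(t-k)^{1-1/q}\|b^n\|_{\LL^q_\infty([k,t])}$ into a bound of order $(1/n)^{1/2}$.
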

\begin{proof}
	We have
	\begin{align*}
		X^n_t-X^n_{k_n(t)}=\int_{k_n(t)}^t b^n(r,X^n_{k_n(t)})dr+\int_{k_n(t)}^t \sigma(r,X^n_{k_n(t)})dB_r.
	\end{align*}
	Using BDG inequality and H\"older, we have
	\begin{align*}
		\|X^n_t-X^n_{k_n(t)}\|_{L_m(\Omega)}\les (t-k_n(t))^{1-\frac1q}\|b^n\|_{\LL^q_\infty([k_n(t),t])}+(t-k_n(t))^{1/2}.
	\end{align*}
	Using $t-k_n(t)\le 1/n$ and \cref{con.B},
	\begin{align*}
		(t-k_n(t))^{1-\frac1q}\|b^n\|_{\LL^q_\infty([k_n(t),t])}\les (1/n)^{\frac12}.
	\end{align*}
	Combining these estimates, we obtain the result.
\end{proof}

Recall that $U$ is the solution to the equation \eqref{eqn.uKol}.
Under \crefrange{con.A}{con.B}, there exists $\lambda_0>0$ such that for every $\lambda\ge \lambda_0$, $\nabla U$ is bounded H\"older continuous on $[0,1]\times\Rd$,
\begin{align}\label{tmp.2709regu}
	\sup_n\left[\|\p_tU\|_{\mL^q_{ p}([0,1])}+\|U\|_{\mathbb{L}_{2, p}^{q}([0,1])}\right]<\infty
	\tand
	\sup_n\sup_{(t,x)\in[0,1]\times\Rd}|\nabla U(t,x)|=o_\lambda(1),
\end{align}
where $o_\lambda(1)$ denotes any constant such that $\lim_{\lambda\to\infty}o_\lambda(1)=0$.
Indeed, existence and uniqueness and the first estimate in \eqref{tmp.2709regu} follow from \cref{pde}. 
H\"older continuity of $\nabla U$  and the second estimate in \eqref{tmp.2709regu} follow by applying \cite[Lemma 10.2]{MR2117951} and the estimate in \cref{pde}.
Let $\mathcal{M}$ be the Hardy--Littlewood maximal operator defined as  
\begin{align*}
	\cmm f(x):=\sup_{0<r<\infty}\frac{1}{|\cB_r|}\int_{\cB_r}f(x+y)dy, \quad \cB_r:=\{x\in\mathbb{R}^d:|x|<r\},\quad r>0.
\end{align*}
It is well-known that $\cmm$ is bounded on $L_p(\Rd)$. Thus we have
\begin{align}
\label{max}
\Vert\mathcal{M} f\Vert_{\mathbb{L}_p^q([0,1])}\lesssim \Vert f\Vert_{\mathbb{L}_p^q([0,1])}.    
\end{align}
Define
\begin{multline}\label{def.Ant}
	A_t^n:= t+\int_0^t \left[ \cmm|\nabla^2 U|(s,X_s)+\cmm|\nabla^2 U|(s,X^n_s)\right]^2 \dif s
	\\+\int_0^t \left[ \cmm|\nabla \sigma|(s,X_s)+\cmm|\nabla\sigma|(s,X^n_s)\right]^2\dif s.
\end{multline}
\begin{proposition}\label{prop.weightedmoment}
	For every ${\bar p}\in(1,p)$, there exists a finite positive constant $c_{\bar p}$ such that
	\begin{align*}
		\left\|e^{-c_{\bar p}|A^n_1|^{\max({\bar p}/2,1)}}\sup_{t\in[0,1]}|X_t-X^n_t|\right\|_{L_{\bar p}(\Omega)}
		&\les\|x_0-x^n_0\|_{L_{\bar p}(\Omega)}+ \varpi_n({\bar p})
		\\&\quad+(1/n)^{\frac \alpha2}+(1/n)^{\frac12}\log(n).
	\end{align*}
	
\end{proposition}
\begin{proof}
	Applying It\^o's formula (\cite[Lemma 4.1]{XXZZ}) for $U(t, X_t)$, we obtain that
	\begin{multline}\label{eq-bX}
	\int_0^t b^n(r,X_r) \dif r = U(0,X_0)-U(t, X_t) + \lambda \int_0^t U(r, X_r) \dif r
	\\+\int_0^t\nabla U(r,X_r)[b(r,X_r)-b^n(r,X_r)]dr  + \int_0^t (\nabla U \cdot\sigma)(r, X_r) \dif B_r,
	\end{multline}
	and similarly,
	\begin{align}\label{eq-bXn}
	\int_0^t b^n(r,X^n_r) \dif r &= U(0,X^n_0)-U(t, X^n_t) + \lambda \int_0^t U(r, X^n_r) \dif r
	\nonumber\\&\quad+\int_0^t \nabla U(r, X_r^n) [b^n(r, X_{k_n(r)}^n)-b^n(r,X^n_r)]  \dif r
	\nonumber\\&\quad+\int_0^t\nabla^2 U(r,X_r)[a(r,X^n_{k_n(r)})-a(r,X^n_r)]dr
	\nonumber\\&\quad+\int_0^t\nabla U(r,X^n_r)\sigma(r,X^n_{k_n(r)})dB_r.
	\end{align}
	From equations \eqref{sde00} and \eqref{eqn.EMscheme}, we have
	\begin{align*}
		X_t-X^n_t
		&=x_0-x_0^n
		+\int_0^t[b^n(r,X_r)-b^n(r,X^n_r)]dr
		\\&\quad+\int_0^t[b(r,X_r)-b^n(r,X_r)]dr+\int_0^t[b^n(r,X^n_r)-b^n(r,X^n_{k_n(r)})]dr
		\\&\quad+\int_0^t[\sigma(r,X_r)-\sigma(r,X^n_r)]dB_r
		+\int_0^t[\sigma(r,X^n_r)-\sigma(r,X^n_{k_n(r)})]dB_r.
	\end{align*}
	We plug \eqref{eq-bX} and \eqref{eq-bXn} into the previous identity, raise to ${\bar p}$-th power to find that
	\begin{align*}
		\xi_t:=\sup_{s\in[0,t]}|X_s-X^n_s|^{\bar p}
		\les |x_0-x_0^n|^{\bar p}+V^0_t+\sum_{i=1}^3V^i_t+\sum_{i=1}^3I^i_t,
	\end{align*}
	where
	\begin{align*}
		&\begin{aligned}
			V^0_t&=|U(0,x_0)-U(0,x_0^n)|^{\bar p}
		+ \sup_{s\in[0,t]}|U(s,X_s)-U(s,X^n_s)|^{\bar p}
		\\&\quad+\lambda^{\bar p}\Big|\int_0^t|U(r,X_r)-U(r,X^n_r)|dr\Big|^{\bar p}
		\end{aligned}
		,\\&V^1_t=\sup_{s\in[0,t]}\Big|\int_0^s[I+\nabla U(r,X_r)][b(r,X_r)-b^n(r,X_r)]dr\Big|^{\bar p}
		,\\&V^2_t=\sup_{s\in[0,t]}\Big|\int_0^s[I+\nabla U(r,X^n_r)][b^n(r,X^n_r)-b^n(r,X^n_{k_n(r)})]dr\Big|^{\bar p}
		,\\&V^3_t=\sup_{s\in[0,t]}\Big|\int_0^s\nabla^2 U(r,X_r)[a(r,X^n_{k_n(r)})-a(r,X^n_r)]dr \Big|^{\bar p}
		,\\&I^1_t=\sup_{s\in[0,t]}\Big|\int_0^s[I+\nabla U(r,X^n_r)][\sigma(r,X_r)-\sigma(r,X^n_r)]dB_r\Big|^{\bar p}
		,\\&I^2_t=\sup_{s\in[0,t]}\Big|\int_0^s[I+\nabla U(r,X^n_r)][\sigma(r,X^n_r)-\sigma(r,X^n_{k_n(r)})]dB_r\Big|^{\bar p}
		,\\&I^3_t=\sup_{s\in[0,t]}\Big|\int_0^s[\nabla U(r,X_r)-\nabla U(r,X^n_r)]\cdot \sigma(r,X_r)dB_r \Big|^{\bar p}.
	\end{align*}
	Using \eqref{tmp.2709regu} and Cauchy--Schwarz inequality
	\begin{align*}
		V^0_t\les|x_0-x_0^n|^{\bar p}+o_\lambda(1)\sup_{s\in[0,t]}|X_s-X_s^n|^{\bar p}+\left(\int_0^t \xi_r^{2/{\bar p}} dr\right)^{\frac {\bar p}2}.
	\end{align*}

	To estimate $I^1,I^2,I^3$, we will utilize a special case of the pathwise Burkholder--Davis--Gundy (BDG) inequality of \cite[Theorem 5]{Pietro}. Namely, there exists a constant $C=C({\bar p},d)$ such that for any c\'adl\'ag martingale $\bar M$, there exists a local martingale $\widetilde M$ such that with probability one,
	\begin{align*}
		\sup_{s\in[0,t]}|\bar M_s|^{\bar p}\le C[\bar M]_t^{\frac{\bar p}{2}}+\widetilde M_t, \quad \forall t.
	\end{align*}
	In the above, $[\bar M]$ is the quadratic variation of $\bar M$.
	We now estimate $I^1$.
	By property of maximal function (see \cite[Proposition C.1]{hinz2020variability}) and continuity of $\sigma$, we have for every $r\in[0,1]$ and every $x,y\in\mR^d$
	\begin{align*}
		|\sigma(r,x)- \sigma(r,y)|\les|x-y|(\cmm|\nabla\sigma(r,x)|+\cmm|\nabla\sigma(r,y)|).
	\end{align*}
	Using this, \eqref{tmp.2709regu} and the pathwise BDG inequality, we can find a local martingale $M^1$ such that
	\begin{align*}
		I^1_t&\les \left(\int_0^t|I+\nabla U(r,X^n_r)|^2|X_r-X^n_r|^2\left(\cmm|\nabla \sigma|(r,X_r)+\cmm|\nabla \sigma|(r,X^n_r)\right)^2dr\right)^{\frac {\bar p}2}+M^1_t
		\\&\les\left(\int_0^t \xi_r^{2/{\bar p}}dA^n_r \right)^{\frac {\bar p}2}+M^1_t.
	\end{align*}
	Similarly,
	\begin{align*}
		I^3_t\les \left(\int_0^t \xi_r^{2/{\bar p}}dA^n_r \right)^{\frac {\bar p}2}+M^3_t
	\end{align*}
	for some local martingale $M^3$.
	Using \cref{lem.Xtkt}, \eqref{tmp.2709regu}, H\"older continuity of $\sigma$ and the pathwise BDG inequality, we have
	\begin{align*}
	 	I^2_t\les\left(\int_0^t|X^n_r-X^n_{k_n(r)}|^{2  \alpha}dr\right)^{\frac {\bar p}2}+M^2_t\les (1/n)^{{\bar p} \frac \alpha2}+M^2_t.
	\end{align*}
	It follows that
	\begin{align*}
		\xi_t\les o_\lambda(1)\xi_t+ \left(\int_0^t \xi^{2/{\bar p}}dA^n\right)^{\frac {\bar p}2}+V_t+M_t
	\end{align*}
	where $V=|x_0-x^n_0|^{\bar p}+V^1+V^2+V^3$ and $M=M^1+M^2+M^3$.
	By choosing $\lambda$ sufficiently large, this deduces to
	\begin{align*}
	 	\xi_t\les  \left(\int_0^t \xi^{2/{\bar p}}dA^n\right)^{\frac {\bar p}2}+V_t+M_t
	\end{align*}
	Applying stochastic Gr\"onwall lemma, \cref{lem.SGr\"onwall}, we have
	\begin{align}\label{tmp.2709eA}
		\E e^{-c_{\bar p} |A^n_1|^{\max({\bar p}/2,1)}} \xi_1\le\E V_1
	\end{align}
	for some finite positive constant $c_{\bar p}$.
	In view of \cref{def.rate}, it is evident that
	\begin{align*}
		\E V^1_1\les \varpi_n({\bar p})^{\bar p}.
	\end{align*}
	Using \cref{prop.gf}, \eqref{tmp.2709regu} and \cref{con.B}, we have
	\begin{align*}
		\E V^2_1
		\les \left[(1/n)^{1-\frac1q}\beta_n(b^n)+(1/n)^{\frac \alpha2}+(1/n)^{\frac12}\log(n)\right]^{\bar p}
		\les \left[(1/n)^{\frac \alpha2}+(1/n)^{\frac12}\log(n)\right]^{\bar p}.
	\end{align*}
	Using \cref{con.A} and Cauchy--Schwarz inequality, we have
	\begin{align*}
		\E V^3_1&\les
		\E\Big|\int_0^1|\nabla^2 U(r,X_r)||X^n_r-X^n_{k_n(r)}|^\alpha dr \Big|^{\bar p}
		\\&\le \E\left(\int_0^1|\nabla^2 U(r,X_r)|^2dr\right)^{\frac {\bar p}2}\left(\int_0^1|X^n_r-X^n_{k_n(r)}|^{2 \alpha} dr \right)^{\frac {\bar p}2}
		\\&\le \left[\E\left(\int_0^1|\nabla^2 U(r,X_r)|^2dr\right)^{{\bar p}}\right]^{1/2} \left[\E\left(\int_0^1|X^n_r-X^n_{k_n(r)}|^{2 \alpha} dr \right)^{{\bar p}}\right]^{1/2}.
	\end{align*}
	In view of  \eqref{tmp.2709regu}, \cref{thm.prealpha}(i) and \cref{lem.Xtkt}
	\begin{align*}
		\E V^3_1\les (1/n)^{ {\bar p}\frac \alpha2}.
	\end{align*}
	The previous estimates for $\E V^i_1$'s and \eqref{tmp.2709eA} yield the result.
\end{proof}
\begin{lemma}\label{lem.expA}
	Let $\rho\in(0,\frac{p\wedge p_0} d)$ and $\kappa>0$ be some fixed constants. Then $\sup_n\E e^{\kappa |A^n_1|^\rho}$ is finite.
\end{lemma}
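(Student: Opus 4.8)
The plan is to reduce the claim, via Hölder's inequality, to four uniform exponential‑moment bounds for functionals of the driftless processes $\bar X$ and $\bar X^n$, and then to run the quantitative Khasminskii estimate with a $\lambda$‑scaling to convert it into a super‑exponential tail bound. First, since $(a+b)^2\le 2a^2+2b^2$, the definition \eqref{def.Ant} gives
\[
  A_1^n\ \le\ 1+2\sum_{i=1}^{4}\int_0^1\beta_i(r,Y^i_r)\,dr,
\]
where $\beta_1=\beta_2=(\cmm|\nabla^2 U|)^2$, $\beta_3=\beta_4=(\cmm|\nabla\sigma|)^2$, $Y^1=Y^3=X$ and $Y^2=Y^4=X^n$. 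By \eqref{tmp.2709regu} and \eqref{max} one has $\sup_n\|\beta_1\|_{\LL^{q/2}_{p/2}([0,1])}<\infty$, and by \cref{con.ASobolev} also $\|\beta_3\|_{\LL^{q_0/2}_{p_0/2}([0,1])}<\infty$; moreover $\frac{d}{p/2}+\frac{2}{q/2}<2$ is \eqref{pq} and $\frac{d}{p_0/2}+\frac{2}{q_0/2}<2$ is implied by \cref{con.ASobolev}. Using $(x_0+\dots+x_4)^\rho\le 5^{\max(\rho-1,0)}(x_0^\rho+\dots+x_4^\rho)$ and the generalized Hölder inequality, it suffices to show that for every $\kappa'>0$ and each $i$,
\[
  \sup_n\ \E\exp\!\Big(\kappa'\Big(\int_0^1\beta_i(r,Y^i_r)\,dr\Big)^{\rho}\Big)<\infty .
\]

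To remove the drift I would use Girsanov's theorem. For $i\in\{1,3\}$ (the functionals along $X$), argue as in the proof of \cref{thm.prealpha}: let $\bar X$ solve \eqref{eqn.SDEnob} and let $\bar\mP=\rho\,\mP$ be the corresponding change of measure, so that $\E\rho^{a'}<\infty$ for every $a'>1$ (by \cite[Lemma 4.1]{XXZZ} and Novikov's criterion); by Hölder it is then enough to bound $\E\exp(a\kappa'(\int_0^1\beta_i(r,\bar X_r)\,dr)^\rho)$ for the conjugate exponent $a$. For $i\in\{2,4\}$ (the functionals along $X^n$), argue as in the proof of \cref{prop.gf}: let $\bar X^n$ solve \eqref{eqn.EMnob} and let $\rho^n$ be the associated Girsanov density; the proof of \cref{prop.gf} shows, via \cref{lem.Khasminski} and \cref{con.B}, that $\sup_n\E(\rho^n)^{a'}<\infty$ for every $a'>1$, so it is enough to bound $\E\exp(a\kappa'(\int_0^1\beta_i(r,\bar X^n_r)\,dr)^\rho)$ uniformly in $n$. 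In both cases the underlying process is driftless, and the required estimate follows from the quantitative Khasminskii estimate: in the discrete case, applying \eqref{est.expnok} of \cref{Girsanov} with $\lambda\beta_i$ in place of $f$ gives, for every $\lambda>0$,
\[
  \E\exp\!\Big(\lambda\int_0^1\beta_i(r,\bar X^n_r)\,dr\Big)\ \le\ 2\exp\!\big(N\lambda^{\mu_i}\big),\qquad \mu_i:=\Big(1-\tfrac{d}{2p_i}\Big)^{-1},
\]
with $N$ independent of $n$ and $p_i\in\{p/2,p_0/2\}$ the spatial integrability exponent of $\beta_i$; the continuum bound for $\bar X$ is identical, obtained exactly as in the proof of \cref{prop.Xnob}(i) but invoking \cref{lem.Khasminski}(iii) (with the control function defined there) in place of the moment estimate, using \eqref{cQ} and \cref{lem.LrhobarX} for the one‑step bound.

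From the last display, Chernoff's inequality gives $\mP(\int_0^1\beta_i(r,Y_r)\,dr>a)\le 2\exp(N\lambda^{\mu_i}-\lambda a)$ for all $\lambda>0$; optimizing in $\lambda$ yields $\mP(\int_0^1\beta_i(r,Y_r)\,dr>a)\le C\exp(-c\,a^{\,\mu_i/(\mu_i-1)})$ with $c,C$ uniform in $n$, and a direct computation gives $\mu_i/(\mu_i-1)=2p_i/d\in\{p/d,\ p_0/d\}$. Since $\rho<\frac{p\wedge p_0}{d}\le 2p_i/d$, integrating this tail bound against $\kappa'\rho\,a^{\rho-1}e^{\kappa' a^\rho}$ shows the desired exponential moment is finite, uniformly in $n$; Hölder's inequality over $i=1,\dots,4$ then completes the proof. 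The main obstacle is keeping every constant $n$‑independent: this requires the uniform moment bound for the Girsanov densities $\rho^n$, where \cref{con.B} enters, together with the fact that the analytic estimate \cref{thm.QLp12} — hence \eqref{est.expnok} — holds with \emph{no} extra restriction linking $\alpha$ and $d/p$, which is what makes the Khasminskii constants for $\bar X^n$ uniform in $n$. Note also that, since $\rho$ may exceed $1$, one genuinely needs the $\lambda$‑scaled exponential moment and the tail optimization above rather than a crude Young inequality.
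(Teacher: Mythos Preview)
Your argument is correct and follows the same skeleton as the paper's proof: a quantitative Khasminskii estimate yields $\E e^{\lambda A}\lesssim e^{c\lambda^{a}}$ with $1/a=1-d/(p\wedge p_0)$, then Chernoff optimization gives the stretched-exponential tail with exponent $a'=(p\wedge p_0)/d$, and layer cake integration finishes. The only organisational difference is that the paper bounds the conditional increment $\E_s\delta A^n_{s,t}$ directly (for the full process, citing \cref{prop.momentg}, \eqref{tmp.2809Esh} and Girsanov) and feeds it into \cref{lem.Khasminski}/\cref{rmk.khas12}, whereas you first split $A^n_1$ into its four summands, apply Girsanov\,+\,H\"older to pass to the driftless processes $\bar X,\bar X^n$, and then invoke the ready-made bound \eqref{est.expnok}; this makes the Girsanov step more explicit but is otherwise the same argument.
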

\begin{proof}
	We observe that
	\begin{align*}
		\E_s \delta A^n_{s,t}\les (t-s)+ (t-s)^{1-\frac dp-\frac2q}\|\cmm|\nabla^2 u|\|_{\LL^q_p([s,t])}^2+(t-s)^{1-\frac d{p_0}-\frac 2{q_0}}\|\cmm|\nabla \sigma|\|_{\LL^{q_0}_{p_0}([s,t])}^2.
	\end{align*}
	Indeed, the estimates for functionals of $X^n$ follow from \cref{prop.momentg} and Girsanov theorem.
	The estimates for the functionals of $X$ follows from \eqref{tmp.2809Esh}, \cref{lem.Khasminski} and Girsanov theorem; or alternatively can be derived from those of $X^n$ and the weak convergence of $X^n$ to $X$. 
	In view of \cref{rmk.khas12}, this implies that for any $\lambda\geq0$
	\begin{align*}
		\E e^{\lambda A_1^n}\les e^{c\lambda^a}, \quad \frac1a=1-\frac d{p\wedge p_0},
	\end{align*}
	where $c$ is some universal positive constant.
	For simplicity, we write $A$ for $A^n_1$ below. For every $x>0$, using Chebyshev inequality, we have
	\begin{align*}
		\mP(A>x)=\mP(e^{\lambda A}>e^{\lambda x})\le e^{-\lambda x}\E e^{\lambda A}
		\les e^{-\lambda x+c \lambda^a}.
	\end{align*}
	One can optimize in $\lambda$ to obtain that for every $x$ bounded away from $0$,
	\begin{align*}
		\mP(A>x)\les e^{-c x^{a'}}, \quad\frac1a+\frac1{a'}=1,
	\end{align*}
	where $c$ is another positive constant. In view of layer cake representation
	\begin{align*}
		\E e^{\kappa A^\rho}=\kappa \rho\int_0^\infty e^{\kappa x^\rho}x^{\rho-1}\mP(A>x)dx,
	\end{align*}
	we see that $\E e^{\kappa A^\rho}$ is finite if $\rho<a'$, completing the proof because $a'=(p\wedge p_0)/d$.
\end{proof}

\begin{proof}[\bf Proof of \cref{thm.main}]
	For $\bar p\in(0,2\frac{p\wedge p_0}d)$, we obtain from \cref{lem.expA} that the quantity $\sup_n\E e^{\kappa |A^n_1|^{\max(\gamma/2,1)}}$ is finite for any constant positive $\kappa$. From here, we obtain \eqref{maine} from \cref{prop.weightedmoment} and H\"older inequality.
\end{proof}
\begin{proof}[\bf Proof of \cref{thm.alpha}]
	We put $g=\nabla U$. From \cref{pde}, we have
	\begin{align*}
		\sup_n (\|g\|_{\LL^{q_2}_{\nu,p_2}([0,1])}+ \|g\|_{\LL^\infty_\infty([0,1])}+\|g\|_{\LL^q_{1,p}([0,1])})<\infty
	\end{align*}
	for all $p_2\in[p,\infty),q_2\in[q,\infty)$ and $\nu\in[0,1)$ with 
	\begin{align}\label{con.p1}
		\frac dp+\frac2q+\nu-1<\frac d{p_2}+\frac2{q_2}.
	\end{align}

	\textit{Part (i).}
	Let $p_1,q_1$ be as in \cref{thm.alpha}(i).
	From \cref{thm.prealpha}(i), we have
	\begin{align*}
		\varpi_n(m)\les \|(1+g)(b-b^n)\|_{\LL^{q_1}_{p_1}}\les (1+\|g\|_{\LL^\infty_\infty})\|b-b^n\|_{\LL^{q_1}_{p_1}}
	\end{align*}
	which shows \eqref{est.ap}.
	
	\textit{Part (ii).}
	Define $q_3$ by $\frac1{q_3}=\frac1{q_2}+\frac1q$.
	For each $\nu\in[0,1]$ and $p_3\in(1,\infty)$  satisfying
	\begin{align}\label{con.p3a}
		\frac1{p}\le\frac1{p_3}\le\frac1p+\frac1{p_2}<\frac1{p_3}+\frac \nu d,
	\end{align}
	an application of \cref{lem.multiplication}(ii) and H\"older inequality shows that the pointwise multiplication is a continuous bilinear map
	\begin{align*}
		\LL^{q}_{-\nu,p}([0,1])\times\LL^{q_2}_{\nu,p_2}([0,1])\to \LL^{q_3}_{-\nu,p_3}([0,1]).
	\end{align*}
	If $p_3$ can be chosen such that
	\begin{align}\label{con,p3b}
		\frac d{p_3}+\frac2{q_2}+\frac2q<2- \nu, 	
	\end{align}
	then \cref{thm.prealpha}(ii) can be applied, which gives for every $\bar p\in(0,p_3)$,
	\begin{align*}
		\|\sup_{t\in[0,t]}|\int_0^tg(b-b^n)(r,X_r)dr|\|_{L_{\bar p}(\Omega)}\lesssim \|g(b-b^n)\|_{\LL^{q_3}_{-\nu,p_3}}\les \|b-b^n\|_{\LL^{q}_{-\nu,p}},
	\end{align*}
	To obtain the last step, we apply the multiplication result above to see that
	\begin{align*}
		\|g(b-b^n)\|_{\LL^{q_3}_{-\nu,p_3}}\lesssim \|g\|_{\LL^{q_2}_{\nu,p_2}} \|b-b^n\|_{\LL^{q}_{-\nu,p}}\lesssim \|b-b^n\|_{\LL^{q}_{-\nu,p}}.
	\end{align*}
	On the other hand, applying \cref{thm.prealpha}(ii), we have
	\begin{align*}
		\|\sup_{t\in[0,1]}|\int_0^t(b-b^n)(r,X_r)dr|\|_{L_{\bar p}(\Omega)}\les \|b-b^n\|_{\LL^q_{-\nu,p}}.
	\end{align*}
	These estimates verify \eqref{est.anu}.
	
	Next, we verify that it is possible to choose $p_2,q_2,p_3$ satisfying all the above conditions.
	Given $p_2,q_2,p,q,\nu$, there exists $p_3\in(1,\infty)$ satisfying \eqref{con.p3a} and \eqref{con,p3b} iff 
	\begin{equation*}
		\left\{
		\begin{aligned}
			&\frac dp<2- \nu-\frac2{q_2}-\frac2q
			\\& \frac dp+\frac d{p_2}- \nu<2- \nu-\frac2{q_2}-\frac2q
		\end{aligned}
		\right.
	\end{equation*}
	which is deduced to 
	\begin{equation}\label{con.p2b}
		\left\{
		\begin{aligned}
			&\frac2{q_2}<2- \nu-\frac dp-\frac2q
			\\& \frac d{p_2}+\frac2{q_2}<2-\frac dp-\frac2q.
		\end{aligned}
		\right.
	\end{equation}
	Given $p,q,\nu$, the existence $p_2\in[p,\infty),q_2\in[q,\infty)$ satisfying \eqref{con.p1} and \eqref{con.p2b} is equivalent to the problem (P): given real numbers $a>0,b>0$ and $c_1<c_2$, find $x\in(0,a), y\in(0,b) $ such that $c_1<x+y<c_2$. Here, we have put $x=d/p_2$, $y=2/q_2$, $a=d/p$, $b=\min(2/q,2- \nu- \zeta)$, $c_1=\zeta+\nu-1$, $c_2=2-\zeta$ and $\zeta=d/p+2/q$.
	With some plotting aid, it is seen that this problem has a solution $(x,y)$ iff $c_1<a+b$ and $c_2>0$. This deduces to the condition \eqref{con.nu}.

	Since \eqref{est.anu} is valid for all $\bar p\in(0,p_3)$, it remains to identify the largest possible value for $p_3$, denoted by $p_3^*$. From \eqref{con.p3a} and \eqref{con,p3b}, we see that
	\begin{align*}
		\frac 1{p_3^*}=\max\left(\frac1p,\frac1p+\frac1{p_2}-\frac \nu d\right).
	\end{align*}
	We observe that the problem (P) with the additional constraint $d/p_2\le \nu$ has a solution $(x,y)$ iff \eqref{con.nu} holds. In other words, we can choose $p_2$ so that $d/p_2\le \nu$ and hence $p_3^*=p$. This shows that \eqref{est.anu} holds for every $\bar p\in(0,p)$.

	\textit{Part (iii).}
	When $\nu=1$, we ought to take $p_2=p$, $q_2=2$. We can also choose $p_3=p$ and $q_3=q/2$. Condition \eqref{con.p3a} is trivially satisfied while condition \eqref{con,p3b} is verified by  \eqref{con.nu1}.
	Define the control $w_1$ by
	\begin{align*}
		w_1(s,t)=\left(\|g\|_{\LL^q_{-1,p}([s,t])}^{q/2}+\|g\|_{\LL^q_{\infty}([s,t])}^{q/2}\right)w_0(s,t)^{1/2}
	\end{align*}
	Then by the multiplication result above, H\"older inequality and \eqref{con.bbn-1}, we have
	\begin{gather*}
		\|g(b-b^n)\|_{\LL^{q/2}_{-1,p}([s,t])}\les \|g\|_{\LL^q_{-1,p}([s,t])}\|b-b^n\|_{\LL^q_{-1,p}([s,t])}
		\les \Gamma w_1(s,t)^{\frac2q},
		\\\|g(b-b^n)\|_{\LL^{q/2}_{p}([s,t])}\les \|g\|_{\LL^q_{\infty}([s,t])}\|b-b^n\|_{\LL^q_{p}([s,t])}\les w_1(s,t)^{\frac 2q}.
	\end{gather*}
	We then apply \cref{thm.prealpha}(iii) to get 
	\begin{align*}
		\|\sup_{t\in[0,t]}|\int_0^tg(b-b^n)(r,X_r)dr|\|_{L_{\bar p}(\Omega)}
		&\les \Gamma(1+|\log \Gamma|)w_1(0,1)^{\frac 2q}
		\\&\les \Gamma(1+|\log \Gamma|)w_0(0,1)^{\frac 1q}.
	\end{align*}
	On the other hand, applying \cref{thm.prealpha}(iii) under condition \eqref{con.bbn-1} yields
	\begin{align*}
		\|\sup_{t\in[0,1]}|\int_0^t (b-b^n)(r,X_r)dr|\|_{L_{\bar p}(\Omega)}\les \Gamma(1+|\log \Gamma|)w_0(s,t)^{\frac1q}.
	\end{align*}
	Combining the previous two estimates, we obtain \eqref{est.a-1}.
\end{proof}

\section{Application: stochastic transport equations}
\label{sec.application}

Let $(W_t)$ be a standard $d$-dimensional Brownian motion on  a filtered probability space $(\Omega, \mathcal{F}, (\mathcal{F}_t)_{t\in[0,1]},\mP)$ and let $b:[0,1]\times\Rd\to\Rd$ be a Borel measurable function satisfying \eqref{pq}. In this section, we propose a numerical scheme for the following (forward) stochastic linear transport equation
\begin{align}
    \label{STE}
    \partial_tu+b\cdot \nabla u+\nabla u\circ d W_t=0,
\quad u(0,x)=\rho(x),
\end{align}
where   $\rho\in \cap_{r\geq1}L_{1,r}(\mR^d)$ and $\nabla u\circ d W_t$  is interpreted in Stratonovich sense. As in \cite{FF13}, we say $u$ is a {\em weakly differentiable solution} to \eqref{STE} if 
\begin{itemize}
    \item $u:\Omega\times[0,1]\times\mR^d\rightarrow\mR$ is measurable, $\int_{\mR^d}u(t,x)\psi(x)dx$ is progressively measurable for each $\psi\in C_0^\infty(\mR^d)$;
    \item $\mP(u(t,\cdot)\in\cap_{r\geq1} L_{1,r}^{loc}(\mR^d))=1$ for $t\in[0,1]$ and both $u$ and $\nabla u$ are in $C([0,1];\cap_{r\geq1}L^r(\mR^d\times\Omega))$; 
    \item for any $\psi\in C_0^\infty(\mR^d)$ and $t\in[0,1]$ with probability one the following holds
    \begin{align*}
       \int_{\mR^d}u(t,x)\psi(x)dx+& \int_0^t\int_{\mR^d}b(s,x)\cdot \nabla u(s,x)\psi(x)dxds\\ &=\int_{\mR^d}\rho(x)\psi(x)dx+\sum_{i=1}^d\int_0^t\Big(\int_{\mR^d}u(s,x)\partial_{x_i}\psi(x)dx\Big)d W_s^i
      \\&\quad\quad\quad\quad\quad\quad\quad\quad\quad+\frac{1}{2}\int_0^t\int_{\mR^d}u(s,x)\Delta\psi(x)dxds.
    \end{align*}
\end{itemize}
It is known from \cite[Theorems 10, 11]{FF13} that a weakly differentiable solution $u$ to \eqref{STE} exists uniquely and has the representation  $u(\tau,x)=\rho(\phi_0^\tau(x))$,   where $\phi_0^\tau(x)$ is the inverse of the stochastic flow of homeomorphisms generated by the solution $(X_\tau(x))_{\tau\in[0,1]}$ to the SDE 
\begin{align}\label{eq:SDE-ad}
    dX_\tau(x)=b(\tau,X_\tau(x))d\tau+d W_\tau,\quad \tau\in[0,1],\quad X_0(x)=x.
\end{align}
For each fixed $\tau\in[0,1]$, consider the  backward-in-time SDE
\begin{align}
     \label{backward-sde}
   	X_{\tau,s}(x)&=x-\int_s^\tau b(r,X_{\tau,r}(x))dr+W_s-W_\tau,\quad 0\leq s\leq \tau, \quad X_{\tau,\tau}(x)=x.
\end{align}
The inverse flow $\phi^\tau_0(x)$ is directly related to the solution to the previous SDE through the relation $\phi_0^{\tau}(x)=X_{\tau,0}(x)$. Indeed, when \(b\) is a smooth bounded function with bounded derivatives, this relation is classical, see \cite[Theorem 3.7.1]{MR3929750}. When $b$ belongs to $\LL_p^q$, one can approximate it by smooth functions. Because both $\phi_0^{\tau}(x), X_{\tau,0}(x)$ are stable under this approximation (see \cite[Lemma 3]{FF13} and \cite[Theorem 1.2]{MR4546633} respectively), the relation holds true in this case. Alternatively, another argument for this fact is recently provided in \cite{ALL} utilizing \emph{path-by-path uniqueness} of \eqref{eq:SDE-ad}.

To devise a numerical scheme for \eqref{STE}, it convenes to introduce
$ X_{s}^\tau(x):=X_{\tau,\tau-s}(x)$. By a change of variables, we find that
\begin{align}\label{change-variable}
    X_s^\tau(x)=x-\int_0^s b^\tau(r, X^\tau_r(x))dr+ W^\tau_s, \quad 0\leq s\leq \tau,
\end{align}
where $b^\tau(r,x):=b(\tau-r,x)$ and $W^\tau_r:=W_\tau-W_{\tau-r}$ for $r\in[0,\tau]$. Observe that $(W_t^\tau)_{t\in[0,\tau]}$ is a $(\mathcal{F}_t^\tau)$-Brownian motion with  $\mathcal{F}_t^\tau:=\sigma(W_{\tau-r_1}-W_{\tau-r_2},0\leq r_1\leq r_2\leq t)$ for $t\in[0,\tau]$.
Hence, we have the representation $u(\tau,x)=\rho(X^\tau_\tau(x))$. This naturally suggests the numerical scheme 
\begin{align*}
	u^n(\tau,x)=\rho(X^{\tau,n}_\tau(x)), 
\end{align*}
  where for each $\tau\in(0,1]$, $ (X_s^{\tau,n})_{s\in[0,\tau]}$ is the tamed Euler--Maruyama approximation for \eqref{change-variable}, namely
\begin{align}
    \label{BE-EM}
  X_{s }^{\tau, n}(x)&=x-\int_0^s b^{\tau, n}(r,X_{k_n(r)}^{\tau,n}(x))dr+ W^\tau_s,\quad 0\leq s\leq \tau.
\end{align}
Here, $b^{\tau, n}(r,x):=b^n(\tau-r,x)$ for $r\in[0,\tau]$ and $b^n$ is an approximation for $b$ satisfying  \cref{con.B}.

\begin{theorem}\label{thm.STE}
 Suppose that \cref{con.B} holds.
 Let $\nu\in[0,1)$ satisfy \eqref{con.nu}  and $p_1,q_1\in[1,\infty]$ satisfy $\frac d{p_1}+\frac 2{q_1}<2$. Then  for any  $l \in(1,p\wedge\frac{2p}{d})$, any $\bar r\in(1,\infty)$ satisfying  $\frac1{\bar r}<\frac1l-\frac1p(1\vee\frac d2)$, 
 we have
   \begin{align}
       \label{TE-app}
     \sup_{(\tau,x)\in[0,1]\times\mR^d} &\tau^{\frac{d}{2\bar r}}\Vert  u^n(\tau,x)-u(\tau,x)\Vert_{L_{l}(\Omega)}\nonumber\\&\leq N\Vert \nabla \rho\Vert_{L_{\bar r}(\mR^d)}((1/n)^{\frac12}\log n+\min(\Vert b-b^n\Vert_{\LL_{p_1}^{q_1}([0,1])},\Vert b-b^n\Vert_{\LL_{-\nu,p}^q([0,1])})),
   \end{align}
   where  $N$ depends on $K_4,p,d,l$, $p_1$, $q_1$ and $\bar r$.
\end{theorem}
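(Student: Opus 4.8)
The plan is to exploit the Feynman--Kac type representations $u(\tau,x)=\rho(X^\tau_\tau(x))$ and $u^n(\tau,x)=\rho(X^{\tau,n}_\tau(x))$, reducing \eqref{TE-app} to a stability estimate for $\rho(X^{\tau,n}_\tau(x))-\rho(X^\tau_\tau(x))$. Since $\sigma\equiv I$, \cref{con.A} holds trivially (with $\alpha=1$, $K_2=0$, $q_0=\infty$ and $p_0$ arbitrarily large, $K_3=0$), so all of \cref{thm.main,thm.alpha,pde,thm.QLp12,Girsanov,thm.prealpha} apply verbatim to \eqref{change-variable}--\eqref{BE-EM}, which is the SDE \eqref{sde00} and the scheme \eqref{eqn.EMscheme} with deterministic initial datum $x$, drift $-b^\tau$, approximating drift $-b^{\tau,n}$, and driving Brownian motion $W^\tau$ on the filtration $(\cff^\tau_t)$. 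Because $b^\tau(r,\cdot)=b(\tau-r,\cdot)$, the substitution $r\mapsto\tau-r$ gives $\|b^\tau-b^{\tau,n}\|_{\LL^{q_1}_{p_1}([0,\tau])}\le\|b-b^n\|_{\LL^{q_1}_{p_1}([0,1])}$ and, the Bessel-potential norm being purely spatial, $\|b^\tau-b^{\tau,n}\|_{\LL^q_{-\nu,p}([0,\tau])}\le\|b-b^n\|_{\LL^q_{-\nu,p}([0,1])}$; moreover \cref{con.B} for $b^{\tau,n}$ follows from that of $b^n$. As the time interval $[0,\tau]$ has length $\le 1$, the constants obtained below are uniform in $\tau\in(0,1]$.

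First I would record two ingredients. \emph{(a) Error of the characteristics.} By \cref{thm.main} (with $x_0=x_0^n=x$, so the initial term vanishes and, since $\alpha=1$, $(1/n)^{\alpha/2}\le(1/n)^{1/2}\log n$), \cref{thm.alpha}(i) (using $\tfrac d{p_1}+\tfrac2{q_1}<2$) and \cref{thm.alpha}(ii) (using $q_0=\infty$, $\tfrac1p+\tfrac1{p_0}<1$ and $\nu$ as in \eqref{con.nu}), for every $s<p\wedge\tfrac{2p}{d}$ one has, uniformly in $\tau\in(0,1]$ and $x\in\Rd$,
\begin{align*}
\big\|X^{\tau,n}_\tau(x)-X^\tau_\tau(x)\big\|_{L_s(\Omega)}\le N\Big[(1/n)^{\frac12}\log n+\min\big(\|b-b^n\|_{\LL^{q_1}_{p_1}([0,1])},\ \|b-b^n\|_{\LL^q_{-\nu,p}([0,1])}\big)\Big].
\end{align*}
\emph{(b) Moments of composed functionals.} For any $g\in L_{\bar r}(\Rd)$ and any $m\in[1,\bar r)$,
\begin{align*}
\big\|g(X^{\tau,n}_\tau(x))\big\|_{L_m(\Omega)}+\big\|g(X^\tau_\tau(x))\big\|_{L_m(\Omega)}\le N\,\tau^{-\frac d{2\bar r}}\|g\|_{L_{\bar r}(\Rd)},
\end{align*}
uniformly in $n,\tau,x$. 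For the discrete term this follows from Girsanov's theorem: removing the drift replaces $X^{\tau,n}$ by the driftless Euler scheme $\bar X^{\tau,n}$, and for $\mu>1$ with $m\mu\le\bar r$ one writes $\E|g(X^{\tau,n}_\tau(x))|^m=\E[\varrho\,|g(\bar X^{\tau,n}_\tau(x))|^m]$ with $\varrho$ the Girsanov density, then applies H\"older's inequality, the bound $\E|g(\bar X^{\tau,n}_\tau(x))|^{m\mu}=Q^n_{0,\tau}[|g|^{m\mu}](x)\le\|Q^n_{0,\tau}[|g|^{m\mu}]\|_{L_\infty(\Rd)}\les\tau^{-\frac{dm\mu}{2\bar r}}\|g\|_{L_{\bar r}}^{m\mu}$ from \cref{thm.QLp12} (taking $L_{\bar r/(m\mu)}\to L_\infty$), and the uniform-in-$n$ finiteness of $\E\varrho^{\mu'}$ which, exactly as in the proof of \cref{prop.gf}, reduces via Cauchy--Schwarz to $\sup_n\E\exp\big(c\int_0^\tau|b^{\tau,n}(r,\bar X^{\tau,n}_{k_n(r)})|^2\dif r\big)<\infty$; the latter is \eqref{est.expk} of \cref{Girsanov}, whose hypothesis \eqref{con.fGirs} follows from \eqref{con.locGK}. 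The continuous analogue uses instead \cref{lem.LrhobarX} and the Novikov bound for the drift $b^\tau$, as in the proof of \cref{thm.prealpha}.

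Finally I would combine these. Since $\rho\in\cap_{r\ge1}L_{1,r}(\Rd)$ and the laws of $X^{\tau,n}_\tau(x)$ and $X^\tau_\tau(x)$ are absolutely continuous, the Hardy--Littlewood maximal inequality (as used for $\sigma$ in the proof of \cref{prop.weightedmoment}) gives, almost surely,
\begin{align*}
\big|u^n(\tau,x)-u(\tau,x)\big|\le\big|X^{\tau,n}_\tau(x)-X^\tau_\tau(x)\big|\Big(\cmm|\nabla\rho|(X^{\tau,n}_\tau(x))+\cmm|\nabla\rho|(X^\tau_\tau(x))\Big).
\end{align*}
Applying H\"older's inequality on $\Omega$ with exponents $s,m$ satisfying $\tfrac1l=\tfrac1s+\tfrac1m$, estimating the first factor by (a) and the second by (b) with $g=\cmm|\nabla\rho|$ (and $\|\cmm|\nabla\rho|\|_{L_{\bar r}}\les\|\nabla\rho\|_{L_{\bar r}}$ by boundedness of $\cmm$ on $L_{\bar r}$), then taking $\sup_{(\tau,x)}$, yields \eqref{TE-app}. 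The split is admissible precisely when one can choose $\tfrac1s\in\big(\tfrac1p(1\vee\tfrac d2),\,\tfrac1l-\tfrac1{\bar r}\big)$ --- the lower bound ensuring $s<p\wedge\tfrac{2p}{d}$ for (a), the upper bound ensuring $m<\bar r$ for (b) --- and this interval is nonempty exactly under the standing hypotheses $\tfrac1{\bar r}<\tfrac1l-\tfrac1p(1\vee\tfrac d2)$ and $l\in(1,p\wedge\tfrac{2p}{d})$. The main obstacle is the bookkeeping in ingredient (b): extracting the sharp blow-up $\tau^{-d/(2\bar r)}$ from the $L_{\bar r/(m\mu)}\to L_\infty$ smoothing of $Q^n$ uniformly in $n$, and then checking that the resulting constraint $m<\bar r$ is compatible, through $1/l=1/s+1/m$, with the constraint $s<p\wedge(2p/d)$ imposed by \cref{thm.main}; everything else is a direct transcription of the estimates already established for \eqref{sde00}--\eqref{eqn.EMscheme}.
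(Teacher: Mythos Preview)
Your proposal is correct and follows essentially the same route as the paper: the pointwise maximal-function bound on $\rho(X^{\tau,n}_\tau)-\rho(X^\tau_\tau)$, then H\"older splitting into a characteristics-error factor controlled by \cref{thm.main,thm.alpha} and a composed-functional factor controlled via Girsanov plus heat-kernel smoothing, with the same parameter bookkeeping $\tfrac1l=\tfrac1s+\tfrac1m$, $s<p\wedge\tfrac{2p}{d}$, $m<\bar r$. The only cosmetic differences are that the paper extends the time interval from $[0,\tau]$ to $[0,1]$ by concatenating with an independent Brownian motion (whereas you invoke uniformity of constants on subintervals directly), and that for ingredient (b) the paper reduces all the way to the Brownian estimate \eqref{BMest} rather than to \cref{thm.QLp12}---but since $\sigma\equiv I$ here, $\bar X^{\tau,n}$ \emph{is} Brownian motion and the two are literally the same.
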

\begin{proof}
 We put $b(r,x)=0$ and $b^n(r,x)=0$ whenever $r\in\R\setminus[0,1]$ so that $b^\tau(r,x)$ and $b^{\tau,n}(r,x)$ are well-defined functions on $[0,1]\times\Rd$. Let $(\hat W_t)_{t\in[0,1]}$ be a standard $(\hat\cff_t)$-Brownian motion which is independent from $(W_t)_{t\in[0,1]}$ and define
 \begin{align}
  \label{btau}
  W_r^\tau:=\begin{cases}
 W_\tau-W_{\tau-r}& \text{if } r\in[0,\tau],\\
 W_\tau+\hat W_r-\hat W_\tau&  \text{if } r\in(\tau,1],
 \end{cases}
 \end{align}
 which is a standard Brownian motion with respect to the filtration $(\mathcal{G}^\tau_t)_{t\in[0,1]}:=(\mathcal{F}_{t\wedge \tau}^\tau\vee \hat {\mathcal{F}}_{t\vee\tau})_{t\in[0,1]}$.
 Equations \eqref{change-variable} and \eqref{BE-EM} (for each fixed $\tau\in[0,1]$) are extended uniquely over the whole time interval $[0,1]$.

By property of maximal function  and continuity of $\rho$ (see \cite[Proposition C.1]{hinz2020variability}) we have for every $x,y\in\mathbb{R}^d$,
\[
	|\rho(x)-\rho(y)|\lesssim|x-y|(\cmm|\nabla \rho|(x)+\cmm|\nabla \rho|(y)).
\]
It follows that
\begin{align}\label{tmp.1453}
   | u(\tau,x)-u_n(\tau,x)|\lesssim|X_{\tau}^\tau(x)-X_{\tau}^{\tau,n}(x)|(\mathcal{M}|\nabla \rho|( X_{\tau}^\tau(x))+\mathcal{M}|\nabla \rho|(X_{\tau}^{\tau,n}(x))).
\end{align}
Next, we estimate the terms on the right-hand side of the previous inequality.

 In order to apply \cref{thm.main} to obtain estimates for $X^{\tau(x)}-X^{\tau,n}(x)$, we verify that $ b^\tau$ and $b^{\tau,n}$ fulfill \cref{con.B} for each $\tau$.
 Indeed,  it is evident that $b^\tau\in \LL_p^q([0,1])$, $b^{\tau,n}\in\LL^q_p([0,1])\cap\LL^q_\infty([0,1])$ and that $\|b^{\tau,n}\|_{\LL^q_p([0,1])}\le \|b^n\|_{\LL^q_p([0,1])}$ which is bounded uniformly in $n$. In addition, define $\mu^{\tau,n}(s,t)=\mu^n((\tau-t)\vee0,(\tau-s)\vee0)$, which is a continuous control on the simplex $\Delta([0,1])$. We have $\|b^{\tau,n}\|_{\LL^q_\infty([s,t])}=\|b^n\|_{\LL^q_\infty([(\tau-t)\vee0,(\tau-s)\vee0])}$ and hence by \eqref{con.locGK}
 \begin{gather*}
 	(1/n)^{\frac12-\frac1q} \|b^{\tau,n}\|_{\LL^q_\infty([s,t])}\le \mu^{\tau,n}(s,t)^\theta.
 \end{gather*}
 Similarly, $\|b^{\tau,n}\|_{\LL^q_p([0,1])}=\|b^n\|_{\LL^q_p([0,\tau])}\le \|b^n\|_{\LL^q_p([0,1])}$ and $\mu^{\tau,n}(0,1)=\mu^n(0,\tau)\le \mu^n(0,1)$ so that
 \begin{align*}
	\sup_{n\ge1}\left(\Vert b^{\tau,n}\Vert_{\LL_p^q([0,1])}+\mu^{\tau,n}(0,1)\right)\leq K_4,
 \end{align*}
 where $K_4$ is the constant in \cref{con.B}.
 
Hence \cref{thm.alpha}(i-ii) yields that for $\bar p\in[1,p)$
\begin{align*}
    \varpi_n(\bar p)
    \leq N\min(\Vert b-b^n\Vert_{\LL_{p_1}^{q_1}([0,1])},\Vert b-b^n\Vert_{\LL_{-\nu,p}^q([0,1])}).
\end{align*}
\cref{thm.main} yields for any $\gamma\in(0,1)$ and $\bar p\in[1,p\wedge\frac{2p}{d})$,
\begin{align}\label{X-Xn}
	\sup_{(t,x)\in[0,1]\times\mR^d}&\|X^\tau_t(x)-X^{\tau,n}_t(x)\|_{L_{\gamma\bar p(\Omega)}}
	\nonumber\\&\le N\left(n^{-1/2}\log n+\min(\Vert b-b^n\Vert_{\LL_{p_1}^{q_1}([0,1])},\Vert b-b^n\Vert_{\LL_{-\nu,p}^q([0,1])})\right). 
\end{align}
The constant $N$ depends on $K_4,\nu,p,q,p_1,q_1,\bar p$.

Similar to the arguments used in the proofs of \cref{thm.prealpha,prop.gf}, using Girsanov theorem, one can deduce the estimates for $X^{\tau,n},X^\tau$ to estimates for Brownian motion, which are obtained in \cref{lem.heat1}. 
Hence,  for any $\bar r>r>1$, we have that
\begin{align*}
	 \|\mathcal{M}|\nabla \rho|(X_\tau^\tau(x))\|_{L_r(\Omega)}\les \|\mathcal{M}|\nabla \rho|(B_\tau)\|_{L_{\bar r}(\Omega)}\les \tau^{-\frac{d}{2\bar r}}\Vert \mathcal{M}|\nabla \rho|\Vert_{L_{\bar r}(\mathbb{R}^d)}
    \lesssim \tau^{-\frac{d}{2\bar r}}\Vert \nabla \rho\Vert_{L_{\bar r}(\mathbb{R}^d)},
\end{align*}
and similarly,
\begin{align*}
   \Vert \mathcal{M}|\nabla \rho|(X_{\tau}^{\tau,n}(x))\Vert_{L_r(\Omega)}
   \lesssim \tau^{-\frac{d}{2\bar r}}\Vert \nabla \rho\Vert_{L_{\bar r}(\mathbb{R}^d)}.
\end{align*}

Given $l,\bar r$ as in the statement, we can choose  $r\in(1,\bar r)$,  $\bar p\in[1,p)$, $\gamma\in(0,1)$  such that  $\frac{1}{r}+\frac{1}{\gamma\bar p}=\frac{1}{l}$. From \eqref{tmp.1453}, applying H\"older inequality, we have 
\begin{multline*}
    \Vert u_n(\tau,x)-u(\tau,x)\Vert_{L_{l}(\Omega)}
    \\\les \|X_{\tau}^\tau(x)-X_{\tau}^{\tau,n}(x)\|_{L_{\gamma\bar p}(\Omega)}(\|\mathcal{M}|\nabla \rho|( X_{\tau}^\tau(x))\|_{L_r(\Omega)} +\|\mathcal{M}|\nabla \rho|(X_{\tau}^{\tau,n}(x))\|_{L_r(\Omega)} ).
\end{multline*}
Combining with the estimates obtained previously, we obtain that
\begin{multline*}
	 \Vert u_n(\tau,x)-u(\tau,x)\Vert_{L_{l}(\Omega)}
	 \\\lesssim \tau^{-\frac{d}{2\bar r}}\Vert \nabla \rho\Vert_{L_{\bar r}(\mR^d)} \left(n^{-1/2}\log n+\min(\Vert b-b^n\Vert_{\LL_{p_1}^{q_1}([0,1])},\Vert b-b^n\Vert_{\LL_{-\nu,p}^q([0,1])})\right),
\end{multline*}
which implies \eqref{TE-app}.
\end{proof}

\begin{appendix}
\label{sec:parabolic_equations_with_variable_coefficients}
\section*{Parabolic equations with distributional forcing} 
In this section we show the well-posedness and regularity estimates for the solutions for a class parabolic equations with distributional forcing, which are used in \cref{sec:regularizing_properties_of_the_continuum_paths} and have their own interests. Although such equations have been considered extensively in literatures, for instance in \cite{Kry}, \cite{Kim}, \cite{Zhangzhao} and \cite{XXZZ}, the available results therein are valid under different hypotheses and are not applicable to our situations. 


	For each $r\in[1,\infty]$, we denote its H\"older conjugate by  $r'$, i.e. $\frac1r+\frac1{r'}=1$.
	For each Banach space $\mathcal E$, we denote its topological dual by $\mathcal E^*$, and the dual paring between $\mathcal E$ and $\mathcal E^*$ by $\wei{\cdot,\cdot}_{\mathcal E^\ast,\mathcal E}$.
	We consider the parabolic partial differential equations (PDEs)
	\begin{align}\label{eqn.u}
		&(\partial_s+a^{ij}\partial^2_{ij})u=f, \quad u(1,\cdot)=0
		\\\shortintertext{and}
		\label{eqn.v}&\partial_tv- \partial^2_{ij}(a^{ij}v)+g=0, \quad v(0,\cdot)=0
	\end{align}
	under the following assumptions:\footnote{Parabolic PDEs with distributional forcing have been considered by Kim \cite{Kim}. However, his result is applicable to \eqref{eqn.u} and \eqref{eqn.v} only when $a$ is continuously differentiable in the spatial variables with bounded derivatives and $p\leq q$  hence \cref{con.A1} is excluded.  }
	
	\begin{customcon}{$\mathfrak{A}^\prime$}\label{con.A1} \phantom{em}
  		\begin{enumerate}[label=$\mathbf{\arabic*}$.]
  			\item $a$ is a $d\times d$-symmetric matrix-valued measurable function on $[0,1]\times\Rd$. There exists a constant $k_1\in[1,\infty)$ such that for every $s\in[0,1]$ and $x\in\Rd$
		  	\begin{align}\label{ellptic-cona}
		  		k_1^{-1}I\le a(s,x)\le k_1I.
		  	\end{align}
			Furthermore, $a(s,\cdot)$ is weakly differentiable for a.e. $s\in[0,1]$ and $k_3:=\|\nabla a\|_{\LL_{p_0}^\infty([0,1])}$ is finite for some $p_0\in(d,\infty)$.
			\item $f\in\LL^q_{-1,p}([0,1])$ and $g\in\LL^{q'}_{-1,p'}([0,1])$ for some $p,q\in(1,\infty)$ satisfying $\frac1p+\frac1{p_0}< 1$. 
  		\end{enumerate}
	\end{customcon}
	\begin{definition}\label{def-sol}
		A measurable function $u:[0,1]\times\Rd\to\R$ is a solution to \eqref{eqn.u} if $u\in\LL^q_{1,p}([0,1])$, $\partial_su\in\LL^{q}_{-1,p}([0,1])$, $u(1,\cdot)=0$ and equation \eqref{eqn.u} holds in $\LL^{q'}_{-1,p'}([0,1])$, i.e. for every $\phi\in \LL^{q'}_{1,p'}([0,1])$ 
		\begin{align}\label{eqn.uweak}
			\int_0^1\wei{	(\partial_s+a^{ij}\partial^2_{ij})u_t,\phi_t}_{L_{-1,p}(\mR^d)\times L_{1,p'}(\mR^d)}dt=\int_0^1\wei{f_t,\phi_t}_{L_{-1,p}(\mR^d)\times L_{1,p'}(\mR^d)}dt.
		\end{align}

		Likewise, a measurable function $v:[0,1]\times\Rd$ is a solution to \eqref{eqn.v} if $v\in\LL^{q'}_{1,p'}([0,1])$, $\partial_tv\in\LL^{q'}_{-1,p'}([0,1])$, $v(0,\cdot)=0$  and equation \eqref{eqn.v} holds in $\LL^{q}_{-1,p}([0,1])$, i.e. for every $\phi\in \LL^{q}_{1,p}([0,1])$ 
		\begin{align}\label{eqn.vweak}
		\int_0^1\wei{\partial_tv_s- \partial^2_{ij}(a^{ij}v_s),\phi_s}_{L_{-1,p'}(\mR^d)\times L_{1,p}(\mR^d)}ds+\int_0^1\wei{g_s,\phi_s}_{L_{-1,p'}(\mR^d)\times L_{1,p}(\mR^d)}ds=0.
		\end{align}
	\end{definition}
	In the above definitions, we have implicitly understood that $a^{ij}\partial^2_{ij}u$ and $\partial^2_{ij}(a^{ij}v)$ are well-defined distributions in $\LL^{q'}_{-1,p'}([0,1])$ and $\LL^q_{-1,p}([0,1])$  respectively.
	To see this, we need the following multiplication result:
	\begin{lemma}\label{lem.multiplication}
		Let $p,p_1,p_2$ be real numbers in $(1,\infty)$ and let $\nu\in(0,1]$.

		(i) Assume that $p_1,p_2\ge p$ and that $\frac1p\le\frac1{p_1}+\frac1{p_2}<\frac1p+\frac \nu d$. Then the pointwise multiplication is a continuous bilinear map
		\begin{align*}
			L_{\nu,p_1}(\Rd)\times L_{\nu,p_2}(\Rd)\to L_{\nu,p}(\Rd).
		\end{align*}

		(ii) Assume that $p_1\ge p$, that $p_2\ge p_1'$ and that $\frac1p\le\frac1{p_1}+\frac1{p_2}<\frac1p+\frac \nu d$. Then the pointwise multiplication is a continuous bilinear map
		\begin{align*}
			L_{-\nu,p_1}(\Rd)\times L_{\nu,p_2}(\Rd)\to L_{-\nu,p}(\Rd).
		\end{align*}

		(iii) Let $g$ be a bounded measurable function such that $\nabla g\in L_{p_0}(\Rd)$ for some $p_0\in(d,\infty)$. Let $f$ be in $L_{1,p'}(\Rd)$, $h$ be in $L_{-1,p}(\Rd)$ and assume that $\frac1p+\frac1{p_0}<1$.
		Then $fg$ belongs to $L_{1,p'}(\Rd)$, $gh$ belongs to $L_{-1,p}(\Rd)$ and
		\begin{align}
			\label{est.g1}
			&\|fg\|_{L_{1,p'}(\Rd)}\les(\|g\|_{L_\infty(\Rd)}+\|\nabla g\|_{L_{p_0}(\Rd)})\|f\|_{L_{1,p'}(\Rd)},
			\\&\|gh\|_{L_{-1,p}(\Rd)}\les(\|g\|_{L_\infty(\Rd)}+\|\nabla g\|_{L_{p_0}(\Rd)})\|h\|_{L_{-1,p}(\Rd)}.
			\label{est.g2}
		\end{align}
	\end{lemma}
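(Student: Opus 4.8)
The plan is to prove \textbf{(i)} first by a fractional Leibniz rule together with Sobolev embeddings, and then to deduce \textbf{(ii)} and the second bound of \textbf{(iii)} by duality, using that $\bigl(L_{-\nu,p}(\Rd)\bigr)^{\ast}=L_{\nu,p'}(\Rd)$ for $p\in(1,\infty)$. For \textbf{(i)}, recall that for $p\in(1,\infty)$ the Bessel-potential norm $\|\cdot\|_{L_{\nu,p}(\Rd)}$ is comparable to a Triebel--Lizorkin norm, so that a paraproduct decomposition yields the Kato--Ponce type estimate
\begin{align*}
	\|fg\|_{L_{\nu,p}(\Rd)}\les \|f\|_{L_{\nu,p_1}(\Rd)}\|g\|_{L_{q_2}(\Rd)}+\|f\|_{L_{r_1}(\Rd)}\|g\|_{L_{\nu,p_2}(\Rd)}
\end{align*}
for any $r_1,q_2\in(1,\infty]$ with $\tfrac1p=\tfrac1{p_1}+\tfrac1{q_2}=\tfrac1{r_1}+\tfrac1{p_2}$. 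I would take $\tfrac1{r_1}=\tfrac1p-\tfrac1{p_2}$ and $\tfrac1{q_2}=\tfrac1p-\tfrac1{p_1}$; the assumptions $p_1,p_2\ge p$ force $r_1\ge p_1$ and $q_2\ge p_2$, and the strict inequality $\tfrac1{p_1}+\tfrac1{p_2}<\tfrac1p+\tfrac\nu d$ makes the Sobolev embeddings $L_{\nu,p_1}(\Rd)\hookrightarrow L_{r_1}(\Rd)$ and $L_{\nu,p_2}(\Rd)\hookrightarrow L_{q_2}(\Rd)$ strictly sub-critical. When $\nu\ge d/p_i$, so that no embedding into a finite $L_r$ is critical, one simply uses $L_{\nu,p_i}(\Rd)\hookrightarrow L_r(\Rd)$ for every $r\in[p_i,\infty]$. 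Substituting these embeddings into the displayed estimate proves \textbf{(i)}.

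For \textbf{(ii)}, by duality and the identity $\langle gh,\phi\rangle=\langle h,g\phi\rangle$, valid since $g\phi\in L_{\nu,p_1'}(\Rd)$ by \textbf{(i)}, one has
\begin{align*}
	\|gh\|_{L_{-\nu,p}(\Rd)}=\sup_{\|\phi\|_{L_{\nu,p'}(\Rd)}\le1}\bigl|\langle h,g\phi\rangle\bigr|\le \|h\|_{L_{-\nu,p_1}(\Rd)}\sup_{\|\phi\|_{L_{\nu,p'}(\Rd)}\le1}\|g\phi\|_{L_{\nu,p_1'}(\Rd)}.
\end{align*}
It then remains to estimate $\|g\phi\|_{L_{\nu,p_1'}(\Rd)}$ by \textbf{(i)} applied with target exponent $p_1'$ and factors in $L_{\nu,p_2}(\Rd)$ and $L_{\nu,p'}(\Rd)$. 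One checks that the hypotheses of \textbf{(i)} hold: $p\le p_1$ gives $p'\ge p_1'$, the assumption $p_2\ge p_1'$ is exactly the second admissibility condition, the bound $\tfrac1p\le\tfrac1{p_1}+\tfrac1{p_2}$ rewrites as $\tfrac1{p_1'}\le\tfrac1{p_2}+\tfrac1{p'}$, and $\tfrac1{p_1}+\tfrac1{p_2}<\tfrac1p+\tfrac\nu d$ rewrites as $\tfrac1{p_2}+\tfrac1{p'}<\tfrac1{p_1'}+\tfrac\nu d$. Hence $\|g\phi\|_{L_{\nu,p_1'}(\Rd)}\les\|g\|_{L_{\nu,p_2}(\Rd)}\|\phi\|_{L_{\nu,p'}(\Rd)}$, which gives \textbf{(ii)}.

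For \textbf{(iii)}, I would use that $L_{1,r}(\Rd)=W^{1,r}(\Rd)$ for $r\in(1,\infty)$ with comparable norms. For the first inequality, $\|fg\|_{L_{p'}(\Rd)}\le\|g\|_{L_\infty(\Rd)}\|f\|_{L_{p'}(\Rd)}$ and, by the Leibniz rule, $\nabla(fg)=g\nabla f+f\nabla g$ with $\|g\nabla f\|_{L_{p'}(\Rd)}\le\|g\|_{L_\infty(\Rd)}\|\nabla f\|_{L_{p'}(\Rd)}$; for $f\nabla g$ apply H\"older with $\tfrac1{p'}=\tfrac1r+\tfrac1{p_0}$, so $\tfrac1r=1-\tfrac1p-\tfrac1{p_0}\in(0,\tfrac1{p'})$ by the hypothesis $\tfrac1p+\tfrac1{p_0}<1$, together with the embedding $W^{1,p'}(\Rd)\hookrightarrow L_r(\Rd)$, which is sub-critical precisely because $p_0>d$. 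This yields \eqref{est.g1}. The bound \eqref{est.g2} then follows from \eqref{est.g1} by the same duality argument as in \textbf{(ii)}, using $\langle gh,\phi\rangle=\langle h,g\phi\rangle$ with $\phi\in L_{1,p'}(\Rd)$ and $\|g\phi\|_{L_{1,p'}(\Rd)}\les(\|g\|_{L_\infty(\Rd)}+\|\nabla g\|_{L_{p_0}(\Rd)})\|\phi\|_{L_{1,p'}(\Rd)}$.

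The main obstacle is purely bookkeeping: one must track the auxiliary exponents so that every Sobolev embedding used is strictly sub-critical, in particular separating off the borderline regime $\nu\ge d/p_i$ in \textbf{(i)}, and verifying that in each case the strict inequality appearing in the hypotheses is exactly what excludes the failing endpoint embedding. A minor technical point is quoting the fractional Leibniz rule in the inhomogeneous (Bessel-potential) form rather than the homogeneous one; if a convenient reference is unavailable, it can be replaced by a direct Littlewood--Paley/paraproduct argument in Triebel--Lizorkin spaces.
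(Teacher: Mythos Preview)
Your argument is correct. For part~(iii) it coincides with the paper's proof: the paper likewise expands $\nabla(fg)=g\nabla f+f\nabla g$, applies H\"older with the exponent $p_3$ defined by $\tfrac1{p'}=\tfrac1{p_0}+\tfrac1{p_3}$ (your $r$), uses the Sobolev embedding $L_{1,p'}(\Rd)\hookrightarrow L_{p_3}(\Rd)$ (sub-critical precisely because $p_0>d$), and then obtains \eqref{est.g2} from \eqref{est.g1} by the same duality pairing $\langle gh,\phi\rangle=\langle h,g\phi\rangle$.

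For parts~(i)--(ii) the paper does not give a proof but simply cites \cite[Lemma~2.2]{Zhangzhao}. Your self-contained route---Kato--Ponce fractional Leibniz rule plus Sobolev embedding for (i), then duality to reduce (ii) to (i) with the dual triple $(p_1',p_2,p')$---is the standard way to establish that lemma, and the exponent checks you wrote out are exactly what is needed. The only cosmetic point is the sentence on the regime $\nu\ge d/p_i$: when $\nu=d/p_i$ exactly one does not get $L_\infty$, but this case never arises because the strict inequality $\tfrac1{p_1}+\tfrac1{p_2}<\tfrac1p+\tfrac\nu d$ forces $q_2<\infty$ (resp.\ $r_1<\infty$) unless $p_1=p$ (resp.\ $p_2=p$), in which case the same strict inequality already gives $\nu>d/p_2$ (resp.\ $\nu>d/p_1$).
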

	\begin{proof}
		(i-ii) are consequences of \cite[Lemma 2.2]{Zhangzhao}. Concerning (iii), define $p_3$ by $\frac1{p'}=\frac1{p_0}+\frac1{p_3}$. Then by H\"older inequality
		\begin{align*}
			\|\nabla gf\|_{L_{p'}(\Rd)}\le\|\nabla g\|_{L_{p_0}(\Rd)}\|f\|_{L_{p_3}(\Rd)}.
		\end{align*}
		The embedding $L_{1,p'}(\Rd)\hookrightarrow L_{p_3}(\Rd)$ is valid if $\frac1{p'}-\frac1d\le\frac1{p_3}\le\frac1{p'}$,
		which is justified by our assumption.
		It follows that $\|\nabla gf\|_{L_{p'}(\Rd)}\les\|\nabla g\|_{L_{p_0}(\Rd)}\|f\|_{L_{1,p'}(\Rd)}$. It is evident that
		\begin{align*}
			\| gf\|_{L_{p'}(\Rd)}+\| g\nabla f\|_{L_{p'}(\Rd)}\les\| g\|_{L_{\infty}(\Rd)}\|f\|_{L_{1,p'}(\Rd)}.
		\end{align*}
		From here, we obtain \eqref{est.g1}. To show \eqref{est.g2}, we note that by duality and \eqref{est.g1},
		\begin{align*}
			\|fgh\|_{L_1(\Rd)}
			&\les\|h\|_{L_{-1,p}(\Rd)} \|fg\|_{L_{1,p'}(\Rd)}
		\\&	\les\|h\|_{L_{-1,p}(\Rd)}(\|g\|_{L_\infty(\Rd)}+\|\nabla g\|_{L_{p_0}(\Rd)}) \|f\|_{L_{1,p'}(\Rd)}.
		\end{align*}
		This implies \eqref{est.g2} by duality.
	\end{proof}

	\begin{proposition}\label{prop.aD2u}
		For every $u\in\LL^{q}_{1,p}([0,1])$ and $v\in\LL^{q'}_{1,p'}([0,1])$, under \cref{con.A1} we have
		\begin{align*}
			a^{ij}\partial^2_{ij}u\in\LL^{q}_{-1,p}([0,1])
			\tand \partial^2_{ij}(a^{ij}v)\in\LL^{q'}_{-1,p'}([0,1]).
		\end{align*}
	\end{proposition}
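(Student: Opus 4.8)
The plan is to reduce both claims to the pointwise-in-time multiplication estimates of \cref{lem.multiplication}(iii), combined with the fact that the differentiation operators $\partial_i\colon L_{\nu,p}(\Rd)\to L_{\nu-1,p}(\Rd)$ are bounded on the Bessel potential scale. Throughout, $1\le i,j\le d$ are fixed and we treat each term $a^{ij}\partial^2_{ij}u$ (resp.\ $\partial^2_{ij}(a^{ij}v)$) separately, the full object being a finite linear combination of such terms.

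First I would handle $a^{ij}\partial^2_{ij}u$. For a.e.\ $s\in[0,1]$ we have $u(s,\cdot)\in L_{1,p}(\Rd)$, hence $\partial^2_{ij}u(s,\cdot)\in L_{-1,p}(\Rd)$ with $\|\partial^2_{ij}u(s,\cdot)\|_{L_{-1,p}(\Rd)}\les\|u(s,\cdot)\|_{L_{1,p}(\Rd)}$. By \cref{con.A1}, $a^{ij}(s,\cdot)$ is bounded with $\|a^{ij}(s,\cdot)\|_{L_\infty(\Rd)}\le k_1$ (using the ellipticity bound \eqref{ellptic-cona} and symmetry) and, for a.e.\ $s$, weakly differentiable with $\|\nabla a^{ij}(s,\cdot)\|_{L_{p_0}(\Rd)}\le k_3$, while $p_0\in(d,\infty)$ and $\frac1p+\frac1{p_0}<1$. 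Thus \cref{lem.multiplication}(iii), applied with $g=a^{ij}(s,\cdot)$ and $h=\partial^2_{ij}u(s,\cdot)$, gives that $a^{ij}(s,\cdot)\partial^2_{ij}u(s,\cdot)$ is a well-defined element of $L_{-1,p}(\Rd)$ and
\begin{align*}
	\|a^{ij}(s,\cdot)\,\partial^2_{ij}u(s,\cdot)\|_{L_{-1,p}(\Rd)}\les(k_1+k_3)\,\|u(s,\cdot)\|_{L_{1,p}(\Rd)}.
\end{align*}
Raising to the $q$-th power and integrating over $s\in[0,1]$ then yields $\|a^{ij}\partial^2_{ij}u\|_{\LL^q_{-1,p}([0,1])}\les(k_1+k_3)\|u\|_{\LL^q_{1,p}([0,1])}<\infty$, which is the first assertion.

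For $\partial^2_{ij}(a^{ij}v)$ I would proceed in two steps. Applying the first half of \cref{lem.multiplication}(iii), with $g=a^{ij}(s,\cdot)$ and $f=v(s,\cdot)\in L_{1,p'}(\Rd)$, shows that $a^{ij}(s,\cdot)v(s,\cdot)\in L_{1,p'}(\Rd)$ with $\|a^{ij}(s,\cdot)v(s,\cdot)\|_{L_{1,p'}(\Rd)}\les(k_1+k_3)\|v(s,\cdot)\|_{L_{1,p'}(\Rd)}$; integrating in $s$ gives $a^{ij}v\in\LL^{q'}_{1,p'}([0,1])$. Composing, for a.e.\ $s$, with the bounded operator $\partial^2_{ij}\colon L_{1,p'}(\Rd)\to L_{-1,p'}(\Rd)$, we obtain $\partial^2_{ij}(a^{ij}v)\in\LL^{q'}_{-1,p'}([0,1])$ together with the analogous norm bound.

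The only point not of a purely computational nature is the strong measurability in time of the auxiliary maps $s\mapsto a^{ij}(s,\cdot)\partial^2_{ij}u(s,\cdot)$ (valued in $L_{-1,p}(\Rd)$) and $s\mapsto a^{ij}(s,\cdot)v(s,\cdot)$ (valued in $L_{1,p'}(\Rd)$), which is needed for the integrations above to make sense. I would settle this by approximating $u$ in $\LL^q_{1,p}$ (resp.\ $v$ in $\LL^{q'}_{1,p'}$) by countably-valued functions and invoking the continuity of the bilinear maps provided by \cref{lem.multiplication}(iii), uniform in $s$ thanks to the bounds $k_1,k_3$; alternatively one may first mollify $a$ in $x$ and pass to the limit. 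This is the step requiring the most care, but it is otherwise routine.
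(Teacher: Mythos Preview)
Your proof is correct and follows essentially the same route as the paper's own argument: apply \cref{lem.multiplication}(iii) pointwise in $s$ with $g=a^{ij}(s,\cdot)$, $h=\partial^2_{ij}u(s,\cdot)\in L_{-1,p}(\Rd)$ for the first claim and $f=v(s,\cdot)\in L_{1,p'}(\Rd)$ for the second, then integrate in time. The paper omits the measurability discussion you raise in your last paragraph, but otherwise the two proofs are identical.
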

	\begin{proof}
		Using \cref{lem.multiplication}(iii), we see that
		\begin{align*}
			\|a^{ij} \partial^2_{ij}u\|_{L_{-1,p}(\Rd)}
			&\les(\|a^{ij}\|_{L_\infty(\Rd)}+\|\nabla a^{ij}\|_{L_{p_0}(\Rd)})\|\partial^2_{ij}u\|_{L_{-1,p}(\Rd)}
			\\&\les(\|a^{ij}\|_{L_\infty(\Rd)}+\|\nabla a^{ij}\|_{L_{p_0}(\Rd)})\|u\|_{L_{1,p}(\Rd)}
		\end{align*}
		and		
		\begin{align*}
			\|\partial^2_{ij}(a^{ij}v)\|_{L_{-1,p'}(\Rd)}
			\les\|a^{ij}v\|_{L_{1,p'}(\Rd)}
			\les(\|a^{ij}\|_{L_\infty(\Rd)}+\|\nabla a^{ij}\|_{L_{p_0}(\Rd)})\|v\|_{L_{1,p'}(\Rd)}.
		\end{align*}
		These estimates imply the result.
	\end{proof}

	\begin{theorem}\label{thm.uv}
		Under \cref{con.A1}, there exist a unique solution $u$ to \eqref{eqn.u} and a unique solution $v$ to \eqref{eqn.v}. Furthermore, we have
		\begin{align}\label{est.upq}
			&\|u\|_{\LL^q_{1,p}([0,1])}+\|\partial_s u\|_{\LL^q_{-1,p}([0,1])}\le N\|f\|_{\LL^q_{-1,p}([0,1])},
			\\&\|v\|_{\LL^{q'}_{1,p'}([0,1])}+\|\partial_tv\|_{\LL^{q'}_{-1,p'}([0,1])}\le N\|g\|_{\LL^{q'}_{-1,p'}([0,1])},
			\label{est.vpq}
		\end{align}
		  where $N$ is a finite positive constant depending on $d,p,q,p_0, k_1, k_3$.
	\end{theorem}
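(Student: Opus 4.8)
\textbf{Proof proposal for \cref{thm.uv}.}
The plan is to treat \eqref{eqn.u} and \eqref{eqn.v} as a dual pair and obtain existence, uniqueness and the estimates \eqref{est.upq}--\eqref{est.vpq} simultaneously via a continuity (method-of-continuity) argument anchored at the constant-coefficient heat equation. First I would record the baseline: when $a^{ij}=\tfrac12\delta^{ij}$, the maximal regularity estimates for the heat semigroup on Bessel potential spaces (which underlie \cref{lem.ust} and are quoted from \cite{MR3086433,LX}) give that the solution operators to $(\partial_s+\tfrac12\Delta)u=f$, $u(1,\cdot)=0$ and $\partial_tv+\tfrac12\Delta v+g=0$, $v(0,\cdot)=0$ are bounded $\LL^q_{-1,p}\to\LL^q_{1,p}$ (with $\partial_su\in\LL^q_{-1,p}$) and $\LL^{q'}_{-1,p'}\to\LL^{q'}_{1,p'}$ respectively. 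The key structural input is \cref{lem.multiplication}(iii) (via \cref{prop.aD2u}): because $\frac1p+\frac1{p_0}<1$ and $\nabla a\in L_{p_0}$ with $p_0>d$, multiplication by $a^{ij}$ maps $L_{-1,p}\to L_{-1,p}$ and $L_{1,p'}\to L_{1,p'}$ boundedly, so $a^{ij}\partial^2_{ij}u$ and $\partial^2_{ij}(a^{ij}v)$ genuinely live in the target spaces and the weak formulations in \cref{def-sol} make sense.

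Next I would set up the continuity method. For $\theta\in[0,1]$ put $a_\theta=\theta a+(1-\theta)\tfrac12 I$; each $a_\theta$ still satisfies \cref{con.A1} with uniform constants (ellipticity constants interpolate, and $\|\nabla a_\theta\|_{\LL^\infty_{p_0}}\le\|\nabla a\|_{\LL^\infty_{p_0}}$). Denote by $L_\theta u=(\partial_s+a_\theta^{ij}\partial^2_{ij})u$ the operator acting $\mathcal B_1:=\{u\in\LL^q_{1,p}:\partial_su\in\LL^q_{-1,p},\ u(1,\cdot)=0\}\to\LL^q_{-1,p}$. The crucial point is an a priori estimate uniform in $\theta$: there is $N$ depending only on $d,p,q,p_0,k_1,k_3$ with
\begin{align*}
	\|u\|_{\LL^q_{1,p}([0,1])}+\|\partial_su\|_{\LL^q_{-1,p}([0,1])}\le N\|L_\theta u\|_{\LL^q_{-1,p}([0,1])}\qquad\forall u\in\mathcal B_1.
\end{align*}
Once this estimate is in hand, the method of continuity (the set of $\theta$ for which $L_\theta:\mathcal B_1\to\LL^q_{-1,p}$ is onto is open and closed, nonempty at $\theta=0$ by the heat-equation baseline) gives invertibility of $L_1$, i.e. existence, uniqueness and \eqref{est.upq}. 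The analogous argument for the adjoint equation \eqref{eqn.v} on $\mathcal B_2:=\{v\in\LL^{q'}_{1,p'}:\partial_tv\in\LL^{q'}_{-1,p'},\ v(0,\cdot)=0\}$ yields \eqref{est.vpq}; alternatively, once $L_1$ is invertible its Banach-space adjoint is invertible with the same norm, and one identifies this adjoint with the solution operator for \eqref{eqn.v} by testing \eqref{eqn.uweak} against \eqref{eqn.vweak} (an integration-by-parts/duality computation using \cref{prop.aD2u}).

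The a priori estimate itself I would prove by a freezing/perturbation argument. Write $(\partial_s+a_\theta^{ij}\partial^2_{ij})u=f$ as $(\partial_s+\tfrac12\Delta)u=f+\tfrac12(\delta^{ij}-2a_\theta^{ij})\partial^2_{ij}u$ and apply the heat-equation maximal regularity to bound $\|u\|_{\LL^q_{1,p}}$ by $\|f\|_{\LL^q_{-1,p}}+\|(\delta^{ij}-2a_\theta^{ij})\partial^2_{ij}u\|_{\LL^q_{-1,p}}$. By \cref{lem.multiplication}(iii) the perturbation term is controlled by $(\|a_\theta\|_{L_\infty}+\|\nabla a_\theta\|_{L_{p_0}})\|u\|_{\LL^q_{1,p}}$, which is NOT small, so one cannot absorb globally. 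This is the main obstacle. The standard remedy—and the step I expect to be most technical—is to localize: on a short time interval the same computation (using the heat kernel estimate $\|P_{s,r}g_r\|_{L_p}\lesssim(r-s)^{-1/2}\|g_r\|_{L_{-1,p}}$ as in \cref{lem.ust}, together with H\"older in time) produces a factor $(t-s)^{1/2-1/q}$ in front of the perturbation term, hence a genuine contraction on intervals of length $\lesssim(2N)^{-1/(1/2-1/q)}$; one then patches a fixed finite number of such intervals, using $u(1,\cdot)=0$ and the transport of the terminal data backward, to recover the estimate on $[0,1]$ with a constant depending only on the allowed parameters. (Equivalently, one proves the resolvent estimate for $\partial_s+a^{ij}\partial^2_{ij}-\lambda$ with large $\lambda$, exactly in the spirit of \cref{pde} quoted from \cite{XXZZ}, and removes $\lambda$ by the exponential shift $u\mapsto e^{-\lambda(1-s)}u$, which is harmless on a bounded interval.) A spatial mollification/regularization of $a$ (so that all manipulations are a priori legitimate for smooth $u$), followed by passing to the limit using the uniform estimates and weak compactness, completes the argument; uniqueness follows from the a priori estimate applied to the difference of two solutions (with $f=0$).
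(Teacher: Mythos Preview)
Your proposal has a genuine gap in the a priori estimate. You write the equation as a perturbation of the heat equation and then hope to absorb
\[
\|(\delta^{ij}-2a_\theta^{ij})\partial^2_{ij}u\|_{\LL^q_{-1,p}([s,t])}
\lesssim(\|a_\theta\|_{L_\infty}+\|\nabla a_\theta\|_{L_{p_0}})\|u\|_{\LL^q_{1,p}([s,t])}
\]
by shrinking the time interval. But this is a \emph{principal-order} perturbation: both sides of the maximal regularity bound $\|u\|_{\LL^q_{1,p}([s,t])}\lesssim\|f\|_{\LL^q_{-1,p}([s,t])}$ are integrated norms of the same parabolic homogeneity, so no positive power of $(t-s)$ appears. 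The factor $(t-s)^{1/2-1/q}$ you cite from \cref{lem.ust} bounds the \emph{pointwise} quantity $\|u(s)\|_{L_p}$, not $\|u\|_{\LL^q_{1,p}([s,t])}$; it is useful for lower-order terms but does nothing for $\partial^2_{ij}u$. The large-$\lambda$ variant has exactly the same defect: in \cref{pde} the gain $\lambda^{\frac12(2-\gamma+\ldots)}$ disappears at the top order $\gamma=2$ (here $\gamma=1$ with right-hand side in $\LL^q_{-1,p}$), so $\lambda$ cannot kill the coefficient of $\|u\|_{\LL^q_{1,p}}$.

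What the paper does instead is \emph{spatial} freezing: localize via cutoffs $\phi^z$, replace $a$ by its ball average $a(z)$, and use that under \cref{con.A1} one has $\sup_{t,z}\|(a-a(z))\psi^z_\rho\|_{L_{1,p_0}}\to 0$ as $\rho\to 0$ (\cref{lem.asmall}, a Poincar\'e-type consequence of $\nabla a\in L_{p_0}$). Combined with \cref{lem.multiplication}, this makes the commutator contribution $o_\rho(1)\|u\|_{\LL^q_{1,p}}$ genuinely small, which can then be absorbed. The paper carries this out first for $q\ge p$ (\cref{lem.u}, via \cref{lem.localize} and \cref{lem.kry}) and for $q'\ge p'$ on the adjoint equation (\cref{lem.v}); the remaining ranges follow by the duality pairing you describe. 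The method of continuity is used, but the a priori bound it requires rests on this spatial localization, not on time localization.
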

	Before giving the proof of the above theorem, we show several auxiliary results.
	\begin{lemma}[{\cite[Lemma 4.1]{Zhangzhao}}]\label{lem.localize}
	 	Let $\zeta$ be a nonzero smooth function with compact support. Define $\zeta^z(x)=\zeta(x-z)$. For any $\nu\in\R$ and $p\in(1,\infty)$, there exists a constant $C\ge1$ depending only on $\nu,p,\zeta$ such that for any $f\in L_{\nu,p}(\mR^d)$,
	 	\begin{align*}
	 		C^{-1}\|f\|_{L_{\nu,p}(\Rd)}\le\left(\int_\Rd\|f \zeta^z\|_{L_{\nu,p}(\Rd)}^pdz\right)^{1/p}\le C \|f\|_{L_{\nu,p}(\Rd)}.	
	 	\end{align*}	
	\end{lemma}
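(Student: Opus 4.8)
The plan is to first settle the case $\nu=0$, which is an exact identity: by Tonelli's theorem,
\[
\int_\Rd\|f\zeta^z\|_{L_p(\Rd)}^p\,dz=\int_\Rd\!\int_\Rd|f(x)|^p\,|\zeta(x-z)|^p\,dx\,dz=\|\zeta\|_{L_p(\Rd)}^p\,\|f\|_{L_p(\Rd)}^p,
\]
so that the estimate holds with $C=\|\zeta\|_{L_p(\Rd)}$. For general $\nu\in\R$ I would pass to the identification $L_{\nu,p}(\Rd)=F^\nu_{p,2}(\Rd)$ of the Bessel potential space with the Triebel--Lizorkin space (valid for $1<p<\infty$) and use two standard properties of the spaces $F^\nu_{p,2}$: \emph{(a)} multiplication by a fixed function in $C_c^\infty(\Rd)$ is bounded on $F^\nu_{p,2}$, with operator norm invariant under translations of the multiplier; and \emph{(b)} the localization principle: for a smooth partition of unity $\{\phi_j=\phi_0(\cdot-j)\}_{j\in\mathbb Z^d}$ one has $\|g\|_{F^\nu_{p,2}}^p\sim\sum_j\|\phi_j g\|_{F^\nu_{p,2}}^p$, and dually $\big\|\sum_j g_j\big\|_{F^\nu_{p,2}}^p\les\sum_j\|g_j\|_{F^\nu_{p,2}}^p$ whenever the supports of the $g_j$ have uniformly bounded overlap.

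For the right-hand inequality I would decompose $\int_\Rd\|f\zeta^z\|_{L_{\nu,p}}^p\,dz=\int_{[0,1]^d}\sum_{j\in\mathbb Z^d}\|f\,\zeta(\cdot-z-j)\|_{L_{\nu,p}}^p\,dz$, fix $\psi\in C_c^\infty(\Rd)$ equal to $1$ on $[0,1]^d+\mathrm{supp}\,\zeta$, and put $\psi_j=\psi(\cdot-j)$, so that $f\,\zeta(\cdot-z-j)=(f\psi_j)\,\zeta(\cdot-z-j)$ for $z\in[0,1]^d$. Then \emph{(a)} gives $\|f\,\zeta(\cdot-z-j)\|_{L_{\nu,p}}\les\|f\psi_j\|_{L_{\nu,p}}$ uniformly in $z$ and $j$, and \emph{(b)} gives $\sum_j\|f\psi_j\|_{L_{\nu,p}}^p\les\|f\|_{L_{\nu,p}}^p$; integrating over $z\in[0,1]^d$ closes this direction.

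For the left-hand inequality I would choose $\eta\in C_c^\infty(\Rd)$ with $\int_\Rd\eta\,\zeta\,dx\ne0$ (possible since $\zeta\not\equiv 0$; e.g. $\eta$ a scalar multiple of $\overline\zeta$), normalised so that $\int_\Rd\eta(x-z)\,\zeta(x-z)\,dz=1$ for every $x$, whence $f=\int_\Rd(f\zeta^z)\,\eta^z\,dz$ pointwise. Breaking the $z$-integral over the cells $[0,1]^d+j$ yields $f=\sum_j h_j$ with $h_j:=\int_{[0,1]^d+j}(f\zeta^z)\,\eta^z\,dz$ supported in a bounded neighbourhood of the $j$-th cell; Minkowski's integral inequality and \emph{(a)} give $\|h_j\|_{L_{\nu,p}}\les\int_{[0,1]^d+j}\|f\zeta^z\|_{L_{\nu,p}}\,dz$, and then \emph{(b)} followed by Hölder's inequality (each cell having measure one) gives
\[
\|f\|_{L_{\nu,p}}^p\les\sum_j\|h_j\|_{L_{\nu,p}}^p\les\sum_j\Big(\int_{[0,1]^d+j}\|f\zeta^z\|_{L_{\nu,p}}\,dz\Big)^p\les\int_\Rd\|f\zeta^z\|_{L_{\nu,p}}^p\,dz,
\]
which is the remaining estimate.

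The hard part is entirely contained in property \emph{(b)}: for $\nu<0$ the localization principle is exactly where one must control the nonlocal operator $(I-\Delta)^{\nu/2}$ against multiplication by a compactly supported bump, and proving it from scratch requires a Littlewood--Paley argument or, equivalently, a commutator estimate for $[(I-\Delta)^{\nu/2},\zeta(\cdot-z)]$ with a bound localized near $z$. If one prefers to avoid citing the function-space literature, a partially elementary route is available: for $\nu\ge 0$ one can bootstrap from the $\nu=0$ case using the Leibniz rule together with the equivalence $\|g\|_{L_{\nu,p}}\sim\|g\|_{L_{\nu-1,p}}+\|\nabla g\|_{L_{\nu-1,p}}$, and the left-hand inequality for $\nu\le 0$ then follows from the right-hand inequality for $-\nu\ge 0$ by duality, $L_{-\nu,p'}(\Rd)=(L_{\nu,p}(\Rd))^{\ast}$, combined with a reconstruction identity of the type used above; the only case these elementary steps do not reach, the right-hand inequality for negative smoothness, is precisely the commutator estimate just mentioned, which is the technical heart of the statement.
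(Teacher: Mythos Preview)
The paper does not prove this lemma at all: it is simply quoted from \cite[Lemma~4.1]{Zhangzhao} with no argument given. Your sketch is a correct and standard route to the result, resting on the identification $L_{\nu,p}=F^\nu_{p,2}$ and the well-known localization principle for Triebel--Lizorkin spaces (e.g.\ Triebel, \emph{Theory of Function Spaces}, Theorem~2.4.7); the reconstruction identity $f=\int(f\zeta^z)\eta^z\,dz$ for the lower bound is exactly the right device, and your reduction of both directions to properties \emph{(a)} and \emph{(b)} is clean. Since there is nothing in the paper to compare against, there is nothing further to add.
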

	\begin{lemma}[{\cite[Lemma 2.5]{Kim}}]\label{lem.kry}
		For $k=1,\ldots,n$, let $a^k:\R_+\to\Rd\times\Rd$ be a measurable function satisfying \eqref{ellptic-cona}.
		For fixed $\nu\in\R$, $p\in(1,\infty)$, let $u^k\in\LL^q_{\nu,p}([0,1])$ solve the following PDE
		\begin{align*}
		 	(\partial_s+a_k^{ij}\partial^2_{ij})u^k=f^k, \quad u(1,\cdot)=0.
		\end{align*}
		Then
		\begin{align*}
			\int_0^1\prod_{k=1}^n\|\nabla^2 u^k(t)\|_{L_{\nu,p}(\Rd)}^pdt\le N\sum_{k=1}^n\int_0^1\|f^k\|_{L_{\nu,p}(\Rd)}^p\prod_{j\neq k}\|\nabla^2 u^j(t)\|_{L_{\nu,p}(\Rd)}^pdt.
		\end{align*}
	\end{lemma}

	We will make use the following: 

	\noindent\textbf{Convention.}  For a parameter $\rho>0$, we write $o_\rho$ and $C_\rho$ for any constants whose exact values depend on $\rho$ and may change from one instance to another, but it is always enforced that $\lim_{\rho\to0}o_\rho=0$. In particular, the inequality $A\les D+o_\rho F+C_\rho E$ means that $A\le cD+co_\rho F+cC_\rho E$ for some constant $c$ independent from $\rho$.
	
	\begin{lemma}\label{lem.asmall}
		Assuming \cref{con.A1}. Let  $\psi_1$ be a smooth function supported in the ball $\cB_1:=\{x\in\mR^d:|x|\leq 1\}$. For each $\rho>0$ and $z\in\Rd$, define $\cB_\rho^z:=\{x\in\mR^d:|x-z|\leq \rho\}$, $\psi_\rho^z(x):=\psi_1(\frac{x-z}{\rho})$ and $a(z)(t,x)=\frac1{|\cB^z_\rho|}\int_{\cB^z_\rho}a(t,y)dy$.
		Then   we have
   \begin{align}
      \label{est:rho-froz}\sup_{(t,z)\in[0,1]\times\mR^d}\|(a-a(z)) \psi_\rho^z\|_{L_{1,p_0}(\Rd)}\lesssim o_\rho
  \end{align}

		Here $\psi_\rho^z(x):=\psi_1(\frac{x-z}{\rho})$, $x,z\in\mR^d$, $\rho>0$.
	\end{lemma}	
	\begin{proof} Observe that
		\begin{align*}
		I_1:=	 \sup_{z\in\mR^d}\|(a-a(z) )\psi_\rho^z\|_{L_{p_0}(\Rd)}\les\|\psi_1\|_{L_\infty(\Rd)} \|a\|_{L_\infty(\Rd)}\|\1_{\cB_{2 \rho}}\|_{L_{p_0}(\Rd)}
		\end{align*}
		and
		\begin{align*}
		I_2:=	\sup_{z\in\mR^d}\|\nabla a \psi_\rho^z\|_{L_{p_0}(\Rd)}\les\|\psi_1\|_{L_\infty(\Rd)}\|\nabla a\cdot\1_{\cB_{2 \rho}}\|_{L_{p_0}(\Rd)}.		
		\end{align*}
		Using Poincar\'e inequality (\cite[Theorem 3.17]{Guisti})
		\begin{align*}
		I_3:=	\sup_{z\in\mR^d}\|(a-a(z)) \nabla\psi_\rho^z\|_{L_{p_0}(\Rd)}
			\les \|\nabla \psi_1\|_{L_\infty(\Rd)} \|\nabla a\cdot\1_{\cB_{2 \rho}}\|_{L_{p_0}(\Rd)}.
		\end{align*}
 \cref{con.A1} implies that $I_1+I_2+I_3\le o_\rho$. Furthermore, because 
		\begin{align*}
		  \sup_{z\in\mR^d}\|(a-a(z)) \psi_\rho^z\|_{L_{1,p_0}(\Rd)}\les  (I_1+I_2+I_3),
		\end{align*}
		we obtain the desired result.
	\end{proof}

 \begin{lemma}\label{lem.u}
	Assuming  \cref{con.A1} and additionally that $q\geq p$.
	Then there exists a unique solution $u$  to \eqref{eqn.u} which satisfies \eqref{est.upq}.
	\end{lemma}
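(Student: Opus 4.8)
The plan is to establish the a priori estimate \eqref{est.upq} for an arbitrary solution of \eqref{eqn.u} first, and then deduce existence by the method of continuity; uniqueness is then automatic, since the difference of two solutions solves \eqref{eqn.u} with $f=0$ and \eqref{est.upq} forces it to vanish. Throughout I would rely on \cref{prop.aD2u} to make sense of $a^{ij}\partial^2_{ij}u$ as an element of $\LL^q_{-1,p}([0,1])$, on the sharp multiplication estimate \cref{lem.multiplication}(iii), and on the constant-(in-space-)coefficient maximal regularity from \cite{Kim,Zhangzhao,LX}: for $h\in\LL^q_{-1,p}([0,1])$ and a coefficient matrix $\tilde a(t)$ depending only on $t$ and satisfying \eqref{ellptic-cona}, the problem $(\partial_s+\tilde a^{ij}\partial^2_{ij})w=h$, $w(1,\cdot)=0$, has a unique solution with $\|w\|_{\LL^q_{1,p}([0,1])}+\|\partial_s w\|_{\LL^q_{-1,p}([0,1])}\les\|h\|_{\LL^q_{-1,p}([0,1])}$, the constant depending only on $d,p,q,k_1$.

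For the a priori estimate I would freeze the coefficients and localize. Fix $\psi_1$ as in \cref{lem.asmall}, set $\psi^z_\rho(x)=\psi_1((x-z)/\rho)$, let $\tilde\psi^z_\rho$ be a fattened cutoff equal to $1$ on the support of $\psi^z_\rho$, and put $a(z)(t,x)=|\cB^z_\rho|^{-1}\int_{\cB^z_\rho}a(t,y)\,dy$, which is $x$-independent. Then $w=\psi^z_\rho u$ solves
\[
(\partial_s+a(z)^{ij}\partial^2_{ij})(\psi^z_\rho u)=\psi^z_\rho f+\big[(a(z)-a)\tilde\psi^z_\rho\big]^{ij}\partial^2_{ij}(\psi^z_\rho u)+R_z,
\]
where $R_z$ gathers the commutator terms of the form $a^{ij}(\partial_i\psi^z_\rho)(\partial_j u)$ and $a^{ij}(\partial^2_{ij}\psi^z_\rho)u$, all of lower order. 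By \cref{lem.multiplication}(iii) the middle term is bounded in $\LL^q_{-1,p}([0,1])$ by $\big(\sup_{t,z}\|(a-a(z))\tilde\psi^z_\rho\|_{L_{1,p_0}(\Rd)}\big)\,\|\psi^z_\rho u\|_{\LL^q_{1,p}([0,1])}$, and by \cref{lem.asmall} the prefactor is $o_\rho(1)$ uniformly in $z$. Applying the frozen maximal regularity estimate with $\tilde a=a(z)$ then yields, for each $z$,
\[
\|\psi^z_\rho u\|_{\LL^q_{1,p}([0,1])}+\|\partial_s(\psi^z_\rho u)\|_{\LL^q_{-1,p}([0,1])}\les\|\psi^z_\rho f\|_{\LL^q_{-1,p}([0,1])}+o_\rho(1)\|\psi^z_\rho u\|_{\LL^q_{1,p}([0,1])}+\|R_z\|_{\LL^q_{-1,p}([0,1])}.
\]

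Next I would raise to the $p$-th power, integrate over $z\in\Rd$, and use \cref{lem.localize} at each time to pass back to global mixed norms on both sides; here the interchange of the $z$- and $t$-integrations is exactly where the hypothesis $q\ge p$ is used, via Minkowski's integral inequality. This produces
\[
\|u\|_{\LL^q_{1,p}([0,1])}+\|\partial_s u\|_{\LL^q_{-1,p}([0,1])}\les\|f\|_{\LL^q_{-1,p}([0,1])}+o_\rho(1)\|u\|_{\LL^q_{1,p}([0,1])}+N_\rho\big(\|\nabla u\|_{\LL^q_p([0,1])}+\|u\|_{\LL^q_p([0,1])}\big).
\]
Choosing $\rho$ small absorbs the $o_\rho(1)$-term. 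The remaining lower-order terms I would handle by interpolation, $\|\nabla u\|_{\LL^q_p}\le\varepsilon\|u\|_{\LL^q_{1,p}}+C_\varepsilon\|u\|_{\LL^q_{-1,p}}$ and similarly for $\|u\|_{\LL^q_p}$, absorbing the $\varepsilon$-terms and then controlling $\|u\|_{\LL^q_{-1,p}}$ by running the whole argument on a short terminal slab $[1-\tau_0,1]$ (where the lower-order constant is small) and iterating backwards finitely many times, using $u(1,\cdot)=0$; this is the standard time-slicing for terminal-value problems and gives \eqref{est.upq}. For existence I would run the method of continuity along $a_\theta=\theta a+(1-\theta)I$, $\theta\in[0,1]$, each satisfying \cref{con.A1} with uniform constants, so the above estimate holds uniformly in $\theta$; at $\theta=0$ the backward heat equation is solvable in the required class by Duhamel's formula and standard bounds, and the a priori estimate propagates solvability to $\theta=1$ (alternatively, mollify $a$ and $f$, solve the smooth problems, and pass to the limit). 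The main obstacle I expect is the careful control of the freezing error and the commutators $R_z$ in the negative-order norm $\LL^q_{-1,p}$: because $f$ and $a^{ij}\partial^2_{ij}u$ are genuine distributions, every product with a cutoff or with $a-a(z)$ must be estimated through \cref{lem.multiplication}(iii) and \cref{lem.asmall}, and the bounds must be uniform in $z$ and $t$ so that the gain in $\rho$ survives the summation via \cref{lem.localize} — this uniform-in-space bookkeeping is the technical heart of the argument.
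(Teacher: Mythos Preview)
Your localization and freezing strategy is correct in spirit, and the treatment of the perturbation $(a(z)-a)\tilde\psi^z_\rho$ via \cref{lem.multiplication}(iii) and \cref{lem.asmall} is exactly right. The gap is in the step where you ``raise to the $p$-th power, integrate over $z$, and use \cref{lem.localize}''. Write $\phi(z,t)=\|\psi^z_\rho u_t\|_{L_{1,p}}$ and $\psi(z,t)=\|\psi^z_\rho f_t\|_{L_{-1,p}}$. After integrating in $z$ your inequality reads $\|\phi\|_{L^p_z L^q_t}\lesssim\|\psi\|_{L^p_z L^q_t}+\ldots$, while \cref{lem.localize} lives at fixed $t$ and identifies $\|\phi(\cdot,t)\|_{L^p_z}\approx\|u_t\|_{L_{1,p}}$, $\|\psi(\cdot,t)\|_{L^p_z}\approx\|f_t\|_{L_{-1,p}}$. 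Minkowski with $q\ge p$ gives $\|\cdot\|_{L^q_t L^p_z}\le\|\cdot\|_{L^p_z L^q_t}$, which is the right direction for the \emph{left}-hand side (you need a lower bound on $\|\phi\|_{L^p_z L^q_t}$), but the \emph{wrong} direction for the right-hand side: you would need $\|\psi\|_{L^p_z L^q_t}\le C\|\psi\|_{L^q_t L^p_z}$, and this simply fails for $q>p$. The same obstruction hits the commutator term $R_z$. So for $q>p$ the argument does not close.

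This is precisely why the paper does not apply the frozen-coefficient $\LL^q_{1,p}$-estimate for each $z$. Instead it reduces by Marcinkiewicz interpolation to $q=\bar n p$ with integer $\bar n$, expands $\left(\int_\Rd\|\nabla^2 u^z_t\|_{L_{-1,p}}^p\,dz\right)^{\bar n}$ as an integral over $(z_1,\ldots,z_{\bar n})$, swaps the $t$- and $z$-integrals by Tonelli (a genuine equality, no Minkowski), and then applies Kim's product inequality \cref{lem.kry} at each fixed $(z_1,\ldots,z_{\bar n})$ to peel off one factor of $F^{z_k}$ at a time. The point of \cref{lem.kry} is exactly that it is a pointwise-in-$t$ statement, so the interchange of integrals is lossless; this is what replaces your Minkowski step and makes the passage from local to global work on the forcing side.
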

	\begin{proof}
		By Marcinkiewicz interpolation theorem, it suffices to show the result when $q=\bar np$ for any integer $\bar n\ge1$.
		Let $\bar n\ge1$ be a fixed integer.
		By the method of continuity (e.g. see \cite[Theorem 5.2]{MR1814364}), it suffices to show that there exists positive  $N=N(d,p,q,p_0,k_1,k_3)$ such that whenever $u$ is a solution to \eqref{eqn.u},
		\begin{align}\label{est.upnp}
			\|u\|_{\LL^{\bar np}_{1,p}([0,1])}\le N\|f\|_{\LL^{\bar np}_{-1,p}([0,1])}.
		\end{align}
		Note that if $u$ is a solution to \eqref{eqn.u}, then using \cref{prop.aD2u}, the above estimate implies that
		\begin{align*}
			\|\partial_tu\|_{\LL^{\bar np}_{-1,p}([0,1])}\les \|f\|_{\LL^{\bar np}_{-1,p}([0,1])}.	
		\end{align*}
		
		Let $\rho>0$ be a fixed constant and $\phi$ be a nonnegative smooth function such that $\phi$ is supported in the ball $\cB_\rho:=\{x\in\Rd:|x|\le \rho\}$ and $\|\phi\|_{L_p(\Rd)}=1$. For each $z\in\Rd$, define $a(z)$ as in \cref{lem.asmall} and
		\[
			\phi^z(x):=\phi(x-z),  \quad u^z(s,x):=u(s,x)\phi^z(x), \quad f^z(s,x):=f(s,x)\phi^z(x).
		\]
		Then $u^z$ satisfies  the relation
		\begin{align}
			&\partial_tu^z+a^{ij}(z)\partial^2_{ij}u^z=F^z, \quad u^z(1,\cdot)=0,
			\label{eqn.uz}
		\\&F^z:=f \phi^z+2a^{ij} \partial_i u \partial_j \phi^z +a^{ij} u \partial^2_{ij}\phi^z+(a^{ij}(z)-a^{ij})\partial^2_{ij}u^z.	 \nonumber
		\end{align}
		The proof is now divided into several steps.

		\textit{Step 1.} We show that for each $t\in[0,1]$,
		\begin{align}
			\left(\int_\Rd \|F^z_t\|_{L_{-1,p}(\Rd)}^pdz\right)^{1/p}\les \|f_t\|_{L_{-1,p}(\Rd)}+C_\rho\|u_t\|_{L_{p}(\Rd)}
   +o_\rho
   \|u_t\|_{L_{1,p}(\Rd)}.\label{tmp.Fz}
		\end{align}
   		Applying \cref{lem.localize} (with $\zeta=\phi,\partial_j \phi,\partial^2_{ij} \phi$ respectively) and \cref{lem.multiplication},  noting that $\|\nabla\phi\|_{\infty}+\|\nabla^2\phi\|_{\infty}\les C_\rho$,  we obtain that
		\begin{multline*}
			\left(\int_\Rd\|f \phi^z+2a^{ij} \partial_i u \partial_j \phi^z +a^{ij} u \partial^2_{ij}\phi^z\|_{L_{-1,p}(\Rd)}^pdz\right)^{1/p}
			\\\les \|f\|_{L_{-1,p}(\Rd)}+\|\nabla\phi\|_{\infty}\|a^{ij}\partial_i u\|_{L_{-1,p}(\Rd)}+|\nabla^2\phi\|_{\infty}\|a^{ij}u\|_{L_{-1,p}(\Rd)}
			\\\les\|f\|_{L_{-1,p}(\Rd)}+ C_\rho
			\|u\|_{L_p(\Rd)}.
		\end{multline*}
		Let $\psi$ be a smooth function on $\Rd$ such that $\psi(x)=1$ if $|x|\le \rho$ and $\psi(x)=0$ if $|x|\ge 2 \rho$. Define $\psi^z(x)=\psi(x-z)$.
		Applying \cref{lem.multiplication} (whose hypothesis is justified because $p_0>p'$ and $p_0>d$), we have
		\begin{align*}
			 \|(a^{ij}(z)-a^{ij})\partial^2_{ij}u^z\|_{L_{-1,p_0}(\Rd)}\les\|(a^{ij}(z)-a^{ij})\psi^z\|_{L_{1,p_0}(\Rd)}\|\partial^2_{ij}u^z\|_{L_{-1,p}(\Rd)}.
		\end{align*}
		By \cref{lem.asmall}, we see that
		\begin{align*}
			\sup_{z\in\Rd}\|(a^{ij}(z)-a^{ij})\psi^z\|_{L_{1,p_0}(\Rd)}\le 
   {o_\rho}.
		\end{align*}
		Hence,
		\begin{align*}
			\|(a^{ij}(z)-a^{ij})\partial^2_{ij}u^z\|_{L_{-1,p_0}(\Rd)}\le  o_\rho
   \|u^z\|_{L_{1,p}(\Rd)}.
		\end{align*}
		It is easy to see that $\int_\Rd\|h \phi^z\|_{L_p(\Rd)}^pdz=\|h\|_{L_p(\Rd)}$ for any $h\in L_p(\Rd)$. Hence, by Minkowski inequality and \cref{lem.localize}, we have 
		\begin{align}
			&\left(\int_\Rd\|u^z\|_{L_{1,p}}^pdz \right)^{1/p}
			\le \left(\int_\Rd\|\nabla u^z\|_{L_{p}}^pdz \right)^{1/p}+\left(\int_\Rd\|u^z\|_{L_{p}}^pdz \right)^{1/p}
			\nonumber\\&\le \left(\int_\Rd\|\nabla u \phi^z\|_{L_{p}}^pdz \right)^{1/p}+\left(\int_\Rd\|u \nabla \phi^z\|_{L_{p}}^pdz \right)^{1/p}+\left(\int_\Rd\|u^z\|_{L_{p}}^pdz \right)^{1/p}
			\nonumber\\&\lesssim \|\nabla u\|_{L_p(\Rd)}+C_\rho
			\|u\|_{L_p(\Rd)}.\label{tmp.uz1}
		\end{align}
		This shows that
		\begin{align*}
			\left(\int_\Rd \|(a^{ij}(z)-a^{ij})\partial^2_{ij}u^z\|_{L_{-1,p}(\Rd)}^pdz\right)^{1/p}\les  o_\rho
	\|u\|_{L_{1,p}(\Rd)}+C_\rho\|u\|_{L_p(\Rd)}.
		\end{align*}
		Hence, we have \eqref{tmp.Fz}.

		\textit{Step 2.} We show that for every integer $1\le n\le\bar n$ and every $s\in[0,1]$,
		\begin{align}\label{tmp.u2}
			\| u\|_{\LL_{1,p}^{np}([s,1])}\les \|f\|_{\LL_{-1,p}^{np}([s,1])}+\|u\|_{\LL_{-1,p}^{np}([s,1])}.
		\end{align}
		Since
		\begin{align}\label{tmp.equivnorm}
		 	\|u\|_{\LL_{1,p}^{np}([s,1])}\approx\|\nabla^2u \|_{\LL_{-1,p}^{np}([s,1])}+\|u\|_{\LL_{-1,p}^{np}([s,1])},
		\end{align}
		  it suffices to estimate $\|\nabla^2u \|_{\LL_{-1,p}^{np}([s,1])}$.
		From \cref{lem.localize}, we have
		\begin{align}\label{tmp.nton-1}
			\|\nabla^2 u\|_{\LL^{np}_{-1,p}([s,1])}^{np}
			&\les\int_s^1\left(\int_\Rd\|\nabla^2 u^z_t\|_{L_{-1,p}(\Rd)}^pdz\right)^n dt+\int_s^1\|u_t\|_{L_p(\Rd)}^{np}dt.
		\end{align}	
From Tonelli's theorem, \cref{lem.kry} and \eqref{tmp.Fz},
		\begin{multline*}
			\int_s^1\left(\int_\Rd\|\nabla^2 u^z_t\|_{L_{-1,p}(\Rd)}^pdz\right)^n dt
			\\\les\int_s^1 \left(\int_\Rd\|\nabla^2 u^z_t\|_{L_{-1,p}(\Rd)}^pdz\right)^{n-1}\left(\|f_t\|_{L_{-1,p}(\Rd)}+
   \|u_t\|_{L_{p}(\Rd)}+o_\rho
   \|u_t\|_{L_{1,p}(\Rd)}\right)^pdt.
		\end{multline*}
		Applying H\"older inequality, we have
		\begin{multline*}
			\int_s^1\left(\int_\Rd\|\nabla^2 u^z_t\|_{L_{-1,p}(\Rd)}^pdz\right)^n dt
			\les\left[\int_s^1 \left(\int_\Rd\|\nabla^2 u^z_t\|_{L_{-1,p}(\Rd)}^pdz\right)^{n}dt\right]^{1-\frac1n}
			\\\times
			\left[\int_s^1\left(\|f\|_{L_{-1,p}(\Rd)}
			+C_\rho \|u\|_{L_p(\Rd)}
			+o_\rho\|u\|_{L_{1,p}(\Rd)}\right)^{np}dt\right]^{\frac1n},
		\end{multline*}
		which yields that
		\begin{align*}
			\int_s^1&\left(\int_\Rd\|\nabla^2 u^z_t\|_{L_{-1,p}(\Rd)}^pdz\right)^n dt
			\\&\les\int_s^1\left(\|f\|_{L_{-1,p}(\Rd)}^{np}
			+C_\rho \|u\|_{L_p(\Rd)}^{np}
			+o_\rho  \|u\|_{L_{1,p}(\Rd)}^{np}\right)dt.
		\end{align*}
		Putting this into \eqref{tmp.nton-1}, we obtain that 
	\begin{align*}
		\|\nabla^2 u\|_{\LL_{-1,p}^{np}([s,1])}^{np}\les \|f\|_{\LL_{-1,p}^{np}([s,1])}^{np}
		+C_\rho\|u\|_{\LL_{p}^{np}([s,1])}^{np}
		+ o_\rho  \|u\|_{\LL_{1,p}^{np}([s,1])}^{np}.
	\end{align*}
	Using interpolation inequality 
	\begin{align}\label{tmp.interpolation}
		\|u\|_{\LL_{p}^{np}([s,1])}\les 
 C_\rho\|u\|_{\LL_{-1,p}^{np}([s,1])}+o_\rho
   \|u\|_{\LL_{1,p}^{np}([s,1])},
	\end{align}
	we get
	\begin{align*}
		\|\nabla^2 u\|_{\LL_{-1,p}^{np}([s,1])}\les \|f\|_{\LL_{-1,p}^{np}([s,1])}+
		C_\rho\|u\|_{\LL_{-1,p}^{np}([s,1])}
+o_\rho
   \|u\|_{\LL_{1,p}^{np}([s,1])}.
	\end{align*}
	In view of \eqref{tmp.equivnorm}, we have
	\begin{align*}
		\|u\|_{\LL_{1,p}^{np}([s,1])}
		&\les\|f\|_{\LL_{-1,p}^{np}([s,1])}+C_\rho\|u\|_{\LL_{-1,p}^{np}([s,1])}+o_\rho
   \|u\|_{\LL_{1,p}^{np}([s,1])}.
	\end{align*}
	By choosing $\rho$ sufficiently small, we derive  derive \eqref{tmp.u2}  from the above estimate.
	
	\textit{Step 3.} We show that
	\begin{align}\label{tmp.u3}
		\|u\|_{\LL^\infty_{-1,p}([0,1])}\les\|f\|_{\LL^p_{-1,p}([0,1])}.
	\end{align}
	From \eqref{eqn.uz}, we have
	\begin{align*}
		u^z_s=\int_s^1P_{\Sigma_{s,t}(z)}F^z_tdt, \quad\text{where}\quad \Sigma_{s,t}(z)=2\int_s^t a(r,z)dr.
	\end{align*}
Then   Minkowski inequality and \cite[Theorem 5.30]{MR3086433} yield 
	\begin{align*}
		\|u_s^z\|_{L_{-1,p}(\Rd)}\les\int_s^1\|F^z_t\|_{L_{-1,p}(\Rd)}dt.
	\end{align*}
	Applying H\"older inequality, \cref{lem.localize}, \eqref{tmp.Fz} and the interpolation inequality \eqref{tmp.interpolation}, we obtain from the above that
	\begin{align*}
		\|u_s\|_{L_{-1,p}(\Rd)}^p
		&\les\int_s^1\int_\Rd\|F^z_t\|_{L_{-1,p}(\Rd)}^pdzdt
		\\&\les\int_s^1\left[\|f_t\|^p_{L_{-1,p}(\Rd)}+\|u_t\|^p_{L_{-1,p}(\Rd)}+\|u_t\|^p_{L_{1,p}(\Rd)}\right]dt.
	\end{align*}
	Applying \eqref{tmp.u2} (with $n=1$), we have
	\begin{align*}
		\|u_s\|^p_{L_{-1,p}(\Rd)}\les\|f\|^p_{\LL^p_{1,p}([0,1])}+\int_s^1\|u_t\|_{L_{-1,p}(\Rd)}^pdt,
	\end{align*}
	which implies \eqref{tmp.u3} by Gr\"onwall inequality.

	\textit{Step 4.}
	Using \eqref{tmp.u3} in \eqref{tmp.u2} yields
	\begin{align*}
		\|u\|_{\LL^{np}_{1,p}([0,1])}\les\|f\|_{\LL^{np}_{1,p}([0,1])}+\|f\|_{\LL^p_{1,p}([0,1])}
		\les\|f\|_{\LL^{np}_{1,p}([0,1])},
	\end{align*}
	which implies \eqref{est.upq}.
	\end{proof}
	\begin{lemma}\label{lem.v}
	Assuming \cref{con.A1} and additionally that $q'\geq p'$. Then there exists a unique solution $v$  to \eqref{eqn.v}  which satisfies \eqref{est.vpq}.
	\end{lemma}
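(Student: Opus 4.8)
The plan is to run, \emph{mutatis mutandis}, the scheme used for \cref{lem.u}, exploiting the fact that after reversing time and freezing the coefficient in the spatial variable the divergence and non-divergence forms of the operator coincide. A direct dualization of \cref{lem.u} is not available here, since the hypothesis $q'\ge p'$ puts the dual $u$-problem into the range $q\le p$ not covered by \cref{lem.u}, so the localization has to be redone. First I note that $\partial^2_{ij}(a^{ij}v)\in\LL^{q'}_{-1,p'}([0,1])$ for $v\in\LL^{q'}_{1,p'}([0,1])$ by \cref{prop.aD2u}, so \eqref{eqn.v} is a meaningful identity in $\LL^q_{-1,p}([0,1])$. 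By Marcinkiewicz interpolation it suffices to treat $q'=\bar np'$ for an integer $\bar n\ge1$; and by the method of continuity---connecting $a$ to the identity through $a_\theta=(1-\theta)I+\theta a$, which satisfies \cref{con.A1} uniformly in $\theta\in[0,1]$, together with the classical solvability for $\partial_t-\Delta$---it is enough to prove the a priori bound $\|v\|_{\LL^{q'}_{1,p'}([s,1])}\les\|g\|_{\LL^{q'}_{-1,p'}([s,1])}$ for an arbitrary solution $v$; uniqueness is then immediate from linearity and this bound.

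For the a priori bound, fix a cutoff $\phi$ supported in $\cB_\rho$ with $\|\phi\|_{L_{p'}}=1$, freeze $a$ at $z$ as in \cref{lem.asmall}, and set $v^z=v\phi^z$. Differentiating the product, $v^z$ solves
\begin{align*}
\partial_t v^z-a^{ij}(z)\partial^2_{ij}v^z=G^z,\qquad v^z(0,\cdot)=0,
\end{align*}
where I have used $\partial^2_{ij}(a^{ij}(z)v^z)=a^{ij}(z)\partial^2_{ij}v^z$ and
\begin{align*}
G^z=\partial^2_{ij}\big((a^{ij}-a^{ij}(z))v^z\big)-2\,\partial_i\phi^z\,\partial_j(a^{ij}v)-\partial^2_{ij}\phi^z\,a^{ij}v-\phi^z g.
\end{align*}
Exactly as in Step~1 of the proof of \cref{lem.u}: writing $\psi^z$ for a smooth cutoff equal to $1$ on the support of $\phi^z$, the first term is bounded in $L_{-1,p'}$ by $\les\|(a^{ij}-a^{ij}(z))\psi^z v^z\|_{L_{1,p'}}\les\|(a^{ij}-a^{ij}(z))\psi^z\|_{L_{1,p_0}}\|v^z\|_{L_{1,p'}}=o_\rho(1)\|v^z\|_{L_{1,p'}}$, using the boundedness of $\partial^2_{ij}$ from $L_{1,p'}$ to $L_{-1,p'}$, \cref{lem.multiplication}(iii) (whose hypothesis $1/p+1/p_0<1$ is \cref{con.A1}), the embedding $L_{1,p_0}\hookrightarrow L_\infty$ ($p_0>d$) and \cref{lem.asmall}; the commutator terms carry only one derivative of $a^{ij}v$, which I relocate by a further integration by parts onto $\phi^z$, after which $p_0>d$, H\"older's inequality, \cref{lem.multiplication} and the compact support of $v^z$ give a bound by $C_\rho(\|v\|_{L_{p'}}+\|g\|_{L_{-1,p'}})$. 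Hence $\big(\int_{\Rd}\|G^z_t\|_{L_{-1,p'}}^{p'}dz\big)^{1/p'}\les\|g_t\|_{L_{-1,p'}}+\|v_t\|_{L_{p'}}+o_\rho(1)\|v_t\|_{L_{1,p'}}$ for each $t$, by \cref{lem.localize}.

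The assembly is then a transcription of Steps~2--4 of the proof of \cref{lem.u} with $(p,q)$ replaced by $(p',q')$. Reversing time turns each $v^z$ into a solution of a backward equation with a spatially-constant elliptic coefficient, so \cref{lem.kry} (with $\nu=-1$, exponent $p'$, the family indexed by $z$ over a countable dense subset of $\Rd$ and then passed to the $dz$-integral), together with H\"older in time, the interpolation inequality $\|v\|_{\LL^{np'}_{p'}}\le C_\rho\|v\|_{\LL^{np'}_{-1,p'}}+o_\rho(1)\|v\|_{\LL^{np'}_{1,p'}}$, the norm equivalence $\|v\|_{\LL^{np'}_{1,p'}}\approx\|\nabla^2 v\|_{\LL^{np'}_{-1,p'}}+\|v\|_{\LL^{np'}_{-1,p'}}$ and a small choice of $\rho$, yields $\|v\|_{\LL^{np'}_{1,p'}([s,1])}\les\|g\|_{\LL^{np'}_{-1,p'}([s,1])}+\|v\|_{\LL^{np'}_{-1,p'}([s,1])}$ for $1\le n\le\bar n$; Duhamel's formula $v^z_t=\int_0^t P_{\Sigma_{r,t}(z)}G^z_r\,dr$, boundedness of the heat semigroup on Bessel potential spaces (\cite[Theorem 5.30]{MR3086433}), the case $n=1$ and Gronwall's inequality give $\|v\|_{\LL^\infty_{-1,p'}([0,1])}\les\|g\|_{\LL^{p'}_{-1,p'}([0,1])}$; feeding this into the $n=\bar n$ estimate closes the a priori bound, and hence the lemma. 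The only genuinely new ingredient relative to \cref{lem.u} is the bookkeeping for the divergence-form commutators in $G^z$ (relocating the extra derivative and tracking which terms are $o_\rho(1)\|v\|_{L_{1,p'}}$ versus $C_\rho(\|v\|_{L_{p'}}+\|g\|_{L_{-1,p'}})$), and that is where I expect the remaining, essentially routine, work to lie; every other step is word-for-word the argument already carried out for \cref{lem.u}.
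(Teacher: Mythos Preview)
Your proposal is correct and follows essentially the same route as the paper: localize by multiplying with $\phi^z$, derive the frozen-coefficient equation for $v^z$ with right-hand side $G^z$, estimate $\big(\int_\Rd\|G^z\|_{L_{-1,p'}}^{p'}dz\big)^{1/p'}$ via \cref{lem.multiplication}, \cref{lem.asmall} and \cref{lem.localize}, and then rerun Steps~2--4 of \cref{lem.u} with $(p',q')$ in place of $(p,q)$. The only cosmetic differences are that the paper handles $\partial_i(a^{ij}v)\partial_j\phi^z$ directly via $\|\partial_i(a^{ij}v)\|_{L_{-1,p'}}\les\|v\|_{L_{p'}}$ (rather than your extra integration by parts) and applies the interpolation $\|v\|_{L_{p'}}\le C_\varepsilon\|v\|_{L_{-1,p'}}+\varepsilon\|v\|_{L_{1,p'}}$ already at the end of Step~1; also, for the forward equation the natural intervals in Steps~2--3 are $[0,t]$ rather than $[s,1]$ (or work in the time-reversed variable throughout), which you should adjust.
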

	\begin{proof}
		The proof is similar to that of \cref{lem.u}.
		The main differences are the computations in step 1 of the proof of \cref{lem.u}, which we will explain below.
		Let $v$ be a solution to \eqref{eqn.v}.
		Define $\phi^z,a(z)$ as in the proof of \cref{lem.u}. In addition, define $g^z(t,z)=g(t,x)\phi^z(x)$ and $v^z(t,x)=v(t,x)\phi^z(x)$. Then $v^z$ satisfies the parabolic equation
		\begin{align*}
			&\partial_t v^z- a^{ij}(z)\partial^2_{ij}v^z+G^z=0, \quad v^z(0,\cdot)=0,
		\end{align*}
		where
		\begin{align*}
			G^z&=g^z+a^{ij}(z)\partial^2_{ij}v^z- \partial^2_{ij}(a^{ij}v)\phi^z
			\\&=g^z- \partial^2_{ij}\left((a^{ij}-a^{ij}(z))v^z\right)+2\partial_i(a^{ij}v)\partial_j \phi^z+a^{ij}v \partial^2_{ij}\phi^z.
		\end{align*}
		Let $\psi$ be a smooth function on $\Rd$ such that $\psi(x)=1$ if $|x|\le \rho$ and $\psi(x)=0$ if $|x|\ge 2 \rho$. Define $\psi^z(x)=\psi(x-z)$.
		Applying \cref{lem.multiplication,lem.asmall}
		\begin{multline*}
		   \Vert\partial_{ij}((a^{ij}-a^{ij}(z))v^z) \Vert_{L_{-1,p'}(\mR^d)}
		   \les\|(a^{ij}-a^{ij}(z))v^z\|_{L_{1,p'}(\Rd)}
		   \\\lesssim  \Vert (a^{ij}-a^{ij}(z))\psi^z \Vert_{L_{1,p_0}(\mR^d)}\Vert v^z \Vert_{L_{1,p'}(\mR^d)}
		   \le o_\rho\|v^z\|_{L_{1,p'}(\Rd)}
		\end{multline*}
		and
		\begin{align*}
			\|a^{ij}v \partial^2_{ij}\phi^z\|_{L_{-1,p'}(\Rd)}
			\les\|a^{ij}\psi^z\|_{L_{1,p_0}(\Rd)}\|v \partial^2_{ij}\phi^z\|_{L_{-1,p'}(\Rd)}
			\les \|v \partial^2_{ij}\phi^z\|_{L_{-1,p'}(\Rd)}.
		\end{align*}
		Similar to \eqref{tmp.uz1}, we have
		\begin{align*}
			\left(\int_\Rd\|v^z\|_{L_{1,p}(\Rd)}^{p'}\right)^{1/p'}\le
			\|v\|_{L_{1,p'}(\Rd)}+C_\rho\|v\|_{L_p'(\Rd)}.
		\end{align*}
		This yields
		\begin{multline*}
			\left(\int_\Rd \Vert\partial_{ij}((a^{ij}-a^{ij}(z))v^z) +a^{ij}v \partial^2_{ij}\phi^z \Vert_{L_{-1,p'}(\mR^d)}^{p'}dz\right)^{1/p'}
			\\\les C_\rho \|v\|_{L_{p'}(\Rd)}+o_\rho\|v\|_{L_{1,p'}(\Rd)}.
		\end{multline*}		
		Applying \cref{lem.localize}, we have
		\begin{align*}
			\left(\int_\Rd\|g^z+2 \partial_i(a^{ij}v)\partial_j \phi^z\|_{L_{-1,p'}(\Rd)}^{p'}dz \right)^{{1/p'}}
			&\les \|g\|_{L_{-1,p'}(\Rd)}+C_\rho\|\partial_i(a^{ij}v)\|_{L_{-1,p'}(\Rd)}
			\\&\les \|g\|_{L_{-1,p'}(\Rd)}+C_\rho\|v\|_{L_{p'}(\Rd)}.
		\end{align*}
		These estimates imply that
		\begin{align*}
			\left(\int_\Rd\|G^z\|^{p'}_{L_{-1,p'}(\Rd)}dz\right)^{1/p'}\les \|g\|_{L_{-1,p'}(\Rd)}+C_\rho\|v\|_{L_{p'}(\Rd)}+ o_\rho\|v\|_{L_{1,p'}(\Rd)}.
		\end{align*}
		Using the interpolation inequality
		\begin{align*}
			\|v\|_{L_{p'}(\Rd)}\le C_\rho\|v\|_{L_{-1,p'}(\Rd)}+o_\rho\|v\|_{L_{1,p'}(\Rd)},
		\end{align*}
		we obtain from the previous estimate that
		\begin{align*}
			\left(\int_\Rd\|G^z\|^{p'}_{L_{-1,p'}(\Rd)}dz\right)^{1/p'}\les \|g\|_{L_{-1,p'}(\Rd)}+C_\rho\|v\|_{L_{-1,p'}(\Rd)}+ o_\rho\|v\|_{L_{1,p'}(\Rd)}.
		\end{align*}
		One can now follow steps 2,3 of the proof of \cref{lem.u} to obtain \eqref{est.vpq}.	
	\end{proof}
	\begin{proof}[\bf Proof of \cref{thm.uv}]
	Concerning equation \eqref{eqn.u}, by the method of continuity  it suffices to show  \eqref{est.upq} whenever $u$ is a solution to \eqref{eqn.u}. The case $q\geq p$ has been treated in \cref{lem.u}.  Consider the case $q<p$, which is equivalent to $q'>p'$. For each $g\in\LL^{q'}_{-1,p'}([0,1])$, let $v$ be the solution to \eqref{lem.v}, which exists uniquely by \cref{lem.v}.
	We take $\phi=v$ in \eqref{eqn.uweak} and use the equation \eqref{eqn.v} for  $v$ to see that
	\begin{align*}
	\int_0^1 \wei{u_s,g_s}_{L_{1,p}(\Rd)\times L_{-1,p'}(\Rd)}ds=  \int_0^1 \wei{v_t,f_t}_{L_{1,p'}(\Rd)\times L_{-1,p}(\Rd)}dt.
	\end{align*}
	Applying H\"older inequality and \eqref{est.vpq}, we have
	\begin{align*}
		\left|\int_0^1 \wei{v_t,f_t}_{L_{1,p'}(\Rd)\times L_{-1,p}(\Rd)}dt\right|
		&\le\|v\|_{\LL^{q'}_{1,p'}([0,1])} \|f\|_{\LL^{q'}_{-1,p'}([0,1])}
		\\&\les\|g\|_{\LL^{q'}_{-1,p'}([0,1])} \|f\|_{\LL^{q'}_{-1,p'}([0,1])},
	\end{align*}
	and hence
	\begin{align*}
		\left|\int_0^1 \wei{u_s,g_s}_{L_{1,p}(\Rd)\times L_{-1,p'}(\Rd)}ds\right|
		\les\|g\|_{\LL^{q'}_{-1,p'}([0,1])}\|f\|_{\LL^{q'}_{-1,p'}([0,1])}.
	\end{align*}
	Since $g$ is arbitrary, this implies \eqref{est.upq}. The result for equation \eqref{eqn.v} follows from similar arguments.
	\end{proof}


		In the remaining, 
		we consider the parabolic differential equation
		\begin{align}\label{eqn.pde}
			\partial_su+\frac12 a^{ij}\partial^2_{ij}u =f, \quad u(t,\cdot)=0
		\end{align}
		where $f\in\LL^q_{-1,p}([0,1])$, $t\in(0,1]$ is fixed. 
		Whenever the dependence on $t$ plays a role, we write $u^t_s(x)$ for the solution to \eqref{eqn.pde} evaluated at $(s,x)$, $s\le t$, $x\in\Rd$. 
		We quantify the dependence on the terminal time of various quantities related to $u^t_s$ under \cref{con.A1}. These estimates are used in \cref{sec:regularizing_properties_of_the_continuum_paths}  where we particularly take $(a^{ij}):=\sigma\sigma^*$ for a $\sigma$ satisfying \cref{con.A} with $q_0=\infty$. 
		\begin{theorem}\label{thm.pde-1} Assuming \cref{con.A1}. Let $q\in(2,\infty)$ and $p\in(1,\infty)$ be such that
		$\frac1p+\frac1{p_0}<1$. Then for every $\nu\in[0,1]$, every $0\le s\le t\le 1$ and $f\in {\LL^q_{-\nu,p}([0,1])}$,  we have 
			\begin{align}
				\|u^t_s\|_{L_p(\Rd)}\le N(t-s)^{1-\frac \nu2-\frac1q}\|f\|_{\LL^q_{-\nu,p}([s,t])}.
				\label{est.u-nu}
			\end{align}
		\end{theorem}
		\begin{lemma}\label{lem.ust}
			Let $q\in(2,\infty)$ and $p\in(1,\infty)$. Let $t\in[0,1]$.  If $u(r,x)=0$ for $r\in[t,1]$,  then for every $s\in[0,t]$ 
			\begin{gather}\label{u-1p-est}
				\|u(s)\|_{L_p(\Rd)}\les (t-s)^{\frac12-\frac1q}\|\partial_su+\frac12 \Delta u\|_{\LL^q_{-1,p}([s,t])} 
				 \text{ whenever }u\in \LL^q_{1,p}([0,1])
				 \\\shortintertext{and}
				 \label{up-est}
				\|u(s)\|_{L_p(\Rd)}\les (t-s)^{1-\frac1q}\|\partial_su+\frac12 \Delta u\|_{\LL^q_{p}([s,t])} \text{ whenever }u\in \LL^q_{2,p}([0,1]).
			\end{gather}	
		\end{lemma}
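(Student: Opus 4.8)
The plan is to reduce both bounds to the Duhamel representation for the flat backward heat semigroup. Put $g:=\partial_su+\frac12\Delta u$; if $u\in\LL^q_{1,p}([0,1])$ then $g\in\LL^q_{-1,p}([0,1])$, and if $u\in\LL^q_{2,p}([0,1])$ then $g\in\LL^q_p([0,1])$, so in either case we may assume that the relevant norm of $g$ appearing on the right-hand side is finite (otherwise there is nothing to prove). Since $u$ vanishes on $[t,1]$ and solves $\partial_su+\frac12\Delta u=g$ on $[0,t]$ with terminal value $u(t,\cdot)=0$, I claim that
\[
	u(s,\cdot)=-\int_s^tP_{s,r}g(r,\cdot)\,dr,\qquad 0\le s\le t .
\]
To prove this, observe that the right-hand side is continuous in $s$ with values in $L_p(\Rd)$, vanishes at $s=t$, and also solves $\partial_s(\cdot)+\frac12\Delta(\cdot)=g$ on $[0,t]$ (differentiate under the integral using $\partial_sP_{s,r}=-\frac12\Delta P_{s,r}$). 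Hence the difference $w$ of the two sides satisfies $\partial_sw+\frac12\Delta w=0$ with $w(t,\cdot)=0$; reversing time $\tau=t-s$ turns this into the forward heat equation with zero initial datum, so $w\equiv0$ by standard uniqueness.

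Granting the representation, \eqref{u-1p-est} follows from the smoothing estimate $\|P_{s,r}h\|_{L_p(\Rd)}\les(r-s)^{-1/2}\|h\|_{L_{-1,p}(\Rd)}$ for $0<r-s\le1$, which one obtains from $\|P_t\|_{L_p(\Rd)\to L_{1,p}(\Rd)}\les t^{-1/2}$ (itself a consequence of $\|\nabla p_t\|_{L_1(\Rd)}\les t^{-1/2}$, $\|p_t\|_{L_1(\Rd)}=1$ and the $L_p$-boundedness of the Riesz transforms) together with the self-adjointness of $P_t$ and the duality $(L_{1,p'}(\Rd))^*=L_{-1,p}(\Rd)$. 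Indeed,
\[
	\|u(s,\cdot)\|_{L_p(\Rd)}\le\int_s^t\|P_{s,r}g(r,\cdot)\|_{L_p(\Rd)}\,dr\les\int_s^t(r-s)^{-\frac12}\|g(r,\cdot)\|_{L_{-1,p}(\Rd)}\,dr ,
\]
and H\"older's inequality in $r$ with exponents $q$ and $q'=q/(q-1)$, legitimate because $q>2$ makes $(r-s)^{-q'/2}$ integrable over $[s,t]$ with $\int_s^t(r-s)^{-q'/2}\,dr\les(t-s)^{1-q'/2}$, yields the prefactor $(t-s)^{(1-q'/2)/q'}=(t-s)^{\frac12-\frac1q}$ and the norm $\|g\|_{\LL^q_{-1,p}([s,t])}$. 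For \eqref{up-est} one uses instead the trivial bound $\|P_{s,r}g(r,\cdot)\|_{L_p(\Rd)}\le\|g(r,\cdot)\|_{L_p(\Rd)}$ and H\"older with constant exponent, producing the prefactor $(t-s)^{1/q'}=(t-s)^{1-\frac1q}$ and $\|g\|_{\LL^q_p([s,t])}$.

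The computations are routine; the two points that require some care---though neither is a genuine obstacle---are the justification of the Duhamel formula in the classes $\LL^q_{1,p}$ and $\LL^q_{2,p}$ (the time-continuity of $u$ into a suitable Bessel potential space, so that $u(t,\cdot)=0$ is meaningful, together with the heat-equation uniqueness invoked for $w$) and checking that $\|P_t\|_{L_{-1,p}(\Rd)\to L_p(\Rd)}\les t^{-1/2}$ holds with a constant uniform for $t\in(0,1]$. Both are standard facts for the Gaussian semigroup, and the restriction $t\le1$ is automatic here since $s,t\in[0,1]$.
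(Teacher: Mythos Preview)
Your proof is correct and follows essentially the same route as the paper: represent $u$ via the Duhamel formula for the flat heat semigroup, apply the smoothing bound $\|P_{s,r}h\|_{L_p(\Rd)}\les(r-s)^{-1/2}\|h\|_{L_{-1,p}(\Rd)}$ (the paper cites this from the literature rather than deriving it by duality as you do), and finish with H\"older in $r$. The only notable difference is in justifying Duhamel: the paper sidesteps your uniqueness argument for $w$ by first approximating $u$ by smooth compactly supported functions, which makes the representation immediate and avoids having to check time-continuity and heat-equation uniqueness in the low-regularity class $\LL^q_{1,p}$.
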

		\begin{proof}
		By approximation, we can assume that $u$ is a smooth function on $[0,1]\times\Rd$ with compact support. 
		Put $g:=\partial_su+\frac12 \Delta u$.	Then by Duhamel's formula $u(s,x)=\int_s^tP_{s,r}g(r,x)dr$.
		Applying Minkowski inequality and \cite[Theorem 5.30]{MR3086433}, we have
		\begin{align*}
			\|u_s\|_{L_p(\Rd)}\le\int_s^t\|P_{s,r}g_r\|_{L_p(\Rd)}dr
			\les\int_s^t(r-s)^{-\frac12}\|g_r\|_{L_{-,1,p}(\Rd)}dr.
		\end{align*}
		Using H\"older inequality, we have $\|u_s\|_{L_p(\Rd)}\les(t-s)^{\frac12-\frac1q}\|g\|_{\LL^q_{-1,p}([s,t])}$.
		This shows \eqref{u-1p-est}.
		Inequality \eqref{up-est} is obtained in the same way.
		\end{proof}
		
		\begin{proof}[\bf Proof of \cref{thm.pde-1}]
			By interpolation, it suffices to show that
			\begin{align}
				&\|u^t_s\|_{L_p(\Rd)}\le N(t-s)^{\frac 12-\frac1q}\|f\|_{\LL^q_{-1,p}([s,t])}\quad \text{for } f\in\LL^q_{-1,p}([0,1]),
				\label{tmp.u-1}
				\\&\|u^t_s\|_{L_p(\Rd)}\le N(t-s)^{1-\frac1q}\|f\|_{\LL^q_{p}([s,t])}\quad\text{for } f\in\LL^q_{p}([0,1]).
				\label{tmp.u0}
			\end{align}
			
			From \cref{thm.uv}, we have $\|u^t\|_{\LL^q_{1,p}([s,t])}\les\|f\|_{\LL^q_{-1,p}([s,t])}$ for every $0\le s\le t\le1$.
			From \eqref{eqn.pde}, we have $\partial_s u+\frac12 \Delta u=f+\frac12(\Delta-a^{ij}\partial^2_{ij})u$.
			It follows from \cref{lem.ust} that
			\begin{align*}
			  \|u^t_s\|_{L_p(\Rd)}\les (t-s)^{\frac12-\frac1q}(\|f\|_{\LL^q_{-1,p}([s,t])}+\|(\delta^{ij}-a^{ij})\partial^2_{ij}u^t\|_{\LL^q_{-1,p}([s,t])} ),
			\end{align*}
			where $\delta^{ij}=1$ if $i=j$ and $\delta^{ij}=0$ otherwise. From \cref{lem.multiplication}(iii) and the hypotheses, we have 
			\begin{align*}
				\|(\delta^{ij}-a^{ij})\partial^2_{ij}u^t\|_{\LL^q_{-1,p}([s,t])}
				&\les(\|a^{ij}\|_{\LL^\infty_\infty([s,t])}+\|\nabla a^{ij}\|_{\LL^\infty_{p_0}([s,t])})\|\partial^2_{ij}u^t\|_{\LL^q_{-1,p}([s,t])}
				\\&\les\|u^t\|_{\LL^q_{1,p}([s,t])}.
			\end{align*}
			Combining the previous estimates, we obtain \eqref{tmp.u-1}.

			Inequality \eqref{tmp.u0} is shown analogously. Indeed, using \cref{lem.ust}, we have
			\begin{align*}
			  \|u^t_s\|_{L_p(\Rd)}
			  &\les (t-s)^{1-\frac1q}(\|f\|_{\LL^q_{p}([s,t])}+\|(\delta^{ij}-a^{ij})\partial^2_{ij}u^t\|_{\LL^q_{p}([s,t])} )
			  \\&\les (t-s)^{1-\frac1q}(\|f\|_{\LL^q_{p}([s,t])}+\|\partial^2_{ij}u^t\|_{\LL^q_{p}([s,t])} ).
			\end{align*}
 			It is known (\cite[Theorem 2.1]{LX}) that $\|u^t\|_{\LL^q_{2,p}([s,t])}\les\|f\|_{\LL^q_{p}([s,t])}$.
			These estimates imply \eqref{tmp.u0}.
		\end{proof}
\end{appendix}
%
%


\section*{Data availability} 
	Data sharing not applicable to this article as no datasets were generated or analyzed during the current study.
\section*{Acknowledgment}
	The authors thank Guohuan Zhao for fruitful discussions and Lucio Galeati for pointing out an error in an earlier version of this work.  The authors would like to thank the editors, associated editors and the referees for their constructive suggestions. 
\section*{Funding} 
 	The first author was supported by the Humboldt fellowship and the second author was supported by the DFG via Research Unit FOR 2402 while they were at TU Berlin where this work was carried out.
\bibliographystyle{alpha}
\bibliography{reference}
\end{document}